\documentclass[12pt]{amsart}

\usepackage{amsmath, amssymb, amsfonts}
\usepackage{graphicx, overpic}
\usepackage{fullpage}
\usepackage{color}
\usepackage[usenames,dvipsnames,svgnames,table]{xcolor}
\usepackage{enumerate}

\theoremstyle{plain}
\newtheorem{thm}{Theorem}[section]
\newtheorem{lemma}[thm]{Lemma}
\newtheorem{prop}[thm]{Proposition}
\newtheorem{cor}[thm]{Corollary}

\numberwithin{equation}{section}
\numberwithin{figure}{section}

\theoremstyle{definition}
\newtheorem{definition}[thm]{Definition}
\newtheorem{example}[thm]{Example}
\newtheorem{remark}[thm]{Remark}

\newtheorem{obs}[thm]{Observation}

\newtheorem{assumption}[thm]{Standing Assumptions}

\newcommand{\G}{\Gamma}

\newcommand{\R}{\mathbb{R}}
\newcommand{\N}{\mathbb{N}}
\renewcommand{\H}{\mathbb{H}}
\newcommand{\Cayley}[1]{\mathcal {C}_#1}
\newcommand{\CAT}{\operatorname{CAT}}
\newcommand{\cC}{\mathcal{C}}

\newcommand{\val}{\mathrm{val}}
\newcommand{\into}{\hookrightarrow} % injection

\newcommand{\cA}{\mathcal{A_{\approx}}}
\newcommand{\cAplus}{\mathcal{A}^+}
\newcommand{\cB}{\mathcal{B}}
\newcommand{\cT}{\mathcal T}

\newcommand{\hide}[1]{}

\newcommand{\card}{\operatorname{card}}

\def\polhk#1{\setbox0=\hbox{#1}{\ooalign{\hidewidth
    \lower1.0ex\hbox{$\,\lhook$}\hidewidth\crcr\unhbox0}}}
\newcommand{\Swiatkowski}{\'Swi{\polhk{a}}tkowski}

\title{Bowditch's JSJ tree and the quasi-isometry classification of certain Coxeter groups}
\author{Pallavi Dani and Anne Thomas}
\date{\today}
\thanks{The first author 
was partially supported by NSF Grant No.~DMS-1207868 and this
work was supported by a grant from the Simons Foundation (\#426932, Pallavi Dani). 
This research of the second author was supported in part by ARC Grant No.~DP110100440.  The second author was supported in part by an Australian Postdoctoral Fellowship.}

\begin{document}
\maketitle

\begin{abstract}  Bowditch's JSJ tree for splittings over $2$-ended subgroups is a quasi-isometry invariant for $1$-ended hyperbolic groups which are not cocompact Fuchsian~\cite{bowditch}.  Our main result gives an explicit, computable ``visual" construction of this tree for certain hyperbolic right-angled Coxeter groups.  As an application of our construction we identify a large class of such groups for which the JSJ tree, and hence the visual boundary, is a complete quasi-isometry invariant, and thus the quasi-isometry problem is decidable.  We also give a direct proof of the fact that among the Coxeter groups we consider, the cocompact Fuchsian groups form a rigid quasi-isometry class. In Appendix~\ref{app:Kn}, written jointly with Christopher Cashen, we show that the JSJ tree is not a complete quasi-isometry invariant for the entire class of Coxeter groups we consider.
\end{abstract}

%%%%%%%%%%%%%%%%%%%%%%%
%%%%%%%%%%%%%%%%%%%%%%%
\section{Introduction}
%%%%%%%%%%%%%%%%%%%%%%%
%%%%%%%%%%%%%%%%%%%%%%%

In this paper we use Bowditch's JSJ tree to investigate the quasi-isometry classification of certain right-angled Coxeter groups.  After describing our results, we review previous work on this classification, about which surprisingly little is known.
 
Given a finite simplicial graph $\G$, the associated \emph{right-angled Coxeter group} $W_\G$ has generating set $S$ equal to  the vertices of $\G$, and relations $s^2 = 1$ for all $s \in S$, and $st = ts$ whenever $s,t \in S$ are adjacent vertices.   We call $\G$  the \emph{defining graph} of $W_\G$.  We assume throughout that~$W_\G$ is infinite, or equivalently that $\G$ is not a complete graph.  We also assume that $W_\G$ is \emph{$2$-dimensional}, that is, that $\G$ has no triangles.  This is equivalent to the Davis complex for $W_\G$ being $2$-dimensional (see Section~\ref{sec:davis} for background on the Davis complex).

Recall that a \emph{special subgroup} of $W_\G$ is one generated by a subset of the generating set $S$.  For each $T \subseteq S$, the special subgroup $\langle T\rangle$ is also a right-angled Coxeter group, whose defining graph has vertex set $T$ and edge set consisting of all edges of $\G$ which have both endpoints in $T$.   By Proposition 8.8.2 of~\cite{davis-book}, if $W_\G$ has infinitely many ends then $W_\G$ has a nontrivial decomposition as a finite tree of groups, in which each vertex group is a finite or $1$-ended special subgroup and each edge group is a finite special subgroup.  It then follows from Theorems 0.3 and 0.4 of Papasoglu and Whyte~\cite{papasoglu-whyte} that for $W_\G$ and $W_\Lambda$ with infinitely many ends, $W_\Gamma$ is quasi-isometric to $W_\Lambda$ if and only if their respective tree-of-groups decompositions have the same set of quasi-isometry types of $1$-ended vertex groups (without multiplicities).
 In light of these results, we focus on the case that $W_\G$ is $1$-ended in this paper. 

JSJ decompositions of groups were introduced by Sela~\cite{sela} and Rips and Sela~\cite{rips-sela}; see Guirardel--Levitt~\cite{guirardel-levitt} for an overview of the theory.
In~\cite{bowditch}, Bowditch gives a construction of a canonical JSJ splitting over $2$-ended subgroups of any $1$-ended hyperbolic group $G$ which is not cocompact Fuchsian.  (A finitely generated group is \emph{Fuchsian} if it is non-elementary and acts properly discontinuously on the hyperbolic plane.)  The JSJ tree $\cT_G$ for this splitting is a quasi-isometry invariant, as it is defined in terms of the local cut point structure of the boundary.  The group $G$ acts on its JSJ  tree $\cT_G$ with finite quotient.  We recall the construction of Bowditch's JSJ tree in Section~\ref{sec:bowditch}.

Our main result gives an explicit construction of Bowditch's JSJ tree $\cT_{W_\G}$ for certain $W_\G$; see Theorem~\ref{thm:main} below.  We restrict our attention to the class of 2-dimensional, $1$-ended, hyperbolic right-angled Coxeter groups $W_\G$, which are not cocompact Fuchsian and which admit a non-trivial splitting over a $2$-ended subgroup (if there is no such splitting then $\cT_{W_\G}$ consists of a single vertex).  Each of these assumptions corresponds to certain properties of the (triangle-free) graph $\G$, as follows.  First, by a special case of~\cite[Theorem 8.7.2]{davis-book}, the group $W_\G$ is $1$-ended if and only if $\G$ is connected and has no separating vertices or edges.  Also, by a special case of~\cite[Corollary 12.6.3]{davis-book}, the group $W_\G$ is hyperbolic if and only if $\G$ has no embedded cycles of length four.  

A Coxeter group $W$ (not necessarily right-angled) is cocompact Fuchsian if and only if $W$ is either a hyperbolic polygon reflection group, or the direct product of a hyperbolic polygon reflection group with a finite Coxeter group.  This result will not surprise experts, but as we could not find it stated explicitly in the literature, we record a proof in Appendix~\ref{sec:fuchsian-all}, as 
Theorem~\ref{thm:fuchsian-all}.  For $\G$ triangle-free, this implies that  $W_\G$ is cocompact Fuchsian if and only if $\G$ is a cycle of length $\ge 5$.  

As for splittings, Papasoglu showed in~\cite{papasoglu} that among 
$1$-ended, finitely presented groups that are not commensurable to surface groups, having a splitting over a $2$-ended subgroup is a quasi-isometry invariant.   
In~\cite{mihalik-Tschantz} Mihalik and Tschantz 
characterised the Coxeter groups which admit such splittings in terms of their defining graphs.  
A pair of vertices $\{a, b\}$ in $\G$ is called a \emph{cut pair} if $\{a,b\}$ separates $\G$, meaning that $\G \setminus \{a,b\}$ has at least two components, each of which contains at least one vertex.
In our triangle-free setting, the main result of~\cite{mihalik-Tschantz} states that $W_\G$ splits over a $2$-ended subgroup if and only if there is a cut pair $\{a,b\}$ in $\G$ (see Section~\ref{sec:splittings} for more details).

Correspondingly, we restrict to graphs $\G$ satisfying the following conditions. 

\begin{assumption} \label{assumptions}
The graph $\G$: 
\begin{enumerate}
\item has no triangles ($W_\G$ is 2-dimensional);
\item  is connected and has no separating vertices or edges ($W_\G$ is $1$-ended);
\item has no squares ($W_\G$ is hyperbolic);
\item is not a cycle of length $\ge 5$ ($W_\G$ is not cocompact Fuchsian); and
\item has a cut pair of vertices $\{a, b\}$ ($W_\G$ splits over a $2$-ended subgroup).
\end{enumerate}
\end{assumption}

The main result of this paper is a ``visual'' description of Bowditch's JSJ tree for such $W_\G$: 
\begin{thm}\label{thm:main}
For $\G$ satisfying Standing Assumptions~\ref{assumptions}, the graph $\G$ visually determines Bowditch's JSJ tree $\cT_\G = \cT_{W_\G}$, in the sense that the $W_\G$-orbits of vertices and edges of $\cT_\G$ are  in bijection with subsets of vertices of the defining graph $\G$ which satisfy certain explicit graph-theoretic conditions.  
Moreover, the stabilisers of the vertices and edges of $\cT_\G$ are conjugates of certain special subgroups of $W_\G$. 
\end{thm}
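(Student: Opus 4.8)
The plan is to build a concrete graph-of-groups decomposition of $W_\G$ directly from the combinatorics of $\G$, prove that it is a JSJ splitting over $2$-ended subgroups, and then identify its Bass--Serre tree with Bowditch's canonical tree $\cT_{W_\G}$. The last step should come almost for free once the first two are in place, because Bowditch's tree is characterised (up to $W_\G$-equivariant isomorphism) by properties of the local cut point structure of $\partial W_\G$, and for a right-angled Coxeter group acting on its Davis complex $\Sigma$ this structure is governed by the separation properties of $\G$.

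First I would set up a dictionary between separation in $\G$ and separation in $\partial W_\G = \partial \Sigma$. Since $W_\G$ is hyperbolic and $1$-ended, $\partial W_\G$ is a connected, locally connected continuum with no global cut points. Given a cut pair $\{a,b\}$ of $\G$, Assumption~\ref{assumptions}(2) forces $a$ and $b$ to be non-adjacent, so $\langle a,b\rangle \cong D_\infty$ is a $2$-ended special subgroup; its limit set is a pair of points of $\partial W_\G$, and I would show this pair separates $\partial W_\G$, with the ``sides'' naturally indexed by the components of $\G\setminus\{a,b\}$, so that the valence of the boundary cut pair equals the number of such components. Conversely, I would show that every cut pair of $\partial W_\G$ arises, up to the $W_\G$-action, from a conjugate of such a subgroup; here Assumptions~\ref{assumptions}(1) and~(3) (no triangles, no squares) are what make $\Sigma$ hyperbolic and pin down the separating subsets of the boundary in terms of $\G$.

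Next I would organise the cut pairs following Bowditch's bookkeeping: partition the conjugates of the $D_\infty$ splitting subgroups into those whose limit pairs are ``crossed'' by infinitely many others, which should assemble into hanging-Fuchsian (necklace) pieces, and those that are ``isolated''. On the graph side I expect the hanging Fuchsian pieces to correspond to induced subsets $T\subseteq V(\G)$ that are either long induced cycles or are cyclic chains of overlapping cut pairs enclosing a polygon reflection subgroup $\langle T\rangle$; this fits the Appendix result that a Coxeter group is cocompact Fuchsian exactly when its graph is a cycle of length $\ge 5$, with Assumption~\ref{assumptions}(1) ruling out the extra finite direct factor. The rigid pieces should correspond to maximal subsets $T$ for which $\langle T\rangle$ does not split over a $2$-ended subgroup relative to the cut pairs incident to $T$. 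I would then define the tree: $W_\G$-orbits of vertices in bijection with these subsets $T$ (of the three types), $W_\G$-orbits of edges in bijection with the cut pairs $\{a,b\}$ together with a choice of incident piece, and stabilisers the corresponding conjugates of $\langle T\rangle$ and of $\langle a,b\rangle$, which yields the asserted description of the vertex and edge stabilisers.

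The main obstacle, I expect, is showing this decomposition is \emph{exactly} Bowditch's JSJ, rather than a proper refinement or coarsening. Concretely, one must verify that each putative rigid vertex group really is rigid relative to its edge groups --- which requires a careful combinatorial analysis of which pairs of vertices of $\langle T\rangle$ can separate $\partial\langle T\rangle$ compatibly with the boundary pairs coming from the incident cut pairs --- that the hanging Fuchsian vertex groups are maximal, and that distinct edge groups cannot be conflated. The cleanest route is probably to check directly that the Bass--Serre tree of the visual decomposition satisfies Bowditch's axioms, matching edges with the ``sufficiently separating'' cut pairs of $\partial W_\G$ and matching the two types of non-cut-pair vertices with Bowditch's stars and necklaces. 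Secondary technical points are the degenerate configurations (cut pairs sharing a vertex; a cut pair one or both of whose sides separates further) and confirming that Assumption~\ref{assumptions}(4) is precisely what makes $\cT_{W_\G}$ defined (so that $W_\G$ itself is not Fuchsian) while still permitting Fuchsian vertex groups inside $\cT_{W_\G}$.
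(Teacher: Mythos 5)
Your overall strategy is genuinely different from the paper's: the paper does not first build a visual graph-of-groups splitting and then match it with Bowditch's tree; it computes Bowditch's tree directly, identifying in terms of $\G$ the $\approx$-pairs, the $\sim$-classes and the stars together with their stabilisers. Measured against what is needed, your plan has a real gap, and it is precisely the step you say ``should come almost for free''. A JSJ splitting over $2$-ended subgroups is not unique up to equivariant isomorphism, so exhibiting some splitting of $W_\G$ and ``verifying Bowditch's axioms'' is not a routine check: the only way to identify a candidate tree with Bowditch's canonical tree is to determine the local cut point structure of $\partial W_\G$ --- which pairs of boundary points separate, into how many components, what the full $\sim$-classes are, and what the stars are --- and your proposal contains no mechanism for doing this. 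In the paper this is exactly the content of the bicoloured-geodesic results (Lemma~\ref{lem:parallel}, Corollary~\ref{cor:parallel}), the separation results (Lemmas~\ref{lem:separation}, \ref{lem:gamma separates}, \ref{lem:Aseparates}), and above all the exhaustion results Proposition~\ref{prop:all 2+} (every point of $\partial W(2+)$ arises from a set satisfying (A1)--(A2)) and Proposition~\ref{prop:all-stars} (every star arises from a set satisfying (B1)--(B3)); these are the hardest parts of the argument and have no counterpart in your outline beyond a promise to ``check directly''.

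Moreover, the combinatorial guesses you do make for the vertex classes are not correct. The maximal hanging Fuchsian vertices do not correspond to ``long induced cycles or cyclic chains of cut pairs enclosing a polygon reflection subgroup'': their stabilisers are conjugates of $\langle A\rangle$ for sets $A$ satisfying conditions (A1)--(A3) of Proposition~\ref{prop:sim}; such $A$ may contain non-essential vertices, can never be the full vertex set of an induced cycle (see the proof of Corollary~\ref{cor:A ess cut pair}), and is governed by the subdivided-$K_4$ condition (A2), which is invisible in your description --- the resulting groups are virtually free, never cocompact polygon reflection groups. There are also $\sim$-pairs (e.g.\ the set $\{p,q\}$ in Figure~\ref{fig:sim-eg}) that fit neither your ``edge $=$ cut pair'' scheme nor your two vertex types. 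The rigid (star) vertices are characterised by the non-separation conditions (B1)--(B3) of Proposition~\ref{prop:stars}, not by relative non-splittability of a subgroup, and showing these account for all stars requires Lemmas~\ref{lem:star-sep}--\ref{lem:z-sep}. Finally, your dictionary asserts that the valence of the boundary pair of a cut pair $\{a,b\}$ equals the number $k$ of components of $\G\setminus\{a,b\}$; when one component is a single vertex $c$ adjacent to both $a$ and $b$, the valence is $2(k-1)$ and the stabiliser is a conjugate of $\langle a,b,c\rangle$ rather than $\langle a,b\rangle$ (Lemma~\ref{lem:separation}(2), Corollary~\ref{cor:approx}), so even the Type 1 part of your proposed bijection needs repair.
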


We prove Theorem~\ref{thm:main} in Section~\ref{sec:JSJ}, and an explicit statement of this theorem appears as  
Theorem~\ref{thm:JSJ}.  A consequence of our description is:

\begin{cor}\label{cor:algorithm}
For $\G$ satisfying Standing Assumptions~\ref{assumptions}, there exists an algorithm to compute the JSJ tree of the right-angled Coxeter group $W_\G$.
\end{cor}

In order to prove Theorem~\ref{thm:main}, we relate the separation properties of subsets of vertices of~$\G$, of geodesics in the Davis complex, and of endpoints of geodesics in the visual boundary~$\partial W_\G$.  We are able to use a few arguments from Lafont's papers~\cite{lafont-3d, lafont} in our proofs, but substantial additional work is required.  The vertices of Bowditch's JSJ tree $\cT_\G$ are of three types.  The first two types are defined in terms of subsets of $\partial W_\G$, and include the so-called $\approx$-pairs and $\sim$-classes.  The vertices of the third type are called stars, and are defined as subsets of vertices of the first two types.  As we explain further in 
Remark~\ref{rem:lafont}, the $\approx$-pairs and certain $\sim$-classes in our setting correspond to key structures in the spaces investigated by Lafont in~\cite{lafont-3d,lafont}.  Similar spaces have also been considered by Crisp--Paoluzzi~\cite{crisp-paoluzzi}, Malone~\cite{malone} and Stark~\cite{stark}.  However the situation for $\sim$-classes is more varied in our setting.  Moreover, there is no analogue of stars 
in~\cite{lafont-3d, lafont} and these other works.   The subtleties of $\sim$-classes and the identification of stars are the most difficult parts of the proof of Theorem~\ref{thm:main}.

Our requirement that the defining graph have no triangles (condition (1) in Standing 
Assumptions~\eqref{assumptions}) is a simplifying assumption.  
For instance, this assumption makes it easier to characterise when $W_\G$ is $1$-ended, hyperbolic and not cocompact Fuchsian, and when $W_\G$ admits a splitting over a $2$-ended subgroup (conditions (2)--(5) in Standing Assumptions~\eqref{assumptions}).  These in turn lead to relatively straightforward descriptions of the JSJ tree in the triangle-free case.  

Although we anticipate that many of our arguments can be adapted to the higher-dimensional setting, that is, where the graph $\G$ does contain triangles, such a generalisation will not be immediate.  As explained in Remark~\ref{rem:approx}, the identification of the $\approx$-pair vertices of the JSJ tree, and the determination of their valence, would become quite delicate.     
Modifying the characterisations of the $\sim$-class vertices and the stars would require even more substantial work.  For example, one condition in our description of the $\sim$-class vertices 
(in Proposition~\ref{prop:sim}) involves disallowing certain subgraphs which are subdivided copies of $K_4$, where $K_4$ is the complete graph on $4$ vertices.  If $\G$ were not triangle-free, this description would have to distinguish between $K_4$'s that ``matter'' and those that don't.   Our proofs also rely on a series of graph-theoretic preliminary results in Section~\ref{sec:sim}, whose statements would become more complicated in the presence of triangles.

We give some applications of our main result in Section~\ref{sec:applications}.  First, we show that the JSJ tree is a complete invariant for a large subclass of groups.  Let $\mathcal G$ be the class of  graphs satisfying Standing Assumptions~\ref{assumptions} which have no induced subgraphs which are subdivided copies of~$K_4$.

\begin{thm}\label{thm:complete-inv}
Bowditch's JSJ tree is a complete quasi-isometry invariant for $W_\G$ with $\G \in \mathcal G$. More precisely, given $\Lambda, \G \in \mathcal G$, the groups $W_\G$ and $W_\Lambda$ are quasi-isometric if and only if there is a type-preserving isomorphism from $\cT_\G$ to $\cT_\Lambda$.
\end{thm}

By Theorem 3.9 of Cashen and Martin~\cite{cashen-martin}, the JSJ tree is a complete quasi-isometry invariant for any class of groups in which the JSJ tree has no stars.  Using this, in Section~\ref{sec:k4} we prove Theorem~\ref{thm:complete-inv} by characterising the groups $W_\G$ for which $\cT_\G$ has no stars as exactly those for which the corresponding graph $\G$ has no subdivided $K_4$ subgraphs.  

In Appendix~\ref{app:Kn}, which is joint work with Cashen, we use techniques from Cashen and Macura~\cite{cashen-macura} to show  (in Theorem~\ref{thm:Kn})  that if $\G_{n+1}$ is a sufficiently subdivided copy of the complete graph on $n+1$ vertices, then for $n \geq 3$ the groups $W_{\G_{n+1}}$ all have the same JSJ tree, but are pairwise not quasi-isometric.  Hence the JSJ tree is not a  complete quasi-isometry invariant in general (i.e.~for all graphs satisfying Standing Assumptions~\ref{assumptions}).

Combining Corollary~\ref{cor:algorithm} with Theorem~\ref{thm:complete-inv}, we immediately obtain the following computability result.  This was pointed out to us by an anonymous referee and by Murray Elder.

\begin{cor}\label{cor:decidable}  
The quasi-isometry problem is decidable for the class of right-angled Coxeter groups $W_\G$ with $\G \in \mathcal G$.
\end{cor}

As a consequence of Bowditch's construction of the JSJ tree and Theorem~\ref{thm:complete-inv}, we also obtain that the visual boundary is a complete invariant for the same class of groups.

\begin{cor}\label{cor:complete-inv}  Let $\G,\Lambda \in \mathcal G$ (where $\mathcal G$ is as defined above the statement of Theorem~\ref{thm:complete-inv}). Then $W_\G$ and $W_\Lambda$ are quasi-isometric if and only if $\partial W_\G$ and $\partial W_\Lambda$ are homeomorphic.
\end{cor}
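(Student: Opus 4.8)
The plan is to derive this from Bowditch's construction together with Theorem~\ref{thm:complete-inv}. One implication is standard and needs nothing about $\mathcal G$: since $W_\G$ and $W_\Lambda$ are hyperbolic, a quasi-isometry between them induces a homeomorphism of visual boundaries, so if $W_\G$ and $W_\Lambda$ are quasi-isometric then $\partial W_\G$ and $\partial W_\Lambda$ are homeomorphic.

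For the converse I would start with a homeomorphism $\phi \colon \partial W_\G \to \partial W_\Lambda$. By Standing Assumptions~\ref{assumptions}, both $W_\G$ and $W_\Lambda$ are $1$-ended hyperbolic and not cocompact Fuchsian, so Bowditch's JSJ trees $\cT_\G$ and $\cT_\Lambda$ are defined. The essential observation is that Bowditch's construction in~\cite{bowditch} builds $\cT_G$ --- and its partition of vertices into the three types --- entirely out of the local cut point structure of $\partial G$: no information about $G$ beyond the topology of its boundary enters. Hence $\phi$ transports this structure from $\partial W_\G$ to $\partial W_\Lambda$ and so induces an isomorphism $\cT_\G \to \cT_\Lambda$ of trees; moreover, since the type of a vertex is read off from topological features of the corresponding subset of the boundary, this isomorphism is type-preserving. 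Feeding a type-preserving isomorphism $\cT_\G \to \cT_\Lambda$ into Theorem~\ref{thm:complete-inv} then yields that $W_\G$ and $W_\Lambda$ are quasi-isometric.

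The one step that needs to be spelled out carefully is the claim that $\phi$ induces a type-preserving isomorphism of trees; this is really a matter of recalling Bowditch's definitions and noting that each is topological in nature. Vertices of the first two types are literally equivalence classes in $\partial G$ for the relations $\sim$ and $\approx$, and stars are certain collections of such classes; in each case whether a vertex has a given type is determined by topological invariants of the associated subspace of $\partial G$ (such as being a cut point, or a local cut point of prescribed valence), all of which are preserved by any homeomorphism. For $\G, \Lambda \in \mathcal G$ the trees have no stars anyway, by the description underlying Theorem~\ref{thm:complete-inv}, so in fact only the two manifestly topological types appear; but the argument does not rely on this simplification.
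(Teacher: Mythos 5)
Your proof is correct and follows the route the paper intends: the forward direction is the standard fact that quasi-isometries of hyperbolic groups induce boundary homeomorphisms, and the converse uses that Bowditch's tree (with its vertex types) is built purely from the local cut point structure of the boundary, so a homeomorphism $\partial W_\G \to \partial W_\Lambda$ yields a type-preserving isomorphism $\cT_\G \to \cT_\Lambda$, to which Theorem~\ref{thm:complete-inv} applies. This matches the paper, which states the corollary as an immediate consequence of Bowditch's construction together with Theorem~\ref{thm:complete-inv} (compare Remark~\ref{rem:jsj-qi}).
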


The visual boundaries of hyperbolic groups have been investigated by \Swiatkowski\ and coauthors in a series of papers~\cite{martin-swiatkowski,swiatkowski-manifolds,swiatkowski-metric,swiatkowski-dense,swiatkowski-sierpinski}.  In particular, in~\cite{martin-swiatkowski} Martin and \Swiatkowski\ consider visual boundaries of fundamental groups of graphs of groups with finite edge groups.  Suppose now that $W_\G$ and $W_\Lambda$ are $2$-dimensional hyperbolic right-angled Coxeter groups with  infinitely many ends.  When applied to the above-mentioned tree-of-groups decompositions for $W_\G$ and $W_\Lambda$, the results of~\cite{martin-swiatkowski} imply that $\partial W_\G$ and $\partial W_\Lambda$ are homeomorphic if and only if the corresponding sets of homeomorphism types of visual boundaries of $1$-ended vertex groups are the same.  Our results thus have applications to determining the homeomorphism type of $\partial W_\G$ when $W_\G$ has infinitely many ends.

The work of Tukia~\cite{tukia}, Gabai~\cite{gabai}, and Casson--Jungreis~\cite{casson-jungreis} shows that  the cocompact Fuchsian groups form a rigid quasi-isometry class, i.e., any finitely generated group which is quasi-isometric to a Fuchsian group is cocompact Fuchsian.  
Our last application gives a direct proof of this fact for $2$-dimensional right-angled Coxeter groups.

\begin{thm}\label{thm:qi-rigid}  Any $2$-dimensional right-angled Coxeter group which is quasi-isometric to a cocompact Fuchsian group is cocompact Fuchsian.
\end{thm}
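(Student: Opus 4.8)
The plan is to reduce to the $1$-ended case and then apply the main theorem together with the known structure of cocompact Fuchsian Coxeter groups. Let $W$ be a $2$-dimensional right-angled Coxeter group quasi-isometric to a cocompact Fuchsian group $F$. First I would dispose of the non-one-ended cases. Since $F$ acts properly and cocompactly on the hyperbolic plane, $F$ is one-ended, hence so is $W$ (number of ends is a quasi-isometry invariant); in particular $W$ is infinite and, by Stallings, cannot have a nontrivial action on a tree with finite edge stabilisers, so $\Gamma$ is connected with no separating vertex or edge. Also $F$ is hyperbolic, hence $W$ is hyperbolic, so by \cite[Corollary~12.6.3]{davis-book} the graph $\Gamma$ has no induced $4$-cycle. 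Thus $W = W_\Gamma$ for a triangle-free graph $\Gamma$ satisfying Standing Assumptions~\ref{assumptions}(1)--(3).

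Next I would run a dichotomy on whether $W_\Gamma$ is itself cocompact Fuchsian. If it is, we are done. If not, then Standing Assumption~\ref{assumptions}(4) holds, and I claim Standing Assumption~\ref{assumptions}(5) also holds: a one-ended hyperbolic group quasi-isometric to a cocompact Fuchsian group admits a splitting over a $2$-ended subgroup — indeed $F$ does (any hyperbolic polygon reflection group splits as an amalgam or HNN extension over an infinite dihedral subgroup generated by two reflections, the direct-product case being handled by splitting the Fuchsian factor and taking products), and by Papasoglu's theorem \cite{papasoglu} having such a splitting is a quasi-isometry invariant among one-ended finitely presented groups not commensurable to surface groups; a right-angled Coxeter group is never commensurable to a surface group (it contains $F_2\times F_2$-type obstructions, or more simply: surface groups are Fuchsian and we are in the non-Fuchsian case, and commensurability preserves the Fuchsian/non-Fuchsian and one-/infinitely-ended trichotomy appropriately). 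So $\Gamma$ satisfies all of Standing Assumptions~\ref{assumptions}, and Theorem~\ref{thm:main} applies: Bowditch's JSJ tree $\cT_\Gamma$ is a well-defined quasi-isometry invariant of $W_\Gamma$, equal to $\cT_{W_\Gamma}$. On the other hand, a cocompact Fuchsian group that is \emph{not} a surface group has JSJ tree a single point (its boundary is a circle with no local cut points of the required type, so Bowditch's construction is trivial), while $W_\Gamma$, admitting a $2$-ended splitting, has $\cT_\Gamma$ with at least one edge — contradiction. This forces $W_\Gamma$ to be cocompact Fuchsian.

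The one genuine subtlety is matching up the two ways a group can be ``quasi-isometric to a cocompact Fuchsian group'': $F$ itself might be a surface group (or virtually one), in which case $W$ could a priori be virtually a surface group without the JSJ obstruction biting. But a $2$-dimensional right-angled Coxeter group is virtually a closed surface group only if it is a cocompact Fuchsian group already — concretely, if $W_\Gamma$ is quasi-isometric to $\mathbb{H}^2$ then its Davis complex is a thickened hyperbolic plane, which by the classification recalled before Theorem~\ref{thm:fuchsian-all} (cf.~Theorem~\ref{thm:fuchsian-all} applied to the right-angled case, giving $\Gamma$ a cycle of length $\ge 5$) means $\Gamma$ is a long cycle. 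I expect the main obstacle to be pinning down this virtual-surface-group borderline case cleanly and verifying the claim about splittings of cocompact Fuchsian Coxeter groups in the product case; the rest is an assembly of invariance statements already available. I would isolate the surface-group case as a short separate lemma (``a $2$-dimensional $W_\Gamma$ quasi-isometric to $\mathbb{H}^2$ has $\Gamma$ an $n$-cycle, $n\ge 5$'') using the quasi-isometry classification of surfaces and the fact that $\Gamma$ is triangle- and square-free, and then the JSJ-triviality argument above handles everything else.
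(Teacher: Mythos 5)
There is a genuine gap, and it sits at the exact point where you establish Standing Assumption~\ref{assumptions}(5) for $W_\G$. Your transfer of the $2$-ended splitting from $F$ to $W_\G$ via Papasoglu does not work: every cocompact Fuchsian group has a torsion-free finite-index subgroup which is a closed hyperbolic surface group, so $F$ is always commensurable to a surface group and therefore lies outside the class in which \cite{papasoglu} makes splittings over $2$-ended subgroups a quasi-isometry invariant. This exclusion is not a technicality -- it exists precisely because of pairs such as a genus-two surface group and the $(2,3,7)$ triangle group, which are quasi-isometric while only the former splits over a $2$-ended subgroup; so splittings over $2$-ended subgroups are genuinely not transferable across a quasi-isometry to a group quasi-isometric to $\H^2$. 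Your premise is also false in general: a triangle group is a hyperbolic polygon reflection group yet splits over no $2$-ended subgroup, so ``$F$ splits'' can fail. Without (5) you cannot invoke Theorem~\ref{thm:main} (or Bowditch) to say $\cT_{W_\G}$ has an edge, and your contradiction in the non-Fuchsian branch never gets off the ground. The statement you would actually need there -- that a $1$-ended hyperbolic group which is not cocompact Fuchsian cannot have circle boundary -- is essentially the convergence group theorem, a far heavier input than anything the paper uses, and not what you cited. (Relatedly, your parenthetical that a right-angled Coxeter group is never commensurable to a surface group is false: $W_n$ is.)

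The ``borderline'' case you set aside is in fact the whole theorem: since every cocompact Fuchsian group is quasi-isometric to $\H^2$, the lemma you defer (``a $2$-dimensional $W_\G$ quasi-isometric to $\H^2$ has $\G$ an $n$-cycle'') is exactly the content of Theorem~\ref{thm:fuchsian-racg}(2)$\implies$(3), and your sketch for it is circular: being quasi-isometric to $\H^2$ does not give a ``thickened hyperbolic plane'' Davis complex, and Theorem~\ref{thm:fuchsian-all} characterises which Coxeter groups \emph{are} cocompact Fuchsian, not which are quasi-isometric to one -- the paper notes explicitly at the end of the Appendix that it cannot be used for this implication. The paper's actual argument avoids all of this machinery: from $\partial W_\G \cong S^1$, if $\G$ is not a cycle then by Lemma~\ref{lem:theta subgraph} it contains an induced $\Theta(n_1,n_2,n_3)$ with at most one $n_i=1$ (no squares); the $1$-ended special subgroup $W_\Theta$ has visual boundary embedded as a closed connected subset of the circle, yet by Corollary~\ref{cor:approx} the endpoints of an $(a,b)$-bicoloured geodesic have valency at least $3$ in $\partial W_\Theta$, which is impossible in a circle or an arc. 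That separation argument, applied to the special subgroup rather than to $W_\G$ itself, is what replaces your Papasoglu/JSJ step and is the idea missing from your proposal.
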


\noindent We give a precise statement as Theorem~\ref{thm:fuchsian-racg} and prove this result in Section~\ref{sec:qi}.  

We now discuss several quasi-isometry invariants that have previously been considered for right-angled Coxeter groups.  In~\cite{dani-thomas}, we investigated \emph{divergence}.  Up to an equivalence relation which identifies polynomials of the same degree, the rate of divergence is a quasi-isometry invariant~\cite{gersten-quadratic}.  We characterised those $W_\G$ with linear and quadratic divergence by properties of their defining graphs and showed that for every positive integer $d$, there is a $W_{\G_d}$ with divergence polynomial of degree $d$.   
Our characterisations were generalised to the non-triangle-free setting by 
Behrstock, Falgas-Ravry, Hagen and Susse~\cite{bfrhs} for linear divergence and by 
Levcovitz~\cite{levcovitz1} for quadratic divergence.  Recently Tran~\cite{tran} used a more refined quasi-isometry invariant suggested by Charney, the \emph{divergence spectrum}, to distinguish the quasi-isometry classes in a family of right-angled Coxeter groups which all have exponential divergence.

\emph{Thickness},  a quasi-isometry invariant related to divergence, was introduced by Behrstock, Dru\c{t}u and Mosher in~\cite{behrstock-drutu-mosher}.  Behrstock--Dru\c{t}u~\cite{behrstock-drutu} show that 
thickness provides a polynomial upper bound on divergence.
In~\cite{behrstock-hagen-sisto-caprace}, Behrstock, Hagen, Sisto and Caprace give an effective characterisation of the right-angled Coxeter groups $W_\G$ (not just the $2$-dimensional ones) which are thick, and show that if 
$W_
\G$ is not thick then it is hyperbolic relative to a collection of thick special subgroups, in which case its divergence is exponential.  As discussed in~\cite{behrstock-hagen-sisto-caprace}, this relative hyperbolicity result combined with results from~\cite{behrstock-drutu-mosher} and~\cite{drutu} can  be used for quasi-isometry classification.  
Recently Levcovitz~\cite{levcovitz2} has introduced a new quasi-isometry invariant for 
2-dimensional right-angled Coxeter groups, the \emph{hypergraph index}, which is closely related to the divergence and thickness of the group, and used it to produce examples of right-angled Coxeter groups for which the orders of thickness and algebraic thickness do not coincide.

By Moussong's Theorem~\cite[Theorem 12.3.3]{davis-book}, right-angled Coxeter groups are CAT(0) groups.  Although the visual boundary is not a quasi-isometry invariant for CAT(0) groups, Charney--Sultan~\cite{charney-sultan} have introduced a boundary for CAT(0) spaces, called the \emph{contracting boundary}, which is a quasi-isometry invariant.  In Section 5 of~\cite{charney-sultan}, the contracting boundary is used to distinguish the quasi-isometry classes of two $1$-ended non-hyperbolic right-angled Coxeter groups.  

Finally, we remark that although every right-angled Artin group is a finite-index subgroup of some  right-angled Coxeter group~\cite{davis-jan}, there exist right-angled Coxeter groups which are not quasi-isometric to any right-angled Artin group.  For example, there are no $1$-ended hyperbolic right-angled Artin groups, since a right-angled Artin group is hyperbolic if and only if it is a free group.  In fact, by considering divergence we see that there are infinitely many quasi-isometry classes of right-angled Coxeter groups which are not the quasi-isometry class of any right-angled Artin group: in~\cite{dani-thomas} we constructed right-angled Coxeter groups with divergence polynomial of any degree, but the divergence of right-angled Artin groups can be only linear or quadratic~\cite{abddy,behrstock-charney}.  
Moreover, Behrstock~\cite{behrstock-counterexample} has recently constructed an example of a right-angled Coxeter group with quadratic divergence which is not quasi-isometric to any right-angled Artin group.
Thus while results on the quasi-isometry classification of right-angled Artin groups (for 
instance~\cite{behrstock-jan-neumann, behrstock-neumann, bestvina-kleiner-sageev-RAAG, huang}) yield some information about right-angled Coxeter groups with certain specific defining graphs, they don't readily lead to a quasi-isometry classification of right-angled Coxeter groups in general.

\subsection*{Acknowledgements}  We thank Jason Behrstock, Murray Elder, Chris Hruska, Misha Kapovich, Mike Mihalik and Kim Ruane for helpful conversations, and the 
anonymous referees for their comments.
We are grateful to Anthony Henderson for helping to support a visit by the first author to the University of Sydney, and to the University of Glasgow for travel support for the second author.  Some of the research for this paper was carried out at the MSRI Summer School in Geometric Group Theory in June 2015, and we thank MSRI for travel support for both authors.

%%%%%%%%%%%%%%%%%%%%%%%
%%%%%%%%%%%%%%%%%%%%%%%
\section{Background}\label{sec:background}
%%%%%%%%%%%%%%%%%%%%%%%
%%%%%%%%%%%%%%%%%%%%%%%

In this section, we recall the Davis complex for $W_\G$ 
(Section~\ref{sec:davis}), a splitting result due to Mihalik and Tschantz~\cite{mihalik-Tschantz} (Section~\ref{sec:splittings}), and the construction and key properties of Bowditch's JSJ tree 
(Section~\ref{sec:bowditch}).  We continue notation from the introduction and assume that the defining graph $\G$ satisfies the Standing Assumptions~\ref{assumptions}.

%%%%%%%%%%%%%%%%%%%%%%%%%%%%%%%%%%%%%%%%%%%%%%%
\subsection{The Davis complex}\label{sec:davis}
%%%%%%%%%%%%%%%%%%%%%%%%%%%%%%%%%%%%%%%%%%%%%%%

A reference for the material is this section is~\cite{davis-book}.

Let $\G'$ be the barycentric subdivision of the defining graph $\G$.  Then the \emph{chamber} $K$ may be defined as the simplicial complex obtained by coning on $\G'$.  Since $\G$ is triangle-free, $K$ is $2$-dimensional.  Denote by $\sigma_0$ the cone point of $K$.  Since all vertices of $\G$ have valence at least two, the boundary of $K$ may be identified with $\G$ (or with $\G'$).  We can then naturally  define the \emph{radial projection} of a path in $K\setminus \{\sigma_0\}$ to $\partial K$, and so obtain a path in $\G$.  Similarly, any point in $K$ can be coned to $\sigma_0$.

For each $s \in S$, the \emph{mirror (of type $s$)} of the chamber $K$ is the subset $K_s$ of $\partial K = \G'$ given by the star of the vertex $s$ in the graph $\G'$.  The mirror $K_s$ is thus the star graph of valence~$n$, where $n = \card \{ t \in S \mid st = ts, t \neq s \} \geq 2$.  Two mirrors $K_s$ and $K_t$ intersect (in a point) if and only if $st = ts$, and the boundary $\partial K$ may be viewed as the union of mirrors $\cup_{s \in S} K_s$.  For each $x \in K$ let $S(x) = \{ s \in S \mid x \in K_s \} \subset S$.
Denote by $W_{S(x)}$ the subgroup of $W$ generated by the set $S(x)$, with $W_{S(x)}$ trivial if $S(x) = \emptyset$.

The \emph{Davis complex} $\Sigma = \Sigma_\G$ is defined to be the following $2$-dimensional  simplicial complex:
\[\Sigma = W \times K / \sim\]
where $(w,x) \sim (w',x')$ if and only if $x = x'$ and $w^{-1}w' \in W_{S(x)}$. 
We define a \emph{chamber} of $\Sigma$ to be any copy of $K$ in $\Sigma$, and a \emph{panel (of type $s$)} in $\Sigma$ to be any copy of the mirror $K_s$.   We write $w\sigma_0$ or just $w$ for the image of the cone point $\sigma_0$ of the chamber $K = (e,K)$ under $w \in W$.  Two chambers in $\Sigma$ are then adjacent along a panel of type $s$ if and only if their cone points $w$ and $w'$ satisfy $w^{-1}w' = s$.  

Since $\G$ is triangle-free, we can now re-cellulate $\Sigma$ so that its vertex set is $W$, its $1$-skeleton is the Cayley graph $\Cayley\G$ of $W$ with respect to the generating set $S$, and all $2$-cells in $\Sigma$ are squares with boundary word $stst = 1$, where $s$ and $t$ are commuting generators.  We call this cellulation of $\Sigma$ the \emph{cellulation by big squares}, with the \emph{big squares} being the $2$-cells.  We also define the \emph{cellulation by small squares} of $\Sigma$ to be the first square subdivision of the cellulation by big squares, with the \emph{small squares} being the squares obtained on subdividing each big square into four.   In the cellulation by small squares, the chamber $K$ is then the ``cubical cone" on the barycentric subdivision $\G'$ of $\G$.  
 
The Davis complex $\Sigma$, with the cellulation by either big or small squares, may now be metrised so that each big square is a unit Euclidean square.  By~\cite[Theorem 12.2.1]{davis-book}, 
this piecewise Euclidean structure is $\CAT(0)$.  In this metrisation, a geodesic $\gamma$ in the 
Cayley graph $\cC = \Cayley\G$ is a geodesic in the Davis complex $\Sigma$ if and only if no two 
successive labels $a$ and $b$ of $\gamma$ are adjacent vertices in $\Gamma$.  Also, each 
induced 
$n$-cycle in $\G$, where $n \geq 5$ since $\G$ is triangle- and square-free, corresponds to a family of 
convex subcomplexes of $\Sigma$ which are planes tiled by big squares so that $n$ big squares 
meet at each vertex.  Since $n \geq 5$, each such subcomplex is quasi-isometric to the hyperbolic 
plane.

\subsection{Splittings of right-angled Coxeter groups over 2-ended subgroups}\label{sec:splittings}

Bowditch's JSJ tree has at least one edge if and only if $G$ admits a splitting over a 2-ended subgroup.  As mentioned in the introduction, Mihalik and Tschantz characterised the Coxeter groups which admit such splittings in~\cite{mihalik-Tschantz}. We now state their result in our setting.

Let $\G$ be a finite simplicial graph which is triangle-free and has no separating vertices or edges, i.e.~$\G$ satisfies (1) and (2) from Standing Assumptions~\ref{assumptions}.  It follows that if $\{a, b\}$ is a cut pair of $\G$, then $a$ and $b$ are non-adjacent, and therefore generate a $2$-ended special subgroup.  For such $\G$, the relevant result from~\cite{mihalik-Tschantz} can be stated as follows:
\begin{thm}\label{thm:mihalik-tschantz}\cite{mihalik-Tschantz}
The right-angled Coxeter group $W_\G$ defined by $\G$ as above 
splits over a $2$-ended subgroup $H$ if and only if there is a cut pair $\{a,b\}$ in $\G$; moreover, there is a unique cut pair $\{a,b\}$ so that some  conjugate of $H$ contains the special subgroup $\langle a, b \rangle$, necessarily with finite index, and $W_\G$ also splits over $\langle a, b\rangle$. 
\end{thm}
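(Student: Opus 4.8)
The plan is to prove the two implications and the uniqueness clause in turn, treating only the case that $\G$ satisfies hypotheses~(1) and~(2).

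\textbf{A cut pair yields a splitting.} Suppose $\{a,b\}$ is a cut pair of $\G$; as observed above $a$ and $b$ are non-adjacent, so $\langle a,b\rangle$ is the free product $\langle a\rangle * \langle b\rangle\cong D_\infty$ and hence $2$-ended. Let $C_1,\dots,C_k$ ($k\ge 2$) be the vertex sets of the connected components of $\G\setminus\{a,b\}$, and let $\Lambda_i$ be the full subgraph of $\G$ spanned by $C_i\cup\{a,b\}$. Then $\G=\bigcup_i\Lambda_i$, distinct $\Lambda_i$ meet exactly in the edge-less pair $\{a,b\}$, and $\G$ has no edge joining distinct $C_i$. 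By the standard dictionary between such graph decompositions and graph-of-groups decompositions of Coxeter groups (see~\cite[\S 8.8]{davis-book}), $W_\G$ is the fundamental group of a graph of groups with vertex groups $W_{\Lambda_1},\dots,W_{\Lambda_k}$ and every edge group equal to $\langle a,b\rangle$; since each $C_i\neq\emptyset$, every vertex group properly contains the edge group, so the splitting is non-trivial. This proves the first implication, and also proves that $W_\G$ splits over $\langle a,b\rangle$ as soon as $\{a,b\}$ is known to be a cut pair.

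\textbf{A splitting yields a cut pair.} Suppose $W_\G$ splits non-trivially over a $2$-ended subgroup $H$. The engine here is the theory of visual decompositions of Coxeter groups~\cite{mihalik-Tschantz}: from any non-trivial splitting of $W_\G$ one obtains a non-trivial \emph{visual} splitting, that is, one coming from a decomposition $\G=\G_1\cup\G_2$ (or a one-vertex HNN analogue) whose edge group is the special subgroup $\langle T\rangle$ with $T=\G_1\cap\G_2\subseteq S$, and with $\langle T\rangle$ conjugate into $H$. Being a subgroup of a $2$-ended group, $\langle T\rangle$ is finite or $2$-ended, and non-triviality of the visual splitting means the vertex set $T$ separates $\G$, with vertices on both sides and no edge between them. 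If $\langle T\rangle$ is finite then, as $\G$ is triangle-free, $T$ is a clique with $|T|\le 2$; the cases $T=\emptyset$, $|T|=1$, and $T$ an edge would force $\G$ to be disconnected, or to have a separating vertex, or a separating edge, each contradicting hypothesis~(2). Hence $\langle T\rangle$ is $2$-ended, and, again using that $\G$ is triangle-free, a short computation shows that the only $2$-ended special subgroups of $W_\G$ are $\langle a,b\rangle\cong D_\infty$ for a non-adjacent pair $\{a,b\}$ and $\langle a,b\rangle\times\langle c\rangle\cong D_\infty\times\Z/2$ for an induced path $a-c-b$. In the first case $T=\{a,b\}$ is a cut pair, and since $\langle a,b\rangle$ is conjugate into $H$ and both are $2$-ended it has finite index in a conjugate of $H$, as claimed. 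In the second case one pushes the generator $c$ onto one side of the visual decomposition — using that $c$ is adjacent to $a$ and to $b$, and that triangle-freeness severely restricts the other neighbours of $c$ — to reduce to the first case and again produce a cut pair $\{a,b\}$ with $\langle a,b\rangle$ of finite index in $\langle T\rangle$, hence in a conjugate of $H$.

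The step I expect to be the main obstacle is the very first move of this direction: passing from an abstract action of $W_\G$ on a tree with $2$-ended edge stabilisers to a visual splitting with edge group controlled as above. This is precisely the content imported from~\cite{mihalik-Tschantz}; an independent proof would require analysing how such a tree action interacts with the Coxeter generating set $S$ — for instance via the fixed points in the tree of the finite-order generators and a minimal-track/accessibility argument — which is considerably more delicate than the graph-theoretic bookkeeping in the rest of the proof.

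\textbf{Uniqueness.} Any two comparable $2$-ended subgroups are commensurable, which yields the ``finite index'' assertion. For uniqueness, suppose $\{a,b\}$ and $\{a',b'\}$ are cut pairs with $\langle a,b\rangle$ and $\langle a',b'\rangle$ both conjugate into $H$. Then $\langle a,b\rangle$ and a suitable conjugate of $\langle a',b'\rangle$ are commensurable $2$-ended subgroups of $W_\G$, so the infinite-order elements $ab$ and the corresponding conjugate of $a'b'$ have translation axes in the $\CAT(0)$ Davis complex $\Sigma$ that agree up to finite Hausdorff distance. Using that a geodesic word in $W_\G$ is a geodesic in $\Sigma$ exactly when no two successive letters commute (so that $ab$ has an explicit combinatorial axis), one recovers the unordered pair from the pattern of walls crossed by that axis, and concludes $\{a,b\}=\{a',b'\}$.
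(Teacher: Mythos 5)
The paper never proves this statement: it is quoted directly from \cite{mihalik-Tschantz}, so there is no internal argument to compare yours against. Your own write-up of the hard direction imports exactly the visual-decomposition theorem from that same reference, so the only new content you supply is the graph-theoretic bookkeeping around it — and that is where there is a genuine gap, namely in your second case, where the visual edge group is $\langle a,c,b\rangle \cong D_\infty\times \Z/2$ with $c$ adjacent to both $a$ and $b$. You claim one can ``push $c$ onto one side'' and conclude that $\{a,b\}$ is a cut pair with $\langle a,b\rangle$ of finite index in a conjugate of $H$. This fails under the stated hypotheses. Let $\Gamma$ have vertices $a,b,c,u_1,u_2,v_1,v_2$ and edges $ac$, $bc$, $cu_1$, $cv_1$, $u_1u_2$, $v_1v_2$, $u_2a$, $u_2b$, $v_2a$, $v_2b$. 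This graph is triangle-free, connected, and has no separating vertex or edge, and $\{a,b,c\}$ separates it into $\{u_1,u_2\}$ and $\{v_1,v_2\}$, so $W_\Gamma$ splits over the $2$-ended special subgroup $\langle a,b,c\rangle$ and your second case occurs. But $\Gamma\setminus\{a,b\}$ is the path $u_2,u_1,c,v_1,v_2$, so $\{a,b\}$ is \emph{not} a cut pair; the only cut pairs are $\{c,u_2\}$ and $\{c,v_2\}$, and $\langle c,u_2\rangle$ lies in no conjugate of $\langle a,b,c\rangle$, since that would force the reflection $u_2$ to be conjugate to one of $a,b,c$, which is impossible in a right-angled Coxeter group (distinct generators are conjugate only along odd edge labels, and here all finite labels are $2$).

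So in this case your reduction does not produce the asserted cut pair, and even producing \emph{some} cut pair — which is what the ``if and only if'' clause needs — requires an argument of a genuinely different kind, because the cut pair need not lie inside $T$ at all (in the example it is $\{c,u_2\}$, disjoint in spirit from $\{a,b,c\}$). The same example shows that one cannot hope to realise the ``moreover'' clause by locating $\langle a,b\rangle$ inside the visual edge group $\langle T\rangle$; whatever the exact formulation in Mihalik--Tschantz is, it must be quoted and used in its precise form rather than re-derived by this shortcut, and your proof stands or falls on that imported statement. The forward direction (cut pair $\Rightarrow$ visual amalgam over $\langle a,b\rangle$) and your uniqueness sketch via commensurable $2$-ended subgroups having parallel axes crossing the same walls are fine, but the existence half of the ``moreover'' clause, and the splitting-implies-cut-pair direction in the $\langle a,c,b\rangle$ case, are not established by what you wrote.
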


\noindent  Thus up to conjugacy and finite index, all splittings of $W_\G$ over $2$-ended subgroups are splittings over special subgroups generated by cut pairs.

%%%%%%%%%%%%%%%%%%%%%%%
\subsection{Bowditch's JSJ tree}\label{sec:bowditch}
%%%%%%%%%%%%%%%%%%%%%%%

Let $G$ be a $1$-ended hyperbolic group which is not cocompact Fuchsian (as defined in 
the introduction). 
For such $G$, Bowditch~\cite{bowditch} uses the structure of local cut points of its boundary 
$M=\partial G$ to define a canonical JSJ tree $\cT = \cT_G$ associated to $G$.  We now recall the construction and key properties of this tree $\cT$ from~\cite{bowditch}.

We begin with some terminology 
from~\cite{bowditch}.  Given $x \in M$, define the \emph{valency} $\val(x)$ of $x$ to be the number of ends of the locally compact space $M\setminus \{x\}$.  A priori $\val(x)\in \N \cup\{\infty\}$,
but Bowditch shows~\cite[Proposition 5.5]{bowditch} that if $M$ is the boundary of a $1$-ended hyperbolic group, then $\val(x)$ is finite for all $x \in M$.  The point~$x$ is a \emph{local cut point} if $\val(x)\geq 2$. 

Now given $n\in \N $, let $M(n)=\{ x \in M \mid \val(x)=n \}$ and 
$M(n+)=\{x\in M \mid \val(x) \geq n\}$.  Bowditch defines relations $\sim$ and $\approx$ on 
$M(2)$ and $M(3+)$ respectively, as in Definitions~\ref{def:sim} and \ref{def:approx} below.  
For $x, y \in M$, let $N(x,y)$ be the number of components of $M \setminus \{x, y\}$. 

\begin{definition}\label{def:sim}[The relation $\sim$]
Given $x, y \in M(2)$, let $x \sim y$ if and only if either $x=y$ or $N(x,y)=2$.  
\end{definition}
The following are some properties of $\sim$.
\begin{enumerate}
\item The relation $\sim$ is an equivalence relation on $M(2)$~\cite[Lemma~3.1]{bowditch}. 
\item Every point of $M(2)$ is in some $\sim$-class by definition.  
\item The $\sim$-equivalence classes are of two types: either they are pairs or they are infinite.  Moreover, the infinite ones are Cantor sets~\cite[Corollary~5.15 and Proposition~5.18]{bowditch}.
 
\end{enumerate}

\begin{definition}\label{def:approx}[The relation $\approx$]
Given $x, y\in M(3+)$, let $x\approx y$ if $x\neq y$ and $N(x, y)=\val(x)=\val(y)\geq 3$.  
\end{definition}
The following are some properties of $\approx$. 
\begin{enumerate}
\item If $x \approx y$ and $x \approx z$ then $y=z$~\cite[Lemma 3.8]{bowditch}.  Thus a priori, a subset of $M(3+)$ is partitioned into pairs of the form $\{x, y\}$ where $x\approx y$.  
\item Lemma 5.12 and Proposition~5.13 of~\cite{bowditch} show that in fact, all of $M(3+)$ is partitioned into 
$\approx$-pairs.  
\end{enumerate}

\begin{remark}\label{rem:allM(2+)}
By the above properties, it is evident that $M(2+)$ is the disjoint union of the $\sim$-classes and 
the $\approx$-pairs in $M$.  
\end{remark}

\subsubsection*{Outline of the construction}

The JSJ tree $\cT$ is produced as follows.  Denote by $T$ the set of of $\sim$-classes and $\approx$-pairs.

\begin{definition}\label{def:between}[Betweenness]
Given three classes $\eta, \theta, \zeta \in T$, the class $\theta $ is \emph{between} $\eta $ and~$\zeta$ if there exist points $x \in \eta$ and $y \in \zeta$, and distinct points $a, b \in \theta$, so that $x$ and $y$ are separated in $M$ by $\{a,b\}$. 
\end{definition}

\begin{remark} \label{rmk:betweenness}
By Lemma 3.18 of \cite{bowditch}, if $\eta$ and $\theta$ are distinct classes in $T$, then for all distinct pairs of points $a,b \in \theta$, the set $\eta$ is contained in a single component of $M \setminus \{a,b\}$.  Hence $\theta$ is between $\eta$ and $\zeta$ if and only if there exist distinct points $a, b \in \theta \subset M$ and distinct components $U$ and $V$ of $M \setminus \{a,b\}$ such that $\eta \subseteq U$ and $\zeta \subseteq V$.  
\end{remark}

Bowditch shows in \cite{bowditch} that the set $T$ forms a \emph{pretree} (i.e.~a set with the betweenness condition satisfying certain axioms) which is \emph{discrete} (i.e.~intervals between points are finite).  In \cite{bowditch-treelike}, Bowditch proves that every discrete pretree can be embedded in a \emph{discrete median pretree} by adding in all possible \emph{stars} of size at least 3.  

\begin{definition}\label{def:star}[Stars]
A \emph{star} is a subset $X$ of $T$ with the property that no element of $T$ is between any pair of elements in $X$, and which is maximal with respect to this property.  Thus if $\eta \in T \setminus X$, then there exist $\zeta\in X$ and $\theta \in T$ such that $\theta$ is between $\eta$ and $\zeta$.  The \emph{size} of a star is its cardinality.  
\end{definition}

\noindent Any discrete median pretree can be realised as the vertex set of a simplicial tree \cite{bowditch-treelike}. The simplicial tree obtained in this way from~$T$ is essentially the JSJ tree $\cT$, although certain additional vertices are added at the midpoints of some of the edges in order to get a cleaner statement about stabilisers.

\subsubsection*{The JSJ tree}

The properties of this tree $\cT$ are summarised in Theorems~0.1 and 5.28 of~\cite{bowditch} and are explained in more detail in Sections~3 and~5 of that paper. 
 The group $G$ acts minimally, simplicially and without edge inversions on $\cT$ with finite quotient graph.  

\subsubsection{Vertices}
The tree $\cT$ has vertices of three types: $V_1(\cT), V_2(\cT)$ and $V_3(\cT)$.
\begin{enumerate}
\item \emph{The vertices $V_1(\cT)$.} The set of Type 1 vertices consists of: 
\begin{itemize}
\item the 
$\sim$-classes in $M(2)$ consisting of exactly two elements; \item 
all the $\approx$-pairs; and \item the extra vertices mentioned at the end of the previous section, added at the midpoints of any edges between stars of size at least 3 and infinite~$\sim$-classes.  
\end{itemize}

For all $v \in V_1(\cT)$, the stabiliser $G(v)$ of~$v$ is a maximal $2$-ended subgroup of $G$.  If~$v$ comes from a $\sim$- or a $\approx$-pair $\{x,y\}$, its degree in $\cT$ is equal to $N(x,y) < \infty$.  The degree of the added vertices is $2$ since they subdivide edges.   

\medskip

\item \emph{The vertices $V_2(\cT)$.} The set of Type 2 vertices is in bijective correspondence with the collection of infinite $\sim$-classes in $M(2)$.  

The stabiliser $G(v)$ of a vertex $v \in V_2(\cT)$ is a \emph{maximal hanging Fuchsian (MHF)} subgroup of $G$.  
This means that there is a properly discontinuous action  of $G(v)$ on the hyperbolic plane~$\H^2$, without parabolics, such that the quotient $\H^2/G(v)$ is non-compact and further, there is an  
 equivariant homeomorphism from the $\sim$-class corresponding to $v$ onto the limit set of the $G$-action in $\partial \H^2$.  
The degree of a vertex in $V_2(\cT)$ is infinite. 

\medskip

\item \emph{The vertices $V_3(\cT)$.}  
The set of Type 3 vertices consists of the added stars of size at least 3 mentioned in the outline of the construction. 
The stabiliser of a vertex of this type is an infinite non-elementary group which is not a maximal hanging Fuchsian subgroup~\cite[Lemma 5.27]{bowditch}.
The degree of a vertex of this type is infinite.   From the description of the edges below, one sees that the elements in the star are in bijection with the edges incident to the corresponding vertex.  In particular, stars of size at least 3 are infinite. 
\end{enumerate}

\subsubsection{Edges}\label{sec:edges}
Given $v_1 \in V_1(\cT)$ coming from an $\approx$-pair $\zeta$, and
$v_2\in V_2(\cT)$ coming from an infinite $\sim$-class $\theta$, there is an edge between $v_1$ and $v_2$ if $\zeta \subset \bar \theta$.
Every edge of the pretree $T$ is of this form. 
The tree $\mathcal T$ has the following additional edges.  Given $v_3 \in V_3(\cT)$, there is an edge between $v_3$ and each vertex of $V_1(\cT) \cup V_2(\cT)$ in the star corresponding to $v_3$.  Finally, if there are edges between vertices of $V_3(\cT)$ and 
$V_2(\cT)$, they are subdivided by adding a valence 2 vertex in the middle.  It is evident from this construction that the two endpoints of an edge of $\cT$ are never of the same type, and that 
$\sim$-pair vertices (which are in $V_1(\cT)$) are necessarily connected to vertices in $V_3(\cT)$. 
The stabiliser of every edge is a 2-ended subgroup, and is the intersection of the stabilisers of the vertex groups incident to this edge.

\begin{remark}\label{rem:jsj-qi}(Quasi-isometry invariance)  Let $G$ and $H$ be $1$-ended hyperbolic groups which are not cocompact Fuchsian.  By the construction above if $G$ and $H$ are quasi-isometric, then since $\partial G$ and $\partial H$ are homeomorphic, the JSJ trees $\cT_G$ and  $\cT_H$ are equal as coloured trees.  That is, the JSJ tree is a quasi-isometry invariant.   
\end{remark}

%%%%%%%%%%%%%%%%%%%%%%%
%%%%%%%%%%%%%%%%%%%%%%%
\section{JSJ tree for certain right-angled Coxeter groups}\label{sec:JSJ}
%%%%%%%%%%%%%%%%%%%%%%%
%%%%%%%%%%%%%%%%%%%%%%%

In this section, we prove our main result, Theorem~\ref{thm:main}, by giving an explicit construction of the Bowditch JSJ tree $\cT = \cT_\G$ associated to
a right-angled Coxeter group $W=W_\G$ such that  $\G$ satisfies our Standing 
Assumptions~\ref{assumptions}. 
That is, we show that for each class of vertices in $\cT$, the $W$-orbits of this class are in bijection with subsets of vertices of $\G$ which satisfy certain explicit graph-theoretic properties.

Throughout this section, we continue the notation of Section~\ref{sec:background}, and we assume that $\G$ satisfies the Standing Assumptions~\ref{assumptions}.  We begin with some conventions and definitions in Section~\ref{sec:assumptions} and preliminary results in Section~\ref{sec:separation}. 
In Section~\ref{sec:approx}, we identify the $\approx$-pairs and their stabilisers. 
We construct certain $\sim$-classes in Section~\ref{sec:sim}.  In Section~\ref{sec:type 1 and 2} we show that we have identified all $\sim$-classes, and so in fact constructed all the vertices of the pre-tree $T$.  We also determine the stabilisers of $\sim$-classes. 
 In Section~\ref{sec:stars} we construct certain stars, and in Section~\ref{sec:all-stars} prove that we have identified all the stars and determine their stabilisers.  
 Finally, 
we summarise 
the description of $\mathcal T$ in terms of $\G$ in Theorem~\ref{thm:JSJ} of 
Section~\ref{sec:summary}.

%%%%%%%%%%%%%%%%%%%%%%%
\subsection{Conventions and definitions}\label{sec:assumptions}
%%%%%%%%%%%%%%%%%%%%%%%
Throughout Section~\ref{sec:JSJ}, 
we denote the Davis complex of~$W$ by $\Sigma$. 
Recall that the 1-skeleton of the cellulation of $\Sigma$ by big squares can be identified with the Cayley graph of $W$.  We use $\cC$ to denote this Cayley graph, together with a choice of base point in $\Sigma$, to be labelled by the identity $e\in W$.  
Now we identify $\partial W = \partial \Sigma$ with the set of geodesic rays in $\cC$ emanating 
from~$e$.  However we also freely think of endpoints of bi-infinite geodesics in~$\cC$ not passing through $e$ as points of $\partial W$.  Given any bi-infinite geodesic $\gamma$ in $\cC$, we denote its endpoints in $\partial W$ by $\gamma^+$ and $\gamma^-$, and put $\partial \gamma = \{ \gamma^+, \gamma^- \}$.

Recall the definition of a \emph{cut pair}  from the introduction.  A vertex of $\G$ is \emph{essential} if it has valence at least three, and a cut pair is \emph{essential} if its two vertices are essential.  A \emph{reduced path} is a path in $\G$ or in $\cC$ which does not contain any loops or any backtracking. (Given any path in $\G$ or in $\cC$, if such loops or backtracking exist, they can be cut out, eventually leading to a reduced path.)  Note that a reduced path does not have to be a shortest path.  
Given a path $\tau$ in $\G$ or $\cC$, the subpath between two vertices $p$ and $q$ on $\tau$ is denoted by $\tau_{[p,q]}$.

A geodesic $\gamma$ in $\cC$ is \emph{bicoloured (by $a$ and $b$)} if it is labelled alternately by a pair of (nonadjacent) vertices $a, b$ of $\G$.  If a geodesic $\gamma$ is bicoloured by $a$ and $b$ we may say that it is \emph{$(a,b)$-bicoloured}.

%%%%%%%%%%%%%%%%%%%%%%%%%%%%%%%%%%%%%%%%%%%%%%%%
\subsection{Preliminary results}\label{sec:separation}
%%%%%%%%%%%%%%%%%%%%%%%%%%%%%%%%%%%%%%%%%%%%%%%%

In this section we show that endpoints of bicoloured geodesics are distinguished elements of $\partial W$.  We also establish two results which relate separation properties of subsets of vertices in $\G$, of subcomplexes of $\Sigma$, and of subsets of $\partial W$.  The proofs in this section use some standard facts about reduced words in (right-angled) Coxeter groups, from~\cite{davis-book}.

For the proofs of Lemma~\ref{lem:parallel} and Corollary~\ref{cor:parallel} below, we do not need all of the Standing Assumptions~\ref{assumptions}.  Indeed, in 
Lemma~\ref{lem:parallel} the conclusion that $\eta$ is eventually $(a,b)$-bicoloured holds for all right-angled Coxeter groups, and the entire conclusion of Lemma~\ref{lem:parallel} and of Corollary~\ref{cor:parallel} holds for all hyperbolic right-angled Coxeter groups with square-free defining graphs.

\begin{lemma}\label{lem:parallel}  Let $\gamma$ be an $(a,b)$-bicoloured geodesic in $\cC$, and let $\eta$ be  any other geodesic in $\cC$.  If $\gamma^+ = \eta^+$, then in the direction of $\eta^+$, the geodesic $\eta$ is eventually $(a,b)$-bicoloured, and the geodesic $\eta$ eventually coincides with either $\gamma$ or $\gamma c$, with the latter case occurring only if there is a (unique) vertex $c$ of $\G \setminus \{a,b\}$ which is adjacent to both $a$ and $b$.
\end{lemma}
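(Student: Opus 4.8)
The plan is to analyze the geodesic $\eta$ by comparing it with $\gamma$ prefix-by-prefix, using the key standard fact (from \cite{davis-book}) that a word is geodesic in $\cC$ iff it is a reduced word in $W_\G$, together with the characterization recalled in Section~\ref{sec:davis}: a geodesic in $\cC$ is a geodesic in $\Sigma$ iff no two successive labels are adjacent in $\G$. Since $\gamma$ is $(a,b)$-bicoloured and $a,b$ are nonadjacent, $\gamma$ is a genuine geodesic in the $\CAT(0)$ space $\Sigma$, and the same holds for $\eta$. First I would use the fact that in a $\CAT(0)$ space, two geodesic rays with the same endpoint at infinity are eventually asymptotic, and in the (locally finite) complex $\Sigma$ with the big-square cellulation this means $\eta$ and $\gamma$ must eventually lie at bounded Hausdorff distance; I will then sharpen this combinatorially.

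The heart of the argument is a local combinatorial claim: if two geodesics in $\cC$ share the endpoint $\gamma^+$ and eventually run close together, then eventually $\eta$ must be bicoloured by $a$ and $b$ as well. To see this, suppose $\eta$ has a label $c \notin \{a,b\}$ infinitely often far out toward $\gamma^+$. Using the exchange/deletion conditions for Coxeter groups, I would compare a long common-endpoint prefix relation between $\eta$ and $\gamma$: any two geodesics from $e$ to a common vertex $v$ represent the same element, so they differ by a sequence of commutation (braid) moves $st \to ts$ for adjacent $s,t$ — there are no length-reducing moves since both are reduced. Hence the multiset of labels on the two paths up to $v$ agree. If $c$ occurs on $\eta$ arbitrarily far out but never on $\gamma$, then reading further along toward $\gamma^+$, $\gamma$ never ``uses up'' a $c$, forcing the $c$'s on $\eta$ to commute past everything — but $c$ commutes with at most $a$ and/or $b$ among $\{a,b\}$, and if $c$ commutes with both $a$ and $b$ this is exactly the exceptional vertex in the statement; if it commutes with at most one of them, the two-colour alternation of $\gamma$ blocks the commutation, giving a contradiction. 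I would make this precise by tracking, for a vertex $v$ far out on both geodesics, the prefix words $w_\gamma$ and $w_\eta$ with $w_\gamma = w_\eta$ in $W_\G$, and applying Tits' solution to the word problem: $w_\gamma$ and $w_\eta$ are related by elementary commutations. A label $c$ appearing in $w_\eta$ must also appear in $w_\gamma$; since $w_\gamma$ is an alternating $ab$-word, $c \in \{a,b\}$ unless... — wait, that's too strong. The correct statement is: the labels of $\eta$ toward $\gamma^+$ are eventually contained in $\{a,b\}$ together with vertices commuting with all the ``surviving'' letters; the only way a $c$ can persist is if it commutes with both $a$ and $b$, i.e., $c$ is the special adjacent-to-both vertex, and then $c$ can appear at most once (it would re-cancel or create a loop otherwise), which is why $\eta$ coincides with $\gamma$ or $\gamma c$.

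Concretely, I would organize the proof as follows: (1) recall that both $\gamma$ and $\eta$ are $\Sigma$-geodesics and, having a common endpoint, are eventually Hausdorff-close in the locally finite complex $\Sigma$, so there are vertices arbitrarily far toward $\gamma^+$ lying on both; (2) for such a common vertex $v$, write the two reduced prefix words and invoke the word problem in Coxeter groups to say they differ by commutations, deducing equality of label multisets on the two prefixes; (3) conclude that every label of $\eta$ (past some point) that is not in $\{a,b\}$ must eventually be cancelled against a matching label on $\gamma$, which is impossible since $\gamma$ is $(a,b)$-bicoloured unless that label $c$ is adjacent to both $a$ and $b$, and uniqueness of such $c$ follows from $\G$ being triangle-free (if $c,c'$ both adjacent to $a$ and $b$, then $\{a,c,b,c'\}$... actually $c \ne c'$ nonadjacent, $\{a,b\}$ together with $c$ forms paths, and one checks a square or triangle arises — here triangle-freeness rules out $c$ adjacent to $a,b$ which are nonadjacent only forms a path, so uniqueness instead comes from the geodesic constraint, since $cac' $ or $cbc'$ alternation would force repeated structure); (4) finally, since $c$ commutes with both $a$ and $b$, the word $\gamma c$ is reduced and $(a,b)$-bicoloured up to the last letter, and matching the common endpoint forces $\eta$ to coincide eventually with $\gamma$ or with $\gamma c$.

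\textbf{Main obstacle.}
The hard part will be step (3)–(4): turning the multiset-of-labels equality into the rigid statement that $\eta$ literally coincides with $\gamma$ or $\gamma c$ eventually, rather than merely being bicoloured ``with the same colours.'' This requires carefully controlling where the extra letter $c$ can sit — showing it can appear at most once and must be a terminal letter of the eventual coincidence — and ruling out other stray labels by a delicate application of the deletion condition combined with the no-adjacent-successive-labels constraint that characterizes $\Sigma$-geodesics. Establishing uniqueness of the vertex $c$ adjacent to both $a$ and $b$ is a small triangle-free/square-free graph argument that I expect to dispatch quickly, but threading it correctly into the geodesic comparison is where the care is needed.
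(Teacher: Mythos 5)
There is a genuine gap in step (1) of your outline that undermines the whole comparison scheme: you assert that, because $\gamma$ and $\eta$ are asymptotic $\Sigma$-geodesics (or at least eventually Hausdorff-close), ``there are vertices arbitrarily far toward $\gamma^+$ lying on both.'' Asymptotic geodesic rays in a CAT(0) space need not intersect at all -- they can stay a fixed positive distance apart forever, as parallel lines in a Euclidean plane do. In fact the very phenomenon the lemma describes is the counterexample: $\gamma$ and $\gamma c$ are disjoint (right-multiplication by $c\neq e$ has no fixed vertices) yet share both endpoints at infinity. So there is no ``common vertex $v$ far out'' along which to compare reduced prefixes $w_\gamma$ and $w_\eta$, and your step (2) never gets off the ground. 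The paper sidesteps this by connecting $\gamma$ to $\eta$ with short paths $\mu_n$ (of length $\le D$, by asymptoticity) and comparing the labels of the two concatenated paths from $e$ to the far endpoint of $\mu_n$ -- one route going along $\gamma$ then across $\mu_n$, the other along $\mu$ then along $\eta$. That produces an honest equality of reduced words, $ww'_n = z_n\overline{w_n}$, to which Tits' word problem does apply. Your step (3) intuition about the role of commutations and the special letter $c$ is then on the right track and close to the paper's final argument, but without this connecting-path device the comparison has nothing to compare.

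A secondary issue: you claim $\eta$ is a $\Sigma$-geodesic because it is a $\cC$-geodesic, but that equivalence requires that no two successive labels of $\eta$ be adjacent in $\G$, which is not assumed. (This is easy to work around, e.g.\ via quasi-geodesic stability, but as written it is false.) Finally, your aside about why the vertex $c$ is unique conflates triangle-freeness and square-freeness with some ``geodesic constraint''; the actual reason is the Standing Assumptions: triangle-freeness forces two candidate vertices $c,c'$ to be nonadjacent, and then $a,c,b,c'$ would be an embedded $4$-cycle, forbidden by square-freeness. That uniqueness does not come from properties of the geodesics themselves.
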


\begin{proof} 
Let $\mu$ be a shortest path in $\cC$ from $\gamma$ to $\eta$, and let $w$ be the (possibly empty) word labelling $\mu$.  Then $\mu$ meets $\gamma$ only at its starting point.  We may assume, without loss of generality, that this point is $e$ (so in particular, $\gamma$ passes through $e$). 
Write $|w|$ for the length of $w$.

Since $\gamma^+ = \eta^+$, there is a $D > 0$ so that for all $n \geq 1$, there is a shortest path $\mu_n$ in $\cC$ from the vertex $(ab)^n$ of $\gamma$ (without loss of generality) to some vertex of $\eta$, such that $\mu_n$ has length at most $D$.  Let $w_n$ be the word labelling the path $\mu_n$.  Now only the endpoints of the paths $\mu$ and $\mu_n$ lie on $\eta$, so if $w'_n$ is the word labelling the subpath of $\eta$ from the endpoint of $\mu$ to the endpoint of $\mu_n$, the concatenation $w w'_n$ is reduced.

By construction, the words $ww_n'$ and $(ab)^n w_n$ are the labels of paths in $\cC$ from $e$ to the same vertex (the endpoint of $\mu_n$).  Hence $ww_n' = (ab)^n w_n$ in $W$.  The right-hand concatenation $(ab)^n w_n$ might not be reduced, but we have $(ab)^n w_n = z_n \overline{w_n}$, where $z_n$ is an initial subword of $(ab)^n$, $\overline{w_n}$ is a subword of $w_n$, and $z_n\overline{w_n}$ is reduced. 
Thus $ww_n'$ and $z_n\overline{w_n}$ are reduced words representing the same element of $W$.

We first claim that  in the direction of $\eta^+$, infinitely many edge-labels of $\eta$ are drawn from the set $\{a,b\}$. 
If not, then each word $w'_n$ contains at most $N$ instances of $a$ or $b$, say.  Hence for all $n$, the number of instances of $a$ or $b$ in the reduced word $ww'_n$ is bounded above by $|w| + N$.  However for $n$ large enough,  there are more than $|w|+N$ instances of $a$ and $b$ in $z_n$.  Any two reduced words for the same element of a Coxeter group have the same letters, so this is a contradiction.  This proves the claim. 

Now suppose that $\eta$ in the direction of $\eta^+$ is not eventually $(a,b)$-bicoloured.  Then in the direction of $\eta^+$, starting from the endpoint of $\mu$, the labels on $\eta$ are $g_1 h_1 g_2 h_2 \dots$, where each $g_i$ is a reduced word in $a$ and $b$, with $g_i$ nonempty for $i \geq 2$, and each $h_i$ is a nonempty reduced word in $\G \setminus \{a,b\}$.  For $n$ large enough, the word $w'_n$ contains $g_1h_1\dots g_{D+1}h_{D+1}$, and so the reduced word $ww'_n$ contains more than $D$ letters in $\Gamma \setminus \{a,b\}$.  However the reduced word $z_n \overline{w_n}$ contains at most $D$ letters in $\Gamma \setminus \{a,b\}$, and again we obtain a contradiction.

We have shown that in the direction of $\eta^+$, the geodesic $\eta$ is eventually $(a,b)$-bicoloured.  Assume that in this direction, the geodesics $\eta$ and $\gamma$ do not eventually coincide.  Then for all  large enough $n$, the path $\mu_n$ is not contained in the geodesic $\gamma$, and this path ends in the $(a,b)$-bicoloured subray of $\eta$ in the direction of $\eta^+$.  Thus the word $\overline w_n$ is nonempty and its first and last letters are not $a$ or $b$.  Fix such a large enough $n$, and choose $m > n$ so that $w'_m$ has $w'_n$ as an initial subword.  Then the word $w'_{n,m}$ obtained by cancelling $w'_n$ from the start of $w'_m$ is a word in $a$ and $b$, and for large enough $m$, we may assume that $w'_{n,m}$ contains both $a$ and $b$.  Similarly, if $z_{n,m}$ denotes the word obtained by cancelling $z_n$ from the start of $z_m$, then $z_{n,m}$ is a word in $a$ and $b$ which we may assume contains both $a$ and $b$.  Denote by $g_n$ the vertex of $\gamma$ at which the subpath of $\mu_n$ labelled by $\overline w_n$ begins.   By considering paths in $\cC$ from $g_n$ to the endpoint of $\mu_m$, we obtain that $\overline w_n w'_{n,m} = z_{n,m} \overline w_m$ in $W$, and both of these products are reduced.

Now we use the fact that any two reduced words representing the same element in a right-angled Coxeter group are related by a sequence of moves replacing a subword $st$ by a subword $ts$, where $s$ and $t$ are commuting generators.  Since both of the words $w'_{n,m}$ and $z_{n,m}$ contain both $a$ and $b$, it follows that every letter in $\overline w_n$ commutes with both $a$ and $b$, or is equal to $a$ or $b$.   The graph $\G$ is square-free so there is at most one vertex $c$ of $\G \setminus \{a,b\}$ which commutes with both $a$ and $b$.  As $\overline w_n$ is nonempty and reduced, and cannot start or end with $a$ or $b$, we deduce that $\overline w_n = c$.  Note that this holds for all large enough $n$.

To complete the proof, observe that for all large enough $n$, the geodesic $\gamma c$ is the unique $(a,b)$-bicoloured geodesic passing through the vertex $g_n c$.  Since the vertices $g_n$ approach $\eta^+ = \gamma^+$, and $\overline w_n = c$ for all large enough $n$, it follows that the geodesic $\eta$ in the direction of $\eta^+$ eventually coincides with $\gamma c$, as required.
\end{proof}

\begin{cor}\label{cor:parallel}  Let $\gamma$ be an $(a,b)$-bicoloured geodesic in $\cC$, and let $\gamma'$ be an $(a',b')$-bicoloured geodesic in $\cC$.  If $\partial \gamma = \partial \gamma'$ then $\{a,b\} = \{a',b'\}$, and either $\gamma = \gamma'$ or there is a (unique) vertex $c$ of $\G \setminus \{a,b\}$ which is adjacent to both $a$ and $b$, and $\gamma' = \gamma c$.
\end{cor}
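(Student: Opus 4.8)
The plan is to derive everything from Lemma~\ref{lem:parallel} by applying it in both directions along the shared pair of endpoints. Suppose $\partial\gamma = \partial\gamma'$, so after possibly swapping the labels $\gamma^\pm$ we may assume $\gamma^+ = (\gamma')^+$ and $\gamma^- = (\gamma')^-$. First I would apply Lemma~\ref{lem:parallel} with the $(a,b)$-bicoloured geodesic $\gamma$ and the geodesic $\eta = \gamma'$: since $\gamma^+ = (\gamma')^+$, the lemma says that $\gamma'$ is eventually $(a,b)$-bicoloured in the direction of $(\gamma')^+$. But $\gamma'$ is by hypothesis $(a',b')$-bicoloured everywhere, so a ray of $\gamma'$ is simultaneously $(a,b)$-bicoloured and $(a',b')$-bicoloured; comparing edge-labels along a common subray forces $\{a,b\} = \{a',b'\}$. (A one-line argument: two consecutive edges of that subray carry labels from $\{a,b\}$ and from $\{a',b'\}$, and since the labels alternate and $a \neq b$, both unordered pairs must coincide.)

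Next I would pin down the two geodesics themselves. Apply Lemma~\ref{lem:parallel} once more, again with $\gamma$ and $\eta = \gamma'$ in the direction of the common endpoint $\gamma^+$: the conclusion is that $\gamma'$ eventually coincides with either $\gamma$ or $\gamma c$, the latter only when there is a (unique) vertex $c$ of $\G\setminus\{a,b\}$ adjacent to both $a$ and $b$. So in the forward direction $\gamma$ and $\gamma'$ either eventually agree or eventually run along the two parallel $(a,b)$-bicoloured lines $\gamma$ and $\gamma c$. The key point now is that an $(a,b)$-bicoloured geodesic is \emph{determined by any one of its edges}: once two such geodesics share an edge they share all edges, since at each vertex the bicolouring leaves no choice about how to continue. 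Hence if $\gamma'$ eventually coincides with $\gamma$, then $\gamma' = \gamma$ on the nose; and if $\gamma'$ eventually coincides with $\gamma c$, then $\gamma' = \gamma c$ on the nose. This already gives the full statement — the behaviour in the direction of $\gamma^-$ is automatically consistent and need not be analysed separately, precisely because a bicoloured line is rigid once one edge is fixed.

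The main obstacle, such as it is, is bookkeeping rather than mathematics: making sure the ``eventually coincides'' conclusion of Lemma~\ref{lem:parallel} is correctly upgraded to genuine equality of geodesics, which rests on the rigidity of $(a,b)$-bicoloured geodesics noted above. One should also be slightly careful that the case $\gamma' = \gamma c$ genuinely requires the common-neighbour vertex $c$ to exist and be unique — but this is exactly what Lemma~\ref{lem:parallel} delivers, together with the square-freeness of $\G$ (Standing Assumption~\ref{assumptions}(3)) which guarantees uniqueness of $c$. No new estimates are needed; Corollary~\ref{cor:parallel} is essentially a symmetrised restatement of Lemma~\ref{lem:parallel}.
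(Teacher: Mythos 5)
Your proposal is correct and matches the paper's intent: the paper states Corollary~\ref{cor:parallel} without proof as an immediate consequence of Lemma~\ref{lem:parallel}, and your argument — applying the lemma to get that $\gamma'$ is eventually $(a,b)$-bicoloured (hence $\{a,b\}=\{a',b'\}$) and eventually coincides with $\gamma$ or $\gamma c$, then upgrading to genuine equality via the rigidity of a bicoloured line through a given vertex — is exactly the intended deduction.
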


The next lemma produces separating pairs of points in $\partial W$ from separating pairs of vertices in $\G$.  Note that since $\G$ is square-free, given a cut pair $\{a,b\}$ in $\G$ there is at most one component of $\G \setminus \{a,b\}$ consisting of a single vertex $c$ which commutes with both $a$ and~$b$.   For the remainder of this section, if $\Upsilon$ is a subcomplex of $\Sigma$, we denote by $N(\Upsilon)$ the union of the chambers which have nontrivial intersection with $\Upsilon$.

\begin{lemma}\label{lem:separation}
Let $\{a, b\}$ be a cut pair in $\G$ and let $\gamma$ be the (unique) $(a,b)$-bicoloured geodesic passing through the identity.  Suppose that $\G \setminus \{a, b\}$ has $k$ components (with $k \geq 2$). 
\begin{enumerate}

\item If no component of $\G \setminus \{ a, b \}$ consists of a single vertex, then $\Sigma \setminus \gamma$, and consequently $\partial W \setminus \partial \gamma $, has exactly $k$ components.  

More precisely, if 
$\G \setminus \{a, b\}$ has $k$ components $\Lambda_1, \dots, \Lambda_k$,
 none of which is a single vertex, 
then $\Sigma \setminus \gamma$ has $k$ components $Y_1,\dots,Y_k$.  
For $1 \leq i \leq k$ the component $Y_i$ contains exactly the Cayley graph vertices which can be written 
in the form $w_{ab} \lambda_i v$, where $w_{ab}$ is a reduced word in $a$ and $b$, 
$\lambda_i$ is a generator corresponding to a vertex of $\Lambda_i$, and $v$ is any 
element of $W$ so that the word $w_{ab}\lambda_i v$ is reduced.

\item If $\G \setminus \{a, b\}$ has a component consisting of a single vertex $c$, let $\gamma'$ be the geodesic $\gamma' = \gamma c = c\gamma$ and let $\Upsilon$ be the subcomplex of $\Sigma$ bounded by $\gamma$ and~$\gamma'$.  Then $\Sigma \setminus \Upsilon$, and consequently $\partial W \setminus \partial \gamma$, has exactly $2(k-1)$ components.

More precisely, if 
$\G \setminus \{a, b\}$ has $k$ components $\Lambda_1, \dots, \Lambda_k$, with $\Lambda_1 = \{c\}$, then $\Sigma \setminus \Upsilon$ has $2(k-1)$ components $Y_2,\dots,Y_k,Y_2',\dots,Y_k'$.    
For $2 \leq i \leq k$ the component $Y_i$ (respectively, $Y_i'$)  contains exactly the Cayley graph vertices which can be written 
in the form
$w_{ab} \lambda_i v$ (respectively, $w_{ab} c \lambda_i v$)
where $w_{ab}$ is a reduced word in $a$ and $b$, $\lambda_i$ is a generator corresponding to a vertex of $\Lambda_i$, and $v$ is any  
element of $W$ so that the word $w_{ab}\lambda_i v$ (respectively, $w_{ab} c \lambda_i v$) is reduced.
\end{enumerate}

\end{lemma}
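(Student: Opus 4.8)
The plan is to prove both statements simultaneously by understanding how the bicoloured geodesic $\gamma$ (or the strip $\Upsilon$) cuts the Cayley graph $\cC$, and then transferring this to a statement about the Davis complex $\Sigma$ and finally to its visual boundary $\partial W$. The key observation is that a vertex $w$ of $\cC$ has a reduced expression, and since $\{a,b\}$ is a cut pair with $a,b$ non-adjacent, any reduced word can be ``pushed'' so that it begins with a (possibly empty) alternating word $w_{ab}$ in $a$ and $b$, followed by a letter not in $\{a,b\}$ (unless the whole word lies in $\langle a,b\rangle$, i.e.\ the vertex is on $\gamma$). The component of $\Sigma \setminus \gamma$ containing $w$ should then be detected by which component $\Lambda_i$ of $\G \setminus \{a,b\}$ the first non-$\{a,b\}$ letter belongs to.

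First I would set up the combinatorial description. Using the standard fact (from~\cite{davis-book}) that any two reduced words for an element of a right-angled Coxeter group differ by swaps of commuting generators, I would show that every $w \in W$ not in $\langle a,b\rangle$ has a reduced expression of the form $w_{ab}\lambda_i v$ as in the statement, and that the index $i$ (the component $\Lambda_i$ containing the first ``exit'' letter $\lambda_i$) is well-defined: if $\lambda_i$ and $\lambda_j$ were first exit letters for two such expressions with $i \neq j$, then since the only way to move a letter past $a$ or $b$ is to commute with both, and no vertex of $\Lambda_i$ is adjacent to any vertex of $\Lambda_j$ (as they are in different components of $\G\setminus\{a,b\}$), one gets a contradiction. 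This partitions the vertices of $\cC \setminus \gamma$ into sets $Y_1,\dots,Y_k$ (or into $2(k-1)$ sets in case (2), where the component $\{c\}$ forces us to record on which side of the strip $\Upsilon$ — determined by the presence or absence of the letter $c$ after $w_{ab}$ — the vertex lies). Then I would check that each $Y_i$ is connected in $\Sigma \setminus \gamma$ (respectively $\Sigma \setminus \Upsilon$): two vertices in $Y_i$ can be joined by a reduced-word edit path that never needs to pass through $\langle a,b\rangle$, staying within chambers that are not cut by $\gamma$. Conversely, $\gamma$ really does separate the $Y_i$ from one another: any edge-path from $Y_i$ to $Y_j$ must cross $\gamma$, because crossing from one side to another forces the first exit letter to change, which as above cannot happen along a single edge (one generator change) unless you pass through a vertex of $\gamma$. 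The $2$-dimensionality of $\Sigma$ is essential here — a codimension-one geodesic genuinely locally separates — and this is where I would invoke the remark from Section~\ref{sec:davis} that geodesics separate $\Sigma$ precisely because it is $2$-dimensional.

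Having established that $\Sigma \setminus \gamma$ (resp.\ $\Sigma \setminus \Upsilon$) has exactly $k$ (resp.\ $2(k-1)$) components, the passage to $\partial W$ is the last step. Since $\Sigma$ is a proper $\CAT(0)$ space and $\gamma$ is a bi-infinite geodesic (hence a convex, hence ``boundary-convex'' subcomplex whose limit set is exactly $\partial\gamma = \{\gamma^+,\gamma^-\}$), and similarly $\Upsilon$ is a convex strip with limit set $\partial\gamma$, I would argue that the components of $\Sigma \setminus \gamma$ are in bijection with the components of $\partial W \setminus \partial\gamma$: each $Y_i$ has a nonempty limit set, these limit sets are pairwise disjoint and cover $\partial W \setminus \partial\gamma$, and each is connected and open in $\partial W \setminus \partial\gamma$. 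The disjointness and the ``covering'' part follow from the fact that a geodesic ray from $e$ eventually stays in one $Y_i$ (apply Lemma~\ref{lem:parallel}: if its endpoint is not $\gamma^\pm$ then it is not eventually $(a,b)$-bicoloured, so it has a well-defined eventual first exit letter). Connectedness and openness of the boundary pieces can be obtained from the fact that $N(Y_i)$ is a convex (or quasiconvex) subcomplex and that the visual boundary of a quasiconvex subset is its limit set.

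The main obstacle I expect is the rigorous proof that each $Y_i$ is connected and that distinct $Y_i$ are genuinely separated by $\gamma$ inside $\Sigma$, rather than merely inside the Cayley graph $\cC$ — one must control the $2$-cells (big squares) of $\Sigma$ and verify that no square straddles $\gamma$ in a way that reconnects two pieces, which is exactly where square-freeness and triangle-freeness of $\G$ get used (a square of $\Sigma$ corresponds to a pair of commuting generators, and a square meeting $\gamma$ must have two of its generators in $\{a,b\}$, forcing it to lie along $\gamma$). In case (2) there is the extra bookkeeping of the strip $\Upsilon$ and the doubling of components, and one must be careful that $\gamma' = \gamma c = c\gamma$ really is the unique parallel $(a,b)$-bicoloured geodesic, which is precisely the content of Corollary~\ref{cor:parallel}.
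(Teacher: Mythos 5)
Your strategy reverses the paper's: you propose to partition the vertices of $\cC\setminus\gamma$ combinatorially (by the component of $\G\setminus\{a,b\}$ containing the first non-$\{a,b\}$ letter of a reduced word) and then prove this combinatorial partition agrees with the topological components of $\Sigma\setminus\gamma$, whereas the paper defines the $Y_i$ geometrically first (each $x\in\Sigma\setminus\gamma$ is assigned to $Y_i$ according to which of the $k$ pieces $N_1,\dots,N_k$ of $N(\gamma)\setminus\gamma$ its $\CAT(0)$ projection geodesic to $\gamma$ passes through; the pieces $N_i$ arise because deleting the mirrors of types $a$ and $b$ from $\partial K\cong\G'$ leaves $k$ components, one per $\Lambda_i$) and only afterwards derives the reduced-word description $w_{ab}\lambda_i v$ by approximating the projection geodesic with a chain of chamber centers. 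The geometric definition has the advantage that well-definedness and disjointness of the $Y_i$ are immediate from uniqueness of $\CAT(0)$ projection.

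As written, your proposal has two real gaps. First, your justification that no square reconnects two pieces rests on the claim that ``a square meeting $\gamma$ must have two of its generators in $\{a,b\}$, forcing it to lie along $\gamma$''; but $a$ and $b$ do not commute, so no big square of $\Sigma$ has both generators in $\{a,b\}$, and this claim is vacuous. The correct and relevant fact is that no square has two edges on $\gamma$ (two adjacent edges would force $a$ and $b$ to commute, and two opposite edges would contradict $\gamma$ being $(a,b)$-alternating), so the two off-$\gamma$ vertices of any square meeting $\gamma$ are joined by an edge in $\Sigma\setminus\gamma$; and even this is not by itself enough to show that $\gamma$ separates, which ultimately uses that $\{a,b\}$ separates $\G$ via the mirror structure of $N(\gamma)$. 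Second, the connectedness of each $Y_i$ is asserted via a ``reduced-word edit path that never needs to pass through $\langle a,b\rangle$,'' but you neither construct this path nor explain why a sequence of commutation moves yields a path in $\cC$ that avoids $\gamma$ --- this is precisely the content to be proved. The projection definition handles connectedness cleanly (points with projection geodesics through the same $N_i$ can be joined by a continuous family of such geodesics), and your combinatorial reframing would need to re-derive that structure by hand.
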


\begin{proof}
Observe first that in both cases, since $a$ and $b$ are nonadjacent vertices in $\Gamma$, the Cayley graph geodesic $\gamma$ is also a geodesic in $\Sigma$.  

To prove (1), assume that no component of $\G \setminus \{a, b\}$ consists of a single vertex.  Recall from Section~\ref{sec:davis} that we may identify the boundary of each chamber of $\Sigma$ 
with the barycentric subdivision $\G'$ of $\G$, and that $\G'$ may be viewed as the union of all mirrors.  Now the complement of the mirrors $K_a$ and $K_b$ in $\G'$  has exactly $k$ components, with the $i$th component being the union of the mirrors of type a vertex of $\Lambda_i$.  The (infinite) union of chambers $N(\gamma)$ is obtained by gluing together chambers along panels of types $a$ and $b$, and this gluing is type-preserving.  Hence the  complement of $\gamma$ in  $N(\gamma)$ has exactly $k$ components, $N_1, \dots, N_k$, so that the boundary of $N_i$ is the union of $\gamma$ together with all panels in the boundary of $N(\gamma)$ with type a vertex  of $\Lambda_i$.  

We now define subsets $Y_1, \dots, Y_k$ of $\Sigma\setminus \gamma$  as follows.  Given a point $x\in \Sigma \setminus \gamma$ let $\pi(x)$ denote its closest-point projection to the geodesic $\gamma$.  There is then a unique geodesic from~$x$ to $\pi(x)$, and this geodesic passes through exactly one of $N_1, \dots, N_k$.  Define $x\in Y_i$ if this geodesic passes through 
$N_i$.  The uniqueness of the projection geodesic implies that the $Y_i$ are well-defined and disjoint.  It is clear that each $Y_i$ is connected and that $\cup_{1\le i \le k} Y_i$ is $\Sigma \setminus \gamma$.  Thus $\Sigma \setminus \gamma$ has $k$ components.  By similar arguments to those in the proof of Lemmas~2.1 and 2.3 of Lafont~\cite{lafont-3d}, it follows that $\partial \gamma$ separates $\partial W$ into exactly $k$ components as well.  

Now, given a Cayley graph vertex $w$ in $Y_i$, we construct a Cayley graph path from the identity to $w$ as follows.  Consider the geodesic $\eta$ in $\Sigma$ from $w$ to the projection $\pi(w) \in \gamma$.  We may choose a sequence of  chambers $K_1,\dots,K_n$ intersecting $\eta$ so that consecutive chambers in this sequence meet along a panel and the union $\cup_{i=1}^n K_i$ 
contains $\eta$.  Since chambers are convex in $\Sigma$, we may assume that the chambers in this 
sequence are pairwise distinct.  Also, if two chambers $K_j$ and $K_{j'}$ with $j < j'$ meet along a 
panel, then $K_j \cup K_{j'}$ is convex, so we may assume that this occurs only if $j' = j+1$.  Let the  
centres of these chambers be $w_1 = w,\ldots,w_n$ respectively.  Then $w_i$ and $w_{i+1}$ are 
adjacent vertices of $\cC$, so we may approximate the geodesic in $\Sigma$ from $w$ to $\pi(w)$ by 
the path in the Cayley graph with vertex set $w_1,\dots,w_n$.  The $w_1,\dots,w_n$ are pairwise 
distinct, since the corresponding chambers are pairwise distinct, and the label on this path does not 
contain any subword $sts$ where $s$ and $t$ are commuting generators, since two chambers in this 
sequence meet along a panel only if they are consecutive.  Hence this path in the Cayley graph is a 
geodesic in the Cayley graph.

Now concatenate the reverse of this geodesic with a Cayley graph geodesic from the identity to $w_n$.  The
 geodesic from $e$ to $w_n$ in $\cC$ is labelled by a reduced word of the form $w_{ab}$ as in the statement of the lemma.  The geodesic from $w_n$ to $w$ leaves $N(\gamma)$ through a panel of type a vertex $\lambda_i \in \Lambda_i$.  Thus the concatenated path is labelled by a word of the form $w_{ab} \lambda_i v$ as in the statement.  It is clear that if a Cayley graph vertex $w$ can be written as a reduced word $w_{ab} \lambda_i v$, then $w$ is in $Y_i$.  Thus $Y_i$ consists of exactly the vertices described.

In case (2), since $\gamma'$ is, like $\gamma$, bicoloured by $a$ and $b$,
both $\gamma$ and $\gamma'$ are geodesics in $\Sigma$.  Moreover the subcomplex $\Upsilon$ is 
convex in $\Sigma$. (The subcomplex $\Upsilon$ consists of a band of big squares bounded by $\gamma$ and $
\gamma'$, and is isometric to $\R \times [0,1]$.)  
By a similar argument to that in case (1), we see that $N(\Upsilon) \setminus \Upsilon$ 
has  exactly $2(k-1)$ components, $N_2, \dots, N_k,N_2',\dots,N'_k$, 
so that the boundary of $N_i$ (respectively $N_i'$) is 
the union of $\gamma$ (respectively $\gamma'$) together with all panels in the boundary of $N(\Upsilon)$ with type a vertex  of 
$\Lambda_i$.

We define subsets  $Y_2, \dots, Y_k, Y_2', \dots, Y_k'$ of $\Sigma\setminus \Upsilon$  as follows.  
Given a point $x\in \Sigma \setminus \Upsilon$ let $\pi(x)$ denote its closest-point projection to $\Upsilon$.  There is a unique geodesic from $x$ to $\pi(x)$, which passes through exactly one of $N_2, \dots, N_{k},N_2',\dots,N_{k}'$.  Define $x\in Y_i$ if this geodesic 
passes through 
$N_i$ (and so $\pi(x) \in \gamma$) and $x \in Y_i'$ if it passes through $N_i'$ (and so $
\pi(x) \in \gamma'$).  An argument similar to the proof of (1) implies that 
$\partial\gamma$ separates $\partial W$ into exactly $2(k-1)$ components, and the description of these  components is also similar to case (1).  
\end{proof}

The next result considers the separation properties of geodesics in the Cayley graph $\cC$.  Since an arbitrary geodesic $\gamma$ in $\cC$ need not be a geodesic in $\Sigma$, we cannot use projections to $\gamma$ in the following proof as we did in Lemma~\ref{lem:separation} above.  Similarly, $N(\gamma)$ need not be convex, so we cannot use projections to $N(\gamma)$.  Instead, we use paths in $\cC$ to approximate projections.

\begin{lemma}\label{lem:gamma separates}  Let $\gamma$ be a geodesic in  $\cC$.  If $\gamma$ separates $N(\gamma)$  then $\gamma $ separates~$\Sigma$.  \end{lemma}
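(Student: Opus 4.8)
The plan is to argue by contrapositive: assuming $\gamma$ does not separate $\Sigma$, I will produce a path in $N(\gamma)$ joining the two sides of $\gamma$, showing $\gamma$ does not separate $N(\gamma)$ either. Concretely, fix two points $x, y \in \Sigma \setminus \gamma$ lying in chambers that meet $\gamma$, with $x$ and $y$ on opposite sides of $\gamma$ locally (i.e.\ realizing different components of $N(\gamma) \setminus \gamma$, if $N(\gamma)\setminus\gamma$ were disconnected). Since $\gamma$ does not separate $\Sigma$, there is a path $\alpha$ in $\Sigma \setminus \gamma$ from $x$ to $y$. The goal is to homotope or re-route $\alpha$ so that it stays inside $N(\gamma)$, while still avoiding $\gamma$.

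The key step is a ``projection/retraction onto $N(\gamma)$'' argument done combinatorially in $\cC$, since $N(\gamma)$ need not be convex and $\gamma$ need not be a $\Sigma$-geodesic. First I would replace $\alpha$ by a path in the $1$-skeleton $\cC$, i.e.\ an edge-path from a Cayley-graph vertex near $x$ to one near $y$, staying in the appropriate component of $\Sigma\setminus\gamma$; this is possible because $\Sigma$ is $2$-dimensional and $\gamma$ is a subcomplex of $\cC$. Then, using the structure of $\Sigma$ as a $\CAT(0)$ square complex together with the fact that $\gamma$ is a reduced edge-path, I would push each vertex of this edge-path to a ``closest'' vertex of $N(\gamma)$ — more precisely, to the Cayley-graph vertex obtained by truncating a geodesic word to its maximal prefix that labels a vertex adjacent to (a chamber meeting) $\gamma$. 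The point is that consecutive vertices of the original edge-path get sent to vertices of $N(\gamma)$ that are either equal or joined by a short edge-path inside $N(\gamma)$, and crucially none of these pushed vertices lands on $\gamma$, because the original path avoided $\gamma$ and pushing toward $N(\gamma)$ cannot cross $\gamma$ (crossing $\gamma$ would require passing through a panel of $\gamma$, contradicting that we move \emph{toward} $N(\gamma)$). Concatenating these short connecting paths yields a path in $N(\gamma)\setminus\gamma$ from the $x$-side to the $y$-side, so $\gamma$ does not separate $N(\gamma)$.

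I expect the main obstacle to be making the ``push toward $N(\gamma)$'' map well-defined and verifying that it does not cross $\gamma$ — this is the analogue of the closest-point projection used in Lemma~\ref{lem:separation}, but here it must be done with paths in $\cC$ rather than with $\CAT(0)$ projection, since neither $\gamma$ nor $N(\gamma)$ is convex. The technical heart is a word-combinatorics lemma: if $w$ is a reduced word and $w$ labels a vertex of $\Sigma\setminus\gamma$, then one can factor $w = w' w''$ where $w'$ labels a vertex of $N(\gamma)$ on the same side of $\gamma$ as $w$, and the intermediate vertices of a geodesic realizing $w''$ stay on that side; the key input is that $\gamma$, being bicoloured-free in general but still a reduced path, has the property that any edge-path entering and leaving $N(\gamma)$ through $\gamma$ must actually pass through a panel of $\gamma$. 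Once this projection is in hand, the connectivity bookkeeping is routine: consecutive projected vertices differ by a bounded-length path that can be taken inside $N(\gamma)$ (using that adjacent chambers meeting $\gamma$ overlap in a panel, or are both adjacent to a common chamber meeting $\gamma$), and avoidance of $\gamma$ is inherited from the original path.
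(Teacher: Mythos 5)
Your overall framing (prove the contrapositive, pass to edge-paths in $\cC$, and convert a path in $\Sigma\setminus\gamma$ into one running along $N(\gamma)$ and finally into $\partial N(\gamma)\setminus\gamma$) matches the shape of the paper's argument, but the mechanism you propose for the crucial step is not carried out, and the step you defer is essentially the whole content of the lemma. Your ``push toward $N(\gamma)$'' map is exactly the projection that the paper explicitly rules out: since $\gamma$ need not be a $\Sigma$-geodesic and $N(\gamma)$ need not be convex, there is no well-behaved closest-point (or word-truncation) projection available, and your justification that ``pushing toward $N(\gamma)$ cannot cross $\gamma$'' is not an argument --- ``toward'' has no meaning without convexity, different reduced words for the same element give different prefixes landing near different parts (and a priori different local sides) of $\gamma$, and the claim that consecutive projected vertices can be joined by short paths \emph{inside} $N(\gamma)\setminus\gamma$ is precisely the assertion that travelling far from $\gamma$ cannot switch local sides, i.e.\ the lemma itself. (There is also a literal inconsistency: the only Cayley-graph vertices contained in $N(\gamma)$ are the vertices of $\gamma$, so ``pushed vertices in $N(\gamma)$ avoiding $\gamma$'' must really mean vertices of chambers adjacent to $N(\gamma)$, after which a further step is needed to move into $\partial N(\gamma)\setminus\gamma$.) So as written there is a genuine gap: the ``word-combinatorics lemma'' you flag as the technical heart is stated vaguely, not proved, and is not obviously provable by the coarse/Lipschitz-projection reasoning you sketch.

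For comparison, the paper avoids projections altogether. Given the approximating Cayley-graph path $\tau$ through the chambers outside $N(\gamma)$ traversed by the original path, it builds a second reduced path $\sigma$ from the same endpoints that runs along $\gamma$ itself, fills the resulting loop with big squares (using that $\Sigma$ is a simply connected square complex), and extracts the band of squares meeting $\gamma_{[u,v]}$. The inner boundary of this band lies in chambers adjacent to $N(\gamma)$, and its intersections with the panels of $\partial N(\gamma)$ concatenate to a path from $p'$ to $q'$; this path avoids $\gamma$ automatically, because $\gamma$ meets $\partial K$ for each chamber $K$ of $N(\gamma)$ only in the centres of the panels it crosses, which are interior panels of $N(\gamma)$. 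If you want to salvage your approach you would need to prove your projection lemma in full, and the natural way to control ``which side'' a re-entry into $N(\gamma)$ occurs on is again a disc-filling/diagram argument, at which point you have reproduced the paper's proof.
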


\begin{proof}  Suppose that $\Sigma \setminus \gamma$ is connected, and let $p$ and $q$ be distinct points in $N(\gamma) \setminus \gamma$.  We will show that $N(\gamma) \setminus \gamma$ is connected by constructing a path from $p$ to $q$  in $N(\gamma)\setminus \gamma$.  

Since $\Sigma \setminus \gamma$ is connected, there is a path $\eta$ in $\Sigma \setminus \gamma$ from $p$ to $q$.  Denote by $p'$ the first point of intersection of $\eta$ with the boundary of $N(\gamma)$ and by $q'$ the last point of intersection of $\eta$ with the boundary of $N(\gamma)$.  We may assume without loss of generality that the interiors of $\eta_{[p,p']}$ and $\eta_{[q',q]}$ lie in $N(\gamma) \setminus \gamma$ and that the interior of $\eta_{[p',q']}$ lies in $\Sigma \setminus N(\gamma)$, and that these subpaths contain no loops or backtracking.  

Let $K_1, \dots, K_n$ be a sequence of pairwise adjacent chambers in the closure of $\Sigma \setminus 
N(\gamma)$ through which the interior of $\eta_{[p',q']}$ passes, so that $p'$ is in a panel shared by 
$K_1$ and some chamber in $N(\gamma)$ and $q'$ is in a panel shared by $K_n$ and some 
chamber in $N(\gamma)$.    Then similarly to the proof of Lemma~\ref{lem:separation}, we may approximate $\eta_{[p',q']}$ by a reduced path $\tau$ in the Cayley graph $\cC$ 
with vertex set $w_1,\dots,w_n$, where $w_1,\dots,w_n$ are the centres of the chambers $K_1,\dots,K_n$ 
respectively.  We construct another reduced path $\sigma$ in $\cC$ from $w_1$ to $w_n$ 
as follows.  Let $u$ be the vertex of $\gamma$ which is adjacent in $\cC$ to $w_1$, so that the 
midpoint of $u$ and $w_1$ is the centre of a panel of $K_1$ which contains $p'$, and let $v$ be the 
vertex of $\gamma$ which is adjacent in $\cC$ to $w_n$, so that the midpoint of $v$ and $w_n$ is 
the centre of a panel of $K_n$ containing $q'$.  Then define $\sigma$ to be the concatenation of the 
edge in $\cC$ between $w_1$ and $u$, the segment of $\gamma$ between $u$ and $v$, and the 
edge in $\cC$ between $v$ and $w_n$.  
Note that since $\gamma$ is a Cayley graph geodesic, the path $\sigma$ 
is reduced.

Now we have that $\tau$ and $\sigma$ are two distinct reduced paths in $\cC$ from $w_1$ to $w_n$, which intersect only at $w_1$ and $w_n$.  So there is a filling by big squares of the loop in $\cC$ obtained by concatenating $\tau$ with $\sigma$.  Denote by $S_1,\dots,S_N$ the pairwise adjacent big squares in this filling which have at least one edge or vertex on $\gamma_{[u,v]}$, so that $w_1$ and $u$ are adjacent vertices of $S_1$ and $w_n$ and $v$ are adjacent vertices of $S_N$.  Then the boundary of $\cup_{1 \leq i \leq N} S_i$ is the union of $\sigma$ with a path $\sigma'$ in $\cC$ from $w_1$ to $w_n$ so that $\sigma'$ is entirely contained in chambers in the closure of $\Sigma \setminus N(\gamma)$ which share a panel or a vertex with $\partial N(\gamma)$.  Now by concatenating the intersections of the squares $S_1,\dots,S_N$ with these panels we obtain a path in the intersection of $\Sigma$ with the boundary of $N(\gamma)$ which goes from the midpoint of $u$ and $w_1$ to the midpoint of $v$ and $w_n$.  This path may be extended and/or restricted within panels to obtain a path from $p'$ to $q'$ which lies in $N(\gamma) \setminus \gamma$.  Then by concatenating $\eta_{[p,p']}$, this path and $\eta_{[q',q]}$, we obtain a path from $p$ to $q$ in $N(\gamma) \setminus \gamma$.  Hence $N(\gamma) \setminus \gamma$ is connected.
\end{proof}

%%%%%%%%%%%%%%%%%%%%%%%%%%%%%%%
\subsection{Identification of the $\approx$-pairs and their stabilisers}
\label{sec:approx}
%%%%%%%%%%%%%%%%%%%%%%%%%%%%%%%

We now begin the construction of the JSJ tree $\cT$, and the determination of stabilisers of vertices in $\cT$.  In Remark~\ref{rem:approx}, we discuss how the results of this section may be generalised to higher dimensions.

Recall that 
one class of finite-valence vertices in $\cT$ is given by $\approx$-pairs, which are
pairs of points in $\partial W$ that separate $\partial W$ into at least three components. 
The following immediate corollary of Lemma~\ref{lem:separation} yields a collection of $\approx$-pairs.  See Figure~\ref{fig:approx-pairs} for an example.

\begin{cor}[$\approx$-pairs]\label{cor:approx}
Let $\{a, b\}$ be a pair of essential vertices in $\G$ such that $\G \setminus \{a,b\}$ has $k \ge 3$ components.   Then $\{a, b\}$ corresponds to a $W$-orbit of $\approx$-pairs in $\cT$.  Each $\approx$-pair in this orbit consists of the endpoints of a geodesic bicoloured by $a$ and $b$, and
yields a vertex of Type 1 in $\cT$ of valence:
\begin{enumerate}
\item $k \geq 3$ if no component of $\Gamma \setminus \{a,b\}$ consists of a single vertex; and
\item $2(k-1) \geq 4$ if $\Gamma \setminus \{a, b\}$ has a component consisting of a single vertex.
\end{enumerate}
\end{cor}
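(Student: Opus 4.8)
The plan is to deduce Corollary~\ref{cor:approx} almost directly from Lemma~\ref{lem:separation}, with the only additional ingredient being a verification that the separating pairs of points we produce are honestly $\approx$-pairs in Bowditch's sense, i.e.\ that the valencies involved satisfy $\val(\gamma^+)=\val(\gamma^-)=N(\gamma^+,\gamma^-)\geq 3$. First I would let $\gamma$ be the unique $(a,b)$-bicoloured geodesic through the identity and apply Lemma~\ref{lem:separation}: in case (1) this gives that $\partial W \setminus \partial\gamma$ has exactly $k\geq 3$ components, and in case (2) (when $\G\setminus\{a,b\}$ has a singleton component $\{c\}$) it gives exactly $2(k-1)\geq 4$ components, where in the latter case one separates using the band $\Upsilon$ bounded by $\gamma$ and $\gamma c$ but the separating \emph{pair of points} is still $\partial\gamma = \{\gamma^+,\gamma^-\}$. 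So in both cases $N(\gamma^+,\gamma^-)\geq 3$.

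Next I would pin down the valencies of the individual endpoints. The point is that removing a single endpoint, say $\gamma^+$, cannot glue the components back together: each component $Y_i$ of $\Sigma\setminus\gamma$ (or $\Sigma\setminus\Upsilon$) has frontier meeting $\gamma$ in a subray in the direction of \emph{both} $\gamma^+$ and $\gamma^-$, so $Y_i$ contributes at least one end of $M\setminus\{\gamma^+\}$; and since $\gamma$ itself, with $\gamma^+$ removed, contributes the end $\gamma^-$, but that end is already ``attached'' to the various $Y_i$ along the $\gamma^-$-ward subray, a careful count — essentially the same projection/filling argument as in Lemma~\ref{lem:separation}, restricted to a half-space — shows $\val(\gamma^+)=\val(\gamma^-)=N(\gamma^+,\gamma^-)$. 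This is where I expect to borrow the arguments ``similar to those in the proof of Lemmas 2.1 and 2.3 of Lafont~\cite{lafont-3d}'' that are already invoked inside Lemma~\ref{lem:separation}; concretely one observes that the closest-point projection to the ray $\gamma_{[e,\gamma^+)}$ partitions $\Sigma$ minus that ray into exactly the same pieces as projection to all of $\gamma$, because every geodesic from a point of $Y_i$ to $\gamma$ lands in the $\gamma^-$-ward part only for a bounded initial segment. Hence $\val(\gamma^+)=N(\gamma^+,\gamma^-)\geq 3$ and likewise for $\gamma^-$, so $\{\gamma^+,\gamma^-\}$ lies in $M(3+)$ and $\gamma^+\approx\gamma^-$ by Definition~\ref{def:approx}.

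Then I would address the $W$-orbit statement and the valence formula. Since $a,b$ are non-adjacent (by Standing Assumption~(2), as noted after Theorem~\ref{thm:mihalik-tschantz}), the set of $(a,b)$-bicoloured bi-infinite geodesics in $\cC$ is acted on transitively by $W$ up to the ambiguity described in Corollary~\ref{cor:parallel} (namely $\gamma$ versus $\gamma c$), and by that corollary the unordered pair $\partial\gamma$ depends only on this $W$-translate and recovers $\{a,b\}$; so distinct essential cut pairs give distinct $W$-orbits of $\approx$-pairs, and a fixed $\{a,b\}$ gives a single orbit. Under the identification of $T$ with vertices of $\cT$ recalled in Section~\ref{sec:bowditch}, each $\approx$-pair is a Type~1 vertex whose degree in $\cT$ equals $N(\gamma^+,\gamma^-)$, which is $k$ in case~(1) and $2(k-1)$ in case~(2) by Lemma~\ref{lem:separation}; this is exactly the asserted valence. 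The main obstacle is the bookkeeping in the previous paragraph — rigorously matching $\val$ of a single endpoint to the component count of Lemma~\ref{lem:separation} — but this is genuinely routine given the projection machinery already set up, and in fact the corollary is stated as an ``immediate corollary'' precisely because Lemma~\ref{lem:separation} does the real work.
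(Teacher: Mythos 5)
Your proposal follows essentially the same route as the paper: the paper states Corollary~\ref{cor:approx} as an immediate consequence of Lemma~\ref{lem:separation} (which does all the real work of counting components of $\partial W \setminus \partial\gamma$), exactly as you do. The extra bookkeeping you supply — checking $\val(\gamma^+)=\val(\gamma^-)=N(\gamma^+,\gamma^-)$ against Definition~\ref{def:approx}, using Corollary~\ref{cor:parallel} for the $W$-orbit statement, and quoting the fact that the degree of a Type~1 vertex equals $N(x,y)$ — is just the routine verification the authors leave implicit, and it is sound.
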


\smallskip

\begin{figure}[ht]
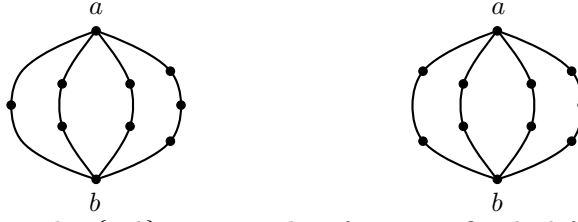

\begin{center}
\begin{overpic}{generalized-theta}
\put(27,25){\footnotesize $a$}
\put(86,25){\footnotesize $a$}
\put(27,-4){\footnotesize $b$}
\put(86,-4){\footnotesize $b$}
\end{overpic}
\caption{{\footnotesize In both graphs, $\{a,b\}$ gives an orbit of $\approx$-pairs.  On the left the corresponding Type 1 vertices of $\cT$ have valence $6=2(4-1)$ and on the right, they have valence~$4$.}} 
\label{fig:approx-pairs}
\end{center}
\end{figure}

We now show that every $\approx$-pair vertex of $\cT$ arises as in 
Corollary~\ref{cor:approx}.

\begin{lemma} \label{lem:all approx}
If $\xi$ and $\xi'$ are points of $\partial W$ such that $\xi \approx \xi'$, then there is an essential cut pair $\{a, b\}$ of $\G$ such that $\G \setminus \{a, b\}$ has at least three components, 
and a geodesic $\gamma$  bicoloured by $a$ and $b$ such that 
$\partial \gamma = \{\xi, \xi'\}$. 
\end{lemma}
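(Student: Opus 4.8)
The plan is to show that an $\approx$-pair $\{\xi, \xi'\}$ must be the endpoints of a bicoloured geodesic whose colours form an essential cut pair. The starting point is that $\{\xi, \xi'\}$ separates $\partial W$ into $N(\xi,\xi') = \val(\xi) = \val(\xi') = k \geq 3$ components. I would first fix a bi-infinite geodesic $\gamma$ in $\cC$ with $\partial\gamma = \{\xi, \xi'\}$; such a geodesic exists since $W$ is hyperbolic and $\Sigma$ is $\CAT(0)$. By Lemma \ref{lem:gamma separates}, since $\partial\gamma$ separates $\partial W$ (and removing $\gamma$ from $\Sigma$ is reflected in removing $\partial\gamma$ from $\partial W$ via the standard boundary arguments used in Lemma \ref{lem:separation}), the geodesic $\gamma$ separates $N(\gamma)$. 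The first main task is to extract combinatorial consequences for the labels of $\gamma$ from the fact that $\gamma$ separates $N(\gamma)$.

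The heart of the argument is to prove that $\gamma$ is bicoloured. I would argue as follows: consider two consecutive edges of $\gamma$ labelled $s$ and $t$ (necessarily with $st \neq ts$, since $\gamma$ is a $\Sigma$-geodesic). Since $\G$ has no separating vertex (Standing Assumption (2)), removing a single vertex $s$ from $\G$ leaves it connected; this means that in $N(\gamma)$, the chambers on the two nominal ``sides'' of a single panel of type $s$ can be joined by a path through other panels of $N(\gamma)$ staying off $\gamma$ — unless the set of labels appearing on $\gamma$ is so constrained that the complement-of-mirrors picture in $\partial K = \G'$ disconnects. Concretely, let $A \subseteq S$ be the set of labels occurring infinitely often on $\gamma$ in each direction; the complement in $\G'$ of the union of mirrors $K_s$ for $s \in A$ governs whether $N(\gamma) \setminus \gamma$ disconnects, by the same type-preserving gluing analysis as in the proof of Lemma \ref{lem:separation}(1). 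For this complement to be disconnected, $A$ must be a cut set of $\G$ of size exactly two (size one is excluded by Standing Assumption (2)), say $A = \{a,b\}$, and then a tail-exchange argument (as in Lemma \ref{lem:parallel}, using that any two reduced words for the same group element have the same letter multiset and are related by commutation moves) forces $\gamma$ to be \emph{eventually} $(a,b)$-bicoloured in both directions. The middle portion of $\gamma$ is then handled by replacing $\gamma$ with the genuinely $(a,b)$-bicoloured geodesic through $e$; by Lemma \ref{lem:parallel} (applied in both directions) this replacement does not change $\partial\gamma$, so we may assume $\gamma$ is $(a,b)$-bicoloured with $\partial\gamma = \{\xi,\xi'\}$.

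Finally I would check that $\{a,b\}$ is an \emph{essential} cut pair with at least three components. Once $\gamma$ is $(a,b)$-bicoloured, Lemma \ref{lem:separation} gives the number of components of $\Sigma \setminus \gamma$ (equivalently of $\partial W \setminus \partial\gamma$) explicitly in terms of the components of $\G \setminus \{a,b\}$: it is $k'$ if no component is a single vertex, and $2(k'-1)$ if one component is a single vertex $c$ (in which case we must compare $\partial\gamma$ with $\partial(\gamma c)$ and invoke Corollary \ref{cor:parallel} to see the relevant separating object is the band $\Upsilon$, but $\partial\gamma = \partial(\gamma c)$ so the count of $\partial W \setminus \partial\gamma$ is still $2(k'-1)$). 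Since $\{\xi,\xi'\}$ is an $\approx$-pair, this number is $\geq 3$, which forces $k' \geq 3$ in the first case and $k' \geq 3$ in the second as well (as $2(k'-1) \geq 3$ with $k'$ an integer gives $k' \geq 3$, noting $k' = 2$ would give only $2$ components). That $a$ and $b$ are each essential (valence $\geq 3$ in $\G$) follows because a vertex of valence $2$ adjacent to both endpoints of its two edges would, combined with triangle-freeness and the separation count, be incompatible with all $k'$ components of $\G \setminus \{a,b\}$ being nontrivial; alternatively, and more cleanly, I would note that if say $a$ had valence $2$ then in $\G \setminus \{b\}$ the vertex $a$ would be a separating vertex or a valence-$1$ vertex, contradicting Standing Assumption (2) together with the existence of $\geq 3$ components of $\G \setminus \{a,b\}$.

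\textbf{Main obstacle.} The hardest step is the middle one: passing from ``$\gamma$ separates $N(\gamma)$'' to ``the infinitely-occurring labels of $\gamma$ form a two-element cut set, and $\gamma$ is eventually bicoloured by them.'' This requires carefully combining the type-preserving gluing structure of $N(\gamma)$ (to see which label sets can disconnect) with a reduced-word tail-exchange argument in the right-angled Coxeter group (to upgrade ``the complement of the mirrors of the label set is disconnected'' to ``$\gamma$ is genuinely eventually bicoloured,'' ruling out long detours through vertices that merely commute with $a$ and $b$ — this is exactly where square-freeness, Standing Assumption (3), pins down the unique possible extra vertex $c$ and forces the band picture rather than a wider strip). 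Getting this dichotomy clean, and in particular ruling out that $\gamma$ wanders among a larger label set while still separating $N(\gamma)$, is the technical core.
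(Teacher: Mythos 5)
Your proposal takes a fundamentally different route from the paper, and the route you sketch has a genuine gap at exactly the step you (correctly) identify as the technical core.

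The paper's own proof is short and structural: Bowditch's theory already tells us that an $\approx$-pair is a Type~1 vertex of the JSJ tree, whose stabiliser $H$ is a maximal $2$-ended subgroup over which $W$ splits. Theorem~\ref{thm:mihalik-tschantz} (Mihalik--Tschantz) then says a conjugate of some $\langle a,b\rangle$, with $\{a,b\}$ a cut pair in $\G$, sits inside $H$ with finite index, which immediately produces the $(a,b)$-bicoloured geodesic $\gamma$ with $\partial\gamma=\{\xi,\xi'\}$. The final steps (ruling out exactly $2$ components via Lemma~\ref{lem:separation}, and showing $a,b$ are essential via Standing Assumption~(2)) match yours. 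So the paper outsources the ``why is the geodesic bicoloured at all?'' question to external theorems rather than re-deriving it from the Davis-complex geometry.

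Your attempted substitute for that step does not go through as written. You claim that in order for $N(\gamma)\setminus\gamma$ to disconnect, the set $A$ of labels occurring infinitely often on $\gamma$ ``must be a cut set of $\G$ of size exactly two.'' That is false: Lemma~\ref{lem:Aseparates} shows that a geodesic labelled by a set $A$ of size $\geq 3$ satisfying (A1) and (A2) also separates $N(\gamma)$ (and $\Sigma$). Such geodesics give points of $\partial W(2)$, not $\partial W(3+)$, so the distinction you need is precisely between separating into exactly two components versus into at least three, and this distinction is governed by the valency $\val(\xi)\geq 3$ hypothesis, which your ``heart of the argument'' never actually uses. To make your approach work you would essentially need to prove Proposition~\ref{prop:all 2+} (characterising which rays land in $\partial W(2+)$) and then the valency count, which is a large portion of Section~\ref{sec:JSJ} rather than a lemma-sized argument. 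Two smaller issues: you cite Lemma~\ref{lem:gamma separates} in the wrong direction (it proves ``$\gamma$ separates $N(\gamma)$ $\Rightarrow$ $\gamma$ separates $\Sigma$,'' not the converse chain you need starting from $\partial\gamma$ separating $\partial W$); and you invoke a ``tail-exchange argument as in Lemma~\ref{lem:parallel}'' to conclude $\gamma$ is eventually bicoloured, but Lemma~\ref{lem:parallel} assumes $\gamma$ already shares an endpoint with a bicoloured geodesic, which is what you are trying to establish, so that step is circular without further argument.

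Your concluding verifications (that $k'\geq 3$ in both cases of Lemma~\ref{lem:separation}, and that $a,b$ must be essential because a valence-$2$ vertex would contradict Standing Assumption~(2) given $\geq 3$ components) are correct and essentially identical to the paper's.
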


\begin{proof}
By Bowditch's construction, the pair $\{\xi, \xi'\}$ corresponds to a vertex $v$ of Type 1 of the JSJ tree for $W$, 
the stabiliser $H$ of $v$ is $2$-ended, and $W$ splits over $H$. Now by Theorem~\ref{thm:mihalik-tschantz}, 
there exists a unique cut pair $\{a, b\}$ such that some conjugate of $\langle a, b \rangle$ is a finite-index subgroup of $H$.   It follows that there is a geodesic 
$\gamma$ bicoloured by $a$ and $b$ such that $\partial \gamma = \{\xi, \xi'\}$.  Suppose 
$\G \setminus \{a, b\}$ had exactly two components.  Then by applying the appropriate case of 
Lemma~\ref{lem:separation}, we see that $\partial W \setminus \{\xi, \xi'\}$  also has two components, which contradicts the assumption that 
$\xi \approx \xi'$.  
Thus $\G \setminus \{a, b \}$ has at least three components.  

We claim that $a$ and $b$ are both essential.   By Standing Assumptions~\ref{assumptions}(2), $\G$ has no valence one vertices.  Suppose one of $a$ and $b$, say $a$, has valence two, and let $\Lambda$ be a component of $\G \setminus \{a, b\}$ which does not contain either of the two edges attached to $a$.  Then $a$ and $\Lambda$ are necessarily in different components of $\G \setminus\{ b\}$, which contradicts Standing Assumptions~\ref{assumptions}(2).  
This proves the claim.
 Thus $\{a, b \}$ satisfies  the hypothesis of Corollary~\ref{cor:approx}.  Therefore the $\approx$-pair $\{\xi, \xi'\}$ comes from the 
construction in Corollary~\ref{cor:approx}.
\end{proof}

We next determine the stabilisers of $\approx$-pairs.

\begin{lemma}\label{lem:approx stabiliser} Let $\xi$ and $\xi'$ be points of $\partial W$ such that $\xi \approx \xi'$, and let $\gamma$ be an $(a,b)$-bicoloured geodesic so that $\partial \gamma = \{\xi,\xi'\}$ (as guaranteed by Lemma~\ref{lem:all approx}).  Then the stabiliser of the $\approx$-pair $\{\xi,\xi'\}$ is a conjugate of either $\langle a,b\rangle$, if there is no vertex $c$ of $\G$ adjacent to both $a$ and $b$, or of $\langle a, b, c\rangle$, if there is such a vertex $c$.
\end{lemma}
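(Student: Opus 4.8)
The plan is to compute the setwise stabiliser of $\{\xi,\xi'\}$ in $W$ by first pinning down the \emph{pointwise} stabiliser and then accounting for the possibility of an element swapping $\xi$ and $\xi'$. Write $\partial\gamma = \{\xi,\xi'\}$ with $\gamma$ the $(a,b)$-bicoloured geodesic supplied by Lemma~\ref{lem:all approx}; after translating by an element of $W$ we may assume $\gamma$ passes through the identity, so $\gamma$ is the unique $(a,b)$-bicoloured geodesic through $e$ and $\langle a,b\rangle$ acts on $\gamma$ as the infinite dihedral group, fixing $\{\xi,\xi'\}$ setwise. If there is a vertex $c$ adjacent to both $a$ and $b$, then $c$ commutes with $a$ and $b$, the geodesic $\gamma' = c\gamma = \gamma c$ is also $(a,b)$-bicoloured with $\partial\gamma' = \partial\gamma$ by Corollary~\ref{cor:parallel}, and $c$ fixes $\xi$ and $\xi'$ (it swaps $\gamma$ and $\gamma'$ but fixes their common endpoints); so $\langle a,b,c\rangle$ (which is $\langle a,b\rangle \times \langle c\rangle$) is contained in the stabiliser. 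This establishes the ``$\supseteq$'' direction.

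For the reverse inclusion, suppose $g \in W$ fixes $\{\xi,\xi'\}$ setwise. Then $g\gamma$ is a geodesic with $\partial(g\gamma) = \{\xi,\xi'\} = \partial\gamma$, and since $g\gamma$ is $(a',b')$-bicoloured with $\{a',b'\} = g\{a,b\}$ a cut pair, Corollary~\ref{cor:parallel} applies: it forces $\{a',b'\} = \{a,b\}$, i.e. $g$ normalises $\{a,b\}$ as a set of generators, and moreover $g\gamma$ is either $\gamma$ or $\gamma c$ (the latter only when the vertex $c$ exists). I would then argue that $g\gamma = \gamma$ can be arranged: if $g\gamma = \gamma c$, replace $g$ by $cg$ (legitimate since $c$ is in the stabiliser), reducing to the case $g\gamma = \gamma$. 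Now $g$ maps the $(a,b)$-bicoloured geodesic through $e$ to itself, so $g$ permutes the vertices $\{(ab)^n, (ab)^na : n \in \Z\}$ of $\gamma$, acting on $\gamma \cong \R$ as an isometry; composing with a suitable element of $\langle a,b\rangle$ we may assume $g$ fixes $\gamma$ pointwise, in particular $g$ fixes $e$ and the two vertices $a$, $b$ of $\cC$, hence $g$ fixes $a$ and $b$ as generators and fixes the identity vertex. The standard fact (from \cite{davis-book}) that the pointwise stabiliser in $W$ of a point of $\Sigma$, or of a vertex of $\cC$ together with the local combinatorics, is trivial --- more precisely, a group element fixing $e$ in the Cayley graph is the identity --- then gives $g = e$. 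Tracing back through the reductions, every element of the stabiliser lies in $\langle a,b\rangle$ (if no $c$ exists) or in $\langle a,b,c\rangle$ (if $c$ exists).

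The main obstacle is the step where I claim that after correcting by elements of $\langle a,b\rangle$ and possibly by $c$, the element $g$ fixes the geodesic $\gamma$ pointwise and hence is trivial; the subtlety is that Corollary~\ref{cor:parallel} controls the \emph{geometry} of the image geodesic but I must convert ``$g\gamma = \gamma$ and $g$ acts trivially on $\gamma$'' into ``$g = e$ in $W$'', which requires knowing that the only element of $W$ fixing a chamber of $\Sigma$ (equivalently, fixing the base vertex $e$ and all adjacent vertices) is the identity --- this is where the concrete structure of the Davis complex, and the fact that the $W$-action on chambers is simply transitive, is used. A secondary point to handle carefully is the case distinction: when no vertex $c$ adjacent to both $a$ and $b$ exists, Corollary~\ref{cor:parallel} forbids the $\gamma c$ alternative outright, so the argument simplifies and yields stabiliser exactly a conjugate of $\langle a,b\rangle$; when $c$ exists, one must check that $\langle a,b,c\rangle$ is genuinely larger (it is, since $c \notin \langle a,b\rangle$) and that no further elements appear, which the pointwise-rigidity argument guarantees. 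Finally I would note the conjugacy: undoing the initial translation that placed $\gamma$ through $e$ conjugates the special subgroup accordingly, giving the stated form.
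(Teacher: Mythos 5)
Your argument is essentially the same as the paper's, with extra detail that the paper leaves implicit; the core step in both is to observe that $g\gamma$ is again a bicoloured geodesic with the same endpoints as $\gamma$, invoke Corollary~\ref{cor:parallel}, and deduce $g\gamma = \gamma$ or $g\gamma = \gamma c$. Your more careful unwinding (correcting by $c$, then by an element of $\langle a,b\rangle$, then using freeness of the action on the vertex set of $\cC$ to conclude $g=e$) is a correct filling-in of what the paper compresses into one sentence.

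One inaccuracy in your write-up that you should fix: you write that $g\gamma$ is $(a',b')$-bicoloured ``with $\{a',b'\}=g\{a,b\}$ a cut pair.'' This is not right. Left multiplication by $g$ on the Cayley graph $\cC$ preserves edge labels, so $g\gamma$ is still $(a,b)$-bicoloured --- the labels do not change under the $W$-action. (The paper's proof opens with precisely this observation: ``The action of $W$ on $\Sigma$ preserves the labels of edges in $\cC$, so that if $g\in W$ then $g\gamma$ is also an $(a,b)$-bicoloured geodesic.'') In your version you recover the correct conclusion $\{a',b'\}=\{a,b\}$ only after invoking Corollary~\ref{cor:parallel}, so nothing downstream breaks, but the intermediate claim is wrong and obscures the actually simpler reason. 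Replacing that sentence with the label-preservation observation both corrects the error and shortens your argument, since then Corollary~\ref{cor:parallel} is used only to restrict $g\gamma$ to $\{\gamma,\gamma c\}$, exactly as in the paper.
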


\begin{proof} The action of $W$ on $\Sigma$ preserves the labels of edges in $\cC$, so that if $g \in W$ then $g\gamma$ is also an $(a,b)$-bicoloured geodesic.  Conjugates of $\langle a,b\rangle$ obviously stabilise endpoints of $(a, b)$-bicoloured geodesics, and by Corollary~\ref{cor:parallel} the only other elements of $W$ which can stabilise these endpoints are those in conjugates of the subgroup $\langle a, b, c \rangle$, in the cases where there exists a (unique) vertex $c$ which commutes with both $a$ and $b$.  
\end{proof}

The following corollary of the results in this section will be used in the next section.

\begin{cor}\label{cor:recognise approx}  Let $A$ be a set of vertices of $\G$ and let $\cC_A$ be a copy of the Cayley graph of $\langle A \rangle$ in $\cC$. Then $\partial \cC_A$ is an $\approx$-pair if and only if the following holds: there is an essential cut pair $\{a,b\}$ in $\G$ so that $\G \setminus \{a,b\}$ has at least three  components,  and $A = \{a,b\}$ or $A = \{a,b,c\}$, with the latter occurring only if there is a vertex $c$ of $\G$ adjacent to both $a$ and~$b$.
\end{cor}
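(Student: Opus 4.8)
The plan is to derive Corollary~\ref{cor:recognise approx} as a direct combination of the three preceding results in this subsection, namely Corollary~\ref{cor:approx}, Lemma~\ref{lem:all approx}, and Corollary~\ref{cor:parallel}. The key point is that we need to translate between the statement ``$\partial\cC_A$ is an $\approx$-pair'' (a condition on a specific boundary subset of a specific copy of a Cayley subgraph) and the intrinsic graph-theoretic condition on $A$.

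For the backward direction, I would argue as follows. Suppose $\{a,b\}$ is an essential cut pair with $\G\setminus\{a,b\}$ having at least three components. If $A=\{a,b\}$, then $\langle A\rangle = \langle a,b\rangle$ is a two-ended special subgroup whose Cayley graph $\cC_A$ is a bi-infinite $(a,b)$-bicoloured geodesic in $\cC$; by Corollary~\ref{cor:approx} its pair of endpoints forms an $\approx$-pair. If instead $A=\{a,b,c\}$ where $c$ is adjacent to both $a$ and $b$, then since $\G$ is square-free $c$ is the unique such vertex, and $\langle a,b,c\rangle \cong \langle a,b\rangle \times \langle c\rangle$. Its Cayley graph $\cC_A$ is a band of big squares, i.e.\ a copy of $\R\times[0,1]$, bounded by two parallel $(a,b)$-bicoloured geodesics $\gamma$ and $\gamma c$; these two geodesics have the same pair of endpoints by (the construction underlying) Corollary~\ref{cor:parallel}, so $\partial\cC_A$ is a single pair of points, equal to $\partial\gamma$, which is an $\approx$-pair by Corollary~\ref{cor:approx}. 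In both cases $\partial\cC_A$ has exactly two points and is an $\approx$-pair.

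For the forward direction, suppose $\partial\cC_A$ is an $\approx$-pair, say $\partial\cC_A=\{\xi,\xi'\}$ with $\xi\approx\xi'$. By Lemma~\ref{lem:all approx} there is an essential cut pair $\{a,b\}$ of $\G$ with $\G\setminus\{a,b\}$ having at least three components and an $(a,b)$-bicoloured geodesic $\gamma$ with $\partial\gamma=\{\xi,\xi'\}=\partial\cC_A$. Now I need to pin down $A$. Since $\partial\cC_A$ is a two-point set, the special subgroup $\langle A\rangle$ is two-ended, which (for $\G$ triangle-free, so that special subgroups split as direct products of their ``components'') forces $A$ to be either a single pair of non-adjacent vertices, or such a pair together with one vertex adjacent to both (a ``square minus an edge''), or a pair together with an isolated vertex in $\langle A\rangle$ — but an isolated vertex would give more than two ends, so that is excluded. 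Hence $\cC_A$ is either a single bicoloured geodesic or a band as above; in either case it contains a bicoloured geodesic $\gamma'$ bicoloured by the two non-adjacent vertices of $A$, with $\partial\gamma'=\partial\cC_A=\partial\gamma$. Applying Corollary~\ref{cor:parallel} to $\gamma$ and $\gamma'$ gives that the two bicolouring pairs coincide, i.e.\ the non-adjacent pair in $A$ is exactly $\{a,b\}$, and that any extra vertex of $A$ must be the unique common neighbour $c$ of $a$ and $b$. This yields $A=\{a,b\}$ or $A=\{a,b,c\}$ as claimed, with the essential cut pair condition already supplied by Lemma~\ref{lem:all approx}.

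The main obstacle I anticipate is the structural step in the forward direction: carefully justifying that a two-point boundary $\partial\cC_A$ forces $A$ to have the stated shape, i.e.\ ruling out any other configuration of vertices in $A$. This uses that $\langle A\rangle$, being the right-angled Coxeter group on the induced subgraph, is two-ended iff that subgraph is a single edge-free pair of vertices possibly with one common neighbour, together with the observation that $\cC_A$ is then exactly a line or an $\R\times[0,1]$-band; the square-free hypothesis on $\G$ is what guarantees the common neighbour, if present, is unique. Once that classification of $\cC_A$ is in hand, Corollary~\ref{cor:parallel} does the rest of the work essentially verbatim.
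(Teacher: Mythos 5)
Your proof is correct and follows essentially the route the paper intends (the paper states Corollary~\ref{cor:recognise approx} without an explicit proof, as a consequence of the surrounding results): Corollary~\ref{cor:approx} gives the ``if'' direction, Lemma~\ref{lem:all approx} gives the cut pair in the ``only if'' direction, and the classification of $2$-ended special subgroups together with Corollary~\ref{cor:parallel} pins down $A$ as $\{a,b\}$ or $\{a,b,c\}$, exactly as in the paper's implicit argument (compare Remark~\ref{rmk: A infinite}).
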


We finish this section by discussing the generalisation of our results on $\approx$-pairs to higher dimensions, that is, where the defining  graph $\G$ is not assumed to be triangle-free.

\begin{remark}\label{rem:approx}  Let $\G$ be a finite, simplicial graph which is not a complete graph, and write~$K_n$ for the complete graph on $n$ vertices, $n \geq 1$.  The (infinite) right-angled Coxeter group $W = W_\G$ is $1$-ended if and only if $\G$ is connected and has no separating induced $K_n$ subgraph, by \cite[Theorem~8.7.2]{davis-book}.  Moreover, $W_\G$ is hyperbolic if and only if $\G$ has no ``empty squares", by~\cite[Corollary~12.6.3]{davis-book}.  

Assume that $\{a,b\}$ is a pair of essential vertices in $\G$ so that $\G \setminus \{a,b\}$ has components $\Lambda_1$, \dots, $\Lambda_k$, with $k \geq 2$.  As we now explain, determining whether the endpoints of $(a,b)$-bicoloured geodesics form $\approx$-pair vertices in $\cT$, and the valence of these vertices, is quite delicate.  
The easiest case is when none of the $\Lambda_i$ contain a vertex which is adjacent to both $a$ and~$b$.  Then Lemma~\ref{lem:separation}(1) generalises to show that the endpoints of $(a,b)$-bicoloured geodesics are $\approx$-pair vertices in $\cT$ if and only if $k \geq 3$, and such vertices have valence $k$.  Suppose now that $\Lambda = \Lambda_1$ does contain a vertex $c$ which is adjacent to both $a$ and $b$.  (Note that in this case, none of $\Lambda_2$, \dots, $\Lambda_k$ contain a vertex adjacent to both $a$ and $b$, as this would yield an empty square.)  If $\Lambda$ consists of a complete graph on $n \geq 1$ vertices, all $n$ of which are adjacent to both $a$ and $b$, then Lemma~\ref{lem:separation}(2) (which considers $n = 1$) generalises to show that the endpoints of $(a,b)$-bicoloured geodesics are $\approx$-pair vertices in $\cT$ if and only if either $k \geq 3$ or $n \geq 2$, and such vertices have valence $2^n(k-1)$.

Other cases are also possible.  For instance, suppose that $\Lambda$ has $3$ vertices $c$, $x$ and $y$ forming a triangle, with $c$ adjacent to both $a$ and $b$, $x$ adjacent to $a$ but not $b$ and $y$ adjacent to $b$ but not $a$.  Then it can be seen that the endpoints of $(a,b)$-bicoloured geodesics separate $\partial W$ into  $2(k-1) + 1$ components, with the $2(k-1)$ coming from $\Lambda_2,\dots,\Lambda_k$, and the remaining component coming from $\Lambda$.  Thus in this case, $\{a,b\}$ yields $\approx$-pairs of valence $2(k-1) + 1 = 2k - 1$ for all $k \geq 2$.  In fact, a careful analysis, with repeated use of the ``no separating $K_n$" and ``no empty square" conditions, shows that when $\Lambda$ has exactly $3$ vertices, the only possible valences of $\approx$-pairs coming from $\{a,b\}$  are $k \geq 3$, $2^3(k-1) \geq 8$ and $2k - 1 \geq 3$; for larger graphs $\Lambda$, the analysis will be even more involved.  Thus in general, the existence of $\approx$-pairs coming from $\{a,b\}$, and their valence, will depend on both  $k$ and the finer structure of $\G$.

Having determined the essential cut pairs $\{a,b\}$ which yield $\approx$-pair vertices in $\cT$, and their valence, other results in this section will generalise without too much trouble.  The fact that all $\approx$-pair vertices come from such essential cut pairs can be obtained by using similar arguments to those in Lemma~\ref{lem:all approx}, together with a more general version of Theorem~\ref{thm:mihalik-tschantz}, from Mihalik and 
Tschantz~\cite{mihalik-Tschantz}.  Moreover, using a suitable generalisation of Corollary~\ref{cor:parallel}, it is not hard to see that the stabiliser of any $\approx$-pair vertex corresponding to  the endpoints of an $(a,b)$-bicoloured geodesic will be a conjugate of the subgroup of $W$ generated by $a$, $b$ and any vertices which are adjacent to both $a$ and $b$.

\end{remark}

%%%%%%%%%%%%%%%%%%%%%%%%%%%%%%%
\subsection{Construction of certain $\sim$-classes}\label{sec:sim}
%%%%%%%%%%%%%%%%%%%%%%%%%%%%%%%
Recall from Section~\ref{sec:bowditch} that the $\sim$-classes 
are equivalence classes of points in $\partial W (2)$ such that any pair of points in the set separates $\partial W$ into exactly two components, and the set is maximal with respect to this property.  In 
Proposition~\ref{prop:sim} below, we describe a construction which yields $\sim$-classes.  We show in Section~\ref{sec:type 1 and 2} that every $\sim$-class arises in this way. 

Let $|\G|$ denote the geometric realisation of $\G$.  
Note the distinction between $\G\setminus \{a, b\}$ and $|\G|\setminus \{a, b\}$, for an arbitrary pair of vertices $a$ and $b$ of $\G$: a component of the former must have a vertex, while a component of the latter is allowed to be an arc between adjacent vertices $a$ and $b$.  In particular, if $a$ and $b$ are adjacent vertices of $\G$, they do not separate $\G$ 
(by Standing Assumptions~\ref{assumptions}(2))
but they do separate $|\G|$.  A \emph{branch} of $\G$ is a subgraph of $\G$ consisting of a (closed) reduced path between a pair of essential vertices, which does not contain any essential vertices in its interior.  For example, both of the graphs in Figure~\ref{fig:approx-pairs} above contain $4$ branches between $a$ and $b$.

\begin{prop}[$\sim$-classes]\label{prop:sim}
Let $A$ be a set of (not necessarily essential) vertices of $\G$ such that $\langle A \rangle$ is infinite and:
\begin{enumerate}
\item[\emph{(A1)}] elements of $A$ pairwise separate $|\G|$; 
\item[\emph{(A2)}] given any subgraph $\Lambda$ of $\G$ which is a 
subdivided copy of $K_4$, if $\Lambda$ contains at least three vertices of $A$ then all the vertices of $A$ lie on a single branch of $\Lambda$; and 
\item[\emph{(A3)}] the set $A$ is maximal among all sets satisfying both \emph{(A1)} and \emph{(A2)}.
\end{enumerate}
Let $\cC_A$ be any copy of the Cayley graph of $\langle A\rangle $ in $\cC$, and let  $\cA$ be the set of geodesics $\gamma \subset \cC_A$ which are bicoloured by pairs $\{a, b\}$ as in 
Corollary~\ref{cor:approx} (so that $\partial \gamma$ is an $\approx$-pair).  Assume that $\partial \cC_A$ is not an $\approx$-pair.   Then $\partial \cC_A \setminus \partial \cA$ contains at least two points, and:

\begin{enumerate}
\item If $\langle A \rangle $ is $2$-ended, $\partial \cC_A \setminus \partial \cA = \partial \cC_A$ is a $\sim$-pair.

\item Otherwise, $\partial \cC_A \setminus \partial \cA$ is an infinite $\sim$-class.
\end{enumerate}
\end{prop}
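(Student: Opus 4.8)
The plan is to show that $\partial \cC_A$ decomposes as $\partial\cA$ together with the points we want to form a $\sim$-class, and then to verify the defining properties of a $\sim$-class (pairwise separation into exactly two components, plus maximality). The key structural tool is Lemma~\ref{lem:separation}: since each $\gamma \in \cA$ is bicoloured by a cut pair realising an $\approx$-pair, removing $\gamma$ (or the band $\Upsilon$, when there is a commuting vertex $c$) cuts $\cC_A$ — and, via the projection/boundary arguments there, $\partial \cC_A$ — into pieces. First I would set up the geometry of $\cC_A$: it is a CAT(0) subcomplex of $\Sigma$ (or at least quasi-isometrically embedded), the $(a,b)$-bicoloured geodesics in it are genuine $\Sigma$-geodesics by the square-free hypothesis, and condition (A1) guarantees that each pair in $A$ really does give a cut pair when restricted to the subgraph on $A$. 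The fact that $\partial\cC_A$ is assumed not to be an $\approx$-pair, combined with Corollary~\ref{cor:recognise approx}, rules out the degenerate cases and forces $|A|$ large enough (or the separation data rich enough) that $\partial\cC_A \setminus \partial\cA$ is nonempty; a short argument counting branches / using that $\langle A\rangle$ is infinite gives at least two such points.

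Next I would handle case (1), $\langle A\rangle$ two-ended. Here $\cC_A$ is a single bi-infinite geodesic (up to the $c$-band), so $\partial\cC_A$ is a single pair $\{\gamma^+,\gamma^-\}$; the hypothesis that this is not an $\approx$-pair means, by Corollary~\ref{cor:approx}/Corollary~\ref{cor:recognise approx}, that the relevant $\G\setminus\{a,b\}$ has exactly two components, so by Lemma~\ref{lem:separation} the pair separates $\partial W$ into exactly two components, i.e. $N(\gamma^+,\gamma^-)=2$, so $\gamma^+ \sim \gamma^-$. That $\partial\cA$ is empty in this case is immediate. The work is to see this pair is a full $\sim$-\emph{class} and not contained in a larger one; this should follow from (A3)-maximality together with the converse direction of the later "all $\sim$-classes arise this way" result — but since that result is proved afterward, I would instead argue directly: if some $z \sim \gamma^+$ with $z \notin \{\gamma^+,\gamma^-\}$, then $z$ together with $\gamma^\pm$ would give enriched separation data contradicting the maximality of $A$ among sets satisfying (A1) and (A2).

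For case (2), $\langle A\rangle$ infinite-ended (or at least not two-ended): I would show (a) any two points of $\partial\cC_A \setminus \partial\cA$ separate $\partial W$ into exactly two components, and (b) the set is maximal and infinite. For (a), given $\xi,\xi' \in \partial\cC_A\setminus\partial\cA$, choose geodesics in $\cC_A$ with these endpoints; the separation of $\partial\cC_A$ by these geodesics is governed by the structure of the graph on $A$, and condition (A2) — the $K_4$-subdivision condition — is exactly what prevents "too much" branching, i.e. it forces that removing a non-$\approx$ geodesic from $\cC_A$ leaves exactly two pieces (more than two pieces would manifest as a subdivided $K_4$ with vertices of $A$ on three different branches). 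One then upgrades separation of $\cC_A$ to separation of $\partial\cC_A$, and of $\partial W$, using Lemma~\ref{lem:gamma separates} and the Lafont-style boundary arguments cited in Lemma~\ref{lem:separation}. For infiniteness of the class, since $\langle A\rangle$ is not two-ended, $\cC_A$ has a Cantor set of ends, and removing the countable family $\cA$ of geodesics leaves uncountably many boundary points; this matches property (3) of $\sim$ from Section~\ref{sec:bowditch}. For maximality (b), I would argue that a point $\zeta \in \partial W$ with $\zeta \sim \xi$ for $\xi \in \partial\cC_A\setminus\partial\cA$ must itself lie in $\partial\cC_A\setminus\partial\cA$: betweenness/separation forces $\zeta$ to be an endpoint of a bicoloured geodesic whose colours lie in $A$ (again via Theorem~\ref{thm:mihalik-tschantz} applied to the relevant cut pair), so adding its colours to $A$ would either already be in $A$ or contradict (A3).

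\textbf{Main obstacle.} The hard part will be part (a) of case (2): translating condition (A2), a purely graph-theoretic statement about subdivided $K_4$'s, into the precise topological fact that a non-$\approx$ bicoloured geodesic $\gamma \subset \cC_A$ separates $\partial\cC_A$ into exactly two pieces and not more. Lemma~\ref{lem:separation} tells us $\partial\gamma$ separates $\partial W$ according to the components of $\G\setminus\{a,b\}$, but inside $\cC_A$ the relevant count is governed by the graph induced on $A$ (together with connecting branches), and one must show a third piece would force a subdivided $K_4$ carrying three vertices of $A$ off a single branch. Dually, one must rule out that two points of $\partial\cC_A\setminus\partial\cA$ fail to separate at all — this is where knowing $A$ consists of genuine $|\G|$-cutpoints (A1) and the connectivity Standing Assumptions~\ref{assumptions}(2) are essential. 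I expect this to require a careful case analysis on how the branches of $\G$ attach to the vertices of $A$, and it is the combinatorial heart of the proposition.
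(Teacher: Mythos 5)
Your overall skeleton (use Lemma~\ref{lem:separation} and Lemma~\ref{lem:gamma separates} for separation, use (A2) to prevent extra branching, treat the $2$-ended case separately) points in the right direction, but there are two genuine gaps, one of which is fatal as written. The fatal one is your maximality step, in both case (1) and case (2)(b). You claim that a point $\zeta$ with $\zeta\sim\xi$, $\xi\in\partial\cC_A\setminus\partial\cA$, must be an endpoint of a bicoloured geodesic whose colours could be adjoined to $A$, ``via Theorem~\ref{thm:mihalik-tschantz}''. That mechanism fails: Mihalik--Tschantz concerns splittings over $2$-ended subgroups and is what identifies the $\approx$-pairs (Lemma~\ref{lem:all approx}); a generic point of valence $2$ lying in an infinite $\sim$-class is \emph{not} the endpoint of any bicoloured (periodic) geodesic, and the pair $\{\xi,\zeta\}$ carries no cut pair of $\G$ to feed into (A3). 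The paper's proof of fullness is instead a genuinely topological argument: assuming $\eta\notin\partial\cC_A$ with $\xi\sim\eta$, it takes a small neighbourhood $V$ of $\xi$ with two local components, uses Swarup's no-global-cut-point theorem to align these with the two components of $\partial W\setminus\{\xi,\eta\}$, and then builds a connecting arc inside $\partial W\setminus\{\xi,\eta\}$ from the circle boundary of a subcomplex $\Sigma_\alpha$, where $\alpha$ is an induced cycle containing $A$ (or the cut pair, in the $2$-ended case) but avoiding the first letter $b\notin A$ on a geodesic from $\xi$ to $\eta$; producing such a cycle is exactly Corollary~\ref{cor:all A not b}, which is where (A3) actually enters. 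Your proposal has no substitute for this construction.

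The second issue is where you locate the difficulty in case (2)(a). You plan to show directly that removing a non-$\approx$ geodesic leaves \emph{exactly} two pieces by a subdivided-$K_4$ analysis inside $\cC_A$, and you flag this as the main obstacle. In the paper the upper bound of two comes for free from Bowditch's structure theory: since $\xi,\eta\notin\partial\cA$, they are not endpoints of bicoloured geodesics, hence (by the identification of all $\approx$-pairs and the fact that $\partial W(3+)$ is partitioned into $\approx$-pairs) they lie in $\partial W(2)$, so they can separate into at most two components. What genuinely has to be proved is only that $\partial\gamma$ separates $\partial W$ at all for an arbitrary $A$-labelled geodesic $\gamma\subset\cC_A$ (not merely bicoloured ones); this is Lemma~\ref{lem:Aseparates}, proved by the inductive partition of panel boundaries of $N(\gamma)$ using (A1) and (A2), and your proposal gestures at it but does not contain that argument. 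Note also that ``exactly two components of $\cC_A\setminus\gamma$'' would not by itself control the components of $\partial W\setminus\partial\gamma$, so your proposed route would still need the global step in any case.
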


\begin{remark}\label{rmk: A infinite}  Let $A$ be a subset of vertices of $\G$.  Then $\langle A \rangle$ is infinite if and only if $A$ contains a pair of non-adjacent vertices; in particular, if $\langle A \rangle$ is infinite then $\card(A) \geq 2$.  Also, we can recognise whether $\partial \cC_A$ is an $\approx$-pair using the graph-theoretic criteria on $A$ given by Corollary~\ref{cor:recognise approx}.  Finally, $\langle A \rangle$ is $2$-ended if and only if either $A = \{a,b\}$, where $a$ and $b$ are non-adjacent vertices of $\G$, or $A = \{a,b,c\}$, where $a$ and $b$ are non-adjacent and $c$ is adjacent to both $a$ and $b$.  Thus all hypotheses of Proposition~\ref{prop:sim} can be verified in the graph~$\G$.
\end{remark}

\begin{example}\label{eg:sim}  We now discuss some examples which illustrate the statement of Proposition~\ref{prop:sim}.  In the graphs in Figure~\ref{fig:approx-pairs} there are no subgraphs which are subdivided copies of $K_4$, so a set of vertices $A$ satisfying (A1)--(A3) is just required to be maximal with respect to (A1).  In the left-hand graph in this figure, if $A$ is the union of $\{a,b\}$ with the vertex adjacent to both $a$ and $b$, then $A$ is maximal with respect to (A1).  However by Corollary~\ref{cor:recognise approx}, $\partial \cC_A$ is an $\approx$-pair, so the set $A$ does not provide a $\sim$-class in this case.  Now suppose that in either graph, $A$ is
a branch from $a$ to $b$ of length at least three.  Then $A$ is maximal with respect to property (A1), hence properties (A1)--(A3) hold, and since $\card(A) \geq 4$ we have by 
Corollary~\ref{cor:recognise approx} that $\partial \cC_A$ is not an $\approx$-pair. 
Thus $A$ provides an infinite $\sim$-class.

Now consider the examples in 
Figure~\ref{fig:sim-eg}.  In this figure, the sets $\{a, b, c, d\}$ and $\{a, d, f, g, e\}$ on the left, the set $\{a_1,a_2,a_3,a_4,a_5\}$ in the centre and the set $\{p, q\}$ on the right all satisfy properties (A1)--(A3) in Proposition~\ref{prop:sim} and do not give $\approx$-pairs. The set $\{p, q\}$ thus yields a $\sim$-pair, while the other three correspond to infinite $\sim$-classes.  On the left and in the centre of Figure~\ref{fig:sim-eg} there is no subdivided $K_4$ subgraph, so (A3) just requires maximality with respect to (A1).  In the right-hand graph, the situation is more subtle.  The set $\{p,q\}$ satisfies (A1) and (A2), but is not maximal with respect to (A1).  Adding any non-essential vertex to $\{p,q\}$ means that (A1) fails, and adding any essential vertex to $\{p,q\}$ means that (A2) fails, so (A3) does hold for the set $\{p,q\}$.  Now consider the set $\{u,q\}$.  This satisfies (A1) and (A2) but is maximal with respect to neither, since adding any vertex on the branch between $u$ and $q$ results in a larger set also satisfying both (A1) and (A2).  So $\{u,q\}$ does not satisfy (A3) (but the branch between $u$ and $q$ does).  Finally, the set $\{p,q, r,s\}$ is maximal with respect to  (A1) but fails (A2), and so does not give a $\sim$-class.
\end{example}

\begin{figure}[ht]
\begin{center}
\begin{overpic}%[grid, tics=5]
{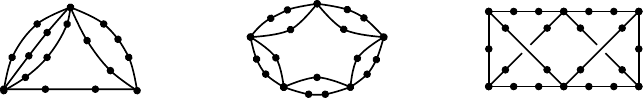}
%left
\put(10,15.5){\footnotesize $a$}
\put(4,13){\footnotesize $b$}
\put(-1,7){\footnotesize $c$}
\put(-2.5,-1){\footnotesize $d$}
\put(22,-1){\footnotesize $e$}
\put(6,-2){\footnotesize $f$}
\put(14,-1){\footnotesize $g$}
%middle
\put(48,16){\footnotesize $a_1$}
\put(61, 9){\footnotesize $a_5$}
\put(54,-1){\footnotesize $a_4$}
\put(42,-1){\footnotesize $a_3$}
\put(34.5,9){\footnotesize $a_2$}
%right
\put(87,-1.5){\footnotesize $q$}
\put(87,15.5){\footnotesize $p$}
\put(100,-1){\footnotesize $s$}
\put(100,14.5){\footnotesize $r$}
\put(73,-1){\footnotesize $v$}
\put(73,14.5){\footnotesize $u$}
\end{overpic}
\vspace{0.5cm}
\caption{\footnotesize{Examples to illustrate properties (A1)--(A3) in Proposition~\ref{prop:sim}.}}
\label{fig:sim-eg}
\end{center}
\end{figure}

For the proof of Proposition~\ref{prop:sim}, we need several preliminary results concerning the structure of a set $A$ satisfying properties (A1), (A2) and/or (A3).  Lemma~\ref{lem:A1 cyclic} shows that subsets $A$ which satisfy property (A1) have a ``cyclic'' configuration in $\G$.  
In Lemma~\ref{lem:A1A2xy} we prove that if in addition $A$ satisfies (A2), then the cyclic ordering on $A$ is well-defined, and paths between pairs of points on induced cycles which contain $A$ are strongly restricted.  We use this in Corollary~\ref{cor:all A not b} to show that if $A$ also satisfies (A3), then for any vertex $b \not \in A$, there is an induced cycle in $\G$ which contains  $A$ but does not contain $b$.  

\begin{lemma} \label{lem:A1 cyclic}
Let $A$ be a set of vertices of $\G$ with $\langle A \rangle$ infinite and satisfying property \emph{(A1)}.  Then the elements of $A$ lie on an induced cycle $\alpha$ in $\G$.
\end{lemma}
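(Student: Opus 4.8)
The plan is to show that a set $A$ satisfying (A1) lies on an induced cycle by first handling the case $|A| = 2$ and then building up larger cycles inductively, using the square-free and triangle-free assumptions at each step. When $A = \{a, b\}$ with $a, b$ non-adjacent (the case forced by $\langle A \rangle$ infinite and $|A| = 2$, see Remark~\ref{rmk: A infinite}), property (A1) says $\{a,b\}$ separates $|\G|$; since $\G$ is connected with no separating vertices (Standing Assumptions~\ref{assumptions}(2)), there are at least two internally disjoint reduced paths in $\G$ from $a$ to $b$, and concatenating two of them gives a cycle through $a$ and $b$. One then passes to an induced cycle: any chord of this cycle would give a shorter cycle still containing both $a$ and $b$ (one must check the chord cannot separate $a$ from $b$ on the cycle, but if it did we could reroute), so a minimal-length cycle through $a$ and $b$ is induced.

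For the inductive step, suppose the elements of some $A' \subsetneq A$ lie on an induced cycle $\alpha'$, and let $x \in A \setminus A'$. I would use that $x$ separates $|\G|$ from each element of $A'$: in particular $x$ together with any single vertex of $A'$ cuts $|\G|$, which forces $x$ to have ``access'' to $\alpha'$ from both sides. More precisely, since $\{x, a\}$ separates $|\G|$ for each $a \in A'$, the vertex $x$ cannot lie in a single component of $\G \setminus \{a\}$ that misses the rest of $\alpha'$; running this over all $a \in A'$ shows there are two internally disjoint reduced paths from $x$ into $\alpha'$ landing in two different arcs of $\alpha'$ determined by the points of $A'$. Splicing these two paths into $\alpha'$ produces a cycle containing $A' \cup \{x\}$, and then — as before — passing to a shortest cycle through all of $A' \cup \{x\}$ and using triangle-freeness and square-freeness (so that chords genuinely shorten the cycle without being forced) yields an induced cycle. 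Iterating over all of $A$ gives the result.

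The main obstacle I anticipate is making the splicing argument precise: when I insert a new path connecting $x$ to the existing cycle, I need to guarantee that the two attachment points lie on \emph{different} arcs of $\alpha'$ cut out by the points of $A'$ — otherwise the new cycle would fail to separate correctly and, worse, I might not even get a cycle containing all of $A' \cup \{x\}$. This is exactly where property (A1), applied to the pair $\{x, a\}$ for a carefully chosen $a \in A'$, has to be leveraged: the separation of $|\G|$ by $\{x,a\}$ forces the two ``sides'' of $x$ relative to $\alpha'$ to be genuinely distinct. A secondary technical point is the reduction to an \emph{induced} cycle: I should argue that a shortest cycle in $\G$ passing through a fixed finite set of vertices is necessarily chordless, since a chord would produce a strictly shorter such cycle (here one uses that $\G$ is triangle-free so a chord cannot create a length-$3$ cycle obstruction, and more importantly that shortening is always possible because the fixed vertices distribute among the two sub-arcs a chord creates — or can be rerouted to one of them). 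I expect the write-up to lean on elementary graph connectivity arguments throughout, with the Standing Assumptions invoked precisely to rule out the degenerate configurations.
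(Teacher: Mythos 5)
Your overall strategy (induct on $\card(A)$, splicing the new vertex into an induced cycle through the rest) is the same as the paper's, but both halves of your argument have genuine gaps. In the base case you use only $2$-connectivity plus the claim that a minimal-length cycle through $a$ and $b$ is induced, and that claim fails: a chord joining the interiors of the two $a$--$b$ arcs of a shortest such cycle yields no shorter cycle through both $a$ and $b$, so there is nothing for "rerouting" to lean on. Concretely, take the $8$-cycle $a\,x_1\,u\,x_2\,b\,y_1\,v\,y_2$ with chord $uv$ together with an extra path $a\,p_1\,p_2\,p_3\,b$; this satisfies the Standing Assumptions, $\{a,b\}$ is a cut pair, and the original $8$-cycle is a shortest cycle through $a,b$ yet is not induced. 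Worse, if you delete the $p$-path you get a $2$-connected, triangle- and square-free graph in which \emph{no} induced cycle through $a$ and $b$ exists -- so the statement is false without (A1), which your base case never invokes. The correct use of (A1) here (implicit in the paper) is to run one $a$--$b$ path through each of two distinct components of $\G\setminus\{a,b\}$, each chosen shortest within its component: chords inside a path are excluded by minimality, and chords between the two paths are impossible because they would connect different components, so the union is an induced cycle.

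In the inductive step your key requirement -- that the two paths from $x$ attach to $\alpha'$ in \emph{different} arcs determined by the points of $A'$ -- is the wrong condition: splicing then deletes an arc of $\alpha'$ which necessarily contains elements of $A'$, so the new cycle does not contain $A'\cup\{x\}$. What is needed, and what the paper arranges, is that the attachment points be two elements of $A'$ that are \emph{consecutive} in the cyclic order (the paper attaches at $a_1$ and $a_n$), so the discarded arc of $\alpha'$ carries no element of $A'$. The paper obtains the connecting paths from the separation pairs $\{a_n,b\}$ and $\{a_1,b\}$: each gives a component of the corresponding complement that misses the cycle (and the previously chosen path), producing paths $\eta$ from $a_n$ to $b$ and $\zeta$ from $b$ to $a_1$ meeting the rest only at their endpoints; this structure is also what makes the final chordlessness verification tractable. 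Your concluding appeal to "a shortest cycle through all of $A'\cup\{x\}$ is induced" repeats the unproved minimality claim from the base case, and is even less plausible with three or more prescribed vertices. So the splicing step needs to be reorganized along the paper's lines before the induction goes through.
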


\begin{proof}  The proof is by induction on $\card(A) = n \geq 2$.  Since $\G$ is connected and has no separating vertices or edges, the statement is satisfied when $A$ has $2$ elements.   

Now let $A$ be a set of $n+1$ elements satisfying property (A1).  By the induction hypothesis, after removing one element, say $b$, the remaining $n$ elements lie on an induced cycle $\alpha$.  If $b$ also lies on $\alpha$ we are done, otherwise label the elements of $A\setminus \{b\}$ going around $\alpha$ cyclically as $a_1,\dots,a_n$ and let $\beta$ be the subpath of $\alpha$ from $a_1$ to $a_n$ which contains all of $a_2,\dots,a_{n-1}$.  

Since the pair $\{a_n,b\}$ separates $|\G|$ there is a path $\eta$ in $\G$ from $a_n$ to $b$ which meets $\alpha$ only at $a_n$.  Similarly, there is a path $\zeta$ in $\G$ from $b$ to $a_1$ which meets $\alpha \cup \beta$ only at $a_1$ and $b$.  Then $\alpha \cup \beta \cup \zeta$ is an embedded cycle containing all elements of $A$, as desired.  It is clear from the construction that this cycle can be chosen to be an induced cycle.
\end{proof}

\begin{remark}\label{rmk:hyp plane}
Let $A$ be as in Lemma~\ref{lem:A1 cyclic} and let $\alpha$ be an induced cycle in $\G$ which contains all elements of $A$.  Consider any copy of the subcomplex $\Sigma_\alpha$ of $\Sigma$ corresponding to the special subgroup generated by all vertices in $\alpha$.  Since $\alpha$ contains at least $5$ vertices and a copy of the Cayley graph $\cC_A$ is contained in $\Sigma_\alpha$, we see that any copy of $\cC_A$ embeds in a subcomplex of $\Sigma$ which is quasi-isometric to the hyperbolic plane. 
\end{remark}

\begin{lemma}\label{lem:A1A2xy}
Let $A$ be a set of vertices of $\G$ with $\langle A \rangle$ infinite and satisfying properties \emph{(A1)} and \emph{(A2)}. Let $\alpha$ be an induced cycle in $\G$ containing all elements of $A$ (as guaranteed by Lemma~\ref{lem:A1 cyclic}) and label the elements of $A$ going around $\alpha$ cyclically as $a_1, \dots, a_n$. Let $\sigma$ be any other induced cycle containing all elements of $A$.  Then:
\begin{enumerate}
\item The cycle $\sigma$ can be oriented so as to induce this same order on $A$.   Thus up to orientation, there is a well-defined cyclic ordering on the elements of $A$.
\item  Assuming $\card(A) \geq 3$, if $x$ and $y$ are points on $\sigma$ (not necessarily vertices of $\G$) such that $\sigma \setminus 
\{x, y \}$ partitions $A$ into two nonempty sets  $\{a_i,a_{i+1}, \dots, a_{j-1}\}$ and $\{a_{i-1}, \dots, a_{1},a_n, \dots, a_{j+1},a_j\}$, where $i < j$ and $x$ (respectively $y$) lies between $a_i$ and $a_{i-1}$ (respectively $a_j$ and $a_{j-1}$), then any reduced path in $\G$ from $x$ to $y$ passes through either all of $a_i,a_{i+1}, \dots, a_{j-1}$, or 
all of $a_{i-1}, \dots, a_{1},a_n, \dots, a_{j+1},a_j$. 
\end{enumerate}
\end{lemma}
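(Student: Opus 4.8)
\textbf{Proof proposal for Lemma~\ref{lem:A1A2xy}.}

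The plan is to prove both statements simultaneously by exploiting property (A2) and the hyperbolic-plane geometry from Remark~\ref{rmk:hyp plane}. First I would set up the key configuration: if $\sigma$ is another induced cycle containing $A$, then $\alpha$ and $\sigma$ together (perhaps after taking unions with connecting paths) sit inside $\G$, and I would look at how $\sigma$ is forced to interact with $\alpha$. The natural first step is to handle the case $\card(A) = 2$ trivially for (1), and then for $\card(A) \geq 3$ consider three consecutive elements $a_{i-1}, a_i, a_{i+1}$ of $A$ in the cyclic order on $\alpha$. Since $\{a_{i-1}, a_{i+1}\}$ separates $|\G|$, I would argue that $\sigma \setminus \{a_{i-1}, a_{i+1}\}$ has $a_i$ in one component and the rest of $A$ in the other; this is where (A2) enters, because if the separation behaved differently we could build a subdivided $K_4$ in $\G$ using arcs of $\alpha$, arcs of $\sigma$, and the separation structure, with three vertices of $A$ distributed over distinct branches — contradicting (A2). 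Pinning down exactly which configuration of arcs yields the subdivided $K_4$ is, I expect, the main obstacle: one must carefully check that the paths one assembles are embedded, internally disjoint, and meet only at the four designated ``corner'' vertices, and that the relevant three elements of $A$ genuinely land on three different branches.

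Granting that local statement at every $a_i$, statement (1) follows: the cyclic order in which $a_1, \dots, a_n$ appear on $\sigma$ must agree (up to orientation) with the order on $\alpha$, since consecutiveness is preserved at each step and a cyclic order is determined by its consecutive triples. For statement (2), suppose $x, y$ lie on $\sigma$ partitioning $A$ into $\{a_i, \dots, a_{j-1}\}$ and its complement as stated, and let $\rho$ be a reduced path in $\G$ from $x$ to $y$. I would argue by contradiction: if $\rho$ misses some $a_k$ with $i \leq k \leq j-1$ and also misses some $a_\ell$ in the complementary set, then I can form a subdivided $K_4$ by combining $\rho$ with the two arcs of $\sigma$ from $x$ to $y$ (the one through $\{a_i, \dots, a_{j-1}\}$ and the one through the complement) and with a suitable further subpath realizing the separation of $a_k$ from $a_\ell$; the four branches of this $K_4$ would then carry at least three elements of $A$ spread out appropriately, violating (A2). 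Here one again has to be careful to reduce $\rho$ so that it is embedded and meets $\sigma$ only where necessary, splicing out any backtracking or loops, and to ensure the resulting graph is honestly a subdivision of $K_4$ rather than something degenerate (e.g.\ a theta graph), which may require choosing the ``corners'' of the $K_4$ among the $a_m$'s rather than at $x$ and $y$.

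Throughout, I would lean on the following recurring lemma-within-the-proof: whenever one has two internally disjoint embedded paths in $\G$ between vertices $p$ and $q$, together with a third path connecting an interior vertex of one to an interior vertex of the other and internally disjoint from both, the union is (a subdivision of) $K_4$ provided the third path's endpoints are distinct from $p, q$ — and $\G$ being triangle-free guarantees the branches all have length $\geq 2$, so no branch degenerates to an edge in a way that would collapse the $K_4$. The bookkeeping of ``which three elements of $A$ on which branch'' is the delicate part, and I would organize it by first locating, among $a_i, \dots, a_{j-1}$ and among the complementary block, the extreme elements closest to $x$ and $y$ along $\sigma$, and showing these must all lie on the branches of the constructed $K_4$ in a way that forces three of them onto three distinct branches unless $\rho$ passes through an entire block. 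Once that contradiction is extracted, the conclusion of (2) is immediate.
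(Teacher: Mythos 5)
Your part (2) is in outline the paper's own argument: assume the reduced path misses $a_k$ from one block and $a_\ell$ from the other, extract from it a subpath $\xi'$ meeting $\sigma$ only at its endpoints and joining the two components of $\sigma\setminus\{a_k,a_\ell\}$, use (A1) to produce a path $\beta$ from $a_k$ to $a_\ell$ meeting $\sigma\cup\xi$ only at its endpoints, and observe that $\sigma\cup\xi'\cup\beta$ is a subdivided $K_4$ containing all of $A$ in which the only branch containing both $a_k$ and $a_\ell$ is $\beta$, contradicting (A2). However, the tool you say you would ``lean on throughout'' is false as stated: two internally disjoint embedded paths from $p$ to $q$ together with a path joining an interior vertex of one to an interior vertex of the other is a theta graph, not a subdivided $K_4$ --- only the two new endpoints have valence $3$, while $p$ and $q$ have valence $2$. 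To get a genuine $K_4$ subdivision you need a cycle together with \emph{two} disjoint chords whose endpoint pairs interleave on the cycle (equivalently, three internally disjoint $p$--$q$ paths plus a path joining interiors of two of them), and triangle-freeness of $\Gamma$ is irrelevant to this; what matters is the interleaving, which is exactly the ``delicate bookkeeping'' you defer. In the paper's version of (2) the interleaving is automatic because two of the four corners are taken to be $a_k$ and $a_\ell$ themselves, which is the precise form of your parenthetical remark about choosing corners among the $a_m$'s.

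For part (1) there is a genuine gap: you reduce the claim to a local statement about consecutive triples $a_{i-1},a_i,a_{i+1}$ and then say that if it fails ``we could build a subdivided $K_4$ using arcs of $\alpha$, arcs of $\sigma$, and the separation structure,'' explicitly flagging the identification of that configuration as the main obstacle without supplying it. That configuration \emph{is} the content of the lemma. The paper argues globally: if $\sigma$ cannot be oriented to induce the same order and $\card(A)\geq 4$, one finds two pairs of elements of $A$ that are consecutive along $\sigma$ but interleaved on $\alpha$ (namely $a_1,a_i$ and $a_k,a_\ell$ with $1<k<i<\ell\leq n$); the corresponding arcs $\sigma_{1i}$ and $\sigma_{k\ell}$ of $\sigma$ are disjoint, each joins the two components of $\alpha$ minus the other pair, and after passing to subarcs meeting $\alpha$ only at their endpoints one obtains two disjoint chords of $\alpha$ whose union with $\alpha$ is a subdivided $K_4$ containing all of $A$ with at most two of $a_1,a_i,a_k,a_\ell$ on any branch, contradicting (A2); note no auxiliary (A1)-path is needed here, since both chords come from $\sigma$. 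Your local route could likely be completed, but only by carrying out essentially this same interleaved-chords analysis, so as written the proposal does not establish (1).
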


\begin{proof}
Part (1) is immediate when $A$ has $2$ or $3$ elements.  If $\card(A) = n \geq 4$ suppose there is an induced cycle $\sigma$ containing all elements of $A$ which cannot be oriented to induce the same ordering.  Then without loss of generality there is a vertex $a_i$ with $i \neq 2,n$ so that $a_1$ is adjacent to $a_i$ on $\sigma$.  Since $\sigma$ is an induced cycle and $n \geq 4$ there is also some $a_k$ with $1 < k < i$ and some $a_l$ with $i < l \leq n$ so that $a_k$ and $a_l$ are adjacent on $\sigma$.  Let $\sigma_{1i}$ be the subpath of $\sigma$ connecting $a_1$ to $a_i$ and containing no other vertices of $A$, and similarly define $\sigma_{kl}$.  Notice that $\sigma_{1i}$ connects the two components of $\alpha\setminus \{a_k,a_l\}$ and $\sigma_{kl}$ connects the two components of $\alpha \setminus \{a_1,a_i\}$.  

Now there is a subpath $\sigma_{1i}'$ of $\sigma_{1i}$ which connects the two components of $
\alpha \setminus \{a_k,a_l\}$ and intersects $\alpha$ only at its endpoints, and 
there is a similar subpath $\sigma_{kl}'$ of $\sigma_{kl}$.
 Consider the graph $\Lambda = \alpha \cup \sigma_{1i}' \cup \sigma_{kl}'$.  This graph 
is a subdivided $K_4$, and since it contains $\alpha$ it contains all vertices of $A$.  But by 
construction, at most two of the four vertices $a_1, a_i, a_k, a_l$ can lie on any branch of $\Lambda$.  
This contradicts (A2), and so completes the proof of part (1).

For (2), assume $\card(A) = n \geq 3$ and let $x$ and $y$ be points as in the statement.   Assume by way of contradiction that there is a reduced path $\xi$ in $\G$ from $x$ to $y$ which fails the condition. 
Suppose that $\xi$ misses some vertex, say $a_k$, in the set $\{a_i,a_{i+1}, \dots, a_{j-1}\}$, and some vertex, say $a_l$, in the set $\{a_{i-1}, \dots, a_{1},a_n, \dots, a_{j+1},a_j\}$.  
Then since $a_k$ and $a_l$ separate $|\G|$, there is a reduced path $\beta$ connecting $a_k$ and $a_l$ which meets $\sigma \cup \xi$ only at its endpoints.  Now $\xi$ connects the two components of $\sigma \setminus \{a_k,a_l\}$, so $\xi$ contains a subpath $\xi'$ which intersects $\sigma$ only at its endpoints and connects the two components of $\sigma \setminus \{a_k,a_l\}$, as shown in 
Figure~\ref{fig:lem-cyclic}.  Then $\sigma \cup \xi' \cup \beta$ forms a subdivided $K_4$ in $\G$, which contains all $n \geq 3$ vertices of $A$.  But the branch $\beta$ of this $K_4$ contains only the two vertices $a_k$ and $a_l$, and this contradicts (A2).  So $\xi$ must contain all of the vertices in at least one of the sets in the induced partition of $A$.
\begin{figure}[ht]
\begin{center}
\begin{overpic}[scale=0.5]%, grid, tics=15]
{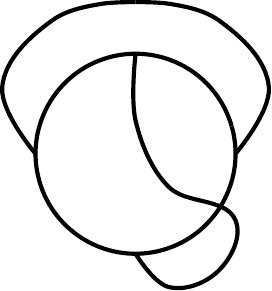}
\put(45, 85){\scriptsize $x$}
\put(37,4){\scriptsize $y$}
\put(-1,40){\scriptsize $a_k$}
\put(83,40){\scriptsize $a_l$}
\put(83,10){\scriptsize $\xi$}
\put(57,44){\scriptsize $\xi'$}
\put(15,15){\scriptsize $\sigma$}
\put(-5,80){\scriptsize $\beta$}
\end{overpic}
\caption{}
\label{fig:lem-cyclic}
\end{center}
\end{figure}
\end{proof}

\begin{cor}\label{cor:all A not b}   Let $A$ be a set of vertices of $\G$ satisfying properties \emph{(A1)}, \emph{(A2)} and \emph{(A3)}, such that $\langle A \rangle$ is infinite and $\partial \cC_A$ is not an $\approx$-pair.   Let $b$ be a vertex of $\G$ which is not in $A$.  
\begin{enumerate}
\item If $\langle A \rangle$ is $2$-ended, let $\{a_1,a_2\}$ be the unique cut pair in $A$.  Then there is an induced cycle $\alpha$ in $\G$ which contains $a_1$ and $a_2$ and contains a vertex from both components of $\G \setminus \{a_1,a_2\}$, but does not contain $b$.
\item If $\langle A \rangle$ is not $2$-ended, there is an induced  cycle $\alpha$ in $\G$ which contains all elements of $A$ but does not contain $b$.
\end{enumerate}
\end{cor}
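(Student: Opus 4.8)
The strategy is to produce an induced cycle through $A$ using Lemma~\ref{lem:A1 cyclic}, and then, if it passes through the forbidden vertex $b$, to surgically reroute it around $b$; the reductions on separating pairs come from properties (A1)--(A3), and the structural control comes from a subdivided $K_4$ argument. It will be used repeatedly that two \emph{adjacent} vertices of $\G$ separate $|\G|$, since the open edge between them is a component, so (A1) is automatic on adjacent pairs.

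First a preliminary point for case (1): the cut pair $\{a_1,a_2\}$ has exactly two components. Every component of $\G\setminus\{a_1,a_2\}$ meets both $a_1$ and $a_2$ (as $\G$ has no separating vertex), so there are at most two when $a_1$ or $a_2$ has valence two; and three or more components with both $a_1,a_2$ essential would make $\partial\cC_A$ an $\approx$-pair by Corollary~\ref{cor:recognise approx}, contrary to hypothesis. By Lemma~\ref{lem:A1 cyclic} there is an induced cycle $\alpha$ containing $A$, which in case (1) I would choose so that its two arcs between $a_1$ and $a_2$ lie in the two distinct components $W_1,W_2$. If $b\notin\alpha$ we are done; otherwise, since $b\notin A$ and $\alpha$ is induced, $b$ lies in the interior of the arc $\beta$ of $\alpha$ joining two cyclically consecutive elements of $A$ (Lemma~\ref{lem:A1A2xy} makes this cyclic ordering meaningful). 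Relabel these $a_1,a_2$ and let $\rho$ be the complementary arc, so $\partial\rho=\{a_1,a_2\}$ and $\rho\supseteq A\setminus\{a_1,a_2\}$. Note $\{a_1,a_2\}$ separates $|\G|$: it is the given cut pair in case (1), and a pair in $A$ (so (A1) applies) in case (2).

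The key local fact is that $\operatorname{int}(\beta)$ and $\operatorname{int}(\rho)$ lie in distinct components $U$ and $W_0$ of $|\G|\setminus\{a_1,a_2\}$. If not, there is a path $\delta$ in $|\G|\setminus\{a_1,a_2\}$, internally disjoint from $\alpha$, joining $\operatorname{int}(\beta)$ to $\operatorname{int}(\rho)$. Since $\{a_1,a_2\}$ separates $|\G|$ while $\alpha\setminus\{a_1,a_2\}$ now lies in a single component, there is a further component $V$, and, $\G$ having no separating vertex, a path $\epsilon$ from $a_1$ to $a_2$ with interior in $V$, hence internally disjoint from $\alpha\cup\delta$. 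Then $\Lambda:=\alpha\cup\delta\cup\epsilon$ is a subdivided $K_4$ with corner vertices $a_1,a_2$ and the endpoints of $\delta$; the only branch of $\Lambda$ containing both $a_1$ and $a_2$ is $\epsilon$, whose vertices apart from $a_1,a_2$ lie in $V$ and so off $\alpha\supseteq A$. As $\card(A)\ge3$ in case (2) (and likewise if $\card(A)=3$ in case (1)), this contradicts (A2): $\Lambda$ contains all of $A$, yet no branch of $\Lambda$ contains $a_1$, $a_2$ and a third element of $A$. In case (1) with $\card(A)=2$ the conclusion instead follows from the choice of $\alpha$.

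Finally, it suffices to find a path $\beta'$ from $a_1$ to $a_2$ avoiding $b$ and meeting $\rho$ only at $a_1,a_2$: taking $\beta'$ with no chords and no edges to $\operatorname{int}(\rho)$, the cycle $\beta'\cup\rho$ is an induced cycle through $A$ missing $b$ (and in case (1) still meeting $W_1$ and $W_2$). If $|\G|\setminus\{a_1,a_2\}$ has a component other than $U$ and $W_0$, route $\beta'$ through it; otherwise route $\beta'$ through $U\setminus\{b\}$ when possible. The main obstacle is the remaining case, where $b$ is a cut vertex of $U\cup\{a_1,a_2\}$ separating $a_1$ from $a_2$. Analysing the two sides of this cut vertex and using that $\G$ has no separating vertex or edge, one finds that either $b$ is adjacent to both $a_1$ and $a_2$, so $\{b\}$ is a single-vertex component of $\G\setminus\{a_1,a_2\}$ --- but then $\{a_1,a_2,b\}$ satisfies (A1) by adjacency and (A2) because $b$, of valence two, lies on a single branch of any subdivided $K_4$ through it together with $a_1,a_2$, contradicting the maximality (A3) of $A$ since $b\notin A$ --- or else $\{a_1,b\}$ and $\{a_2,b\}$ both separate $|\G|$, in which case I would check that $\{a_j,b\}$ separates $|\G|$ for every $a_j\in A$ and that (A2) persists for $A\cup\{b\}$ (using Lemma~\ref{lem:A1A2xy}), once more contradicting (A3); the one way the rerouting can be genuinely blocked would force $\G$ to be a single cycle, excluded by Standing Assumptions~\ref{assumptions}(4). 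The bulk of the work lies in this last case analysis --- verifying (A1) and (A2) for $A\cup\{b\}$, and carrying out the parallel argument for case (1).
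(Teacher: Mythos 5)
Your reduction to the ``blocked'' case is fine: the key local fact (that $\operatorname{int}(\beta)$ and $\operatorname{int}(\rho)$ lie in different components of $|\G|\setminus\{a_1,a_2\}$, via the subdivided $K_4$ built from $\alpha$, $\delta$ and $\epsilon$) is correct when $\card(A)\ge 3$, and the rerouting through a third component or through $U\setminus\{b\}$ does produce an induced cycle containing $A$ and missing $b$. The problem is that everything after that --- precisely the case in which $|\G|\setminus\{a_1,a_2\}$ has only the two components $U,W_0$ and $b$ separates $a_1$ from $a_2$ in $U\cup\{a_1,a_2\}$ --- is only a sketch, and this is exactly where the paper's proof does essentially all of its work. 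Your plan is the contrapositive of the paper's argument: the paper invokes (A3) to conclude that $A\cup\{b\}$ fails (A1) or (A2), and then converts each failure into an explicit detour (a path $\mu$ attached to the $\beta$-side of $\alpha$, controlled by Lemma~\ref{lem:A1A2xy}(2), in the (A1)-failure case; a cycle inside a subdivided $K_4$ containing the branch through $a_k,a_l$, plus a further $K_4$ argument to rule out an $a_m$ off that branch, in the (A2)-failure case). You instead must prove that in the blocked case $A\cup\{b\}$ satisfies (A1) \emph{and} (A2) --- i.e.\ that $\{a_j,b\}$ separates $|\G|$ for \emph{every} $a_j\in A$ (not just $a_1,a_2$), and that no subdivided $K_4$ violates (A2) for $A\cup\{b\}$ --- and the delicate sub-case of (1) where $A=\{a_1,a_2,c\}$ with $c$ adjacent to both, to which the paper devotes two full paragraphs. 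None of these verifications appear in your proposal; you explicitly defer ``the bulk of the work,'' so as written the proof is incomplete at its decisive step.

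Moreover, the one concrete claim you do make in the blocked case is not justified as stated: ``$b$ adjacent to both $a_1$ and $a_2$, so $\{b\}$ is a single-vertex component of $\G\setminus\{a_1,a_2\}$'' does not follow. Take two induced $5$-cycles sharing the single edge $a_1b$, with $a_2$ the other neighbour of $b$ on one of them: this graph is triangle- and square-free with no separating vertex or edge, $b$ is adjacent to both $a_1$ and $a_2$, every $a_1$--$a_2$ path in $U\cup\{a_1,a_2\}$ passes through $b$, yet the component of $b$ in $\G\setminus\{a_1,a_2\}$ has four vertices. (In that particular graph the (A1)--(A3)-maximal set happens to contain $b$, so it does not contradict the corollary itself, but it shows your implication needs the full strength of the standing hypotheses and an actual argument.) Similarly, ``$\{a_j,b\}$ separates $|\G|$ for every $a_j\in A$'' and ``(A2) persists for $A\cup\{b\}$'' are true only because of arguments equivalent to the paper's (A1)- and (A2)-failure analyses; asserting them with ``I would check'' leaves the core of the proof missing. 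To complete your route you would essentially have to reproduce those arguments, at which point the forward direction used in the paper is the cleaner way to organise them.
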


\begin{proof}  Assume first that $\langle A \rangle$ is $2$-ended.  Note that $\G \setminus \{a_1,a_2\}$ must have exactly two components, say $\Lambda_1$ and $\Lambda_2$, otherwise $\partial \cC_A$ would be an $\approx$-pair.  Suppose $b \in \Lambda_1$ and let $\mu$ be a reduced path from $a_2$ to $a_1$ in $\Lambda_2$.  To obtain the desired cycle $\alpha$, it suffices to concatenate $\mu$ with a reduced path from $a_1$ to $a_2$ in $\Lambda_1$ which does not contain $b$.  We assume by contradiction that every reduced path from $a_1$ to $a_2$ in $\Lambda_1$ passes through $b$.  It follows that property (A1) holds for the set $\{a_1,a_2,b\}$.  Now suppose that there is a subgraph of $\G$ which is a subdivided $K_4$ and contains all three vertices $a_1$, $a_2$ and $b$.  Then as every reduced path from $a_1$ to $a_2$ in $\Lambda_1$ contains $b$, the vertices $a_1$, $a_2$ and $b$ lie on the same branch of this $K_4$.  Thus property (A2) also holds for the set $\{a_1,a_2,b\}$.  Hence $\{a_1,a_2,b\}$ is contained in a maximal set which satisfies both (A1) and (A2).  If $A = \{a_1,a_2\}$ this contradicts (A3), since $b \not \in A$.  

Since $\langle A \rangle$ is $2$-ended the only other possibility is that $A = \{a_1,a_2,c\}$ with $c \neq b$ and $c$ adjacent to both $a_1$ and $a_2$.  Notice that as every reduced path from $a_1$ to $a_2$ in $\Lambda_1$ passes through $b$, the vertex $c$ lies in $\Lambda_2$.  In particular, $b$ and $c$ are non-adjacent.  Now the set $\{a_1,a_2,b,c\}$ must fail at least one of (A1) and (A2).  Suppose first that (A1) fails for this set.  Since (A1) does hold for $\{a_1,a_2,b\}$ and $\{a_1,a_2,c\}$, the pair $\{b,c\}$ is not a cut pair.  Thus there is a reduced path $\eta$ from $a_1$ to $a_2$ in $\G \setminus \{b,c\}$.  If $\eta$ is in $\Lambda_1$ this contradicts our assumption that every reduced path from $a_1$ to $a_2$ in $\Lambda_1$ contains $b$, so $\eta$ is in $\Lambda_2$.  Now $c \in \Lambda_2$ is adjacent to both $a_1$ and $a_2$, so there must be a reduced path in $\Lambda_2$, say $\beta$, from $c$ to some point in $\eta$.  Notice that $\beta$ does not contain either $a_1$ or $a_2$, and $\eta$ does not contain $c$.  Let $\Lambda$ be the union of $\eta$, $\beta$, the edges between $a_i$ and $c$ for $i = 1,2$, and a path in $\Lambda_1$ from $a_1$ to $a_2$ (via $b$).  Then $\Lambda$ is a subdivided $K_4$ subgraph which has $a_1$, $a_2$ and $c$ as three of its essential vertices.  Hence these three vertices are not contained in a single branch of $\Lambda$.  This contradicts property (A2) for the set $\{a_1,a_2,c\}$.  Therefore property (A1) must hold for the set $\{a_1,a_2,b,c\}$.

As (A1) holds for the set $\{a_1,a_2,b,c\}$, property (A2) must fail.  Thus there is a subgraph $\Lambda$ of $\G$ which is a subdivided $K_4$ and contains at least three vertices of $\{a_1,a_2,b,c\}$, so that these vertices are not all on the same branch of $\Lambda$.  Since (A2) holds for the sets $\{a_1,a_2,b\}$ and $\{a_1,a_2,c\}$, we may assume that the vertices $a_1$, $b$ and $c$ all lie on $\Lambda$, but they are not all on the same branch.  Note that then $a_2$ cannot lie on $\Lambda$.   However $\Lambda \setminus \{a_1\}$ contains both $b$ and $c$, and is contained in a single component of $\G\setminus \{a_1,a_2\}$.  This contradicts $b$ and $c$ being in different components of $\G \setminus \{a_1,a_2\}$.  We conclude that there is a reduced path from $a_1$ to $a_2$ in $\Lambda_1$ which does not pass through $b$, and so the required cycle $\alpha$ can be obtained.

Now suppose $\langle A \rangle$ is not $2$-ended.  Then $\card(A) \geq 3$.  Consider an induced cycle $\alpha$ containing all elements of $A$, and inducing the cyclic order $a_1,\dots,a_n$ on the elements of $A$.  Suppose $b$ is on the cycle $\alpha$, say between $a_i$ and $a_{i+1}$.   Since $b$ is not in $A$, by property (A3) the set $A \cup \{b\}$ fails either (A1) or (A2).  

If (A1) fails, there is some $a_k \in A$ (possibly equal to $a_i$ or $a_{i+1}$) so that $\{b, a_k\}$ does not separate $|\G|$.  Then there is a path $\mu$ connecting the two components of $\alpha\setminus \{b, a_k\}$ and meeting $\alpha$ only at its endpoints $x$ and $y$.  We claim that both $x$ and $y$ lie on the (closed) subpath of $\alpha$ between $a_i$ and $a_{i+1}$ containing $b$.   If not, then one possibility is that both $x$ and $y$ both lie on the (open) component of $\alpha \setminus \{a_i,a_{i+1}\}$ which does not contain $b$ (in this case, $a_k$ must be distinct from both $a_i$ and $a_{i+1}$).  Then $a_k$ and $a_i$ (say) lie in different components of $\alpha \setminus \{x,y\}$.  By slightly extending $\mu$ if either $x$ or $y$ is in $A$, we obtain a reduced path $\mu'$ connecting two points $x'$ and $y'$ in $\alpha \setminus A$, so that $a_k$ and $a_i$ lie in different components of $\alpha \setminus \{x',y'\}$, but neither $a_i$ nor $a_k$ lies on $\mu'$.  This contradicts 
Lemma~\ref{lem:A1A2xy}(2).  By a similar argument we can rule out exactly one of $x$ and $y$ lying on the (closed) subpath of $\alpha$ between $a_i$ and $a_{i+1}$ containing $b$. This proves the claim.
Since $\mu$ does not contain $b$, we can then use $\mu$ to replace the subpath of $\alpha$ from $x$ to $y$ via $b$, and so obtain the required cycle.

Now suppose (A1) holds for the set $A \cup \{b\}$, but (A2) fails.  Then there is a subgraph $\Lambda$ of $\G$ which is a subdivided copy of $K_4$, and distinct vertices $a_k, a_l \in A$ such that $a_k$, $a_l$ and $b$ are not contained in the same branch of $\Lambda$.  If $a_k$ and $a_l$ are on different branches of $\Lambda$, then 
by (A1) there is a path $\eta$ from $a_k$ to $a_l$ which meets $\Lambda$ only at its endpoints.  
Then $\Lambda \cup \eta$ contains a subgraph, say $\Lambda'$, which is a subdivided $K_4$, and 
such that $a_k$ and $a_l$ are on the same branch of $\Lambda'$.   Moreover, $\Lambda'$ can be 
chosen so that $b \in \Lambda'$, on a branch other than the one containing $a_k$ and $a_l$.  Thus we may assume 
that $a_k$ and $a_l
$ are on the same branch of $\Lambda$.

We now have that $a_k$ and $a_l$ lie on the same branch of $\Lambda$, say $\beta$, and that $b$ is not on $\beta$. Then there is an induced cycle $\sigma$ in $\Lambda$ so that $\sigma$ contains $\beta$ but not $b$.  If all elements of $A$ lie on $\beta$ then $\sigma$ is an induced cycle containing all elements of $A$ but not $b$, so suppose that there is some $a_m \in A$ which does not lie on $\beta$.
 By property (A2) for $A$, the vertex $a_m$ then cannot lie on $\Lambda$.  Recall that by assumption, the cycle $\alpha$ which contains all of $A$ also contains $b$.  Then it is not hard to see that $\alpha \cup \Lambda$ contains a subdivided $K_4$ subgraph in which $a_k$, $a_l$ and $a_m$ do not lie on the same branch.  This contradicts property (A2) for $A$.  Thus all elements of $A$ lie on the branch $\beta$, which completes the proof.
\end{proof}

Finally, we consider 
separation properties of geodesics in $\cC$ labelled by elements of a set $A$ satisfying (A1) and (A2).

\begin{lemma}\label{lem:Aseparates}
Let $A$ be a set of vertices of $\G$ with $\langle A \rangle$ infinite satisfying properties \emph{(A1)} and \emph{(A2)}.  Let $\gamma$ be a bi-infinite geodesic in $\cC_A$ labelled by elements of $A$.  Then $\gamma$ separates $\Sigma$ and $\partial \gamma$ separates $\partial W$.
\end{lemma}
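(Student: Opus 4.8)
The plan is to reduce the separation statement for $\gamma$ to the separation statements already established in Lemma~\ref{lem:separation} and Lemma~\ref{lem:gamma separates}. The geodesic $\gamma$ lies in $\cC_A$, and by Lemma~\ref{lem:A1 cyclic} the elements of $A$ lie on an induced cycle $\alpha$ in $\G$. By Remark~\ref{rmk:hyp plane}, a copy of $\cC_A$ embeds in a copy $\Sigma_\alpha$ of the subcomplex of $\Sigma$ generated by the vertices of $\alpha$, and $\Sigma_\alpha$ is quasi-isometric to the hyperbolic plane; in particular it is a convex subcomplex of $\Sigma$ on which $\gamma$ is a bi-infinite geodesic separating $\Sigma_\alpha$ into two pieces (a bi-infinite geodesic in $\H^2$ separates). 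So the natural first step is: show $\gamma$ separates $\Sigma$, and then that $\partial\gamma$ separates $\partial W$.

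For the separation of $\Sigma$: by Lemma~\ref{lem:gamma separates} it suffices to show that $\gamma$ separates $N(\gamma)$, the union of chambers meeting $\gamma$. First I would reduce to the case that $\gamma$ is bicoloured. If $\gamma$ is $(a,b)$-bicoloured for a pair $\{a,b\}\subseteq A$ which is a cut pair of $\G$ (which it is, since non-adjacent elements of $A$ separate $|\G|$, and by Standing Assumption (2) non-adjacent vertices of $\G$ that separate $|\G|$ in fact separate $\G$ — here one must be slightly careful, as $\{a,b\}$ separating $|\G|$ need not mean $\{a,b\}$ separates $\G$; but any two non-adjacent vertices of $A$ do form a cut pair of $\G$ by the cyclic structure of Lemma~\ref{lem:A1 cyclic} together with (A1)), then Lemma~\ref{lem:separation} directly gives that $\Sigma\setminus\gamma$, and hence $\partial W\setminus\partial\gamma$, has at least two components, and we are done. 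If $\gamma$ is not bicoloured, then its label alternates among at least three distinct elements of $A$, and the structure of the mirrors of the chamber $K$ together with the induced cycle $\alpha$ shows that, locally at each vertex of $\gamma$, the chambers of $N(\gamma)$ are partitioned into two sides according to the two components of $|\alpha|\setminus\{a_i,a_{i+1}\}$ (the two consecutive labels at that vertex). The key point is to show these local two-sided structures are globally consistent, i.e.\ that $N(\gamma)\setminus\gamma$ genuinely has two components rather than one: a chamber adjacent to $\gamma$ on one "side" cannot be connected within $N(\gamma)\setminus\gamma$ to a chamber on the other "side". This is where I expect to use Lemma~\ref{lem:A1A2xy}(2): a path in $\G$ realizing such a connection would correspond to a reduced path between two points $x,y$ on the cycle $\sigma$ (a realization of $\alpha$) avoiding all vertices of $A$ in one of the two arcs, contradicting the conclusion that it must pass through all of the vertices in one of the two parts of the induced partition of $A$.

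Once $\gamma$ separates $\Sigma$, the passage to $\partial W$ follows the pattern in the proof of Lemma~\ref{lem:separation}: the two (or more) components $Y_1,\dots$ of $\Sigma\setminus\gamma$ are each unbounded and "deep" (each contains points arbitrarily far from $\gamma$, via the explicit description of their Cayley-graph vertices as words $w_{ab}\lambda_i v$ or the analogue for three or more labels), and by the same projection/quasiconvexity argument used there — or by invoking Lafont's Lemmas 2.1 and 2.3 from~\cite{lafont-3d} as the authors do — the endpoint pair $\partial\gamma$ separates $\partial W$ into a matching number of components. The main obstacle is the non-bicoloured case of showing $N(\gamma)\setminus\gamma$ is disconnected: one has to set up the correct notion of "side" at each vertex of $\gamma$, check consistency along $\gamma$, and then rule out cross-connections through $\Sigma\setminus N(\gamma)$ — the latter being exactly the kind of combinatorial/Euclidean-square-filling argument used in Lemma~\ref{lem:gamma separates}, now fed by the graph-theoretic input from Lemma~\ref{lem:A1A2xy}(2) rather than from a cut pair.
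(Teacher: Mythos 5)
Your proposal follows essentially the same route as the paper: reduce via Lemma~\ref{lem:gamma separates} to showing $\gamma$ separates $N(\gamma)$, set up a two-sided partition of the panels along $\gamma$ using the cyclic ordering of $A$ on an induced cycle $\alpha$, and use Lemma~\ref{lem:A1A2xy}(2) (the place where (A2) and the no-subdivided-$K_4$ condition do the work) to show the partition is well-defined and globally consistent, then pass to $\partial W$ by the Lafont-style argument. The only organizational difference is that the paper splits on $\card(A)=2$ versus $\card(A)\geq 3$ rather than on bicoloured versus non-bicoloured $\gamma$, and carries out the ``sides'' construction as an explicit induction on the panels $P_i$ with three consistency properties; the key ideas and citations match yours.
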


\begin{proof}
If $\card(A) =2$, with $A = \{a, b\}$, then $a$ and $b$ are not adjacent in $\G$ (since $\langle A \rangle$ is infinite), and so 
$\{a, b\}$ is a cut pair by (A1).  Hence by 
Lemma~\ref{lem:separation}, $\gamma$ separates $\Sigma$ and $\partial \gamma$ separates $\partial W$.

Now assume $\card(A) \geq 3$.  We first show that $\gamma$ separates $\Sigma$.  By Lemma~\ref{lem:gamma separates}, it is enough to show that $\gamma$ separates $N(\gamma)$.  
If not, then given any pair of points $x$ and $y$ in $N(\gamma) \setminus \gamma$, there is a path $\eta$ in $N(\gamma) \setminus \gamma$ connecting them.  Since $\eta$ misses $\gamma$, and in particular, the cone points of the chambers that make up $N(\gamma)$, we may assume that $\eta$ (as well as $x$ and $y$) lie in $\partial N(\gamma)$.  We show below that this leads to a contradiction.  

Write $N(\gamma)$ as the union of chambers $K_i$, with $-\infty \le i \le \infty$, such that for each $i$, the chambers $K_{i-1}$ and $K_{i}$ intersect in the panel $P_i$, of type $g_i$.  
Recall that each panel $P$ is a star, and denote by $\partial P$ the set of endpoints of the spokes of this star.  
The boundary of any chamber $K$ can be identified with the graph $\G$, and under this identification, the cone point of a panel $P$ of type $g$ corresponds to the vertex $g$ in $\G$, while the points of $\partial P$ correspond to midpoints of the edges emanating from $g$ in $\G$. 

Identify the boundary of  $K_0$ with $\G$ as above.  Fix an embedded cycle $\alpha$ containing $A$ so that $\alpha$ induces a cyclic ordering $a_1, \dots, a_n$ on the elements of $A$ with $a_1= g_0$.  
This order on $A$ induces a partition of $\partial P_0$ into two sets $P_0^+$ and $P_0^-$, as follows.  Let $x \in \partial P_0$, so that $x$ may be identified with the midpoint of an edge emanating from $a_1$ in $\G$.  Let  $\alpha_{1n}$ be the subpath of $\alpha$ connecting $a_1$ to $a_n$ and containing no other vertices of $\alpha$, and similarly define $\alpha_{12}$.  If $x$ lies on $\alpha$, then we put $x$ in $P_0^-$ if $x$ lies on $\alpha_{1n}$, and $x$ in $P_0^+$ if $x$ lies on $\alpha_{12}$.  

If $x$ does not lie on $\alpha$, let $b$ be the other vertex of the edge of $\G$ which has $x$ as its midpoint.  As $\G$ has no separating vertices or edges, there is a reduced path $\eta$ connecting $b$ to some vertex $b'$ of $\alpha$ other than $a_1$, so that $\eta$ intersects $\alpha$ only at $b'$.  
An argument similar to the proof of Lemma~\ref{lem:A1A2xy}(2), applied to the union of $\eta$ with the edge containing $x$, implies that $b'$ must lie on either $\alpha_{1n}$ or $\alpha_{12}$.
Put $x \in P_0^-$ if $b'$ lies on $\alpha_{1n}$, and $x \in P_0^+$ if $b'$ lies on $\alpha_{12}$.

To see that this is well-defined, 
suppose that there are reduced paths $\xi$ and $\xi'$ connecting $a_1$ via $b$ to vertices $b' \neq a_1$ 
on $\alpha_{1n}$ and $b'' \neq a_1$ on $\alpha_{12}$ respectively, so that $\xi$ and $\xi'$ meet $\alpha$ only at their endpoints.  
Then the graph $\Lambda=\alpha \cup \xi \cup \xi'$
 is a subdivided $K_4$ containing all of $A$, and its branch $\xi$ contains at most two vertices of $A$. This contradicts (A2).   Thus $P_0^-$ and 
$P_0^+$ are well-defined, and  we have completed the partition of $\partial P_0$.

Now we inductively define a partition of $\partial P_i$ into $P_i^+$ and $P_i^-$ for each $i \neq 0$ such that:
\begin{enumerate}
\item If $\nu$ is a path from $\partial P_{i}$ to $\partial P_{i+1}$ in $\partial K_i$ whose interior contains no points of $\partial P_{i} \cup \partial P_{i+1}$, then $\nu$ either connects 
$P_{i}^+$ to  $P_{i+1}^+$ or $P_{i}^-$ to  $P_{i+1}^-$.  
\item If $\nu$ is a path from $ P_{i}^+$ to $P_{i}^-$ in $\partial K_i \setminus P_i$, then $\nu$ passes through the cone point of~$P_{i+1}$.
\end{enumerate}

We first prove property (2) for $i=0$.   Suppose $\nu$ is a path from $ P_{0}^+$ to $P_{0}^-$ in $\partial K_0 \setminus P_0$.  Consider $\nu$ under the identification of $\partial K_0$ with $\G$ and call its endpoints $x$ and $y$.  Assume $\nu $ is reduced. By our assumption, 
$\nu$ does not pass through the vertex of $\G$ labelled $g_0 = a_1$.  
Then by Lemma~\ref{lem:A1A2xy}(2), $\nu$ must pass through all the other vertices of $A$.  Translating back to $K_0$, this means, in particular, that $\nu$ passes through the cone point of $P_1$.  

Now suppose that $P_1^\pm, \dots, P_{i-1}^\pm$ 
have been defined satisfying (2), with $P_1^\pm, \dots, P_{i-2}^\pm$ satisfying (1).
Identify $\partial K_i$ with $\G$, and let $x$ correspond to a point of $\partial P_i$.  Then there is a path $\nu$ in $\G$ between the vertex labelled $g_i$ and the vertex labelled $g_{i-1}$ passing through $x$.  Put $x$ in $P_i^+$ 
if $\nu$ passes through $P_{i-1}^+$ and in $P_i^-$ otherwise.  If $\mu$ and $\nu$ are two such paths, entering $P_{i-1}$ through $P_{i-1}^-$ and $P_{i-1}^+$ respectively, then $\mu\cup\nu$ reduces to a path connecting $P_{i-1}^+$ and $P_{i-1}^-$,
which does not pass through the cone point of $P_i$.  However, this contradicts property (2) for $P_{i-1}$.
This shows that the partition on $\partial P_i$ is well defined, and 
the partition on $\partial P_{i-1}$ satisfies property (1).  The proof that the partition on $\partial P_i$ satisfies property (2) is similar to the proof in the base case.  The definition of $P_i^\pm$ for $i<0$ is similar.  

Observe that an argument similar to the base case of (2) shows for all $i$

\begin{enumerate}
\setcounter{enumi}{2}
\item If $\nu$ is a path from $ P_{i}^+$ to $P_{i}^-$ in $\partial K_{i-1} \setminus P_i$, then $\nu$ passes through the cone point of~$P_{i-1}$.
\end{enumerate}

Now assume that the path $\eta$ in $\partial N(\gamma)$ constructed above connects 
some point $x\in P_0^+$ to some point $y \in P_0^-$.  Then $\eta$ is a union of segments, each of which connects some point of $\partial P_{i }$ to some point of 
$\partial P_i$ or $\partial P_{i+1}$ for some $i$, and does not pass through the cone points of the panels $P_i$ and $P_{i\pm1}$.  Now since $\eta$ connects $x$ to $y$, it must (without loss of generality) have some segment which connects a point of $P_j^+$ to either $P_j^-$ or $P_{j+1}^-$ for some $j$.  However this contradicts one of properties (1), (2), and (3) above.  

It follows that $\gamma$ does separate $N(\gamma)$, and therefore $\gamma$ separates $\Sigma$ as required. By similar arguments to those in Lemma 2.3 of Lafont~\cite{lafont-3d}, we obtain that $\partial \gamma$ separates $\partial W$ as well.
\end{proof}

We are now ready to prove Proposition~\ref{prop:sim}.

\begin{proof}[Proof of Proposition~\ref{prop:sim}]
Let $\cC_A$ be some copy of the Cayley graph of $\langle A \rangle$ in $\cC$.  Assume that $\langle A \rangle$ is infinite and that $\partial \cC_A$ is not an $\approx$-pair.  

We first show that $\partial \cC_A \setminus \partial \cA$ contains at least two points.  Since $\langle A \rangle$ is infinite, the set $\partial \cC_A$ has at least two points.  If $\langle A \rangle$ is $2$-ended, then $\partial \cC_A$ has exactly two points, and by the assumption that $\partial \cC_A$ is not an $\approx$-pair, it follows that no bicoloured geodesic in $\cC_A$ has endpoints an $\approx$-pair.  So the set $\cA$ is empty and $\partial \cC_A \setminus \partial \cA = \partial \cC_A$ has exactly two points.  If $\langle A \rangle$ is not $2$-ended, we may construct a geodesic $\gamma$ in $\cC_A$ so that neither direction of $\gamma$ is eventually bicoloured by a pair of vertices in $A$.  Thus by Lemma~\ref{lem:parallel}, neither $\gamma^+$ nor $\gamma^-$ is the endpoint of a bicoloured geodesic.  So $\gamma^+$ and $\gamma^-$ are two distinct points in $\partial \cC_A \setminus \partial \cA$.

We next show that any two points in $\partial \cC_A \setminus \partial \cA$ are in the same $\sim$-class.  For this, let $\xi, \eta \in \partial \cC_A \setminus \partial \cA$, with $\xi \neq \eta$.  To show that $\xi \sim \eta$, we must show that $\partial W \setminus \{\xi,\eta\}$ has exactly two components.  
Since $\langle A \rangle$ is convex in $W$ and $\xi,\eta \not \in \partial \cA$, the boundary points $\xi$ and $\eta$ are the endpoints of some geodesic $\gamma$ in $\cC_A \setminus \cA$.  By Lemma~\ref{lem:Aseparates}, $\partial \gamma = \{ \xi, \eta\}$ separates $\partial W$.  Thus $\xi$  and $\eta$ are in $\partial W(2+)$.  On the other hand, since $\xi$ and $\eta$ are not in $\partial \cA$, by our characterisation of $\approx$-pairs in Lemma~\ref{cor:approx} we know that neither of them is part of an 
$\approx$-pair. 
It follows that neither $\xi $ nor $\eta$ is in $\partial W (3+)$, 
by property (2) following Definition~\ref{def:approx}.
So $\xi$ and $\eta$ are both in $\partial  W(2)$, and they separate $\partial W$ into exactly two components.  
Thus $\xi \sim \eta$.  

To complete the proof of Proposition~\ref{prop:sim}, we show that $\partial \cC_A \setminus \partial \cA$ is a full $\sim$-class, i.e, that no point of $\partial W(2)$ outside the set $\partial \cC_A \setminus \partial \cA$ is equivalent to a point inside it.  Assume by contradiction that there are points $\xi \in \partial \cC_A \setminus \partial \cA$ and 
$\eta \not \in \partial \cC_A$ 
so that $\xi \sim \eta$.  Then since $\xi \sim \eta$, we have that $\{\xi,\eta\}$ separates $\partial W$ 
into exactly two components, say $U_1$ and $U_2$.  Now $\xi \in \partial W(2)$, so there is a 
neighbourhood $V$ of $\xi$ in $\partial W$ so that $V \setminus \{\xi\}$ has exactly two components, 
say $V_1$ and $V_2$.  Suppose $V$ intersects only one of $U_1$ and $U_2$, say $U_1$.  Then $\partial W \setminus \eta $ is the union of the open sets $V\cup U_1$ and $U_2$, and hence $\eta$ is a cut point of $\partial W$.  This contradicts the result from~\cite{swarup} that the boundary of a $1$-ended hyperbolic group has no global cut points, as discussed in the introduction to~\cite{bowditch}.  Thus 
$V$ intersects both $U_1$ and $U_2$ nontrivially.
On the other hand, if $V_1$ has nonempty intersection with both $U_1$ and $U_2$, this 
contradicts the connectedness of $V_1$.  So without loss of generality $V_1 = V \cap U_1$, and 
similarly $V_2 = V \cap U_2$.  
We claim that there is a path in $\partial W \setminus \{\xi,\eta\}$ which connects a point in $V_1$ to a 
point in $V_2$.  This yields a contradiction, since this path connects $U_1$ to $U_2$ in $\partial W \setminus \{\xi,\eta\}$.

Let $\xi'$ be a point of $\partial \cC_A \setminus \partial \cA$ with $\xi' \neq \xi$.  We showed  above that $\xi \sim \xi'$.  Let $\gamma$ be a geodesic in $\cC$ connecting $\xi$ to $\eta$.  Since $\xi \in \partial \cC_A$, in the direction of $\xi$ the geodesic $\gamma$ is eventually labelled by elements of $A$.   
Now since $\eta \not \in \partial \cC_A$, not all of $\gamma$ is contained in $\cC_A$.  Starting at $\xi$ and travelling back along $\gamma$, let $b$ be the first label of $\gamma$ which is not in $A$.   

We first consider the case that $\langle A \rangle$ is $2$-ended.  Then $\partial \cC_A \setminus \partial \cA = \partial \cC_A$ consists of the two endpoints $\xi$ and $\xi'$ of an $(a_1,a_2)$-bicoloured geodesic, where $a_1, a_2 \in A$ and $\{a_1,a_2\}$ is a cut pair in $\G$.  Let $\Lambda_1$ and $\Lambda_2$ be the two components of $\G \setminus \{a_1,a_2\}$.  Then by Corollary~\ref{cor:all A not b}(1), there is an induced cycle $\alpha$ in $\G$ which contains both $a_1$ and $a_2$, does not contain $b$, and has nonempty intersection with both $\Lambda_1$ and $\Lambda_2$.  Let $\Sigma_{\alpha}$ be the corresponding subcomplex of $\Sigma$ containing our chosen copy of $\cC_A$, as in Remark~\ref{rmk:hyp plane}.  
Then $\Sigma_{\alpha}$ is quasi-isometric to the hyperbolic plane, and $\xi, \xi' \in \partial \Sigma_{\alpha}$ but $\eta \not \in \partial \Sigma_{\alpha}$ since $b$ is not on $\alpha$.  Let $C_1$ and $C_2$ be the two components of $\partial W \setminus \{\xi,\xi'\}$.  Since the cycle $\alpha$ contains vertices in both components of $\G \setminus \{a_1,a_2\}$, Lemma~\ref{lem:separation} implies that the circle $\partial \Sigma_{\alpha}$ has nonempty intersection with both $C_1$ and $C_2$.  Now if both $C_1$ and $C_2$ have nonempty intersection with $V_1$ this contradicts the connectedness of $V_1$, and so without loss of generality $V \cap C_i = V_i$ for $i = 1,2$.  Hence $\partial \Sigma_{\alpha}$ has nonempty intersection with both $V_1$ and $V_2$.  Let $p_i \in \partial \Sigma_{\alpha} \cap V_i$ for $i = 1,2$.  Then the arc of $\partial \Sigma_{\alpha}$ from $p_1$ to $p_2$ which misses $\xi$ gives a path in $\partial W \setminus \{\xi,\eta\}$ from $V_1$ to $V_2$, proving the claim in this case.

Now suppose that $\langle A \rangle$ is not $2$-ended.  By Corollary~\ref{cor:all A not b}(2), there is an induced cycle $\alpha$ in $\G$ which contains all elements of $A$ but does not contain $b$.  Let $\Sigma_\alpha$ be the corresponding subcomplex of $\Sigma$ which contains $\cC_A$ and is quasi-isometric to the hyperbolic plane.  Then $\xi \in \partial \Sigma_\alpha$ but $\eta \not \in \partial \Sigma_\alpha$.  Similarly to the previous case, to prove the claim it suffices to show that the sets $\partial \Sigma_\alpha \cap V_1$ and $\partial \Sigma_\alpha \cap V_2$ are both nonempty.

Since $\langle A \rangle$ is not $2$-ended, there is a point $\xi'' \in \partial \cC_A \setminus \partial \cA$ with $\xi''$ distinct from both $\xi$ and $\xi'$.  Let $C_1$ be the component of $\partial W \setminus \{\xi,\xi'\}$ which does not contain $\xi''$ and let $C_2$ be the component of $\partial W \setminus \{\xi,\xi''\}$ which does not contain $\xi'$.  Then similarly to the previous case, we may assume that $V \cap C_i = V_i$ for $i = 1,2$.  Now the component $C_1$ contains the interior of the arc of $\partial \Sigma_\alpha$ from $\xi$ to $\xi'$ which misses $\xi''$, hence there is a point $p_1 \in \partial \Sigma_\alpha \cap V_1$.  Similarly, we obtain a point $p_2 \in \partial \Sigma_\alpha \cap V_2$, which completes the proof of the claim in this case.

We have shown that $\partial \cC_A \setminus \partial \cA$ is a $\sim$-class in $\partial W$.  Now this $\sim$-class is a $\sim$-pair if and only if $\partial \cC_A$ contains exactly two points, which occurs if and only if $\langle A \rangle$ is $2$-ended.  This completes the proof
of Proposition~\ref{prop:sim}.
\end{proof}

\begin{remark}\label{rem:approx-sim edges}  We can now construct some edges in the pre-tree $T$.  Let $A$ be a set of vertices of $\G$ so that $\langle A \rangle$ is infinite and not $2$-ended and properties (A1)--(A3) from Proposition~\ref{prop:sim} hold.  Let $\cC_A$ be a copy of the Cayley graph of $\langle A \rangle$.  Then since $\langle A \rangle$ is not $2$-ended, $\partial \cC_A$ is not a $\approx$-pair.  Let $v = \partial \cC_A \setminus \partial \cA$ be the corresponding infinite $\sim$-class.  Then for each $\gamma \in \cA$, the endpoints $\{\gamma^+,\gamma^-\}$ form an $\approx$-pair which is in the closure of the set $v$.  Hence there is an edge in the pre-tree $T$ connecting $v$ to this $\approx$-pair vertex.\end{remark}

%%%%%%%%%%%%%%%%%%%%%%%%%%%%%%%%%%  
\subsection{Identification of the $\sim$-classes and their stabilisers}\label{sec:type 1 and 2}
%%%%%%%%%%%%%%%%%%%%%%%%%%%%%%%%%%

We now show that we have already constructed all the $\sim$-classes in $\partial W$.  Together with our identification of the $\approx$-pairs in Lemma~\ref{lem:all approx} and the relation of betweenness, 
this means we have constructed the pretree $T$. This section also identifies the stabilisers of $\sim$-classes.

Bowditch shows that $\partial W(2+)$ is equal to the disjoint union of the $\approx$-pairs and $\sim$-classes in $\partial W$.  (See Remark~\ref{rem:allM(2+)}.)
Observe that the constructions in Corollary~\ref{cor:approx} and Proposition~\ref{prop:sim}
use exactly those points in $\partial W(2+)$ which correspond to geodesic rays that are eventually labelled by elements of some set $A$ satisfying conditions (A1) and (A2) in 
Proposition~\ref{prop:sim}.
 (Given  a $\approx$-pair associated to an $(a, b)$-bicoloured bi-infinite geodesic,  
 the set $\{a,b\}$ satisfies (A1) and (A2).)   The following proposition shows that such points in $\partial W (2+)$ exhaust all of $
\partial W(2+)$.  It then follows that there are no additional $\sim$-classes.

 \begin{prop}\label{prop:all 2+}
Let $\gamma^+ \in \partial W$ be represented by a geodesic ray $\gamma$ in $\cC$ based at $e$.  Then $\gamma^+ \in \partial W(2+)$ if and only if $\gamma$ is eventually labelled by the elements of a set $A$ satisfying properties \emph{(A1)} and  \emph{(A2)} in 
Proposition~\ref{prop:sim}.
\end{prop}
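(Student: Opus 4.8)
The plan is to prove the two implications separately. The ``if'' direction is a quick consequence of the results already established, while the converse is where the substantive work lies. For the ``if'' direction, suppose $\gamma$ is eventually labelled by a set $A$ satisfying (A1) and (A2); since $\gamma$ is an infinite geodesic ray, $\langle A\rangle$ is infinite (Remark~\ref{rmk: A infinite}). I would enlarge $A$ to a set $A'$ that is maximal among all sets of vertices of $\G$ which contain $A$ and satisfy (A1) and (A2); such an $A'$ exists since $S$ is finite, and it is then automatically maximal among \emph{all} sets satisfying (A1) and (A2), so $A'$ satisfies (A1), (A2) and (A3). The tail of $\gamma$ lies in some copy $\cC_{A'}$ of the Cayley graph of $\langle A'\rangle$, so $\gamma^+\in\partial\cC_{A'}$. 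If $\partial\cC_{A'}$ is an $\approx$-pair (recognisable via Corollary~\ref{cor:recognise approx}) then $\gamma^+\in\partial W(3+)\subseteq\partial W(2+)$; otherwise Proposition~\ref{prop:sim} applies to $A'$ and writes $\partial\cC_{A'}$ as a union of $\approx$-pairs (lying in $\partial W(3+)$) together with one $\sim$-class (lying in $\partial W(2)$), so again $\gamma^+\in\partial W(2+)$.

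For the ``only if'' direction, assume $\gamma^+\in\partial W(2+)$; by Remark~\ref{rem:allM(2+)} the point $\gamma^+$ lies in an $\approx$-pair or in a $\sim$-class. If $\gamma^+$ lies in an $\approx$-pair, Lemma~\ref{lem:all approx} provides an essential cut pair $\{a,b\}$ of $\G$ and an $(a,b)$-bicoloured geodesic $\gamma'$ having $\gamma^+$ as an endpoint; since $\gamma$ and $\gamma'$ share the endpoint $\gamma^+$, Lemma~\ref{lem:parallel} shows $\gamma$ is eventually $(a,b)$-bicoloured, hence eventually labelled by $A=\{a,b\}$. Such an $A$ satisfies (A1), because a cut pair of $\G$ consists of non-adjacent vertices (see the discussion after Theorem~\ref{thm:mihalik-tschantz}) and hence separates $|\G|$, and it satisfies (A2) vacuously since $\card(A)=2$.

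It remains to handle the case that $\gamma^+$ lies in a $\sim$-class, so $\val(\gamma^+)=2$. Let $A=A_\gamma$ be the set of labels occurring infinitely often along $\gamma$; then $\gamma$ is eventually labelled by $A$, the group $\langle A\rangle$ is infinite, and it suffices to prove that $A$ satisfies (A1) and (A2). I would argue by contradiction. If (A1) fails, there are non-adjacent $a,b\in A$ with $\G\setminus\{a,b\}$ connected; if (A1) holds but (A2) fails, then by Lemma~\ref{lem:A1 cyclic} the elements of $A$ lie on an induced cycle, and there is a subdivided $K_4$ subgraph $\Lambda$ containing at least three vertices of $A$ not all on one branch of $\Lambda$. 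In either case I would show that $\gamma^+$ is not a local cut point, contradicting $\val(\gamma^+)=2$.

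To show $\gamma^+$ is not a local cut point, note that $\partial W\setminus\{\gamma^+\}$ is connected, since the boundary of a $1$-ended hyperbolic group has no global cut points~\cite{swarup}; so one must show this space has only one end, i.e.\ that $\gamma^+$ has arbitrarily small connected punctured neighbourhoods in $\partial W$. A point in such a neighbourhood is represented by a geodesic ray that agrees with $\gamma$ along a long initial run of chambers and then diverges from $\gamma$ at some chamber $K_j$, leaving through a panel other than the entry and exit panels of $\gamma$ at $K_j$; under the identification of $\partial K_j$ with $\G$, the components of $\G\setminus\{s_j,s_{j+1}\}$ (where $s_j,s_{j+1}\in A$ are the consecutive labels of $\gamma$ at $K_j$) index the directions in which $\gamma$ may be left at $K_j$. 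When (A1) or (A2) fails, $\G$ is ``too connected'' around the labels occurring in $\gamma$, and one can join any such direction to any other by sliding the chamber of divergence along $\gamma$ and routing within the (graph-isomorphic) chamber boundaries, arguing in the spirit of Lemmas~\ref{lem:A1A2xy} and~\ref{lem:gamma separates}; thus $\partial W$ fails to pinch at $\gamma^+$. Carrying out this local analysis — in particular, keeping careful track of how the complementary components of the entry and exit panels of successive chambers of $\gamma$ overlap, and matching this bookkeeping precisely with the negations of (A1) and (A2) — is the main obstacle and constitutes the bulk of the proof.
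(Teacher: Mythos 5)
Your ``if'' direction is correct and in fact makes explicit what the paper leaves as a one-line pointer: enlarging $A$ to a maximal $A'$ satisfying (A1)--(A3), then invoking Corollary~\ref{cor:recognise approx} or Proposition~\ref{prop:sim} on $\partial\cC_{A'}$, is a clean way to see $\gamma^+\in\partial W(2+)$. The $\approx$-pair branch of your ``only if'' argument, via Lemmas~\ref{lem:all approx} and~\ref{lem:parallel}, is also fine.

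The genuine gap is the $\sim$-class branch, which you yourself flag as ``the bulk of the proof'' --- and it is exactly where the proposition's content lies. The paper does not case-split at all here: it treats the whole ``only if'' direction as a single contrapositive, fixing the tail alphabet $C$, assuming $C$ fails (A1) or (A2), and showing the punctured neighbourhoods $U_n\setminus\{\gamma^+\}$ are connected. The mechanism for that connectivity is not ``sliding the chamber of divergence and routing within chamber boundaries''; rather, the paper exploits the circular boundaries $\partial\Sigma_\alpha$ of the quasi-hyperbolic-plane subcomplexes $\Sigma_\alpha$ attached to induced cycles $\alpha\supseteq C$. In the base case where the two rays are confined to one such $\Sigma_\alpha$, an arc of $\partial\Sigma_\alpha$ joins their endpoints, and if that arc runs through $\gamma^+$, the failure of (A1) or (A2) is used to produce another induced cycle $\tau$ missing one of the recurring labels of $\gamma$, whose boundary circle $\partial\Sigma_{\tau,m}$ (for $m$ chosen at a transition point between sub-alphabets of $C$) supplies a detour past $\gamma^+$. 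The general case is then reached by a chain of induced cycles whose consecutive intersections generate infinitely-ended special subgroups, followed by an interpolation over arbitrary rays. Your sketch correctly names the topological goal, but it neither explains how negating (A1) or (A2) concretely yields the needed detours nor describes a mechanism that plausibly would, so it cannot be completed as written without importing essentially the paper's argument.
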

 
 \begin{proof}
The ``if'' direction follows from the discussion in the paragraph above the statement. 

 For the converse, let $C$ be the set of generators of $W$ that occur infinitely often as labels of $\gamma$ (so that $\gamma$ is eventually labelled by $C$), and suppose that $C$ fails either (A1) or (A2).  To establish that $\gamma^+$ is not in $ \partial W(2+)$, it 
  is enough to show that $U_n \setminus \{\gamma^+ \}$ is connected for all $n$, where 
$U_n$ is the subset of $\partial W$ represented by geodesic rays in $\cC$ based at $e$
which agree with $\gamma$ for the first $n$ edges. Given $\eta^+, \mu^+ \in U_n\setminus \{\gamma^+ \}$, represented by geodesic rays $\eta$ and $\mu$ in $\cC$ which agree with $\gamma$ for the first $n$ edges, 
 we will construct a path between $\eta^+$ and $\mu^+$ in $U_n \setminus \{\gamma^+ \}$. 

We begin with two special cases.  First, suppose that $\eta$ and $\mu$ are both labelled, starting at $\gamma(n)$, exclusively by vertices of an induced cycle $\alpha$ of $\Gamma$. 
If $\Sigma_\alpha$ denotes the subcomplex corresponding to $\alpha$ based at $\gamma(n)$, then 
the parts of $\eta$ and $\mu$ beyond $\gamma(n)$ lie in 
$\Sigma_\alpha$. 
Thus $\eta^+$ and $ \mu^+$ are in $ \partial \Sigma_\alpha $, which is homeomorphic to  $S^1$
(see Remark~\ref{rmk:hyp plane}).  There are two arcs connecting $\eta^+$ and $\mu^+$ in $\partial \Sigma_\alpha$, and at least one of these is contained in $U_n$; call this arc $\nu$. 

If $\nu$ does not pass through $\gamma^+$, then it is the desired path between 
$\eta^+$ and $ \mu^+$.  Now suppose $\gamma^+$ lies on $\nu$.  This means that if 
$V \subseteq U_n \cap \partial \Sigma_\alpha$ is such that $V\setminus \{ \gamma^+\}$ has two components $V_1$ and $V_2$, then $\nu$ intersects both $V_1$ and $V_2$.   We will obtain a detour to $\nu$ by constructing a path between $V_1$ and $V_2$ in $U_n \setminus \{\gamma^+ \}$. 
Observe that since $\gamma^+$ lies on $\nu \subset  \partial \Sigma_\alpha $, the set $C$ defined  above, of eventual labels of $\gamma$, is a subset of the vertices of $\alpha$.

If $C$ fails (A1), then there exists a pair $\{c_1, c_2 \}\subseteq C$ which does not  separate $|\G|$.  Then $c_1$ and $c_2$ are nonadjacent, so  the pair $\{c_1,c_2\}$ does not separate $\G$.  It follows 
that there is a path $\beta$ in $\G$ which connects the two components of $\alpha \setminus \{c_1, c_2\}$ 
and meets $\alpha$ only at its endpoints.  Let $\tau$ be the cycle obtained by taking the union of $\beta$ with the arc of $\alpha$ between the endpoints of $\beta$ which contains $c_1$.  The path $\beta$ can be chosen so that $\tau$ is induced.  
Now for $m \ge n$, 
let $\Sigma_{\tau,m}$ be the subcomplex corresponding to $\tau$ based at 
$\gamma(m)$.  Since $c_2$ is not on $\tau$, it follows that $\gamma^+$ does not lie on 
$\partial  \Sigma_{\tau,m}$.   Let $x$ and $y$ be two vertices of $\alpha$ which lie on $\tau \setminus \{c_1\}$ and are such that $c_1$ and $c_2$ lie in distinct components of $\alpha \setminus \{x,y\}$.  
Then  $\Sigma_{\tau,m} \cap \Sigma_\alpha$ contains (at least) 
 an $(x, y)$-bicoloured geodesic $\zeta_m$ passing through $\gamma(m)$.   We may choose $m$ large enough so that $\partial \zeta_m \subset V$, 
 and at least one arc of $\partial \Sigma_{\tau, m} $ connecting $\zeta_m^+$ and $\zeta_m^-$ 
 is in $U_n$.  This arc will yield the required path between $V_1$ and $V_2$, provided we can show that  $m$ can be chosen so that $\partial \zeta_m $ has non-trivial intersection with 
 $V_1$ as well as $V_2$.  

Assume $m$ is large enough that the labels of $\gamma$ beyond $\gamma(m-1)$ are in $C$.  
Now suppose the two edges of $\gamma$ incident to $\gamma(m)$ are labelled $c$ and $c'$ (so in particular, $c$ and $c'$ lie on $\alpha$), and $\xi_1, \xi_2$ are geodesic rays based at $\gamma(m)$ such that 
$\xi_1^+, \xi_2^+ \in V$.  Observe that by similar arguments to Lemma~\ref{lem:separation}, carried out in the subcomplex $\Sigma_\alpha$, if the first letters labelling $\xi_1$ and $\xi_2$ are in different components of $\alpha \setminus \{c, c'\}$ then one of $\xi_1^+$ and $\xi_2^+$ lies in $V_1$, and the other lies in $V_2$.  We use this fact to choose $m$ as follows.  

Let $C_1$ denote the elements of $C$ which lie on the 
arc of $\alpha \setminus \{x, y\}$ containing $c_1$, and let $C_2$ denote the remainder of $C$ (which may include one or both of $x$ and $y$).   It follows from the definition of $C$ that $\gamma$ is (eventually) labelled alternately by nonempty words in $C_1$ and $C_2$.  Now if $m$ is chosen as in the previous paragraphs with the additional property that at $\gamma(m)$, the label of $\gamma$ transitions from a word in $C_1$ to a word in $C_2$, then the above observation can be used to show that 
$\partial \zeta_m$
intersects both $V_1$ and $V_2$, and therefore, as described above, $\partial \Sigma_{\tau,m}$ contains the desired detour.  Together with $\nu$, this yields a path between $\eta^+$ and $\mu^+$ in $U_n\setminus \{\gamma^+\}$ when $C$ fails (A1).

If $C$ fails (A2), then there is a subdivided $K_4$ subgraph $\Lambda$ of $\Gamma$ (with $\alpha \subset \Lambda$) and elements $c_1, c_2 \in C$ which lie on distinct branches of $\Lambda$.  Let $\beta_1$ and $\beta_2$ be the branches of $\Lambda$ containing $c_1$ and $c_2$ respectively.  Without loss of generality, we may assume $c_1$ is an interior vertex of the branch $\beta_1$.  
We now apply a similar argument as in the case that $C$ fails (A1), with 
$\tau$ taken to be a cycle in $\Lambda$ which contains $\beta_1$ but not $\beta_2$, and $x$ and $y$ the endpoints of the branch $\beta_1$.  
This completes the proof of the first special case.

The second special case we consider is when $\eta$ (respectively $\mu$) is labelled, starting at $\gamma(n)$,
exclusively by the vertices of an induced cycle $\alpha$ (respectively $\beta$), with $\alpha \neq \beta$.   
By Standing Assumptions~\ref{assumptions} and an elementary graph-theoretic argument,
 there exists a sequence of induced cycles 
$\alpha = \sigma_1, \sigma_2, \dots, \sigma_k=\beta$ such that 
every consecutive pair of cycles intersects in at least a pair of non-adjacent vertices.  
We claim that the $\sigma_i$ can be chosen so that 
for all $1 \le i<k$, the intersection of $\sigma_i$ and $\sigma_{i+1}$ generates a group with infinitely many ends (that is, strictly speaking, this intersection is the defining graph of a special subgroup with infinitely many ends).  
 
 To see this claim, observe that if the intersection of consecutive cycles generates a 
 2-ended group, then the intersection is exactly a pair of non-adjacent vertices or a pair of adjacent edges. 
 For each pair 
$\sigma_i, \sigma_{i+1}$ which intersect in a pair of non-adjacent vertices, add an extra cycle between $\sigma_i$ and $\sigma_{i+1}$ consisting of one arc from $\sigma_i$ connecting these vertices and one from  $\sigma_{i+1}$.
Then the intersection of this new cycle with $\sigma_i$ or $\sigma_{i+1}$ either generates a group with infinitely many ends,  or consists of a pair of adjacent edges.  Thus after reindexing we have a sequence in which we only need to deal with the latter case.  Now if $\sigma_i$ and $\sigma_{i+1}$ intersect in a subpath $\epsilon$ consisting of a pair of edges, then adding the cycle obtained by deleting the interior of $\epsilon$ from $\sigma_i
\cup \sigma_{i+1}$ results in the desired intersections between successive cycles in the sequence. 
 
Assume that $\sigma_1, \dots, \sigma_k$ have been chosen to satisfy the above claim, and let $\Sigma_i$ be the subcomplexes corresponding to $\sigma_i$ based at $\gamma(n)$. 
Now for all 
$1<i<k$ choose a geodesic ray $\eta_i$ in $\cC$ based at $e$,
 which agrees with $\gamma$ for the first $n$ edges, lies in $\Sigma_i \cap \Sigma_{i+1}$ beyond $\gamma(n)$, and is not 
 equal to $\gamma$ (this last criterion is possible because by construction $\Sigma_i \cap \Sigma_{i+1}$ contains 
 a tree with infinitely many ends).  
 Finally, define $\eta_1 = \eta$ and $ \eta_k = \mu$. 
 Observe that $\eta_i^+ \in U_n \setminus \{\gamma^+\}$ for all $i$, 
and moreover, $\eta_i^+$ and $\eta_{i+1}^+$ satisfy the hypotheses of the first special case
above.  Now we can construct the desired path between $\eta^+$ and $\mu^+$ 
by concatenating the paths obtained above between $\eta_i^+$ and $\eta_{i+1}^+$ for all $i$. 

In the general case, we consider arbitrary $\mu_1^+, \mu_2^+
\in U_n \setminus \{\gamma^+\}$.  Write $\mu^+$ for either $\mu_1^+$ or $\mu_2^+$.  It is enough to construct a path in $U_n \setminus \{\gamma^+\}$ from $\mu^+$ to some $\eta^+ \in U_n \setminus \{\gamma^+\}$, where $\eta$ is labelled, starting at $\gamma(n)$, exclusively by vertices of an induced cycle $\alpha$.  
Suppose $\mu$ is labelled, starting at $\gamma(n)$, by $w_1w_2w_3\dots $ such that
$w_i$ is a word in the letters contained in some cycle $\beta_i$ for all $i$.
We define geodesic rays $\eta_i$ interpolating between $\eta$ and $\mu$ as follows: $\eta_0=\eta$ and for $i>0$,  $\eta_i$ agrees with $\gamma$ until $\gamma(n)$, and is 
labelled $w_1w_2\dots w_i u_i$ beyond $\gamma(n)$, where $u_i$ is an infinite word in $\beta_i$  chosen so that the result is a geodesic ray not equal to $\gamma$.   Then $\eta_i^+ \in U_n
\setminus \{\gamma^+\}$ for all $i$. Beyond $\gamma(n)$, the geodesics $\eta_i$ and $\eta_{i+1}$ agree on a segment labelled $w_1w_2\dots w_i$, after which $\eta_i$ is labelled by $\beta_i$ and $\eta_{i+1}$ is labelled by $\beta_{i+1}$.
An argument similar to the second special case (with an appropriate change of base point) shows that we can construct a path in $U_n \setminus \gamma^+$ between $\eta_i^+$ and  $\eta_{i+1}^+$ for all $i$. This completes the proof of the proposition, as concatenating the paths obtained between $\eta_i^+$ and $\eta_{i+1}^+$
for each $i$ results in a continuous path in $U_n \setminus \{\gamma^+\}$ between $\eta^+= \eta_0^+$ and $\mu^+$.
\end{proof}

\begin{example}\label{eg:sim and approx}  We can now describe the $\sim$-classes for Figures~\ref{fig:approx-pairs} and~\ref{fig:sim-eg}, and the $\approx$-pairs to which they are adjacent in the pre-tree $T$ (see also Example~\ref{eg:sim} and Remark~\ref{rem:approx-sim edges}).

First consider Figure~\ref{fig:approx-pairs}.  In the left-hand graph, there are three $W$-orbits of infinite $\sim$-classes, corresponding to the three branches between $a$ and $b$ which are of length at least three.  In the right-hand graph, there are four $W$-orbits of infinite $\sim$-classes, corresponding to the four branches between $a$ and $b$.   In both cases, there are no $\sim$-pairs, all infinite $\sim$-classes are adjacent in $T$ to $\approx$-pairs, and all $\approx$-pairs correspond to $(a,b)$-bicoloured geodesics.

On the left of Figure~\ref{fig:sim-eg}, the $W$-orbits of infinite $\sim$-classes correspond to the three branches between $a$ and $d$, the two branches between $a$ and $e$, and the set $\{a,d,f,g,e\}$, and there are no $\sim$-pairs.  The infinite $\sim$-classes for branches between $a$ and $d$ (respectively, $a$ and $e$) are adjacent in $T$ to $\approx$-pairs of valence $4$ corresponding to $(a,d)$-bicoloured geodesics (respectively, valence $3$ corresponding to $(a,e)$-bicoloured geodesics), while the infinite $\sim$-classes for the set $\{a,d,f,g,e\}$ are adjacent to $\approx$-pairs of both kinds.  

In the centre of Figure~\ref{fig:sim-eg}, there is a $W$-orbit of infinite $\sim$-classes for each of the five branches of length three, and a $W$-orbit of infinite $\sim$-classes corresponding to the set $\{a_1,a_2,a_3,a_4,a_5\}$.  There are no $\sim$-pairs.  All $\approx$-pairs correspond to $(a_i,a_{i+1})$-bicoloured geodesics.  For each $i$, the infinite $\sim$-classes corresponding to the branch between $a_i$ and $a_{i+1}$ are adjacent in $T$ to $\approx$-pairs of valence $4$ corresponding to $(a_i,a_{i+1})$-bicoloured geodesics.  The infinite $\sim$-classes for $\{a_1,a_2,a_3,a_4,a_5\}$ are adjacent in $T$ to all five kinds of $\approx$-pairs.  

On the right of Figure~\ref{fig:sim-eg}, there is a $W$-orbit of infinite $\sim$-classes corresponding to each branch of length three, as well as three $W$-orbits of $\sim$-pairs, corresponding to the set $\{p,q\}$, the branch between $u$ and $v$, and the branch between $r$ and $s$.  There are no $\approx$-pairs.
\end{example}

The following results and observations concerning sets $A$ which correspond to $\sim$-classes will be used to identify the stabilisers of $\sim$-classes, and in later sections.  In Lemma~\ref{lem:A branch} we prove that if $A$ satisfies (A1)--(A3) and $A$ contains an interior vertex of a branch of $\G$, then $A$ contains all vertices of this branch.  This is used to establish Corollary~\ref{cor:A ess cut pair}, which says that $A$ contains an essential cut pair $\{a,b\}$ such that $a$ and $b$ are consecutive in the cyclic order on $A$.  Remark~\ref{rmk:components} then records some implications for $\Sigma$.

\begin{lemma}\label{lem:A branch}  Let $A$ be a set of vertices of $\G$ satisfying properties \emph{(A1)}, \emph{(A2)} and \emph{(A3)}, and such that $\langle A \rangle$ is infinite.  Suppose an element $a \in A$ is an interior vertex of a branch of $\G$.  Then $A$ contains all vertices of this branch, including its endpoints.
\end{lemma}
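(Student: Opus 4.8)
The plan is to argue by contradiction: suppose $a \in A$ is an interior vertex of a branch $\beta$ of $\G$ with endpoints the essential vertices $p$ and $q$, and suppose some vertex of $\beta$ fails to lie in $A$. I would first observe that since $\G$ satisfies the Standing Assumptions, $p$ and $q$ are genuinely distinct essential vertices and $\beta$ is a reduced path between them with no essential vertices in its interior; in particular $p$ and $q$ are non-adjacent (a branch of length one would be an edge between essential vertices, which is fine, but if $a$ is an \emph{interior} vertex then $\beta$ has length at least two, and length two would force $p,q$ adjacent via the middle vertex—here triangle-freeness and the no-square condition will need to be invoked to control short branches). The key structural fact I want is that the cut pair $\{p,q\}$ separates $|\G|$ and that $\beta$ is one of the ``arcs'' realizing a component of $|\G| \setminus \{p,q\}$; more precisely every interior vertex of $\beta$ lies on every induced cycle through $p$ and $q$ that uses $\beta$, and conversely any reduced path from $p$ to $q$ staying near $\beta$ must traverse all of $\beta$.

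Next I would use the cyclic structure from Lemma~\ref{lem:A1 cyclic} and Lemma~\ref{lem:A1A2xy}: the elements of $A$ lie on an induced cycle $\alpha$ with a well-defined cyclic order, and $a$, being an interior vertex of the branch $\beta$, forces $\alpha$ to run along $\beta$ through $a$. Let $a', a''$ be the neighbours of $a$ in $A$ in the cyclic order. The claim will be that the portion of $\alpha$ between the two nearest elements of $A$ on either side of $a$ must coincide with a sub-path of $\beta$ — if it left $\beta$, it would have to pass through one of the essential endpoints $p$ or $q$, which are then forced to be in $A$ by maximality. The heart of the argument is then: if a vertex $v$ of $\beta$ is not in $A$, consider $A \cup \{v\}$; I would check that $A \cup \{v\}$ still satisfies (A1) — pairwise separation of $|\G|$, using that $v$ lies on the branch $\beta$ and every other element of $A$ lies ``outside'' $\beta$ in a controlled way, so $\{v, a_k\}$ separates $|\G|$ for each $a_k \in A$ (here I'd use that removing $v$ from the interior of $\beta$ disconnects $\beta$, hence disconnects $|\G|$ in conjunction with any $a_k$, by a path-lifting argument analogous to the proof of Corollary~\ref{cor:all A not b}). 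Then I would check $A \cup \{v\}$ satisfies (A2): in any subdivided $K_4$ subgraph $\Lambda$ containing at least three vertices of $A \cup \{v\}$, since $v$ sits on a branch of $\G$ and $\G$ is triangle-free, the vertex $v$ cannot be an essential vertex of $\Lambda$, so $v$ lies in the interior of some branch of $\Lambda$; combined with the fact that $A$ already satisfies (A2), all of the relevant vertices of $A$ lie on a single branch of $\Lambda$, and $v$ must lie on that same branch (otherwise we would get a contradiction with (A2) for $A$ alone, by a small surgery on $\Lambda$ as in the proof of Corollary~\ref{cor:all A not b}). This contradicts maximality (A3), so every vertex of $\beta$, including $p$ and $q$, lies in $A$.

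The main obstacle I anticipate is the verification of (A2) for $A \cup \{v\}$: one has to rule out a subdivided $K_4$ that uses $v$ as an essential vertex, or uses $v$ on a branch different from the branch carrying the three old $A$-vertices. The fix is to exploit that $v$ has valence two in $\G$ (being an interior vertex of a branch of $\G$, hence not essential), so in \emph{any} subgraph $v$ has valence at most two and cannot be a branch-point of a subdivided $K_4$; and then a subdivided-$K_4$ surgery argument — splicing in a reduced path guaranteed by (A1) and re-extracting a subdivided $K_4$ — reduces the bad configuration to one contradicting (A2) for $A$ itself. A secondary subtlety is the degenerate case of a very short branch (length two), where I need triangle-freeness to guarantee $p \ne q$ and the no-square / no-separating-edge hypotheses to guarantee the surgery paths exist; these should be handled as preliminary remarks before the main contradiction argument.
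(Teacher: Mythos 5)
Your overall strategy is the same as the paper's: show that adding a new vertex $v$ of the branch $\beta$ to $A$ preserves (A1) and (A2), then invoke maximality (A3) to conclude $v \in A$. However, your execution has a genuine gap concerning the branch endpoints.

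Your verification of (A2) for $A \cup \{v\}$ hinges on the observation that $v$ has valence two in $\G$ and hence cannot be an essential vertex of a subdivided $K_4$. This is true for \emph{interior} vertices of $\beta$, but the lemma also asserts that the \emph{endpoints} $p$ and $q$ of the branch lie in $A$ --- and these are essential vertices, so they certainly can be branch-points of a subdivided $K_4$. You dismiss this with the sentence that $p$ and $q$ are ``forced to be in $A$ by maximality,'' but maximality only gives you $p \in A$ once you have verified (A1) and (A2) for $A \cup \{p\}$, and your valence-two argument simply does not apply there. The paper devotes the second half of its proof to exactly this: it first shows all interior vertices of $\beta$ are in $A$ (where the non-essential hypothesis does the work), and then runs a separate, more delicate case analysis to show (A1) and (A2) hold for $A \cup \{b, b'\}$, using connectivity of $\Lambda \cup \beta \setminus \{a, a'\}$ and the fact that $a$ is non-essential to derive contradictions. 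None of that machinery appears in your sketch.

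A secondary issue, even for interior vertices: you argue that if a subdivided $K_4$ $\Lambda$ contains three vertices of $A \cup \{v\}$ then ``the relevant vertices of $A$ lie on a single branch of $\Lambda$'' by (A2) for $A$. But (A2) for $A$ only bites when $\Lambda$ contains at least three vertices of $A$; if $\Lambda$ contains $v$ plus only two vertices of $A$, you get nothing directly. The paper handles this by noting that since $v$ is non-essential, $\Lambda$ containing $v$ forces $\Lambda$ to contain the entire branch $\beta$, hence $a$, and then argues via the separation properties of $\{a, a'\}$ and the non-essentiality of $a$. You would need to supply a comparable argument to make this case go through.
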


\begin{proof}  Let $\alpha$ be an induced cycle in $\G$ containing all elements of $A$.
Let $\beta$ be the branch of $\G$ containing $a$ and let $b$ and $b'$ be the essential vertices of $\G$ which are its endpoints.

First consider a non-essential vertex $c \neq a$ which also lies between $b$ and $b'$ on $\alpha$.  Then $\{a,c\}$ separates $|\G|$, and for all $a' \in A \setminus \{a\}$, the pair $\{a,a'\}$ separates $|\G|$ if and only if the pair $\{c,a'\}$ separates $|\G|$.  Thus (A1) holds for $A \cup \{c\}$.  

Now suppose there is a subgraph $\Lambda$ of $\Gamma$ which is a subdivided $K_4$ and contains at least three vertices of $A \cup \{c\}$.  Then $\Lambda$ contains at least three vertices of $A$, or $\Lambda$ contains $c$ (these cases are not mutually exclusive).  
In the former case, 
by (A1) all vertices of $A$ lie on the same branch of $\Lambda$.  Since $a$ and $c$ are both interior vertices of $\beta$, the vertex $c$ must lie on this branch of $\Lambda$ as well, and so in this case (A2) holds for $A \cup \{c \}$.  

In the case that  $\Lambda$ contains $c$, we claim that all elements of $A$ lie on the same branch of $\Lambda$ as $c$.  Since $c$ is non-essential, $\Lambda$ must contain the entire branch $\beta$, and so $\Lambda$ contains $a$ as well.  Now suppose there exists $a' \in A$ which does not lie on the branch of $\Lambda$ containing $a$ and $c$.  Then $\Lambda \setminus \{a,a'\}$ is connected but $\{a,a'\}$ separates $|\G|$, so there is a path between $a$ and $a'$ which intersects $\Lambda$ only at $a$, and possibly $a'$, which contradicts $a$ being non-essential. This proves the claim.  Thus (A2) holds for $A \cup \{ c\}$ in this case as well. We have shown that both (A1) and (A2) hold for the set $A \cup \{c\}$, and so by (A3) we conclude that $c \in A$.

To show that $b$ and $b'$ are both in $A$, notice first that $\{b,b'\}$ separates $|\G|$, and for all vertices $c$ in the interior of the branch $\beta$, the pairs $\{b,c\}$ and $\{b',c\}$ separate $|\G|$.  Consider the case that every vertex of $A$ lies on the branch $\beta$.  Then (A1) holds for $A \cup \{b,b'\}$ by the previous observations.  For (A2), if any subgraph $\Lambda$ which is a subdivided copy of $K_4$ contains a triple of vertices of $A$ then $\Lambda$ must contain a non-essential vertex of $\beta$.  It follows that a branch of $\Lambda$ contains all of $\beta$, and so all vertices of $A \cup \{b,b'\}$ lie on the same branch of $\Lambda$.  Thus (A2) also holds for $A \cup \{b,b'\}$ in this case, and so by (A3) we have $b,b' \in A$.  

Now suppose there is some $a' \in A$ which does not lie on the branch $\beta$.  If $\{a',b\}$ does not separate $|\G|$ then $a'$ and $b$ are non-adjacent and there must be a path $\eta$ in $\G$ which connects the two components of $\alpha \setminus \{a',b\}$.  
Since $\{a,a'\}$ separates $|\G|$, there is a path between $a$ and $a'$ which intersects $\alpha \cup \eta$ only at its endpoints.  This contradicts the fact that $a$ is not essential.  So $\{a',b\}$, and similarly $\{a',b'\}$, separates $|\G|$.  Thus $A \cup \{b,b'\}$ satisfies (A1).  

For (A2), to avoid trivial cases we may assume by contradiction that $\G$ has a subdivided $K_4$ subgraph $\Lambda$ which contains three vertices of $A \cup \{b,b'\}$, so that for some vertex $a'$ of $A$ in this triple, $a'$ and $b$ lie in different branches of $\Lambda$.   If $b'$ is also on $\Lambda$, then $\Lambda \cup \beta \setminus \{a, a'\}$ is connected, and as before, this contradicts the fact that $a$ is not essential.  Finally, if $b'$ is not on $\Lambda$, then there is another vertex $a''$ of $A$ which is on $\Lambda$.  If $a''$ is between $a'$ and $b$ 
on $\alpha \setminus \beta$, then $\Lambda \cup \alpha \setminus \{a, a''\}$ is connected, and by the same argument as before, this contradicts the fact that $a$ is not essential.  If $a'$ is between $a''$ and $b$ on $\alpha \setminus \beta$, then using $a'$ instead of $a''$ in the previous sentence, we again have a contradiction.  This completes every case, and shows that $A \cup \{b, b'\}$ satisfies (A2).  By (A3) it follows that $b, b' \in A$ in this case as well. 
\end{proof}

\begin{cor}\label{cor:A ess cut pair}   Let $A$ be a set of vertices of $\G$ satisfying properties \emph{(A1)}, \emph{(A2)} and \emph{(A3)}, and such that $\langle A \rangle$ is infinite.  Then $A$ contains an essential cut pair $\{a,b\}$ such that $a$ and $b$ are consecutive in the cyclic order on $A$.
\end{cor}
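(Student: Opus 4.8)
The strategy is to exploit the cyclic ordering on $A$ guaranteed by Lemma~\ref{lem:A1 cyclic} and Lemma~\ref{lem:A1A2xy}, and then extract an essential cut pair from among consecutive elements. First I would fix an induced cycle $\alpha$ in $\G$ containing all of $A$, with the elements of $A$ labelled $a_1, \dots, a_n$ in cyclic order (here $n = \card(A) \geq 2$ by Remark~\ref{rmk: A infinite}). I then want to show that \emph{some} consecutive pair $\{a_i, a_{i+1}\}$ (indices mod $n$) is an essential cut pair. Consecutiveness is half of this: such a pair always separates $|\G|$ by property (A1), and I must upgrade this to separating $\G$ and to essentiality.

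\textbf{Step 1: consecutive pairs in $A$ are cut pairs of $\G$ (not just of $|\G|$).} A pair $\{a_i, a_{i+1}\}$ that separates $|\G|$ but not $\G$ must consist of two adjacent vertices with $\G \setminus \{a_i, a_{i+1}\}$ connected but $|\G| \setminus \{a_i, a_{i+1}\}$ disconnected, i.e.~the edge $a_i a_{i+1}$ has a non-vertex component. Since $a_i$ and $a_{i+1}$ are consecutive on the induced cycle $\alpha$, they are adjacent on $\alpha$; but if they were adjacent in $\G$, the arc of $\alpha$ between them not using this edge, together with this edge, would give a component of $|\G|\setminus\{a_i,a_{i+1}\}$ — wait, more carefully: I need $\{a_i, a_{i+1}\}$ nonadjacent. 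I would argue that if $a_i$ and $a_{i+1}$ are adjacent then, since $\alpha$ is induced and they are consecutive on $\alpha$, the edge $a_i a_{i+1}$ is an edge of $\alpha$, so $\alpha$ has length $\geq 5$ forces the rest of $\alpha$ to be a path of length $\geq 4$ between them; the pair still separates $|\G|$, and in fact nonadjacency of all consecutive pairs will follow once I invoke Lemma~\ref{lem:A branch}. The cleaner route: apply Lemma~\ref{lem:A branch}. If some interior vertex of a branch $\beta$ lies in $A$, then $A$ contains all of $\beta$ including its essential endpoints, which are a cut pair. So I may assume no element of $A$ is interior to any branch — i.e.~every element of $A$ is an essential vertex of $\G$. (A vertex not interior to any branch and lying on $\G$, which has no valence-$1$ vertices by Standing Assumptions (2), is essential.) Hence every element of $A$ is essential; in particular any cut pair found among elements of $A$ is automatically essential.

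\textbf{Step 2: produce the cut pair among consecutive elements.} Having reduced to the case where all $a_i$ are essential, I need to find $i$ such that $\{a_i, a_{i+1}\}$ separates $\G$. I would argue by maximality (A3): suppose no consecutive pair $\{a_i,a_{i+1}\}$ separates $\G$. If $n = 2$, then $\langle A \rangle$ infinite forces $a_1, a_2$ nonadjacent, and (A1) says $\{a_1,a_2\}$ separates $|\G|$, hence (being nonadjacent) separates $\G$ — contradiction. If $n \geq 3$: consider the arc $\beta_i$ of $\alpha$ between $a_i$ and $a_{i+1}$ containing no other element of $A$. Since $\{a_i, a_{i+1}\}$ does not separate $\G$, there is a reduced path in $\G \setminus \{a_i, a_{i+1}\}$, hence (after extracting a subpath meeting $\alpha$ only at endpoints) an arc connecting the two components of $\alpha \setminus \{a_i, a_{i+1}\}$ that avoids $\{a_i, a_{i+1}\}$. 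Now I'd show this lets me enlarge $A$: either a vertex can be added to $A$ keeping (A1) and (A2) — contradicting (A3) — or the configuration forces a subdivided $K_4$ violating (A2) for $A$ itself. This is where I expect to lean heavily on Corollary~\ref{cor:all A not b} and on the $K_4$-avoidance arguments already developed; in fact Corollary~\ref{cor:all A not b}(2) (or (1) when $\langle A\rangle$ is $2$-ended) directly gives, for a vertex $b$ in a component of $\G\setminus\{a_i,a_{i+1}\}$ of the right type, an induced cycle through $A$ avoiding $b$, and iterating/choosing $b$ appropriately contradicts either maximality or (A2).

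\textbf{Main obstacle.} The delicate point is Step 2 — showing that at least one consecutive pair genuinely separates $\G$, rather than merely $|\G|$, and doing so without circularity against (A3). The subtlety is that "$\{a_i,a_{i+1}\}$ separates $|\G|$" is cheap (it's just (A1)), but the two vertices of a consecutive pair could a priori be adjacent in $\G$, in which case they don't separate $\G$ at all; ruling this out globally is exactly what Lemma~\ref{lem:A branch} buys us (it forces all of $A$ to be essential, and essential vertices on an induced cycle that are consecutive in $A$ cannot all be pairwise adjacent to their cyclic neighbours without creating short cycles, contradicting triangle-freeness). Once everything in $A$ is essential, the remaining work is a finite combinatorial argument: among the $n$ consecutive pairs around the cycle, if none separated $\G$ we could route around each "gap," and patching these reroutings together produces either a strictly larger admissible set (violating (A3)) or a forbidden subdivided $K_4$ (violating (A2)). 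I would organize this as a single contradiction argument, handling $n=2$ separately and using the $n\geq 3$ machinery of Lemmas~\ref{lem:A1A2xy} and~\ref{lem:A branch} and Corollary~\ref{cor:all A not b} for the general case.
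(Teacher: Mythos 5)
Your proposal has a genuine gap in Step~1 and lacks the key idea that the paper uses in Step~2.

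In Step~1 you argue that if $A$ contains an interior vertex of a branch $\beta$, then by Lemma~\ref{lem:A branch} the branch endpoints lie in $A$ and are an essential cut pair, and you then conclude ``I may assume no element of $A$ is interior to any branch.'' But the branch endpoints need \emph{not} be consecutive in the cyclic order on $A$: Lemma~\ref{lem:A branch} tells you all the interior vertices of $\beta$ are also in $A$, and these sit between the two endpoints along one arc of $\alpha$; if $A$ also contains a vertex on the other arc (as happens whenever $A \supsetneq \beta$) then the endpoints are consecutive on neither arc. So this branch case is not ``done,'' and your reduction to all-of-$A$-essential does not go through. The paper's proof handles this more carefully: it first locates a vertex $c$ of $\alpha$ not in $A$ lying between two consecutive $A$-elements $a_1, a_2$, and only then applies Lemma~\ref{lem:A branch} locally to show each of $a_1$ and $a_2$ must be essential (because a non-essential $a_1$ would force its $\alpha$-neighbours into $A$ via the branch argument, leaving no room for $c$).

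In Step~2 you write that if no consecutive pair separates $\G$ you would ``route around each gap'' and derive a contradiction via (A2) or (A3), but there is no concrete argument here, and moreover the strategy, as stated, targets the wrong pairs. In the generic case (especially when $\card(A) \ge 4$) most consecutive pairs $\{a_i, a_{i+1}\}$ are \emph{adjacent} on $\alpha$, hence adjacent in $\G$ (as $\alpha$ is induced), hence not cut pairs of $\G$ at all; there is no hope of showing they separate $\G$, so a case analysis over all $n$ consecutive pairs is not what is needed. The missing idea is to first prove that $\alpha$ has a vertex $c \notin A$ (the paper gets this from Lemma~\ref{lem:A1A2xy}(2) together with Standing Assumption~\ref{assumptions}(4) that $\G$ is not a cycle and (2) that it has no separating vertices or edges), and then focus exclusively on the one consecutive pair $\{a_1, a_2\}$ flanking $c$. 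That pair is automatically non-adjacent on $\alpha$, is essential by the Lemma~\ref{lem:A branch} argument above, and separates $\G$ by (A1). You should also handle $\card(A) \in \{2,3\}$ separately at the outset, where any pair of elements of $A$ is consecutive and the existence part of your Step~1 argument suffices. Without the $c$-gap step, your plan does not assemble into a proof.
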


\begin{proof}  We first show that $A$ contains an essential cut pair.  If $A$ contains an interior vertex of a branch of $\G$, then by Lemma~\ref{lem:A branch}, $A$ contains the endpoints of this branch.  These are an essential cut pair.  Otherwise, $A$ consists only of essential vertices.  Then as $\langle A \rangle$ is infinite  $A$ must contain a pair of non-adjacent essential vertices, which by (A1) are a cut pair.

If $\card(A) = 2$ or $3$, let $\{a,b\}$ be an essential cut pair in $A$.  It is then immediate that $a$ and $b$ are consecutive in any cyclic order on $A$.   Now assume that $\card(A) = n \geq 4$.  Let $\alpha$ be an induced cycle containing all of $A$, and inducing the cyclic order $a_1,\ldots,a_n$ on $A$.  

 We first show that $\alpha$ must contain a vertex which is not in $A$.  If every vertex of $\alpha$ is in $A$, then as $\G$ is not a cycle and has no separating vertices or edges, there is a path $\eta$ which connects a non-adjacent pair $a_i$ and $a_j$ in $A$, so that $\eta$ intersects the cycle $\alpha$ only at $a_i$ and $a_j$.  By slightly extending $\eta$ to the midpoints of edges of $\alpha$ incident to $a_i$ and $a_j$, we obtain a path which contradicts Lemma~\ref{lem:A1A2xy}(2).  Thus 
 $\alpha$ must contain a vertex which is not in $A$.
 
We now have that $\alpha$ contains a vertex $c \notin A$.  Then without loss of generality $c$ lies on the subpath of $\alpha$ between $a_1$ and $a_2$ which has no other elements of $A$.  If $a_1$ is non-essential then by Lemma~\ref{lem:A branch}, $A$ and thus $\alpha$ contains all vertices of the branch on which $a_1$ lies.  Using Lemma~\ref{lem:A branch} again, this contradicts either $c \not \in A$ or there being no vertex of $A$ between $c$ and $a_1$ on $\alpha$.  Thus $a_1$ is essential, and similarly $a_2$ is essential.  By (A1), it now suffices to show that $a_1$ and $a_2$ are not adjacent in $\G$.  If there is an edge $\epsilon$ of $\G$ with endpoints $a_1$ and $a_2$, then since 
$\epsilon$ doesn't separate $\G$, there is a path $\eta$ connecting the two components of 
$\alpha \setminus \{a_1, a_2\}$ which meets $\alpha$ only at its endpoints.  Now $\alpha\cup \eta \cup \epsilon$ is a subdivided $K_4$ which does not contain all elements of $A$ on a single branch.  
This contradicts (A2).
Therefore $a_1$ and $a_2$ are not adjacent in $\G$, and so $\{a_1,a_2\}$ is the desired essential cut pair.
\end{proof}

\begin{remark}\label{rmk:components}
Let $A$ be a set of vertices of $\G$ with $\langle A \rangle$ infinite and satisfying properties (A1),  (A2) and (A3). Write $K_A$ for the Davis complex chamber for the special subgroup $\langle A \rangle$.  Developing this chamber and then imposing the cellulation by big squares gives a subcomplex $\Sigma_A$ of $\Sigma$ with $1$-skeleton the copy of the Cayley graph $\cC_A$ of $A$ which contains the identity.  For example, in Figure~\ref{fig:sim-eg}, if $A = \{a,b,c,d\}$ then $\Sigma_A$ contains big squares with edges labelled by commuting generators, and so $\Sigma_A$ properly contains $\cC_A$, while if $A = \{a_1, a_2, a_3, a_4, a_5\}$ then $K_A$ is a star graph of valence $5$, and $\cC_A = \Sigma_A$ is a tree.  Each coset of $\langle A \rangle$ in $W$ also corresponds to some copy of $\Sigma_A$ and to some copy of $\cC_A$, with $\cC_A$ the $1$-skeleton of $\Sigma_A$. 

Now fix a copy of $\Sigma_A$, and let  $\alpha$ be an induced cycle in $\G$ containing all vertices of $A$.  We define $\cAplus$ to be the set of geodesics in $\cC_A \subseteq \Sigma_A$ which are bicoloured by essential cut pairs $\{a,b\}$ in $A$ so that $a$ and $b$ are consecutive in the cyclic order on $A$.  By Corollary~\ref{cor:A ess cut pair}, the set $\cAplus$ is nonempty.  Let $\gamma$ be a geodesic in $\cAplus$, with $\gamma$ bicoloured by $a$ and $b$.  Then by definition of $\cAplus$, if $A \setminus \{a,b\}$ is nonempty then all vertices of $A \setminus \{a,b\}$ lie in the same component of $\alpha \setminus \{a,b\}$, and thus in the same component of $\G \setminus \{a,b\}$.  It  follows from Lemma~\ref{lem:separation} that $\Sigma_A \setminus \gamma$, if nonempty, is contained in a single component of $\Sigma \setminus \gamma$.   We thus refer to the elements of $\cAplus$ as the \emph{frontier geodesics} of $\Sigma_A$.  Now observe that for any component $U$ of $\Sigma \setminus \Sigma_A$, there is a unique frontier geodesic $\gamma \in \cAplus$ such that $U$ is a component of $\Sigma \setminus \gamma$.  We then say that $\gamma$ is the frontier geodesic \emph{corresponding} to the component $U$ of $\Sigma \setminus \Sigma_A$.  \end{remark}

We finish this section by determining the stabilisers of $\sim$-classes.

\begin{prop}\label{prop:sim stabiliser}   Let $v$ be a $\sim$-class in $T$, and let $A$ be the corresponding set of vertices of $\Gamma$ as in Proposition~\ref{prop:sim}, so that $v = \partial \cC_A \setminus \partial \cA$ for some copy $\cC_A$ of the Cayley graph of~$\langle A \rangle$.
\begin{enumerate}
\item  If $v$ is a $\sim$-pair, let $\gamma$ be an $(a,b)$-bicoloured geodesic such that $v = \partial \gamma = \partial \cC_A$, hence $a,b\in A$.  Then the stabiliser of $v$ is a conjugate of $\langle a,b \rangle$, if there is no vertex $c$ of $\G$ adjacent to both $a$ and $b$, or of $\langle a, b,c \rangle$, if there is such a vertex $c$. 
\item If $v$ is an infinite $\sim$-class, then the stabiliser of $v$ is a conjugate of $\langle A \rangle$.
\end{enumerate}
\end{prop}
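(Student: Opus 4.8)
The plan is to treat the two cases separately. Case (1), where $v$ is a $\sim$-pair, will be essentially a repeat of the computation of $\approx$-pair stabilisers in Lemma~\ref{lem:approx stabiliser}. Case (2), where $v$ is an infinite $\sim$-class, is the substantial one; here the plan is to combine the rigidity of bicoloured geodesics from Corollary~\ref{cor:parallel} with the frontier-geodesic decomposition of $\Sigma_A$ from Remark~\ref{rmk:components}.

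For Case (1): we have $v = \partial\gamma = \partial\cC_A$ for an $(a,b)$-bicoloured geodesic $\gamma$, and after conjugating we may assume $\gamma$ passes through $e$, so that $\gamma$ is the copy $\cC_{\{a,b\}}$ of the Cayley graph of $\langle a,b\rangle$ through $e$. Conjugates of $\langle a,b\rangle$ plainly fix the set $\{\gamma^+,\gamma^-\}$, and when a (necessarily unique) vertex $c$ of $\G$ is adjacent to both $a$ and $b$, so does $\langle a,b,c\rangle = \langle a,b\rangle\times\langle c\rangle$, since $c\gamma = \gamma c$ has the same endpoints as $\gamma$. Conversely, if $g$ fixes $\{\gamma^+,\gamma^-\}$ then $g\gamma$ is again $(a,b)$-bicoloured (the $W$-action on $\cC$ preserves edge labels) with the same endpoints, so Corollary~\ref{cor:parallel} gives $g\gamma = \gamma$ or $g\gamma = \gamma c$; in the first case $g$ lies in the stabiliser of $\cC_{\{a,b\}}$, which is $\langle a,b\rangle$ because the vertex set of $\cC_{\{a,b\}}$ is the coset $\langle a,b\rangle\subseteq W$; in the second, $cg$ stabilises $\gamma$ and $g\in c\langle a,b\rangle\subseteq\langle a,b,c\rangle$. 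This yields exactly the asserted stabiliser.

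For Case (2): after conjugating, assume $\cC_A$ is the standard copy, so $\langle A\rangle$ is a special subgroup, and since $W$ acts simply transitively on chambers the stabiliser of the subcomplex $\Sigma_A$ is exactly $\langle A\rangle$. The inclusion $\langle A\rangle\subseteq\operatorname{Stab}(v)$ is immediate, as $\langle A\rangle$ preserves $\cC_A$ and its edge-labels, hence preserves $\cA$, hence preserves $v = \partial\cC_A\setminus\partial\cA$. For the converse it suffices to show that any $g$ with $gv = v$ satisfies $g\Sigma_A = \Sigma_A$. The first step is $\overline v = \partial\cC_A$: in Case (2) the group $\langle A\rangle$ is infinite and not $2$-ended, and being convex in $W$ it is quasiconvex and hyperbolic, hence non-elementary, so $\partial\cC_A = \Lambda(\langle A\rangle)$ is a perfect compact metric space; on the other hand $\partial\cA$ is countable, since there are finitely many cut pairs $\{a,b\}$ and, for each, the $(a,b)$-bicoloured geodesics in $\cC_A$ are the $\langle A\rangle$-translates of $\cC_{\{a,b\}}$, parametrised by the countable set $\langle A\rangle/\langle a,b\rangle$. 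Hence by Baire's theorem $v$ is dense in $\partial\cC_A$, and so $g$ stabilises $\partial\cC_A$. Now fix a frontier geodesic $\delta\in\cAplus$, say $(a,b)$-bicoloured; then $g\delta$ is again $(a,b)$-bicoloured with $\partial(g\delta)\subseteq g\,\partial\cC_A = \partial\cC_A$. A short argument from Lemma~\ref{lem:parallel} and the convexity of $\langle A\rangle$ shows that any bicoloured geodesic with both endpoints in $\partial\cC_A$ either lies in $\cC_A$ or is a parallel copy $\epsilon c$ of a geodesic $\epsilon\subseteq\cC_A$ (with $c\notin A$); in either case $g\delta$ determines a frontier geodesic $\epsilon\in\cAplus$, and $\delta\mapsto\epsilon$ is a bijection of $\cAplus$, its inverse supplied by $g^{-1}$, which also stabilises $v$. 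Using the $W$-equivariant description in Lemma~\ref{lem:separation} of the components of $\Sigma$ cut off by $\delta$ (or, when a single-vertex component occurs, by the band $\Upsilon$), one checks that $g$ carries the closed ``$A$-side'' halfspace bounded by $\delta$ onto a closed halfspace containing the ``$A$-side'' halfspace bounded by $\epsilon$. By Remark~\ref{rmk:components}, $\Sigma_A$ is precisely the intersection of the ``$A$-side'' halfspaces over all $\delta\in\cAplus$, so since $g$ permutes $\cAplus$ we get $g\Sigma_A\supseteq\Sigma_A$, i.e.\ $\langle A\rangle\subseteq g\langle A\rangle$ as cosets, whence $g\in\langle A\rangle$.

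The step I expect to be the main obstacle is the halfspace bookkeeping in the last part: one must verify, $W$-equivariantly and for all frontier geodesics simultaneously, that the component of $\Sigma\setminus\delta$ carrying the rest of $\Sigma_A$ is sent to the component of $\Sigma\setminus g\delta$ carrying the rest of $\Sigma_A$. This is exactly the point at which the two cases of Lemma~\ref{lem:separation} — cut pairs that do and do not have a single-vertex component — become indispensable, since when a vertex $c$ commuting with a cut pair is present $g$ may legitimately move a frontier geodesic off $\cC_A$ to a parallel copy, and one must check the intersection of halfspaces is unaffected. A subsidiary technical point is the claim that a bicoloured geodesic with endpoints in $\partial\cC_A$ essentially lies in $\cC_A$; this is where Corollary~\ref{cor:parallel} and the convexity of special subgroups do the work.
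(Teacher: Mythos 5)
Your proposal attacks the same structure as the paper (show the stabiliser stabilises the configuration of frontier geodesics, then conclude $g \in \langle A\rangle$) but replaces the paper's word-combinatorial endgame with a geometric halfspace argument. The paper's Case~(2) proof is short and algebraic: once $g\gamma$ is shown to be parallel to some $\gamma' \in \cAplus$ via Corollary~\ref{cor:parallel}, the potential identity $g\gamma = \gamma'c$ forces $c$ to stabilise $v$, and then a reduced-word argument using an element $a' \in A$ not commuting with $\{a,b\}$ produces a contradiction. Your route instead tries to show directly that $g\Sigma_A \supseteq \Sigma_A$ by writing $\Sigma_A$ as the intersection of $A$-side halfspaces over $\cAplus$ and checking that $g$ moves these halfspaces in a way that does not shrink the intersection. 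Case~(1) of your proof is essentially the paper's Lemma~\ref{lem:approx stabiliser} and is fine, and the density step $\overline{v} = \partial\cC_A$ via Baire is a nice explicit justification of something the paper leaves tacit.

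However, the halfspace bookkeeping in Case~(2) is not merely a technical check --- as stated, the key claim fails in the delicate case. Suppose $\delta \in \cAplus$ is $(a,b)$-bicoloured and $g\delta = \epsilon c$ with $\epsilon \in \cAplus$ and $c \notin A$ commuting with $a$ and $b$. When $\{c\}$ is a singleton component of $\G \setminus \{a,b\}$, Lemma~\ref{lem:separation}(2) applies, and the components of $\Sigma \setminus \Upsilon$ split into those on the $\epsilon$-side and those on the $\epsilon c$-side. The closed $A$-side halfspace $H_\delta$ is bounded by $\delta$, so $g(H_\delta)$ is bounded by $g\delta = \epsilon c$ and hence lies on the $\epsilon c$-side of the band, whereas $H_\epsilon$ (and indeed $\Sigma_A$ itself) lies on the $\epsilon$-side. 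So $g(H_\delta) \not\supseteq H_\epsilon$, and in fact $g(H_\delta) \cap \Sigma_A$ is at most the geodesic $\epsilon c$, which is disjoint from $\Sigma_A$. Your later reformulation --- that $g$ carries the component of $\Sigma\setminus\delta$ carrying $\Sigma_A$ to the component of $\Sigma\setminus g\delta$ carrying $\Sigma_A$ --- is the correct statement to aim for, but it is not the same as ``containing $H_\epsilon$,'' and it is not obvious: establishing it requires ruling out the $g\delta = \epsilon c$ possibility (or accounting for it), which is precisely the work the paper does with the $a'$-argument. A boundary-at-infinity argument alone does not settle this, since a priori one must exclude a configuration where $g\Sigma_A$ is parallel to and disjoint from $\Sigma_A$ but with the same limit set. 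So there is a genuine gap: the proposal does not actually eliminate the parallel-translate case, and the ``one checks'' sentence hides the real content of the proof.

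A secondary, smaller issue: you use $\Sigma_A = \bigcap_{\delta\in\cAplus} H_\delta$ as though it were stated in Remark~\ref{rmk:components}. The remark only gives one inclusion directly (each $A$-side halfspace contains $\Sigma_A$, and each component of $\Sigma\setminus\Sigma_A$ is cut off by a unique frontier geodesic); the equality is true but needs a sentence of proof, particularly a careful treatment of the bands $\Upsilon$ when a vertex $c$ commutes with a frontier cut pair.
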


\begin{proof}  The proof of (1) is similar to that for stabilisers of $\approx$-pairs in Lemma~\ref{lem:approx stabiliser}.  Now suppose $v$ is an infinite $\sim$-class, so $\langle A \rangle$ is not $2$-ended, and let $\cC_A$ be the copy of the Cayley graph of $\langle A \rangle$ which contains the identity.  It suffices to show that the stabiliser of $v$ is $\langle A \rangle$. 

Suppose that some $g \in W \setminus \langle A \rangle$ stabilises $v$.  Then $g$ stabilises $\partial \cC_A = \partial \Sigma_A$ and the set of endpoints of the frontier geodesics $\cAplus$, since these bound components of $\Sigma \setminus \Sigma_A$.  Let $\gamma$ be an $(a,b)$-bicoloured geodesic in $\cAplus$ which passes through the identity.  Then $g\gamma$ is not in $\cAplus$ since $g \not \in \langle A\rangle$.  Hence there is a geodesic $\gamma' \in \cAplus$ so that $\gamma'$ and $g\gamma$ have the same endpoints.  By Corollary~\ref{cor:parallel}, $\gamma'$ is also $(a,b)$-bicoloured, and $g\gamma =  \gamma' c$ where $c$ is a   (unique) vertex of $\G$ adjacent to both $a$ and $b$.  If there is no such vertex $c$ we are done, otherwise let $w \in \langle A \rangle$ be the label on a shortest path from $e$ to $\gamma'$, so that $\gamma' = w\gamma$.  Then $g\gamma = w\gamma c$.  Since $\gamma$ passes through the identity $\gamma c = c\gamma$, so $g\gamma = wc \gamma$.  As the stabiliser of $\gamma$ is $\langle a, b \rangle$, it follows that $g = wcz$ where $w \in \langle A \rangle$ and $z \in \langle a, b \rangle \leq \langle A \rangle$.  Since we assumed $g \not \in \langle A \rangle$, we have $c \not \in A$.  However $g$, $w$ and $z$ all stabilise $v$, so $c$ must as well.  

Now let $a' \in A \setminus \{a,b\}$ be such that $a'$ does not commute with both $a$ and $b$ (since $\langle A \rangle$ is not $2$-ended, such an $a'$ exists).  Using properties (A1) and (A2), we see that $a'$ cannot be adjacent to $c$.  Consider the geodesic $ca'\gamma$.  Since $a' \gamma \in \cAplus$, $c  \not \in A$ and $c$ stabilises $v$, by similar reasoning to the previous paragraph there is a $\gamma'' \in \cAplus$ with the same endpoints as $ca'\gamma$, and a vertex $x$ of $\Gamma$ which is adjacent to both $a$ and $b$ so that $ca'\gamma = \gamma''x$.  Now $\Gamma$ has no squares, so $x = c$ and thus $ca' \gamma = \gamma'' c$.  As $\gamma'' \in \cAplus$ and is $(a,b)$-bicoloured, we can write $\gamma'' = w' \gamma$ where $w' \in \langle A \rangle$ is the label on a shortest path from $\gamma$ to $\gamma''$.  Then $ca' \gamma = w' \gamma c = w' c \gamma$, hence $c (w')^{-1}ca' \in \langle a, b\rangle$.  It follows that $w'$ contains an instance of $a'$, and every letter in $w'$ commutes with $c$.  But $a'$ does not commute with $c$, so we have obtained a contradiction.  We conclude that the stabiliser of $v$ is $\langle A \rangle$.
\end{proof}

%%%%%%%%%%%%%%%%%%%%%%%%%%%%%%%%%%
\subsection{Construction of certain stars}\label{sec:stars}
%%%%%%%%%%%%%%%%%%%%%%%%%%%%%%%%%%

Recall that a star is a maximal set $X$ of vertices in the pretree $T$ such that given any two vertices in $X$, no vertex in $T$  is between them, in the sense of Definition~\ref{def:between}.  Bowditch shows that stars of size at least 3 are in fact infinite.  In Proposition~\ref{prop:stars} below, we construct certain infinite stars.  We will show in Section~\ref{sec:all-stars} that we have identified all of the stars of size at least 3, and therefore all of the Type 3 vertices.

\begin{prop}[Stars of size at least 3]\label{prop:stars}
Let $B$ be a set of essential vertices of $\G$ satisfying the following properties:
\begin{enumerate}
\item[\emph{(B1)}] if $C = \{c_1, c_2\}$ is any pair of essential vertices of $\G$ (which possibly intersects $B$), then $B \setminus  C$ is contained in a single component of $\G \setminus C$; 
\item[\emph{(B2)}] the set $B$ is maximal with respect to \emph{(B1)}; and
\item[\emph{(B3)}] $\card( B ) \ge 4$.
\end{enumerate}
Then $B$ corresponds to a $W$-orbit of stars of size at least 3 as follows: let $\mathcal{G}_B$ denote the set of $(b_i, b_j)$-bicoloured geodesics in $\mathcal C$ passing through 
$e$, 
where  $\{ b_i, b_j\} \subseteq  B$ is a cut pair. 
Let $\cB$ be the $\langle B \rangle$-orbit of $\mathcal{G}_B$.  Then for each $\eta \in \cB$, the endpoints $\partial \eta$ are contained in a unique $\approx$-pair or $\sim$-class, and this containment induces a map $\Phi:\cB \to T$ so that $\Phi(\cB)$ corresponds to a star of size at least 3 in $T$.
\end{prop}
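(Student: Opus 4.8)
The plan is to prove the statement in three parts: (1) determine the combinatorial structure of a set $B$ satisfying (B1)--(B3); (2) use this to define the map $\Phi$ and to locate the whole orbit $\cB$ inside a single hyperbolic-plane-like subcomplex of $\Sigma$; and (3) verify the two conditions of Definition~\ref{def:star} for $\Phi(\cB)$, translating betweenness in $T$ into separation of $\partial W$ via Remark~\ref{rmk:betweenness}, Lemma~\ref{lem:separation} and Corollary~\ref{cor:approx}. For part (1), I would prove the analogue for $B$ of Lemma~\ref{lem:A1 cyclic}, Lemma~\ref{lem:A1A2xy} and Corollary~\ref{cor:A ess cut pair}: the elements of $B$ lie on a common induced cycle $\alpha$ of $\G$ with a well-defined cyclic order, $B$ contains an essential cut pair (so $\mathcal{G}_B\neq\emptyset$), and any two elements of $B$ consecutive in the cyclic order form an essential cut pair. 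The point is that (B1), applied with $C$ ranging over essential cut pairs, does the work of both (A1) and (A2) in those earlier proofs: if $\Lambda\subseteq\G$ is a subdivided $K_4$ then the two endpoints of any branch of $\Lambda$ form an essential cut pair, so (B1) forbids exactly the configurations used there to derive contradictions. Two consequences I would record: first, since $\alpha$ is induced and consecutive elements of $B$ are non-adjacent, no two elements of $B$ are adjacent, so $\langle B\rangle$ is a free product of copies of $\Z/2$ and $\cC_B=\Sigma_B$ is a tree; second, since $B\subseteq\alpha$ we have $\langle B\rangle\leq\langle\alpha\rangle$, so the chosen copy of $\cC_B$ (the one through $e$), and hence all of $\cB=\langle B\rangle\cdot\mathcal{G}_B$, lies in the corresponding copy $\Sigma_\alpha$ of the Davis complex of $\langle\alpha\rangle$, which by Remark~\ref{rmk:hyp plane} is quasi-isometric to $\H^2$ with $\partial\Sigma_\alpha$ a circle.

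For part (2): given $\eta\in\cB$, $\eta$ is $(b_i,b_j)$-bicoloured for a cut pair $\{b_i,b_j\}\subseteq B$, so by Lemma~\ref{lem:separation} $\partial\eta$ separates $\partial W$ and $\eta^{\pm}\in\partial W(2+)$; by Remark~\ref{rem:allM(2+)} each endpoint lies in a unique $\approx$-pair or $\sim$-class, and since $N(\eta^+,\eta^-)\geq2$ both lie in the same one (the $\approx$-pair $\{\eta^+,\eta^-\}$ if $\G\setminus\{b_i,b_j\}$ has at least three components, by Corollary~\ref{cor:approx}; a common $\sim$-class otherwise). This defines $\Phi:\cB\to T$, and using $\card(B)\geq4$, the cyclic structure of $B$, and Corollary~\ref{cor:parallel} one sees that $\Phi(\cB)$ has at least three elements. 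Granting for the moment that $\Phi(\cB)$ satisfies the no-betweenness condition, its maximality follows from the contrapositive of (B2): if $\zeta\in T\setminus\Phi(\cB)$, then $\zeta$ comes from an $\approx$-pair or a $\sim$-class whose underlying set contains an essential cut pair $\{a_1,a_2\}$ (by Corollary~\ref{cor:approx} or Corollary~\ref{cor:A ess cut pair}), and one checks that $\zeta\notin\Phi(\cB)$ forces $B\cup\{a_1,a_2\}$ to fail (B1); so some essential cut pair $C$ separates $B\cup\{a_1,a_2\}$, and Lemma~\ref{lem:separation} exhibits the class of $C$ (or of a suitable frontier geodesic, in the notation of Remark~\ref{rmk:components}) as a class in $T$ between $\zeta$ and some $\Phi(\eta)$. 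Together with Bowditch's dichotomy that stars of size at least $3$ are infinite, this completes the proof once the no-betweenness condition is established.

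The no-betweenness condition is the heart. Fix $\eta,\eta'\in\cB$ with $\Phi(\eta)\neq\Phi(\eta')$ and $\theta\in T$. The case $\theta\in\Phi(\cB)$ needs a separate short argument — a class in $\Phi(\cB)$ cannot lie strictly between two others, which is just the tree-theoretic fact that the star vertex separates its neighbours — so assume $\theta\notin\Phi(\cB)$. By Remark~\ref{rmk:betweenness} I must rule out distinct points $a,b\in\theta$ and distinct components $U\ni\Phi(\eta)$, $V\ni\Phi(\eta')$ of $\partial W\setminus\{a,b\}$. Pick a geodesic $\delta$ of $\cC$ with $\partial\delta=\{a,b\}$ realising the separation of Lemma~\ref{lem:separation}, so $\delta$ is bicoloured by an essential cut pair (if $\theta$ is an $\approx$-pair) or is a frontier geodesic of some $\Sigma_{A'}$, again bicoloured by an essential cut pair (if $\theta$ is a $\sim$-class). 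If an endpoint of $\delta$ is not on the circle $\partial\Sigma_\alpha$, then $\partial\Sigma_\alpha\setminus\{a,b\}$ is connected, so lies in one component of $\partial W\setminus\{a,b\}$; since $\Phi(\eta)$ and $\Phi(\eta')$ each meet $\partial\Sigma_\alpha$ and (being distinct from $\theta$) each lies in a single component by Remark~\ref{rmk:betweenness}, they lie in the same one. Otherwise $\partial\delta\subseteq\partial\Sigma_\alpha$ and $\delta$ may be taken inside the convex subcomplex $\Sigma_\alpha$; in $\Sigma_\alpha\cong\H^2$, if $\delta$ crossed some $\zeta\in\cB$ then $\partial\zeta$ would straddle $\partial\delta$ on the circle, so $\Phi(\zeta)$ would meet two components of $\partial W\setminus\{a,b\}$, forcing $\Phi(\zeta)=\theta$ (Remark~\ref{rmk:betweenness}) and contradicting $\theta\notin\Phi(\cB)$. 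Hence $\delta$ crosses no geodesic of $\cB$, so each lies in one of the two half-planes of $\Sigma_\alpha$ cut off by $\delta$; it remains to see they all lie in the \emph{same} half. Here (B1) is decisive: the essential cut pair $C$ underlying $\delta$ has $B\setminus C$ in a single component of $\G\setminus C$, and combining this with the cyclic order on $B$ from part (1) and the explicit description of components in Lemma~\ref{lem:separation} forces the endpoints of every geodesic in $\mathcal{G}_B$ — and then, by $\langle B\rangle$-equivariance inside $\Sigma_\alpha$, of every geodesic in $\cB$ — onto a single arc of $\partial\Sigma_\alpha\setminus\partial\delta$. Thus $\Phi(\eta)$ and $\Phi(\eta')$ lie in the same component of $\partial W\setminus\{a,b\}$, and $\theta$ is not between them.

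The main obstacle is precisely this last step of part (3): promoting the ``one side of $\delta$'' conclusion from the finitely many geodesics of $\mathcal{G}_B$ through $e$ to the entire $\langle B\rangle$-orbit $\cB$, and doing so uniformly in the essentially arbitrary separating geodesic $\delta$. This requires carefully tracking how the two arcs of $\partial\Sigma_\alpha$ delimited by $\partial\delta$ sit relative to the cyclic order on $B$, together with separate treatment of the degenerate sub-cases where $C$ meets $B$, where $\delta$ passes through the base chamber, and where $\theta$ is an infinite $\sim$-class (so that the relevant separating geodesics are the frontier geodesics of Remark~\ref{rmk:components} rather than single bicoloured lines). By contrast, part (1) is routine given the analogues already proved for $\sim$-classes, and part (2) together with the maximality argument is comparatively short; essentially all of the real work lies in controlling the interaction of an arbitrary class of $T$ with the whole orbit $\cB$.
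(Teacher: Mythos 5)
Your overall framework (define $\Phi$, check no-betweenness via Remark~\ref{rmk:betweenness} and Lemma~\ref{lem:separation}, check maximality via (B2)) matches the paper's, but your part (1) — the structural analysis of $B$ on which the rest of your argument is built — contains claims that are simply false, and the paper deliberately avoids them.

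Concretely, (B1) does not ``do the work of (A1) and (A2).'' Property (A1) asserts that the elements of $A$ pairwise \emph{separate} $|\G|$, which is what drives the induction in Lemma~\ref{lem:A1 cyclic} and the chord-forbidding argument in Lemma~\ref{lem:A1A2xy}; property (B1) is a statement in the opposite direction (no essential pair separates $B$), and gives you no handle to run those arguments. Your claimed analogue of Lemma~\ref{lem:A1A2xy}(1) — a well-defined cyclic order on $B$ — is false: if $\G$ is a subdivided $K_4$ with all six branches of length at least $2$, then $B$ is the set of four essential vertices, and there are three distinct induced Hamiltonian cycles on $B$ inducing three distinct cyclic orders. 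Your claim that ``any two elements of $B$ consecutive in the cyclic order form an essential cut pair'' is also false: take a subdivided $K_4$ with one branch $b_1b_2$ of length $1$ and the other five of length $3$; this $\G$ is triangle- and square-free, $B=\{b_1,b_2,b_3,b_4\}$ satisfies (B1)--(B3), the only induced cycles through $B$ have $b_1,b_2$ consecutive, yet $\{b_1,b_2\}$ is not a cut pair. The same example kills your claim that no two elements of $B$ are adjacent and hence that $\cC_B=\Sigma_B$ is a tree. Since your part (3) explicitly invokes ``the cyclic order on $B$ from part (1)'' to push all of $\cB$ onto one arc of $\partial\Sigma_\alpha\setminus\partial\delta$, the core of your no-betweenness argument rests on a structure that does not exist, and it is unclear how to repair it when several incompatible cyclic orders coexist.

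What the paper actually does in (b) is quite different in method: it never localizes $\cB$ inside a single hyperbolic-plane subcomplex and never uses a cyclic order on $B$. Given a putative separating vertex $v''$ arising from a $(c_1,c_2)$-bicoloured geodesic $\mu$, it takes the shortest Cayley-graph path from $\eta$ to $\eta'$ (labelled by a word in $B$), tracks how that word crosses $\mu$, and uses the \emph{word-level} description of the components of $\Sigma\setminus\mu$ from Lemma~\ref{lem:separation} together with (B1) — $B\setminus\{c_1,c_2\}$ lies in a single component of $\G\setminus\{c_1,c_2\}$, so the first letter back out of $\mu$ always points into the same component — to force $\eta$ and $\eta'$ onto the same side. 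The infinite $\sim$-class case is then reduced to the bicoloured case via the frontier geodesics of Remark~\ref{rmk:components} and finished off with Lemma~\ref{lem:mhf-cap-star}. Your maximality step (c) is in the right spirit, but the paper's version is more careful: it does not argue that $B\cup\{a_1,a_2\}$ fails (B1) (which need not be the reason $\zeta\notin\Phi(\cB)$, since $\{a_1,a_2\}$ may already lie in $B$ with the offending geodesic merely based at the wrong coset); instead it reads off the first letter $z\notin B$ along the path from $e$ to the geodesic for $\zeta$ and applies Lemma~\ref{lem:z-sep} to $z$. You also omit the paper's step (a), that $\Phi$ is at most $2$-to-$1$, which is how one promotes $\card\cB=\infty$ to $\card\Phi(\cB)\ge 3$; your appeal to ``the cyclic structure of $B$'' at that point is again relying on the nonexistent structure.
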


\begin{example}  
In Figure~\ref{fig:sim-eg},  the set $\{a, d, e\}$ in the left-most graph satisfies (B1) and (B2) 
but not (B3), and since there are only three essential vertices condition (B3) will never be satisfied.  Any subset of $\{a_1, a_2, a_3, a_4, a_5\}$ in the middle graph which contains at least $4$ vertices fails (B1).  The sets $\{u, v, p, q\}$ and  $\{p, q, r, s\}$ in the right-most graph satisfy 
(B1), (B2) and (B3), and each pair of vertices within these sets is a cut pair in $\G$. \end{example}

We begin with some preliminaries for the proof of Proposition~\ref{prop:stars}.

\begin{obs}\label{obs:B}
Let $B$ be a set of essential vertices of $\G$ which satisfies property (B1) of Proposition~\ref{prop:stars}, and  $y$ be an essential vertex of $\G$.  If there exist paths $\eta, \mu$ and $\nu$ in $\G$ from $y$ to $B$ which intersect only at $y$, then $B \cup \{y\}$ satisfies property (B1).  This is because given any essential vertices $c_1, c_2 \neq y$,  at least one of the paths $\eta, \mu$ and $\nu$ misses $\{c_1, c_2\}$, so that $y$ is in the same component as $B \setminus \{c_1, c_2\}$ in $\G \setminus \{c_1, c_2\}$. 
\end{obs}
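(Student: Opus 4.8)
The plan is to verify property (B1) for $B \cup \{y\}$ directly, by checking an arbitrary pair $C = \{c_1, c_2\}$ of essential vertices of $\G$. If $y \in B$ there is nothing to prove, so I assume $y \notin B$, and I split into two cases according to whether $y \in C$.

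In the case $y \in C$, say $y = c_1$, the set $(B \cup \{y\}) \setminus C$ equals $B \setminus C$, since $y \notin B$. This lies in a single component of $\G \setminus C$ because $B$ itself satisfies (B1) for the pair $C$. So this case is immediate.

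In the case $y \notin C$, I would use the three paths $\eta, \mu, \nu$ from $y$ to $B$ which pairwise meet only at $y$. Since $c_1 \neq y$, the vertex $c_1$ can lie on at most one of these three paths; otherwise two of them would share the vertex $c_1 \neq y$, contradicting the hypothesis. Likewise $c_2$ lies on at most one of them. As $c_1 \neq c_2$, at least one of the three paths, say $\eta$, avoids both $c_1$ and $c_2$ entirely, i.e.\ is a path in $\G \setminus C$ joining $y$ to its endpoint $b \in B$. Since $b$ lies on $\eta$, we also have $b \notin C$, so $b \in B \setminus C$; hence $y$ and $b$ lie in the same component of $\G \setminus C$. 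By (B1) for $B$, the set $B \setminus C$ lies in a single component of $\G \setminus C$, and that component therefore also contains $y$. Thus $(B \cup \{y\}) \setminus C = (B \setminus C) \cup \{y\}$ is contained in one component of $\G \setminus C$. Since $C$ was arbitrary, $B \cup \{y\}$ satisfies (B1).

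There is no genuine obstacle here: the argument is the short Menger-type observation already indicated in the statement, and the only point requiring a moment's care is the book-keeping that a pair of vertices both distinct from $y$ can block at most two of three paths that are internally disjoint at $y$, so at least one survives, together with the remark that the surviving path's endpoint automatically lies in $B \setminus C$.
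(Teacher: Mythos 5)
Your proof is correct and follows essentially the same pigeonhole argument sketched in the paper's own observation: since the three paths meet pairwise only at $y$, a pair $\{c_1,c_2\}$ with both $c_i \neq y$ can block at most two of them, so one survives in $\G\setminus C$ and connects $y$ to a vertex of $B\setminus C$. You additionally record the (trivial) case $y \in C$, which the paper's phrasing implicitly sets aside by restricting to $c_1, c_2 \neq y$; this is a harmless bit of extra care, not a new idea.
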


\begin{lemma}\label{lem:star-sep}
Let $B$ be a set of essential vertices satisfying \emph{(B1)}, \emph{(B2)} and \emph{(B3)}.  Then there exists a cut pair in $\G$ consisting  
of vertices $\{b, b'\}\subset B$. 
\end{lemma}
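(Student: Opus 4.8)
The goal is to show that a set $B$ satisfying (B1)--(B3) necessarily contains at least one cut pair of $\G$. The natural approach is by contradiction: assume no two vertices of $B$ form a cut pair, and derive a violation of the maximality condition (B2) or of the Standing Assumptions. I would start by fixing an essential vertex $b_0 \in B$. Since $\card(B)\ge 4$ by (B3), there are at least three other essential vertices $b_1, b_2, b_3 \in B\setminus\{b_0\}$. For each $i=1,2,3$, choose a reduced path $\eta_i$ in $\G$ from $b_0$ to $b_i$. The idea is to argue that these three paths can be chosen to be internally disjoint (meeting only at $b_0$), which would then let me apply Observation~\ref{obs:B} to enlarge $B$ and contradict (B2) --- \emph{unless} enlarging is impossible because some pair is already a cut pair.

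\textbf{Key steps.} First I would use (B1) repeatedly: for any pair of essential vertices $\{c_1,c_2\}$, the set $B\setminus\{c_1,c_2\}$ stays in one component of $\G\setminus\{c_1,c_2\}$. Applying this with $\{c_1,c_2\} = \{b_0, b_i\}$ shows that the remaining $b_j$'s are not separated from each other by any pair through $b_0$. Next, suppose toward a contradiction that no pair inside $B$ is a cut pair; I want to produce an essential vertex $y\notin B$ (or, more carefully, show $B$ itself can be enlarged) reachable from $B$ by three internally disjoint paths, contradicting (B2) via Observation~\ref{obs:B}. The mechanism: take $b_1, b_2 \in B$; since $\{b_1,b_2\}$ is not a cut pair, $\G\setminus\{b_1,b_2\}$ is connected (using that $\G$ has no separating vertices or edges, Standing Assumptions~\ref{assumptions}(2), so one cannot have a single-vertex complement either). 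Using connectivity of $\G$ and the fact that $\G$ is not a cycle (Standing Assumptions~\ref{assumptions}(4)), one locates a ``branch point'' structure: a vertex $y$ from which three internally disjoint arcs run to distinct vertices of $B$. Concretely, pick internally disjoint arcs from $b_1$ and $b_2$ to some third vertex $b_3$, then find where a path to $b_0$ first meets the resulting theta-graph; that meeting vertex $y$ has the required three-arc property (two along the theta-graph, one along the incoming path), and $y$ is essential since it has valence $\ge 3$. Then Observation~\ref{obs:B} gives that $B\cup\{y\}$ satisfies (B1); if $y\notin B$ this contradicts (B2), and if $y\in B$ one iterates or notes that $y\in B$ forces a cut pair among elements of $B$ after all (the two theta-graph arcs through $y$ together with a separation argument).

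\textbf{Main obstacle.} The delicate point is handling the case $y\in B$ and, more generally, ensuring the three arcs to $B$ really can be taken internally disjoint --- naive choices of paths may overlap. I expect to need a careful minimal-counterexample or Menger-type argument: choose the three paths from $y$ to $B$ so that the total number of shared edges is minimized, and show any sharing can be removed or else directly exhibits a cut pair. The triangle-free and square-free hypotheses (Standing Assumptions~\ref{assumptions}(1),(3)) should not be essential here, but the no-separating-vertex-or-edge hypothesis (2) and the not-a-cycle hypothesis (4) are exactly what prevent the pathological configurations where $B$ lies on a single arc with no cut pair; I would make sure those are invoked at the step where I claim a genuine branch vertex $y$ exists. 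Once the three-disjoint-paths configuration is in hand, Observation~\ref{obs:B} plus (B2) closes the argument cleanly, so the write-up should be short modulo that graph-theoretic lemma about internally disjoint arcs.
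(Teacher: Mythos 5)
Your proposal takes a genuinely different approach from the paper, and unfortunately it has gaps that you yourself flag but do not resolve; they are substantial enough that the argument doesn't close.

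The paper's proof is ``outside-in'': it first disposes of the case where $B$ contains \emph{all} essential vertices of $\G$, by observing that Standing Assumptions (4) and (5) force $\G$ to have an essential cut pair, which then lies in $B$. In the remaining case, it takes an essential vertex $x \notin B$ that is \emph{nearest} to $B$ (joined to some $b \in B$ by a path $\tau$ with no essential interior vertices), and applies (B2) to $x$ to obtain a separating pair $\{b,c\}$; it then analyzes whether $c \in B$, and when $c \notin B$ it carefully locates the second member $b'$ of the cut pair by tracking where paths from $c$ enter $B$, and uses Observation~\ref{obs:B} only at the very end to get the contradiction $y \in B$. Your proposal instead works ``inside-out'': fix $b_0 \in B$, try to build a branch vertex $y$ with three internally disjoint arcs to $B$, and contradict (B2). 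This is a natural idea, but you never actually establish that the initial configuration exists.

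Concretely, the claim ``pick internally disjoint arcs from $b_1$ and $b_2$ to some third vertex $b_3$'' is the whole difficulty, and it is unjustified: absence of separating vertices gives two internally disjoint $b_1$--$b_3$ paths, but it does not give a cycle (or theta-graph) through three prescribed vertices $b_1, b_2, b_3$, which is what your construction requires. Under your contradiction hypothesis (no pair in $B$ is a cut pair) there is no obvious reason such a configuration exists, and the ``Menger-type argument'' you gesture at would itself need to be the main content of the proof. Second, your fallback ``if $y \in B$ one iterates or notes that $y \in B$ forces a cut pair among elements of $B$'' is vague: you have no termination measure for the iteration, and the claim that $y \in B$ directly produces a cut pair is asserted without argument. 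Third, you never handle the case where $B$ is the full set of essential vertices; in that case every essential $y$ you construct is already in $B$, so your contradiction via (B2) never fires, and you would need (as the paper does at the outset) the separate observation that $\G$ has an essential cut pair. In short, the architecture (contradiction plus Observation~\ref{obs:B} plus (B2)) is sound and shares the paper's final step, but the geometric heart of the argument --- producing the branch vertex $y$ with three disjoint arcs to $B$ --- is missing, and the paper's ``nearest $x \notin B$'' device is precisely what supplies it.
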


\begin{proof}
By items (4) and (5) in Standing Assumptions~\ref{assumptions},  we have that $\G$ is not a cycle, and has at least one cut pair.  
Since we may replace each vertex in the pair with a nearest 
essential vertex if necessary, it follows that $\G$ has at least one 
essential cut pair.  
Thus if $B$ contains all the essential vertices of $\G$, the lemma follows.

We may now assume that $\G \setminus B$ contains an essential vertex.  
Let $x$ be an essential vertex in $\G \setminus B$ ``nearest to $B$'' in the sense that there exists 
$b \in B$ and a path $\tau$ in $\G$ between $x$ and $b$ which does not contain any essential vertices in its interior.  
Since $x \notin B$, 
there exists, by property (B2),  a pair of essential vertices which separate $x$ from $B$.  One of the vertices in the pair must be $b$, for otherwise $\tau$ would connect $x$ to $B$. 
Let $c$ be the other vertex, so that $\G \setminus \{b, c\}$
contains disjoint components $\Lambda_x$ and $\Lambda_B$, with $x \in \Lambda_x$ and $B \setminus \{b, c\} \subset \Lambda_B$.  
If $c \in B$, then put $b'=c$ to obtain the 
desired cut pair 
$\{b, b'\} \subset B$.  

If $c \notin B$, 
choose a reduced path $\eta$ from $c$ to $B$ through $\Lambda_B$, which meets $B$ only at its endpoint
$b' \in B$. 
Since $b\notin \Lambda_B$, we know that $b \neq b'$.  We claim that $\{b, b'\}$ is the desired pair.  
If every path from $c$ to $B$ enters through either $b$ or $b'$, then clearly $\{b, b'\}$ separates $\G$ 
(putting $c$ and $B \setminus \{b, b'\}$ in different components).  
Assume, therefore, that there is a reduced path $\nu$ from $c$ to $B$, which intersects $B$ only in its endpoint $b'' \neq b, b'$.  It follows that $\nu$ does not intersect $\Lambda_x$.   

There exists a path $\mu$ from $c$ to $b$ in $\Lambda_x$ which is disjoint from $\eta$ and $\nu$.
Since $c \notin B$, Observation~\ref{obs:B} implies that the paths $\eta$ and $\nu$ must intersect.
Let $y$ be the last point of $\eta \cap \nu$ encountered while 
traveling along $\eta$ from $c$ to $b'$ (with $y$ possibly equal to $c$), so that the segment $\eta_{[yb']}$ meets $\nu$ only at $y$, as in Figure~\ref{fig:star-sep}. 
\begin{figure}[ht]
\begin{center}
\begin{overpic}%[grid, tics=20]
{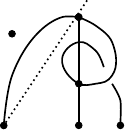}
\put(0, -15){\footnotesize $b$}
\put(60,-15){\footnotesize $b'$}
\put(85,-15){\footnotesize $b''$}
\put(63,41){\footnotesize $y$}
\put(66,89){\footnotesize $c$}
\put(10,80){\footnotesize $x$}
\put(32,95){\footnotesize $\Lambda_x$}
\put(80,95){\footnotesize $\Lambda_B$}
\put(50,15){\footnotesize $\eta$}
\put(-8,15){\footnotesize $\mu$}
\put(97,15){\footnotesize $\nu$}
\end{overpic}
\caption{{\footnotesize }} 
\label{fig:star-sep}
\end{center}
\end{figure}

We now have three paths from $y$ to $B$ which meet only at $y$: $\eta_{[yb']}, \nu_{[yb'']}$, and the concatenation of $\nu_{[yc]}$ with $\mu = \mu_{[cb]}$.  Then by Observation~\ref{obs:B}, we have 
$y \in B$, which contradicts our choice of $\eta$.  It follows that $\{b, b'\}$ separates $\G$. 
\end{proof}

\begin{lemma}\label{lem:cycle-k4}
Let $B$ be a set of essential vertices satisfying \emph{(B1)}, \emph{(B2)} and \emph{(B3)}.  Suppose that $B$ contains three distinct vertices $b_1$, $b_2$ and $b_3$ which lie on an induced cycle $\sigma$ in $\G$.  Then $b_1$, $b_2$ and $b_3$ lie on a subdivided $K_4$ subgraph 
of $\G$.  Moreover, the set $\{b_1, b_2, b_3\}$ intersects at least two branches of this subgraph. 
\end{lemma}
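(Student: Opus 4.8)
```
The plan is to build the subdivided $K_4$ by starting from the induced cycle $\sigma$ and adding one extra path through the ``interior'', then using the hypotheses on $B$ to locate $b_1, b_2, b_3$ on it appropriately.

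First I would establish that $\G$ cannot simply be the cycle $\sigma$: by Standing Assumptions~\ref{assumptions}(4), $\G$ is not a cycle of length $\ge 5$, and $\sigma$ is an induced cycle of length $\ge 5$ (since $\G$ is triangle- and square-free), so there must be at least one vertex of $\G$ off $\sigma$, or a chord — but $\sigma$ is induced, so there is a vertex off $\sigma$. Since $\G$ has no separating vertices or edges (Standing Assumptions~\ref{assumptions}(2)), there must in fact be a reduced path $\pi$ in $\G$ with both endpoints on $\sigma$, whose interior is disjoint from $\sigma$, and whose endpoints $p, q$ divide $\sigma$ into two arcs each containing a vertex not shared with $\pi$. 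The union $\sigma \cup \pi$ is then a subdivided $\theta$-graph. To upgrade this to a subdivided $K_4$ I would need a \emph{fourth} branch: starting from a subdivided $\theta$-graph, one adds a path connecting the interiors of two of its three arcs, disjoint from the rest, which exists again by the no-separating-pair type arguments available from Standing Assumptions~\ref{assumptions}(2) (more precisely, since the two open arcs of the $\theta$ together with $\pi^\circ$ do not separate $\G$ appropriately... I would instead invoke a cleaner route, see below).

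The cleaner route, and the one I expect to actually use: apply Lemma~\ref{lem:star-sep} to get a cut pair $\{b,b'\}\subset B$, and then use property (B1) together with the betweenness structure of $b_1,b_2,b_3$ on $\sigma$. Concretely, at least two of $b_1,b_2,b_3$ — say $b_1$ and $b_2$ — are separated by a cut pair among $\{b,b'\}$ and another essential vertex; but really the argument should be: pick two of the three, say $b_1$ and $b_2$, that are \emph{not} separated from each other by the third $b_3$ in an obstructive way, and find, using (B1) applied to the pair $\{b_i, b_j\}$ for the relevant pair, a path realizing a fourth branch of $\sigma$ that passes through the arc of $\sigma$ containing $b_3$ but avoids the arc containing, say, $b_1$ or $b_2$. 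The point of (B1) is exactly that no pair of essential vertices separates the others, which forces the existence of ``detour'' paths; this is the mechanism already exploited in Lemma~\ref{lem:star-sep} and Observation~\ref{obs:B}. Having produced a subdivided $K_4$ subgraph $\Lambda \supseteq \sigma$ (or $\supseteq$ an induced cycle through $b_1,b_2,b_3$ that forms three of its branches), the three vertices $b_1,b_2,b_3$ lie on $\Lambda$; and since $\sigma$ was an induced cycle passing through all three, they cannot all lie on a single branch of $\Lambda$ unless that branch contains an induced cycle, which is impossible as a branch is a reduced path. Hence $\{b_1,b_2,b_3\}$ meets at least two branches.

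The main obstacle will be the bookkeeping in the second step: ensuring that the extra path(s) added to $\sigma$ can be chosen with interiors disjoint from $\sigma$ and from each other, with endpoints landing in the correct arcs of $\sigma$ relative to $b_1, b_2, b_3$, so that no branch of the resulting subdivided $K_4$ swallows all three $b_i$. This requires carefully choosing which two of the three vertices to ``protect'' and invoking (B1) for the right pair, possibly iterating the path-addition argument (as in the proof of Corollary~\ref{cor:all A not b}) to move an existing path into the desired position. I expect the case analysis to parallel the argument of Lemma~\ref{lem:star-sep}, using Observation~\ref{obs:B} to derive contradictions whenever the added path fails to separate the $b_i$ as required.
```
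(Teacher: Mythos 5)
The central gap is that your argument never uses (B3), i.e.\ a fourth element of $B$, and without it the mechanism you invoke cannot produce the required extra branches. Property (B1) only says that the elements of $B\setminus\{c_1,c_2\}$ lie in one component of $\G\setminus\{c_1,c_2\}$; it forces a ``detour'' path joining the two arcs of $\sigma\setminus\{b_i,b_j\}$ only if you can point to elements of $B$ on (or attached through) \emph{both} arcs. With just $b_1,b_2,b_3$ in play, (B1) applied to $\{b_1,b_2\}$ says nothing about the arc not containing $b_3$, so your step ``find, using (B1) applied to the pair $\{b_i,b_j\}$, a path realizing a fourth branch'' has no justification; likewise the cut pair from Lemma~\ref{lem:star-sep} bears no relation to $b_1,b_2,b_3$, and your first route's upgrade from a theta-graph to a subdivided $K_4$ via ``no separating vertices or edges'' is false in general (the generalised theta graphs of Figure~\ref{fig:approx-pairs} satisfy the Standing Assumptions and contain no subdivided $K_4$). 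Indeed the statement itself fails if (B3) is dropped: in the left graph of Figure~\ref{fig:sim-eg} the set $\{a,d,e\}$ satisfies (B1) and (B2), its three vertices lie on an induced cycle, yet that graph has no subdivided $K_4$ subgraph at all (Example~\ref{eg:sim} and the example following Proposition~\ref{prop:stars}). The paper's proof is organised precisely around a fourth vertex $b_4\in B$: if $b_4$ lies on $\sigma$, relabel so that $b_2$ and $b_4$ lie in different arcs of $\sigma\setminus\{b_1,b_3\}$; then (B1) for $\{b_1,b_3\}$ yields a path $\eta$ joining the two arcs and meeting $\sigma$ in essential vertices $x,y$, and (B1) for $\{x,y\}$ yields a second path $\tau$ rejoining the sides containing $b_1$ and $b_3$, so $\sigma\cup\eta\cup\tau$ contains the desired subdivided $K_4$; if $b_4\notin\sigma$, Observation~\ref{obs:B} either produces a new element of $B$ on $\sigma$ (reducing to the first case) or gives three paths from $b_4$ to $b_1,b_2,b_3$, each avoiding the other two, which together with $\sigma$ contain a subdivided $K_4$. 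Without an argument of this shape, specifying which elements of $B$ lie behind each arc, your ``pick two of the three that are not separated in an obstructive way'' step has no content.

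The ``moreover'' clause is also not established by your final observation. If $\Lambda\supseteq\sigma$ is a subdivided $K_4$, all three of $b_1,b_2,b_3$ can perfectly well lie on a single branch $\beta$ of $\Lambda$ with $\beta\subseteq\sigma$: the cycle $\sigma$ is then the union of $\beta$ with other branches of $\Lambda$, so nothing forces a branch containing all three $b_i$ to ``contain an induced cycle''; the deduction is a non sequitur. In the paper this conclusion is read off from the construction itself: the attaching points of $\eta$ and $\tau$ (respectively of the paths $\tau_i$ from $b_4$) are located on $\sigma$ in positions dictated by the $b_i$ --- for instance $x$ and $y$ separate $b_1$ from $b_3$ in $\sigma\cup\eta$ --- and this is what guarantees that $\{b_1,b_2,b_3\}$ meets at least two branches. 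Any repair of your argument must build this separation into the choice of the added paths rather than deduce it afterwards.
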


\begin{proof}
By property (B3), $\card(B) \ge 4$, so there is $b_4 \in B$ distinct from $b_1$, $b_2$ and $b_3$.   

If $b_4$ lies on $\sigma$, assume without loss of generality that $b_1$ and $b_3$ separate $\sigma$ into two components, one containing $b_2$ and the other containing $b_4$.  By property (B1), $b_2$ and $b_4$ lie in the same component of $\G \setminus \{b_1, b_3\}$, so there is a path $\eta$ which connects the two components of $\sigma \setminus \{b_1,b_3\}$.  We may assume $\eta$ is reduced and meets $\sigma$ only in a pair of essential vertices $x$ and $y$.  Now $b_1$ and $b_3$ lie in different components of $(\sigma \cup \eta) \setminus \{x, y\}$, but by  property (B1), they lie in the same component of $\G \setminus \{x, y\}$.  Thus there is a reduced path $\tau$ connecting the component of $\sigma \setminus \{x, y\}$ containing $b_1$ with that containing $b_3$.  
Then if $\tau$ and $\eta$ are disjoint, $\sigma \cup \tau \cup \eta$ is a subdivided $K_4$ subgraph containing $b_1$, $b_2$ and $b_3$, and otherwise $\sigma \cup \tau \cup \eta$ contains such a subgraph.  It is clear from the construction that $b_1$, $b_2$, and $b_3$ lie on at least two distinct branches of this subgraph.

On the other hand, if $b_4$ is not on $\sigma$ then since $\G$ is connected, there is at least one (reduced) path $\tau$ connecting $b_4$ to $\sigma$.  If $\tau$ first meets $\sigma$ at $z \neq b_1, b_2, b_3$, then  by Observation~\ref{obs:B} 
and property (B2) 
we conclude that $z\in B$, and apply the previous paragraph with $b_4=z$.  So we may assume that every reduced path from $b_4$ to $\tau$ first meets $\sigma$ at $b_1, b_2$ or $b_3$.  Now using the same argument as in the preceding paragraph, we conclude that for $1 \le i \le 3$, there is a reduced path $\tau_i$ connecting $b_4$ to $b_i$ in the complement of $b_{i+1}$ and $b_{i+2}$ (indices taken mod 3).   It is easy to see that 
$\sigma \cup \tau_1 \cup \tau_2 \cup \tau_3$ either is or contains a subdivided $K_4$ subgraph.  
Again, it is also clear that $b_1$, $b_2$, and $b_3$ lie on at least two distinct branches of this subgraph.
\end{proof}

\begin{lemma}\label{lem:mhf-cap-star}
Let $A$ and $B$ be sets of vertices of $\G$ as in the statements of Propositions~\ref{prop:sim} 
and~\ref{prop:stars}
respectively.  Then $\card(A\cap B) \le 2$.  
\end{lemma}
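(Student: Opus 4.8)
The plan is to argue by contradiction, chaining together two of the structural lemmas already established. Suppose $\card(A \cap B) \geq 3$, and fix three distinct vertices $b_1, b_2, b_3 \in A \cap B$.

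First I would invoke Lemma~\ref{lem:A1 cyclic}: since $A$ satisfies property \emph{(A1)} and $\langle A \rangle$ is infinite, all elements of $A$ lie on a single induced cycle $\sigma$ in $\G$; in particular $b_1, b_2, b_3$ are three distinct vertices lying on the induced cycle $\sigma$. Since $b_1, b_2, b_3 \in B$ and $B$ satisfies \emph{(B1)}, \emph{(B2)} and \emph{(B3)}, this is exactly the situation to which Lemma~\ref{lem:cycle-k4} applies: it produces a subdivided $K_4$ subgraph $\Lambda$ of $\G$ containing $b_1, b_2, b_3$, with the additional property that the set $\{b_1, b_2, b_3\}$ meets at least two distinct branches of $\Lambda$.

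Finally I would feed this same subgraph $\Lambda$ into property \emph{(A2)} of $A$: since $\Lambda$ contains the three vertices $b_1, b_2, b_3$ of $A$, property \emph{(A2)} forces all vertices of $A$ — and in particular $b_1, b_2, b_3$ — to lie on a single branch of $\Lambda$. This directly contradicts the conclusion of Lemma~\ref{lem:cycle-k4} that $\{b_1, b_2, b_3\}$ intersects at least two branches of $\Lambda$. Therefore $\card(A \cap B) \leq 2$.

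I do not expect any genuine obstacle here: the proof is a two-step combination of Lemma~\ref{lem:A1 cyclic} and Lemma~\ref{lem:cycle-k4} against property \emph{(A2)}. The only point requiring a moment's care is to observe that the induced cycle through $A$ provided by Lemma~\ref{lem:A1 cyclic} is precisely the kind of hypothesis needed to apply Lemma~\ref{lem:cycle-k4}, so that the ``at least two branches'' conclusion of the latter collides cleanly with the ``single branch'' requirement of \emph{(A2)}.
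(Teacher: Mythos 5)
Your proof is correct and follows essentially the same route as the paper: an induced cycle through $A$ via Lemma~\ref{lem:A1 cyclic}, then Lemma~\ref{lem:cycle-k4} to place $b_1,b_2,b_3$ on at least two branches of a subdivided $K_4$, contradicting property (A2). No gaps to report.
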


\begin{proof}
Suppose there are three distinct elements $b_1$, $b_2$ and $b_3$ in $A \cap B$.   By 
Lemma~\ref{lem:A1 cyclic} there is an induced cycle $\alpha$ containing $b_1$, $b_2$ and $b_3$.
Then by Lemma~\ref{lem:cycle-k4}, $b_1$, $b_2$ and $b_3$ lie on at least two distinct branches of a subdivided $K_4$ subgraph 
of $\G$. 
This is a contradiction, since $A$ satisfies property (A2) of Proposition~\ref{prop:sim}. 
\end{proof}

\begin{lemma}\label{lem:z-sep}
If
$B$ is a set of essential vertices satisfying (B1) and (B2) with at least two elements, and if $z$ is an arbitrary vertex in $\G\setminus B$, then (at least) one of the following is true:
\begin{enumerate}
\item  
there exists a pair of essential vertices $\{c_1, c_2\}$ and an element $b\in B$ such that $z$ and $b$ lie in different components of $\G \setminus \{c_1, c_2\}$; or

\item  
the vertex $z$ is not essential and lies on a branch between a pair of vertices $b_1$ and $b_2$ in $B$. 
\end{enumerate}

\end{lemma}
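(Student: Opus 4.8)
The plan is to argue by cases according to whether $z$ is essential, deriving conclusion (1) from the maximality hypothesis (B2) whenever possible. Suppose first that $z$ is essential. Since $z\notin B$, the set $B\cup\{z\}$ is a strictly larger set of essential vertices than $B$, so by (B2) it fails (B1): there is a pair of essential vertices $C=\{c_1,c_2\}$ such that $(B\cup\{z\})\setminus C$ is not contained in a single component of $\G\setminus C$. I would first note that $z\notin C$ (otherwise $(B\cup\{z\})\setminus C=B\setminus C$, which lies in a single component by (B1) for $B$, a contradiction) and that $B\setminus C\neq\emptyset$ (otherwise $(B\cup\{z\})\setminus C=\{z\}$ lies in a single component). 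Then $B\setminus C$ lies in a single component $\Lambda_B$ of $\G\setminus C$ while $(B\setminus C)\cup\{z\}$ does not, so $z$ lies in a different component; taking any $b\in B\setminus C$ then gives conclusion (1).

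Now suppose $z$ is not essential. By the Standing Assumptions $z$ has valence $2$, and tracing along the two edges at $z$ through valence-$2$ vertices shows that $z$ is an interior vertex of a unique branch $\beta$ of $\G$ whose endpoints $p$ and $q$ are essential and distinct (if $p=q$ then $\G$ would be a cycle or $p$ would be a separating vertex, both of which are excluded). If $p,q\in B$, then conclusion (2) holds with $b_1=p$ and $b_2=q$. Otherwise, relabelling if necessary, we may assume $p\notin B$. Applying the reasoning of the previous paragraph to the essential vertex $p$ in place of $z$ produces a pair of essential vertices $C=\{c_1,c_2\}$ with $p\notin C$, with $B\setminus C$ nonempty and contained in a single component $\Lambda_B$ of $\G\setminus C$, and with $p$ lying in some other component. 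It remains to locate $z$: the subpath of $\beta$ from $p$ to $z$ has only non-essential vertices (its interior vertices are interior vertices of $\beta$, and $z$ itself is non-essential), hence it avoids $C$, so $z$ lies in the same component of $\G\setminus C$ as $p$, which is not $\Lambda_B$. Taking any $b\in B\setminus C$ then gives conclusion (1).

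The argument is largely a bookkeeping exercise. The only points requiring care are the exclusion of the degenerate configurations ($z\in C$, $B\setminus C=\emptyset$, and $p=q$), which follow from (B1) for $B$ together with the Standing Assumptions, and the routine observation that a non-essential vertex is an interior vertex of a well-defined branch with distinct essential endpoints. I do not foresee a genuine obstacle.
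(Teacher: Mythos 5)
Your proof is correct and follows essentially the same route as the paper: split on whether $z$ is essential, use the maximality property (B2) to produce a separating pair of essential vertices in the essential case, and use the branch through $z$ (with conclusion (2) when both endpoints lie in $B$) otherwise. The only, harmless, divergence is the subcase where a branch endpoint lies outside $B$: the paper simply takes the separating pair to be the two branch endpoints $\{c_1,c_2\}$ together with any $b\in B\setminus\{c_1,c_2\}$, which exists since $\card(B)\ge 2$ and, being essential, cannot lie in the branch interior containing $z$, whereas you rerun the maximality argument at the missing endpoint $p$ and transfer the conclusion to $z$ along the branch -- slightly longer, but equally valid.
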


\begin{proof}
If $z$ is essential, then (1) follows easily from (B1) and (B2).  Otherwise $z$ lies on a branch between two essential vertices $c_1$ and $c_2$.  
If $\{c_1, c_2\} \subseteq B$ then (2) holds.  If not, then since $\card(B) \geq 2$, there exists $b \in B $ such that $b \notin  \{c_1, c_2\}$, and (1) holds, because $b$ is necessarily essential, and so cannot lie on the branch between $c_1$ and $c_2$ containing $z$. 
\end{proof}

We are now ready to prove Proposition~\ref{prop:stars}.

\begin{proof}[Proof of Proposition~\ref{prop:stars}]  
Given $\eta\in \cB$, by definition of the sets $\mathcal{G}_B$ and $\cB$ and Lemma~\ref{lem:separation}, the endpoints $\partial \eta$ separate $\partial W$, so that $\partial \eta \subset \partial W(2+)$.  We can thus define a map  $\Phi:\cB \to T$ by letting $\Phi(\eta) $ be the unique  
$\approx$-pair or $\sim$-class containing $\partial \eta$.   This map is well-defined since Bowditch shows that the $\approx$-pairs and $\sim$-classes partition $\partial W(2+)$.

We claim that: 
\begin{itemize}
\item[(a)] 
The map $\Phi:\cB \to T$ satisfies $\card{\Phi^{-1}(v)} \le 2$ for all $v \in T$.
\item[(b)] 
No vertex in $T$ is between any pair of vertices in $\Phi(\cB)$.
\item[(c)] The set $\Phi(\cB)$ is maximal with respect to condition (b).
\end{itemize}
Note that $\mathcal{G}_B$ is nonempty by Lemma~\ref{lem:star-sep}, and since $\card(B) \geq 4$ it follows that $\card{\cB} = \infty$.  
This, together with conditions (a), (b) and (c), implies that $\Phi(\cB)$ corresponds to a star of size at least 3 in $T$.

\bigskip

\emph{Proof of (a):}  Suppose $\Phi(\eta)=v \in T$ where $\eta \in \cB$ is labelled by $b_1$ and $b_2$. 
If $v$ is a $\approx$-pair or a $\sim$-pair, and $\eta' \in \cB$ with $\Phi(\eta') = v$, then $\partial\eta = \partial \eta'$.  As both $\eta$ and $\eta'$ are bicoloured, it follows from Corollary~\ref{cor:parallel} that there is at most one possibility for $\eta'$ distinct from $\eta$.  

Now suppose $v$ is an infinite $\sim$-class, and $\Phi(\eta')=v$.  
By left-multiplying by an element of $\langle B \rangle$ if necessary, we may assume that 
$\eta$ passes through the identity, and that $\eta'= w \gamma $, where $w\in \langle B \rangle$ 
and $\gamma \in \mathcal{G}_B$ passes through the identity.  Since $\partial \eta$ and $\partial\eta' = \partial (w\gamma)$ are part of the same $\sim$-class,  
the supports of $w, \eta_1$ and $\gamma$ are all part of the same set $A$ as in Proposition~\ref{prop:sim} giving rise to this  
$\sim$-class.  Then by Lemma~\ref{lem:mhf-cap-star} the union of these supports contains at most two elements.  It follows that $\eta= \eta'$.   

Thus $\card{\Phi^{-1}(v)} \le 2$ for all $v \in T$.  
\bigskip

\emph{Proof of (b):} 
Let $\eta,\eta'\in \cB$ be such that $\Phi(\eta) = v$, $\Phi(\eta') = v'$ and $v \neq v'$.  Suppose by way of contradiction that there is a vertex $v''$ of $T$ between the vertices $v$ and~$v'$.  

First assume $v''$ is either a $\sim$- or $\approx$-pair. 
Then $v''$ corresponds to $\partial \mu$, where $\mu$ is a 
 $(c_1, c_2)$-bicoloured bi-infinite geodesic in $\cC$,  
so that $\Sigma \setminus \mu$ has at least two components.  
Since $v''$ is between $v=\Phi(\eta)$ and $v' = \Phi(\eta')$, by Remark~\ref{rmk:betweenness} the sets $\partial \eta$ and $\partial \eta'$ are in different components of $\partial W \setminus \partial \mu$.  Thus $\eta$ and $\eta'$ are in different components of $\Sigma \setminus \mu$.  

Assume without loss of generality that $\eta$ passes through $e$, and that $\eta'$ is of the form $w\gamma$, where $w$ is a (possibly empty) reduced word in $B$ labelling a shortest path $\nu$ from $e$ to $\eta'$, and $\gamma$ is a bi-infinite geodesic passing through $e$.  Now $\nu$ necessarily crosses $\mu$, and could potentially travel along it for some distance.  
Let $p$ and $p'$  be the points where it first meets $\mu$ and leaves $\mu$, respectively, and let $u$ (respectively $u'$) be the reduced word labelling the path starting at $p$ (respectively $p'$) and ending at $\eta$ (respectively $\eta'$) along $\nu$.  Then every vertex on $\eta$ (respectively $\eta'$) can be reached from $\mu$ by a path labelled by $u$ (respectively $u'$), followed by an alternating word in $B$.  (This allows the possibility that $u$ or $u'$ is empty.)  By the description of the components of $\Sigma \setminus \mu$ in Lemma~\ref{lem:separation}, we see that all such paths lie in the same component of $\Sigma \setminus \mu$, since the initial letter labelling any such path is in $B$, and by property (B1), 
$B \setminus \{c_1, c_2\}$ lies in a single component of $ \G \setminus \{c_1, c_2\}$.  This is a contradiction.  Thus the vertex $v''$ cannot be either a $\sim$-pair or a $\approx$-pair.

If $v''$ is an infinite $\sim$-class, then it comes from a set $A$ as in Proposition~\ref{prop:sim} so that $\langle A \rangle$ is not $2$-ended.  More precisely, $\cC$ contains a copy $\cC_A$ of the Cayley graph of $\langle A \rangle$ so that $v''$ is the set $\partial \cC_A \setminus \partial \cA = \partial \cC_A \setminus \partial W(3+)$.  As discussed in Remark~\ref{rmk:components}, this copy of $\cC_A$ is contained in a copy of $\Sigma_A$, the subcomplex of $\Sigma$ corresponding to the special subgroup generated by $A$.  Recall from Remark~\ref{rmk:components} that $\cAplus$ denotes the set of frontier geodesics of $\Sigma_A$, and that there is a unique frontier geodesic corresponding to any component of $\Sigma \setminus \Sigma_A$.

We claim that $v''$ being between $v$ and $v'$ implies that either $\eta$ and $\eta' $ are both in
$\cAplus$, or there exists $\mu \in \cAplus$ such that  $\partial \mu$ separates 
$\partial \eta$ from $\partial \eta'$. 
For this, assume that $\eta \notin 
\cAplus$. By \cite[Lemma 3.18]{bowditch} (see Remark~\ref{rmk:betweenness}), $\eta$ then cannot intersect any geodesic in $\cAplus$.  Thus $\eta$ lies entirely in a single component of 
$\Sigma \setminus \Sigma_A$.   If $\eta'$ is also in this component, or $\eta'$ is the frontier geodesic for this component, then no pair of points in 
$\partial \cC_A \setminus \partial W(3+)$
would separate $\partial \eta$ from $\partial \eta'$, contradicting the fact that $v''$ is between $v$ and $v''$.  Thus $\eta$ and $\eta'$ are either in distinct components of $\Sigma \setminus \Sigma_A$, or if $\eta' \in \cAplus$ it corresponds to a different component of $\Sigma \setminus \Sigma_A$ to that containing $\eta$.   Then if $\mu$ is the geodesic in $\cAplus$ corresponding to the component containing $\eta$, it is clear that $\partial \mu$ separates 
$\partial \eta$  from $\partial \eta'$. This proves the claim. 

In the case that there exists $\mu$ in $\cAplus$ such that  $\partial \mu$ separates 
$\partial \eta$ from $\partial \eta'$, the proof proceeds as in the case that $v''$ is in $V_1(\cT)$.  
Otherwise, $\eta$ and $\eta' $ are both in
$\cAplus \subset \cC_A$.  By construction $\eta$ and $\eta'$ are in $\cB\subset \cC_B$.  Since 
$ \cC_A$ (respectively  $\cC_B$) is convex,  the shortest path between $\eta$ and $\eta'$ is labelled by a word in $A$ (respectively $B$).
Thus the support of this path, together with the supports of $\eta$ and $\eta'$, are in $A\cap B$.  But the union of these supports must have at least three elements, which contradicts Lemma~\ref{lem:mhf-cap-star}. 

Thus we have shown that no vertex of $T$ is between $v$ and $v'$.

\bigskip
 
\emph{Proof of (c):}  Consider an arbitrary vertex $v \in T \setminus \Phi(\cB)$.   We wish to show that $\{v \} \cup \Phi(\cB)$ fails (b), 
i.e.~that there is some vertex $v_\eta = \Phi(\eta)$ in $\Phi(\cB)$ such that there exists a vertex $v' \in V_1(\cT) \cup V_2(\cT)$ between $v$ and $v_\eta$.  

Since $v$ is in $T$, there is a bi-infinite geodesic $\mu$ which is bicoloured by a cut pair
$\{x, y\}$ in $\G$, such that $\mu$ separates $\Sigma$ and $\partial \mu \subseteq v \subset \partial W$.
Let $w$ be a (possibly empty) reduced word labelling a shortest path from $e$ to $\mu$, so that every vertex along $
\mu$ in $\cC$ is labelled by a group element consisting of $w$ followed by a word in $x$ and $y$.  
Since $v$ is not in $\Phi(\cB)$, either a letter in $w$ or $x$ or $y$ is not in $B$.  Starting 
from $e$ along the chosen path to $\mu$, let $z$ denote the first letter encountered that is not in $B$.  
Thus either $z$ is some letter used in $w$ or it is one of $x$ and $y$.   Let $u$ denote the 
(possibly empty) subword  of $w$ before the occurrence of $z$.

By Lemma~\ref{lem:z-sep}, there exists an essential cut pair $\{c_1, c_2\}$ (possibly intersecting $B$) which separates $z$ from $B \setminus \{c_1, c_2\}$.   Let $\nu$ be the 
$(c_1, c_2)$-bicoloured geodesic passing through the vertex of $\cC$ labelled $u$ (or $e$ if $u$ is empty).  
By Lemma~\ref{lem:separation},  
$\nu$ separates $\Sigma$, and so $\partial \nu$ is part of the set defining some vertex $v'$ in $T$.  

Now by Lemma~\ref{lem:star-sep}, there exists a cut pair $\{b_1, b_2 \}\subset B$.  We will construct a $(b_1, b_2)$-bicoloured bi-infinite geodesic $\eta$ based at the vertex of $\cC$ labelled $u'$, where $u'$ is chosen so that the length of a shortest path between $\nu$ and $\eta$ is at least $2$, as follows.  
Since $\card(B) \ge 4$, there exist $b_3, b_4 \in B$ distinct from $b_1, b_2$.  If $u$ is empty, then $u' = b_3b_4$, and if $u$ has length $1$, then $u'$ is one of $b_3$ or $b_4$, chosen to be distinct from $ u$ (so that $uu'$ is reduced).  Otherwise, $u'$ is empty (and $\eta$ is based at $e$). 
 It is easy to verify that the shortest path from $\nu$ to $\eta$ passes through $e$ and has 
 $uu'$ as its label. Now $\eta$ defines a vertex $v_\eta = \Phi (\eta)$ corresponding to $\partial \eta$. 
 
 We claim that $v'$ and $v_\eta$ are distinct.  
 Suppose not.  Then if $v' = v_\eta$ is a $\sim$- or $\approx$-pair, the geodesics $\nu$ and $\eta$ are parallel, that is, $\{c_1, c_2\} = \{b_1, b_2\}$ and $uu'$ consists of a single letter, which is a contradiction.   Thus $v' = v_\eta$ is an infinite $\sim$-class, in which case the labels of $\nu, \eta, u, $ and $u'$ are contained in a set $A$ as in Proposition~\ref{prop:sim}.  Now by 
 Lemma~\ref{lem:mhf-cap-star}, we have that $\card(A\cap B) \le 2$, which is a contradiction, since 
 this intersection contains $b_1, b_2, b_3,$ and $ b_4$.  This proves the claim.  

Finally, we observe that $v'$ is between $v_\eta$ (which is in $\Phi(\cB)$)   and $v$.  This 
is a consequence of Definition~\ref{def:between} and Lemma~\ref{lem:separation}, 
 because 
at least one half of $\mu$ can be reached by a path with first letter $z$, and at least one half of $\eta$ can be reached by a path with first letter in $B \setminus\{c_1, c_2\}$.
It follows that $\Phi(\cB)$ is maximal with respect to property (b), as desired.

\medskip

The above shows that the set $\Phi(\cB) \subset V_1(\cT) \cup V_2(\cT)$ defined above yields one star.  By acting on this set on the left by elements of $W$, one obtains a $W$-orbit of stars.  
\end{proof}

%%%%%%%%%%%%%%%%%%%%%%%%%%%%%%%%%%
\subsection{Identification of all stars and their stabilisers}\label{sec:all-stars}
%%%%%%%%%%%%%%%%%%%%%%%%%%%%%%%%%%

In this section we prove that we have identified all of the stars in the JSJ tree $\cT$, and determine their stabilisers.

\begin{prop}\label{prop:all-stars} Every star of size at least 3 in $T$ comes from a set $B$ satisfying conditions \emph{(B1)}, \emph{(B2)} and \emph{(B3)} in the statement of Proposition~\ref{prop:stars}.  
\end{prop}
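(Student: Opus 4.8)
The plan is to start with an arbitrary star $X$ of size at least $3$ in the pretree $T$ and produce a set $B$ of essential vertices satisfying (B1), (B2), (B3) whose associated star $\Phi(\cB)$ coincides with $X$. By the results of Sections~\ref{sec:type 1 and 2} and the identification of $\approx$-pairs, every element $v$ of $X$ is either an $\approx$-pair, a $\sim$-pair, or an infinite $\sim$-class, and in each case $v$ contains the endpoints of some bicoloured geodesic (for $\approx$- and $\sim$-pairs) or arises from a set $A$ satisfying (A1)--(A3) (for infinite $\sim$-classes). The first step is to extract, for each $v \in X$, a cut pair of $\G$: for $\approx$- and $\sim$-pairs this is the bicolouring pair $\{a,b\}$; for an infinite $\sim$-class with associated set $A$, Corollary~\ref{cor:A ess cut pair} gives an essential cut pair $\{a,b\} \subset A$ consecutive in the cyclic order on $A$, and moreover by Remark~\ref{rmk:components} the frontier geodesics of $\Sigma_A$ realise such pairs. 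I would define $B$ to be the union of all these cut-pair vertices over $v \in X$ (replacing any non-essential vertex by a nearby essential one, using Standing Assumptions~\ref{assumptions}(2)), and then argue $B$ is exactly the maximal set containing this union that satisfies (B1).

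The key geometric input is the correspondence between betweenness in $T$ and separation in $\Sigma$: by Remark~\ref{rmk:betweenness}, no vertex of $T$ is between two elements of $X$, which translates (via Lemmas~\ref{lem:separation} and \ref{lem:Aseparates}) into the statement that no bicoloured geodesic of the relevant type separates the geodesics realising distinct elements of $X$. The second step is to show that for any essential cut pair $\{c_1,c_2\}$ of $\G$, the set $B \setminus \{c_1,c_2\}$ lies in a single component of $\G \setminus \{c_1,c_2\}$ — this is property (B1). I would prove this by contradiction: if two vertices $b, b' \in B$ lay in different components of $\G \setminus \{c_1,c_2\}$, then the $(c_1,c_2)$-bicoloured geodesic $\mu$ (which separates $\Sigma$ by Lemma~\ref{lem:separation}) would separate the geodesics realising the elements of $X$ that gave rise to $b$ and $b'$; the vertex of $T$ corresponding to $\partial\mu$ (an $\approx$-pair or a $\sim$-class) would then be between these two elements of $X$, contradicting the defining property of a star. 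Some care is needed when $b$ and $b'$ come from the same $v \in X$, or when $\{c_1,c_2\}$ meets $B$, or when the relevant vertex of $T$ is an infinite $\sim$-class (where one uses the frontier geodesic machinery of Remark~\ref{rmk:components}); these are handled exactly as in the proof of part (b) of Proposition~\ref{prop:stars}.

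The third step is to check (B3), i.e.~$\card(B) \geq 4$. Since $X$ has size at least $3$, there are at least three distinct elements $v_1, v_2, v_3 \in X$, and I would argue that the cut pairs they contribute cannot all be squeezed into a set of size $\leq 3$: if $\card(B) \leq 3$ then all the geodesics realising elements of $X$ would lie in the subcomplex $\Sigma_B$ associated to a $2$-ended or closely-related special subgroup, forcing all of $X$ into a single $\sim$-class or $\approx$-pair, contradicting that $X$ is a star of size $\geq 3$ (and invoking Lemma~\ref{lem:mhf-cap-star} to bound intersections with any set $A$). Finally, having shown $B$ satisfies (B1), one enlarges $B$ to a maximal such set $B^+$ (property (B2)); the fourth step is to verify that $\Phi(\cB^+) = X$. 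One inclusion $\Phi(\cB^+) \supseteq X$ follows because each $v \in X$ is realised by a geodesic bicoloured by a cut pair inside $B \subseteq B^+$, hence lies in $\Phi(\cB^+)$; the reverse inclusion follows from maximality of $X$ as a star, since $\Phi(\cB^+)$ is itself a set with no element of $T$ between its members (by part (b) of Proposition~\ref{prop:stars}) containing $X$, and $X$ is maximal with this property, so they are equal.

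The main obstacle I anticipate is the case analysis in Step~2 when the intervening vertex $v''$ of $T$ is an infinite $\sim$-class: there one cannot work with a single separating geodesic $\mu$, but must instead use the whole family $\cAplus$ of frontier geodesics of $\Sigma_A$ and argue, as in the proof of Proposition~\ref{prop:stars}(b), that either some frontier geodesic separates the two relevant geodesics (reducing to the $\approx$-pair case) or both lie in $\cAplus$ itself (reducing to a support-counting contradiction via Lemma~\ref{lem:mhf-cap-star}). Ensuring that the cut pairs extracted from infinite $\sim$-class vertices of $X$ are genuinely forced into $B$ — rather than being one of several possible choices — also requires invoking the uniqueness statement in Theorem~\ref{thm:mihalik-tschantz} together with Corollary~\ref{cor:A ess cut pair}, and this bookkeeping is the most delicate part of the argument.
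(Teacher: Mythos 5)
Your overall approach mirrors the paper's: extract cut pairs from each element of the star $X$, form $B$, and verify (B1)--(B3). The (B1) argument you outline is essentially right, and your closing observation that maximality of $X$ as a star forces $\Phi(\cB^+) = X$ is sound. But there are two substantive gaps. First, for an infinite $\sim$-class $v' \in X$ with associated set $A$, Corollary~\ref{cor:A ess cut pair} does not single out a unique essential cut pair, and there may be several frontier geodesics in $\cAplus$. You flag this and propose invoking Theorem~\ref{thm:mihalik-tschantz} for uniqueness, but that theorem concerns conjugacy of $2$-ended splitting subgroups and has no bearing on which frontier geodesic to pick. What is actually needed is the observation that, because $X$ is a star, there is a unique component $U$ of $\Sigma \setminus \Sigma_A$ containing the boundary subsets of all other vertices of $X$; the correct choice is the frontier geodesic corresponding to $U$. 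This choice is not cosmetic: the (B1) argument and the final ``recover $v$ by running Proposition~\ref{prop:stars} on $B$'' step both depend on having selected, for each infinite $\sim$-class, the frontier geodesic facing the rest of the star.

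The second and larger gap is (B3). Your claim that $\card(B) \leq 3$ would force all geodesics realising $X$ into $\Sigma_B$ and collapse $X$ to a single $\sim$-class or $\approx$-pair is not correct. When $\card(B) = 2$, the geodesics in the collection all have labels $\{b_1,b_2\}$ but are pairwise disjoint parallel lines that do not lie in a single copy of $\Sigma_B$; ruling this out requires taking two such geodesics, applying Lemma~\ref{lem:z-sep} to the first letter of a connecting path, and running a case split that either produces a new intervening vertex or forces both geodesics into the same $\cA$ or $\cAplus$. When $\card(B) = 3$, one must first show the third pair $\{b_1,b_3\}$ also separates $\G$ (via Observation~\ref{obs:B}), then show that $B$ satisfies (A1) but must fail (A2) (else $B$ would sit inside a set $A$ giving an infinite $\sim$-class, contradicting that $\cB_v$ contains only one geodesic per $\sim$-class), and then extract a contradiction from the resulting subdivided $K_4$ via Observation~\ref{obs:B} again. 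None of this follows from convexity of $\Sigma_B$ or from Lemma~\ref{lem:mhf-cap-star} alone; as written, your Step~3 is a claim rather than an argument, and it is the hardest part of the proof. A smaller point: ``replacing any non-essential vertex by a nearby essential one'' is both unnecessary (the geodesics for each vertex type of $X$ are already bicoloured by essential cut pairs, via Lemma~\ref{lem:all approx}, Corollary~\ref{cor:A ess cut pair}, and the definition of $\cAplus$) and would itself require justification, since replacing a vertex changes separation properties.
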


\begin{proof}
Suppose that $v$ is a vertex of $\cT$ corresponding to a star of size at least 3 in $T$.  Then $v$ is an infinite collection of vertices in $T$.  We now describe how to construct an associated set of geodesics $\cB_v$ by choosing, 
for each vertex in this collection,  a bi-infinite geodesic which is bicoloured by an essential cut pair.

For each $\approx$-pair or $\sim$-pair in the collection $v$, we add to the set $\cB_v$ a (separating) bi-infinite geodesic $\gamma$ whose boundary is this $\approx$-pair or $\sim$-pair. 
We claim that $\gamma$ may be chosen so that the labels of this geodesic are essential vertices.  This is immediate from Lemma~\ref{lem:all approx} for $\approx$-pairs.  For $\sim$-pairs, we have by Corollary~\ref{cor:A ess cut pair} that the corresponding set $A$ contains an essential cut pair.  Now since $\langle A \rangle$ is $2$-ended, there is only one essential cut pair contained in $A$.  If $\card(A) = 2$ the geodesic $\gamma$ must be bicoloured by this cut pair, and if $\card(A) = 3$ then $A = \{a,b,c\}$ with $\{a,b\}$ an essential cut pair and $c$ commuting with both $a$ and $b$; in this case, we may take $\gamma$ to be one of the two geodesics bicoloured by $a$ and $b$ with $\partial \gamma = v$.

Given an infinite $\sim$-class $v'$ in $v$ which comes from a set $A$ as in 
Proposition~\ref{prop:sim}, by Remark~\ref{rmk:components}, for each component of 
$\Sigma\setminus \Sigma_A$ there is a unique corresponding geodesic in the set $\cAplus$.
Now for any $v'' \neq v'$ in the collection $v$, the subset of $\partial W$ corresponding to $v''$ lies in 
a single component of $\Sigma\setminus \Sigma_A$, as shown in the proof of 
Proposition~\ref{prop:stars}(b).  Moreover, there is a unique component $U$ of $\Sigma\setminus \Sigma_A$ which contains the subsets of $\partial W$ corresponding to all of the vertices in the collection $v$ other than $v'$ itself, for otherwise, $v'$ would be between some pair of vertices in $v$, contradicting the fact that $v$ is a star.  Let $\mu_{v'} \in \cAplus$ be the frontier geodesic corresponding to the component $U$.  By the definition of $\cAplus$, 
$\mu$ is bicoloured by essential vertices.  For each such $v'$ in $v$, add $\mu_{v'}$ to the collection 
$\cB_v$. 

Thus for each vertex in the star $v$, the set $\cB_v$ contains a unique separating bi-infinite geodesic bicoloured by a pair of essential vertices.  
We claim that $\cB_v$ arises from the construction given in Proposition~\ref{prop:stars} associated to some set $B$ satisfying (B1), (B2) and (B3).

Let $B$ be the support of the geodesics in the set $\cB_v$, that is the union of all the pairs labelling the geodesics in the set.  We will show that $B$ satisfies (B1), (B2) and (B3).  

If (B1) fails, then there is a pair of vertices $\{c_1, c_2\}$ and $b_1, b_2\in B$ which are in different components of $\G \setminus \{c_1, c_2\}$.  For $i=1, 2, $ let $b_i' \in B$ be such that there is a  
$(b_i, b_i')$-bicoloured geodesic $\gamma_i$ in $\cB_v$. 
Without loss of generality $\gamma_1$ passes through $e$.  Let $w$ be the label on a shortest path from $e$ to $\gamma_2$.  Write $w$ as $w_1 w_2$, where $w_1$ is the maximal initial subword consisting of generators from the same component of $\G \setminus \{ c_1, c_2 \}$ as $b_1$.  (The subwords $w_1$ and/or $w_2$ could be empty.)  Let $\gamma$ be the $(c_1, c_2)$-bicoloured geodesic through the vertex $w_1$.
Then $\gamma$ separates $\Sigma$ by Lemma~\ref{lem:separation}, and since the first letter of $w_2$ (if it is nonempty) or the generator $b_2$ (if $w_2$ is empty) lie in a separate component of $\G \setminus \{c_1,c_2\}$ from $b_1$, we get that at least one endpoint of the geodesic $\gamma_2$ is separated by $\partial \gamma$ from at least one endpoint of the geodesic $\gamma_1$.  Since the endpoints of $\gamma_1$ and $\gamma_2$ are contained in vertices say $v_1$ and $v_2$ of $T$, and $\gamma$ is also contained in a vertex $v_\gamma$ of $T$, this implies that $v_\gamma$ lies between $v_1$ and $v_2$.   (Note that $v_\gamma$ is a distinct vertex from $v_1$ and $v_2$ since our procedure for choosing geodesics in $\cB_v$ involved choosing the geodesic closest to the star in each infinite $\sim$-class.) This contradicts $v_1$ and $v_2$ being in the same star. 
We have shown that no $\{c_1, c_2\}$ as above can exist.

Thus $B$ satisfies property (B1) of Proposition~\ref{prop:stars}.   It is therefore contained in a maximal set say $B'$ satisfying (B1).  Now if we run the procedure from Proposition~\ref{prop:stars} on the set $B$ we will recover all of the pre-tree vertices in $v$.  But $v$ is maximal since it is a star, so $B = B'$ and hence $B$ satisfies property (B2).

To show that (B3) holds, we need to rule out $\card(B) = 2, 3$.
In the first case, suppose $b_1$ and $b_2$ are the only labels of geodesics in $\cB_v$.
Since stars are infinite, there are infinitely many geodesics in $\cB_v$ with these labels.
If $\gamma$ and $\gamma'$ are two such geodesics, they are necessarily disjoint.   
 Let $w$ be the (non-empty) word labelling a shortest path $\nu$ between $\gamma$ and $\gamma'$ and let $z$ be the first letter in $w$.  Assume without loss of generality that $\gamma$  and 
 $\nu$ intersect at $e$.

 We now use Lemma~\ref{lem:z-sep} to define a geodesic $\mu$ passing through some point along $\nu$.  If Case (1) of the lemma holds, then define $\mu$ to be the $(c_1, c_2)$-bicoloured geodesic passing through $e$. (Observe that in this case $b$ from the lemma is necessarily one of $b_1$ and $b_2$.)
If Case (2) of the lemma holds, then let $u$ be the longest subword of $w$ labelled by generators from the branch $\beta$ of $\G$ containing $z$ guaranteed by the lemma.  If $u\neq w$, then define $\mu$ to be the 
$(b_1, b_2)$-bicoloured geodesic through the vertex of $\cC$ labelled $u$.  In both these cases, we arrive at a contradiction by a proof similar to that of Proposition~\ref{prop:stars}(c).

 Finally, if $u=w$, then 
 the vertices of $\beta$ (which include $b_1$, $b_2$ and the labels of $u$) 
 are contained in a set $A$ as in Proposition~\ref{prop:sim}, and it follows that $\gamma$ and $\gamma'$ are either both in $\cA$ (which is a contradiction because then the vertex corresponding to $A$ in $T$ is between the vertices corresponding to $\gamma$ and $\gamma'$) or they are both in 
$\cAplus$ (which is a contradiction, since $\cB_v$ only contains one geodesic from each such set). 
This completes the proof that $\card(B) \neq 2$.

Now suppose $\card(B)=3$, with $B= \{b_1, b_2, b_3\}$.  Since  $B$ comes from bicoloured geodesics which separate $\Sigma$, there are at least two cut pairs in $B$, say $\{ b_1, b_2\} $ and $\{b_2, b_3\}$.  

Suppose that $b_1$ and $b_3$ don't separate $\G$. Starting with a path $\mu$ between them in $\G$, and using the separation properties of $\{b_1, b_2\}$ and $\{b_2, b_3\}$, we can complete $\mu$ to a cycle 
$\sigma$ containing all elements of $B$.  Since $\{b_1, b_3\}$  doesn't separate, there is a path connecting $\mu$ to the other component of $\sigma \setminus \{b_1, b_3\}$ which meets $\mu$ at an essential vertex $c\neq b_1, b_3$, and the other component at a single point.  Then there are three paths from $c$ to $B$ which meet only at $c$, so by Observation~\ref{obs:B}, we have $c \in B$, which is a contradiction.

Thus we may assume that $\{b_1, b_3\}$ also separates $\G$, so that $B$ satisfies (A1).  If $B$ 
also satisfies (A2), then it is contained in a set $A$ which is maximal among all sets satisfying both (A1) and (A2).  That is, $B$ is contained in a set $A$ which satisfies (A1), (A2) and (A3), and therefore defines an infinite $\sim$-class.  However, $\cB_v$ only contains one geodesic from each 
$\sim$-class in $v$, so this is a contradiction. 

Thus $B$ does not satisfy (A2).  If there were no subdivided $K_4$ subgraph containing $B$, then (A2) would be vacuously true.  It follows that there is a subdivided $K_4$ subgraph $\Lambda$ of $\G$ containing $B$, and moreover, that $b_1, b_2$ and $b_3$ lie on at least two distinct branches of $\Lambda$.

Now if there is an essential vertex $x$ of $\Lambda \setminus B$ which  admits three paths to $B$ meeting only at $x$, then by Observation~\ref{obs:B}, the maximality of $B$ is violated, and this is a contradiction.  Thus no essential vertex of  $\Lambda \setminus B$ admits three such paths to $B$. This can only happen if 
$B$ is contained in a single branch of $\Lambda$. This is a contradiction. 

We have shown that the set $B$ satisfies properties (B1), (B2) and (B3) in 
Proposition~\ref{prop:stars}.  It is then easy to verify that the geodesics in $\cB_v$, together with any geodesics parallel to them, form the set $\cB$ defined in the statement of Proposition~\ref{prop:stars}, and that $\Phi(\cB) = \Phi(\cB_v)$.  Hence $v = \Phi(\cB)$ is one of the stars we have already constructed.  This completes the proof.
\end{proof}

\begin{example}  We now describe the stars (if any) for the graphs in Figures~\ref{fig:approx-pairs} and~\ref{fig:sim-eg}, and their adjacencies in $\cT$.  There are no sets of vertices $B$ satisfying conditions (B1), (B2) and (B3) in either graph in Figure~\ref{fig:approx-pairs}, or in the left-hand and central graphs in Figure~\ref{fig:sim-eg}.  Hence by Proposition~\ref{prop:all-stars}, the corresponding JSJ trees have no stars, and so $\cT$ coincides with the pre-tree $T$; compare Example~\ref{eg:sim and approx}, which describes all $\approx$-pairs and $\sim$-classes, and their adjacencies in $T$.  The left-hand (respectively, right-hand) graph in Figure~\ref{fig:approx-pairs} has JSJ tree which is particularly easy to describe: it is biregular with vertices alternating between $\approx$-pairs of valence $6$ (respectively, $4$) and infinite $\sim$-classes.  

On the right of Figure~\ref{fig:sim-eg}, there are two $W$-orbits of stars, corresponding to the sets $\{u,v,p,q\}$ and $\{p,q,r,s\}$.  There are no $\approx$-pairs.  Each infinite $\sim$-class corresponds to a branch of this graph of length three, and so is adjacent in $\cT$ to a Type 1 vertex which is added in to subdivide the edge between this infinite $\sim$-class (which is a Type 2 vertex) and a representative of exactly one of these $W$-orbits of stars (which are Type 3 vertices).  The $\sim$-pairs for the branch between $u$ and $v$ and the branch between $r$ and $s$ are adjacent in $\cT$ to a representative of exactly one $W$-orbit of stars, and the  $\sim$-pairs which correspond to the set $\{p,q\}$ are adjacent in $\cT$ to representatives of both $W$-orbits of stars.  
\end{example}

\begin{remark}\label{rem:lafont}  Having identified all vertices of the JSJ tree $\cT$, we compare with the work of Lafont~\cite{lafont-3d, lafont}.  It is easy to see that the $\approx$-pairs in our setting are analogous to the endpoints of branching geodesics in~\cite{lafont} (which are studied using some results from~\cite{lafont-3d}).  Now suppose $A$ is a set of vertices of $\G$ corresponding to an infinite $\sim$-class such that $A$ consists of all vertices in a single branch of $\G$ of length at least three. Then the $\sim$-classes corresponding to $A$ are analogous to the visual boundaries of chambers in~\cite{lafont}.  The spaces considered in~\cite{lafont} consist entirely of chambers glued together along branching geodesics, and so when $\cT$ consists entirely of ``single branch" infinite $\sim$-classes alternating with $\approx$-pairs, as in the case of the graphs in Figure~\ref{fig:approx-pairs}, the situation is very similar to that in~\cite{lafont}.  However in our setting there may not be any $\approx$-pairs or any infinite $\sim$-classes, and there may also be $\sim$-pairs, infinite $\sim$-classes which do not come from a single branch, and stars.\end{remark}

We finish this section by determining the stabilisers of stars.

\begin{prop}\label{prop:star stabiliser}  Let $v$ be a vertex of $\cT$ corresponding to a star of size at least 3 in $T$, and let $B$ be the corresponding subset of vertices of $\G$ satisfying (B1), (B2) and (B3) in Proposition~\ref{prop:stars}.  Then the stabiliser of $v$ is a conjugate of $\langle B \rangle$.
\end{prop}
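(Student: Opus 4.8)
The plan is to follow the pattern of the proofs of Lemma~\ref{lem:approx stabiliser} and Proposition~\ref{prop:sim stabiliser}: after translating $v$ by an element of $W$ we may assume that the copy of the Cayley graph of $\langle B\rangle$ attached to $v$ is the standard one $\cC_B\subset\Sigma$ (the one through $e$, which is the $1$-skeleton of the subcomplex $\Sigma_B$ of Remark~\ref{rmk:components}), and then show that the stabiliser of $v$ is exactly $\langle B\rangle$. The inclusion $\langle B\rangle\subseteq\operatorname{Stab}(v)$ is easy: by construction the set $\cB$, being the $\langle B\rangle$-orbit of $\mathcal G_B$, is $\langle B\rangle$-invariant, and since the $W$-action on $\partial W$ preserves the partition of $\partial W(2+)$ into $\approx$-pairs and $\sim$-classes we have $\Phi(g\eta)=g\cdot\Phi(\eta)$ for all $g\in W$; hence every $g\in\langle B\rangle$ permutes $\Phi(\cB)$, which is the underlying set of pretree vertices of the star $v$, and therefore fixes $v$ as a vertex of $\cT$.

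For the reverse inclusion, let $g\in\operatorname{Stab}(v)$. Since $g$ fixes $v$ it permutes the edges of $\cT$ at $v$, hence the pretree vertices comprising the star, so $g\cdot\Phi(\cB)=\Phi(\cB)$. I would then invoke the construction in the proof of Proposition~\ref{prop:all-stars}: from $v$ alone one builds the collection $\cB_v$ of separating bicoloured geodesics with essential labels, one per pretree vertex of the star --- for an $\approx$- or $\sim$-pair a geodesic realising that pair (unique up to the parallel-geodesic ambiguity of Corollary~\ref{cor:parallel}), and for an infinite $\sim$-class coming from a set $A$ the frontier geodesic in $\cAplus$ corresponding to the unique component of $\Sigma\setminus\Sigma_A$ containing all the other star vertices. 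This construction is canonical, and the $W$-action preserves edge-labels, the chamber and panel structure, the sets $\cAplus$, and the pretree, so $g\cB_v$ realises exactly the same data; hence $g\cB_v=\cB_v$ up to replacing some geodesics by parallel ones. As in the Proposition~\ref{prop:stars} construction the geodesics of $\cB_v$ can be chosen among parallels to lie in $\cC_B$, whose vertex set is covered by the $\langle B\rangle$-translates of a single geodesic bicoloured by a cut pair $\{b_1,b_2\}\subseteq B$ (such a cut pair exists by Lemma~\ref{lem:star-sep}), and whose support is $B$ by Proposition~\ref{prop:all-stars}. It therefore suffices to upgrade ``$g\cB_v=\cB_v$ up to parallels'' to ``$g\cC_B=\cC_B$'', which yields $g\in\operatorname{Stab}(\Sigma_B)=\langle B\rangle$ and finishes the proof.

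The one obstruction is that for some $\delta\in\cB_v$ bicoloured by a cut pair $\{b_i,b_j\}\subseteq B$ one might have $g\delta=\delta' c$ for a geodesic $\delta'\in\cB_v$ also bicoloured by $\{b_i,b_j\}$ and a vertex $c\notin B$ adjacent to both $b_i$ and $b_j$, in which case $g\delta$ leaves $\cC_B$. I would rule this out as in the second half of the proof of Proposition~\ref{prop:sim stabiliser}(2): writing $\delta'=w\delta$ with $w\in\langle B\rangle$ one obtains $g=wcz$ with $z\in\langle b_i,b_j\rangle$, so that $c\notin B$ must itself stabilise $v$; then, using property (B3) together with the facts that $\G$ is triangle-free and that no vertex outside $B$ can be adjacent to all of the $\geq 4$ vertices of $B$ without contradicting the maximality~(B2), one finds $b_k\in B$ not adjacent to $c$, and analysing the geodesic $c\,b_k\delta$ with Corollary~\ref{cor:parallel} and properties (B1)--(B2) produces a contradiction exactly as there. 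Hence no such $c$ occurs, $g$ preserves $\cC_B$, and $\operatorname{Stab}(v)=\langle B\rangle$. The main obstacle, as this shows, is the bookkeeping around parallel geodesics and establishing that the auxiliary vertex $c$ cannot lie in the stabiliser; the remaining ingredients are parallel to arguments already developed in Sections~\ref{sec:approx}, \ref{sec:type 1 and 2} and \ref{sec:all-stars}.
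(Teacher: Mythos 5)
Your proposal is correct and follows essentially the same route as the paper: the paper isolates your ``canonical equivariance of the $\cB_v$-construction'' step as Lemma~\ref{lem:star endpoints}, and then handles the parallel-geodesic ambiguity exactly as you do, by showing the auxiliary vertex $c$ (called $x$ there) would stabilise $v$ and deriving a contradiction from a third generator $b_k$ together with Observation~\ref{obs:B} and the maximality (B2). The only cosmetic difference is at the very end: the paper takes an arbitrary $b_k\in B\setminus\{b_i,b_j\}$, deduces that $c$ commutes with $b_k$ and hence lies in $B$, whereas you first use the same maximality mechanism to choose $b_k$ non-adjacent to $c$ and then run the word analysis of Proposition~\ref{prop:sim stabiliser}; both variants rest on identical ingredients.
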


\begin{proof}  Let $\mathcal{G}_B$, $\cB$ and $\Phi:\cB \to T$ be as in the statement of Proposition~\ref{prop:stars}.  It suffices to show that the stabiliser of the star $\Phi(\cB)$ is $\langle B \rangle$.  Let $g \in W$ be in this stabiliser.

Let $\eta$ be a geodesic in $\mathcal{G}_B$ and let $v'$ be the $\approx$-pair or $\sim$-class $\Phi(\eta)$.  Since $g$ stabilises $v$, we have that $g(v') = \Phi(\xi)$ for some $\xi \in \cB$.  Note that there are at most two choices for $\xi$, and these choices have the same endpoints, by (a) in the proof of Proposition~\ref{prop:stars}.  We will need the following result.

\begin{lemma}\label{lem:star endpoints}  The geodesics $\xi$ and $g\eta$ have the same endpoints.
\end{lemma}

\begin{proof} If $v'$ is an $\approx$-pair or a $\sim$-pair then $v' = \partial \eta$ and $g(v') = \partial \xi$, hence $\partial \xi = g(\partial \eta) = \partial (g\eta)$ as required. Now suppose $v'$ is an infinite $\sim$-class and that $\xi$ and $g\eta$ do not have the same endpoints.  By Lemma~\ref{lem:parallel}, since $\xi$ and $g\eta$ are bicoloured, the sets $\partial \xi$ and $\partial (g\eta)$ are disjoint.  

Let $A$ be the set of vertices of $\G$ corresponding to $v'$ as in Proposition~\ref{prop:sim}, so that $v' = \partial \cC_A \setminus \partial \cA$ for some copy $\cC_A$ of the Cayley graph of $\langle A \rangle$.  
Then by similar arguments to those in the proof of Proposition~\ref{prop:all-stars}, $\eta$ is the frontier geodesic of $\Sigma_A \supseteq \cC_A$ corresponding to the component of $\Sigma \setminus \Sigma_A$ which contains $\cB \setminus \{\eta\}$.  Similarly, writing $\Sigma_A'$ for the copy of $\Sigma_A$ for the infinite $\sim$-class $g(v')$, we have that $\xi$ is the frontier geodesic of $\Sigma_A'$ corresponding to the component of $\Sigma \setminus \Sigma_A'$ which contains $\cB \setminus \{\xi\}$.  By Proposition~\ref{prop:sim stabiliser} the stabiliser of $v'$ is $\langle A \rangle$ and the stabiliser of $g(v')$ is $g\langle A \rangle g^{-1}$.  It follows that $g$ takes the frontier geodesic $\eta$ of $\Sigma_A$ to a frontier geodesic $g\eta$ of $\Sigma_A'$.  The endpoints of $g\eta$ are in $g(v')$, so $g\eta$ is the frontier geodesic of $\Sigma_A'$ corresponding to the component of $\Sigma \setminus \Sigma_A'$ which contains $g\cB \setminus \{g\eta\}$.

Now $v$ is a star of size at least 3, so there is a $v'' \in v$ with $v''$ distinct from both $v'$ and $g(v')$.  Note that $g(v'') \neq g(v')$.  Let $\mu \in \cB$ be such that $\Phi(\mu) = v''$.  Then $\mu$ and $g\eta$ lie in distinct components of $\Sigma \setminus \xi$.  Now the endpoints of $g\mu$ are contained in $g(v'')$, and $g(v'')$ is in the star $v$.  However $g\mu$ lies in $g\cB \setminus \{g\eta\}$, so $g\mu$ and $\mu$ are in distinct  components of $\Sigma \setminus \Sigma_A'$.  This means that $g(v')$ separates $g(v'')$ from $v''$ in $T$, which contradicts $v$ being a star.  Thus $\xi$ and $g\eta$ have the same endpoints as required.
\end{proof}

Assume now that there is a $g \in W \setminus \langle B \rangle$ which stabilises~$v$.  
By Lemma~\ref{lem:star-sep}, there exists a cut pair $\{b_i,b_j \} \subset B$.  Let $\eta$
be the (unique) geodesic in $\mathcal{G}_B$ which is $(b_i,b_j)$-bicoloured. 
By similar arguments to those in the proof of Proposition~\ref{prop:sim stabiliser}, since $g \not \in \langle B \rangle$ there is a (unique) vertex $x$ of $\G \setminus B$ so that $x$ is adjacent to both $b_i$ and $b_j$, and  $x$ stabilises $v$.  

Since $\card(B) \geq 4$, there is an element $b_k \in B \setminus \{b_i,b_j\}$.  Consider the geodesic $b_k \eta \in \cB$.   The group element $x$ stabilises $v$ but $x \not \in \langle B \rangle$, so we have by the same arguments as for $g$ and $\eta$ above that the geodesic $x b_k \eta$, which is not in $\cB$, has the same endpoints as some geodesic, say $\xi'$, which is in $\cB$.  Corollary~\ref{cor:parallel} then implies that the geodesic $\xi'$ is $(b_i,b_j)$-bicoloured and $x b_k \eta = \xi' x$.  Now since $\xi'$ is $(b_i,b_j)$-bicoloured and is in $\cB$, we have $\xi' = h \eta$ for some $h \in \langle B \rangle$.  Thus  $x b_k \eta$ equals $h \eta x =  h x \eta$ for some $h \in \langle B \rangle$.  
As the stabiliser of $\eta$ is $\langle b_i, b_j \rangle$, it follows that $b_k^{-1} x^{-1} h x = b_k x h x\in \langle b_i, b_j \rangle$.  Now $x$ and $b_k$ are not in $\langle b_i,b_j\rangle$, so this means $x$ commutes with $b_k$.  Thus $x$ is adjacent to all three elements of $\{b_i,b_j,b_k\}$.  Hence by Observation~\ref{obs:B}, $x \in B$ as well.  This contradicts $x \in \G \setminus B$, and so completes the proof.
\end{proof}

\subsection{The tree $\cT$}\label{sec:summary}

We now list all vertices and edges in $\cT$, and describe their stabilisers.  

\begin{thm}\label{thm:JSJ}  The vertices of $\cT$ and their stabilisers are as follows.

\begin{enumerate}
\item The $W$-orbits in $V_1(\cT)$ are in bijective correspondence with the following four families.
\begin{enumerate}

\item ($\approx$-pairs, valence $k \ge 3$)
Pairs $\{a, b\}$ such that $\G \setminus \{a, b\}$ has $k \ge 3$ components and $\G \setminus \{ a,b\}$ has no component consisting of a single vertex. The stabilisers of these vertices are the conjugates of either $\langle a, b \rangle$, if there is no vertex $c$ of $\G$ adjacent to both $a$ and $b$, or of $\langle a, b, c \rangle$, if there is such a vertex $c$.

\item ($\approx$-pairs, valence $2(k-1) \ge 4$)
Pairs $\{a, b\}$ such that $\G \setminus \{a, b\}$ has $k \ge 3$ components and $\G \setminus \{ a,b\}$ has a component consisting of a single vertex $c$. The stabilisers of these vertices are the conjugates of $\langle a, b, c \rangle$.

\item ($\sim$-classes of size 2, valence 2)
Sets $A$ satisfying properties \emph{(A1)}, \emph{(A2)} and \emph{(A3)} in 
Proposition~\ref{prop:sim} with $\langle A \rangle$ being $2$-ended and which are not as in (1)(a) or (1)(b).  The  stabilisers of these vertices are the conjugates of either $\langle A \rangle$, if $\card(A) = 3$ or $\card(A) = 2$ and there is no vertex $c$ of $\G$ adjacent to both elements of $A$, or of $\langle A \cup \{c\}\rangle$, if $\card(A) = 2$ and there is such a vertex $c$.

\item Valence 2 vertices added between Type 2 and Type 3 vertices (if any).
The stabilisers of these vertices are the intersections of stabilisers of their endpoints. 

\end{enumerate}

\item The $W$-orbits in $V_2(\cT)$ are in bijective correspondence with sets $A$ satisfying properties \emph{(A1)}, \emph{(A2)} and \emph{(A3)} in 
Proposition~\ref{prop:sim} with $\langle A \rangle$ infinite but not $2$-ended.   
The stabilisers of these vertices are conjugates of $\langle A \rangle$. 

\item The $W$-orbits in $V_3(\cT)$ are in bijective correspondence with sets $B$ satisfying properties \emph{(B1)}, \emph{(B2)} and \emph{(B3)} in 
Proposition~\ref{prop:stars}.  
The stabilisers of these vertices are conjugates of $\langle B \rangle$. 

\end{enumerate}

The edges of $\cT$ are as described in Section~\ref{sec:edges}.  Furthermore, there is an edge 
between vertices $v$ and $v'$ if and only if the corresponding stabilisers intersect. 
\end{thm}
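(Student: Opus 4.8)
The theorem is, for the most part, an assembly of the results of Sections~\ref{sec:approx}--\ref{sec:all-stars}. For the vertices of Type~1: parts~(1)(a)--(1)(b) record the $\approx$-pairs together with their valences and stabilisers, and combine Corollary~\ref{cor:approx} (each such pair $\{a,b\}$ gives a $W$-orbit of $\approx$-pair vertices of the stated valence), Lemma~\ref{lem:all approx} (every $\approx$-pair arises this way), and Lemma~\ref{lem:approx stabiliser} (their stabilisers); part~(1)(c) records the $\sim$-pairs and combines Proposition~\ref{prop:sim}(1) with Proposition~\ref{prop:all 2+} (these sets $A$, and only these, give $\sim$-pair vertices), Corollary~\ref{cor:recognise approx} (to discard those $A$ for which $\partial\cC_A$ is in fact an $\approx$-pair, i.e.\ those already appearing in (1)(a)--(1)(b)), and Proposition~\ref{prop:sim stabiliser}(1); and part~(1)(d), with its stabiliser statement, is immediate from Bowditch's construction recalled in Section~\ref{sec:bowditch}, since those vertices subdivide edges. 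Part~(2) is Proposition~\ref{prop:sim}(2) with Proposition~\ref{prop:all 2+} and Proposition~\ref{prop:sim stabiliser}(2); part~(3) is Proposition~\ref{prop:stars} with Proposition~\ref{prop:all-stars} and Proposition~\ref{prop:star stabiliser}. Two points of bookkeeping remain: first, that these graph-theoretic data index the $W$-orbits \emph{bijectively} --- this holds because $W$ acts on the Cayley graph $\cC$ preserving edge labels, so the subset of $S=V(\G)$ attached to a vertex of $\cT$ (the support of the associated family of geodesics, or the underlying cut pair) is a $W$-orbit invariant, whence distinct admissible sets $A$, sets $B$, or cut pairs give distinct orbits; second, that the edges listed in Section~\ref{sec:edges} are exactly those of $\cT$ in this notation, which is Remark~\ref{rem:approx-sim edges} for the pretree edges and the construction of Proposition~\ref{prop:stars} for the remaining ones.

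It remains to prove the final assertion, that $v$ and $v'$ are joined by an edge if and only if $\operatorname{Stab}(v)\cap\operatorname{Stab}(v')\neq\{1\}$. One implication is immediate: by Bowditch's construction the stabiliser of an edge with endpoints $v,v'$ equals $\operatorname{Stab}(v)\cap\operatorname{Stab}(v')$, and every edge group of $\cT$ is $2$-ended, hence infinite. For the converse the plan is to use that $W$ acts on the tree $\cT$ simplicially and without edge inversions: a nontrivial $g\in\operatorname{Stab}(v)\cap\operatorname{Stab}(v')$ maps the geodesic $[v,v']$ of $\cT$ to itself while fixing its endpoints, hence fixes $[v,v']$ pointwise; so if $v$ and $v'$ are not adjacent, $g$ fixes two distinct edges $e_1,e_2$ incident to an interior vertex $w$ of $[v,v']$, and thus lies in $\operatorname{Stab}(e_1)\cap\operatorname{Stab}(e_2)$. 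The converse therefore reduces to analysing, for a vertex $w$ of $\cT$ and two distinct incident edges $e_1,e_2$, the intersection $\operatorname{Stab}(e_1)\cap\operatorname{Stab}(e_2)$. Here one uses that each $\operatorname{Stab}(e_i)$ is $2$-ended with limit set a pair $\partial\gamma_i$ for a bicoloured geodesic $\gamma_i$ in the relevant $\cA$ or $\cAplus$, that Corollary~\ref{cor:parallel} recovers $\gamma_i$ from $\partial\gamma_i$ up to the unique possible parallel translate, and that the stabilisers of Type~2 and~3 vertices are the full special subgroups $\langle A\rangle$ and $\langle B\rangle$ (Propositions~\ref{prop:sim stabiliser} and~\ref{prop:star stabiliser}), together with the frontier-geodesic picture of Remark~\ref{rmk:components}; one then runs a case analysis on the type of $w$ and the types of the two neighbours reached along $e_1$ and $e_2$.

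This last case analysis is the one genuine obstacle. All the other ingredients are formal consequences of work already done, whereas here the combinatorics of the three vertex types interact, and one must simultaneously invoke the rigidity of bicoloured geodesics (Corollary~\ref{cor:parallel}), the precise identification of the special subgroups occurring as vertex and edge stabilisers, and the geometry of the subcomplexes $\Sigma_A$; there is no single clean principle that makes the converse automatic, and extracting from a nontrivial element of a two-fold intersection of edge groups the conclusion that the two edges coincide is the delicate point of the argument.
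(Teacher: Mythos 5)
For the classification of the vertices and their stabilisers your argument is the paper's argument: the proof of Theorem~\ref{thm:JSJ} given in the paper consists precisely of citing Corollary~\ref{cor:approx}, Lemma~\ref{lem:all approx} and Lemma~\ref{lem:approx stabiliser} for the $\approx$-pairs, Propositions~\ref{prop:sim}, \ref{prop:all 2+} and~\ref{prop:sim stabiliser} for the $\sim$-classes, Propositions~\ref{prop:stars}, \ref{prop:all-stars} and~\ref{prop:star stabiliser} for the stars, together with the observation that the remaining Type~1 vertices merely subdivide edges; the orbit bookkeeping you spell out (edge labels are preserved by the $W$-action, and the copies of $\cC_A$, $\Sigma_A$, etc.\ in one orbit correspond to cosets of the relevant special subgroup) is exactly what the paper leaves implicit. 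On this part there is nothing separating your route from the paper's.

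The divergence is the final sentence about edges. The paper's proof offers no argument for it: the edge description, including the fact that an edge stabiliser is the $2$-ended intersection of the stabilisers of its endpoints, is simply imported from Bowditch's construction via Section~\ref{sec:edges}. So you are not missing a step of the paper's proof; but the strategy you sketch for the converse cannot be completed, and it is worth seeing why. Your reduction would require showing that no nontrivial element stabilises two distinct edges incident to a common vertex of $\cT$, and that is false. Take $\G=\Theta(1,2,2,3)$, the left graph of Figure~\ref{fig:approx-pairs}, and let $A$ and $A'$ be two different branches from $a$ to $b$ of length at least three, both of which contain $a$ and $b$ (Lemma~\ref{lem:A branch}, Example~\ref{eg:sim}). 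The Type~2 vertices $v_A=\partial\cC_A\setminus\partial\cA$ and $v_{A'}$, taken for the copies of $\cC_A$ and $\cC_{A'}$ through $e$, are both adjacent to the Type~1 vertex determined by the $(a,b)$-bicoloured geodesic through $e$ (Remark~\ref{rem:approx-sim edges}), hence lie at distance two in $\cT$; nevertheless $\operatorname{Stab}(v_A)\cap\operatorname{Stab}(v_{A'})=\langle A\rangle\cap\langle A'\rangle=\langle a,b\rangle$ is infinite dihedral, and the two edges at that Type~1 vertex (whose stabiliser is $\langle a,b,c\rangle$ with $c\notin A\cup A'$) both have stabiliser exactly $\langle a,b\rangle$. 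So a nontrivial element fixing two distinct edges at a vertex certainly exists, no case analysis will show the two edges coincide, and the same example shows that the converse, read literally as ``nontrivial intersection of vertex stabilisers implies adjacency,'' cannot hold as stated: it is asserted in the theorem without proof, and any justification would have to specify which intersections are meant rather than proceed along the lines you propose.
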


\begin{proof}  The identification of all vertices in $\cT$ follows from the construction of the JSJ tree together with Corollary~\ref{cor:approx} and Lemma~\ref{lem:all approx} for the $\approx$-pairs, Propositions~\ref{prop:sim} and~\ref{prop:all 2+} for the $\sim$-classes, and Propositions~\ref{prop:stars} and~\ref{prop:all-stars} for the stars of size at least 3.  We determined the stabilisers of $\approx$-pairs in Lemma~\ref{lem:approx stabiliser}, of $\sim$-classes in Proposition~\ref{prop:sim stabiliser} and of stars in Proposition~\ref{prop:star stabiliser}.
Finally the stabilisers of any valence $2$ vertices added between Type 2 vertices and stars are the intersections of the stabilisers of the endpoints, since these vertices are just subdividing edges of $\cT$.
\end{proof}

See Figure~\ref{fig:quotients} for an illustration of the quotient graph of groups obtained by the action of $W$ on $\cT$, for the examples in Figure~\ref{fig:approx-pairs}.

\begin{figure}[ht]
\begin{center}
\begin{overpic}[width=0.9\textwidth]{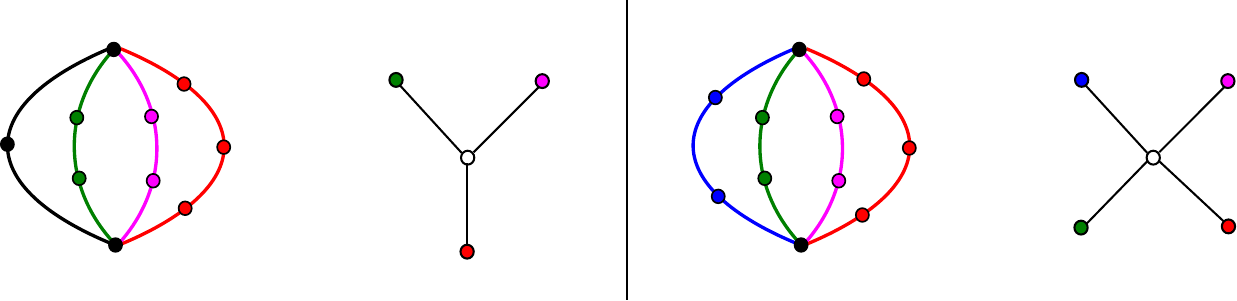}
% left
\put(9,22){$a$}
\put(9,1){$b$}
\put(-2,12){\footnotesize{$c$}}
\put(3,15){\footnotesize{\textcolor{ForestGreen}{$x_1$}}}
\put(3,9){\footnotesize{\textcolor{ForestGreen}{$x_2$}}}
\put(9.5,15){\footnotesize{\textcolor{magenta}{$y_1$}}}
\put(9.5,10){\footnotesize{\textcolor{magenta}{$y_2$}}}
\put(16,18){\footnotesize{\textcolor{red}{$z_1$}}}
\put(19,12){\footnotesize{\textcolor{red}{$z_2$}}}
\put(16,7){\footnotesize{\textcolor{red}{$z_3$}}}
\put(29,11){\footnotesize{$\langle a, b,c \rangle$}}
\put(41,20){\footnotesize{$\langle a, b, y_i \rangle$}}
\put(34,1){\footnotesize{$\langle a, b, z_i \rangle$}}
\put(27,20){\footnotesize{$\langle a, b, x_i \rangle$}}
% right
\put(64,22){$a$}
\put(64,1){$b$}
\put(56,18){\footnotesize{\textcolor{blue}{$s_1$}}}
\put(55,8){\footnotesize{\textcolor{blue}{$s_2$}}}
\put(59.5,16){\footnotesize{\textcolor{ForestGreen}{$t_1$}}}
\put(59,10){\footnotesize{\textcolor{ForestGreen}{$t_2$}}}
\put(64.5,15){\footnotesize{\textcolor{magenta}{$u_1$}}}
\put(64.5,10){\footnotesize{\textcolor{magenta}{$u_2$}}}
\put(71,18){\footnotesize{\textcolor{red}{$v_1$}}}
\put(74.5,12){\footnotesize{\textcolor{red}{$v_2$}}}
\put(71,6.5){\footnotesize{\textcolor{red}{$v_3$}}}
\put(87,11){\footnotesize{$\langle a, b \rangle$}}
\put(96,20){\footnotesize{$\langle a, b, u_i \rangle$}}
\put(96,3){\footnotesize{$\langle a, b, v_i \rangle$}}
\put(82,20){\footnotesize{$\langle a, b, s_i \rangle$}}
\put(82,3){\footnotesize{$\langle a, b, t_i \rangle$}}
\end{overpic}
\caption{{\footnotesize The quotient graphs of groups obtained via the action of $W = W_\G$ on its JSJ tree $\cT = \cT_\G$, for the graphs $\G$ from Figure~\ref{fig:approx-pairs}.  Each edge group is the intersection of the adjacent vertex groups, so all edge groups are $\langle a, b \rangle$.}} 
\label{fig:quotients}
\end{center}
\end{figure}

%%%%%%%%%%%%%%%%%%%%%%%%%%%
%%%%%%%%%%%%%%%%%%%%%%%%%%%
\section{Applications}\label{sec:applications}
%%%%%%%%%%%%%%%%%%%%%%%%%%%
%%%%%%%%%%%%%%%%%%%%%%%%%%%

In this section we give some applications of our main result.  In Section~\ref{sec:k4} we prove Theorem~\ref{thm:complete-inv} and in Section~\ref{sec:qi} we provide a precise statement for and proof of Theorem~\ref{thm:qi-rigid}.

%%%%%%%%%%%%%%%%%%%%%%%%%%%%%%%%%%
\subsection{A collection for which $\cT$ is a complete invariant}\label{sec:k4}
%%%%%%%%%%%%%%%%%%%%%%%%%%%%%%%%%%
In~\cite{malone}, Malone 
uses the quasi-isometries of ``fattened trees" from Behrstock--Neumann~\cite{behrstock-neumann} to construct 
 a quasi-isometry between any pair of ``geometric amalgams of free groups" which have the same Bowditch JSJ tree.  Cashen and Martin observe that this argument generalises to any pair of $1$-ended hyperbolic groups which are not cocompact Fuchsian, so long as their corresponding JSJ trees do not have any stars, in Theorem 3.9 of~\cite{cashen-martin}.  
In light of this,  to establish Theorem~\ref{thm:complete-inv} it is enough to prove the following. 

\begin{prop}\label{prop:k4}
Let $\G$ be a graph satisfying Standing Assumptions~\ref{assumptions}.  Then the JSJ tree $\cT = \cT_\G$ has no stars if and only if 
$\G$ has no subdivided $K_4$ subgraphs.  
\end{prop}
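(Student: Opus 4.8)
The plan is to invoke Propositions~\ref{prop:stars} and~\ref{prop:all-stars}, which together show that $\cT_\G$ has a star of size at least $3$ if and only if $\G$ has a set $B$ of essential vertices satisfying (B1), (B2) and (B3). Thus it suffices to prove that such a set $B$ exists if and only if $\G$ contains a subgraph which is a subdivided copy of $K_4$.

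\emph{A subdivided $K_4$ produces a star.} Suppose $\Lambda_0 \subseteq \G$ is a subdivided copy of $K_4$ with branch vertices $p_1,p_2,p_3,p_4$. Each $p_i$ has valence $3$ in $\Lambda_0$, hence valence at least $3$ in $\G$, so $B_0 = \{p_1,p_2,p_3,p_4\}$ consists of essential vertices. I claim $B_0$ satisfies (B1). Let $C = \{c_1,c_2\}$ be a pair of essential vertices of $\G$ and let $p_i,p_j \in B_0 \setminus C$. In $\Lambda_0$ there are three internally disjoint paths from $p_i$ to $p_j$: the branch joining them, and the two paths through $p_k$ and through $p_l$, where $\{k,l\} = \{1,2,3,4\}\setminus\{i,j\}$. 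Any vertex of $\Lambda_0$ other than $p_i$ or $p_j$ lies on at most one of these three paths, since their parts away from $\{p_i,p_j\}$ consist of five pairwise distinct branches of $\Lambda_0$ together with the two distinct vertices $p_k,p_l$. Hence deleting $C$ kills at most two of the three paths, so $p_i$ and $p_j$ lie in a common component of $\Lambda_0 \setminus C$, and hence of $\G\setminus C$. This gives (B1) for $B_0$. Now enlarge $B_0$ to a set $B$ of essential vertices maximal with respect to (B1) (possible since $\G$ is finite): then $B$ satisfies (B1) and (B2), and $\card(B)\geq \card(B_0)=4$ gives (B3). By Proposition~\ref{prop:stars}, $B$ yields a star of size at least $3$, so $\cT_\G$ has a star.

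\emph{A star produces a subdivided $K_4$.} Suppose $B$ satisfies (B1), (B2) and (B3). By Lemma~\ref{lem:star-sep} there is a cut pair $\{b,b'\}\subseteq B$, and these vertices are non-adjacent (Section~\ref{sec:splittings}). Applying (B1) to the essential pair $C=\{b,b'\}$, all of $B\setminus\{b,b'\}$ lies in a single component $\Lambda$ of $\G\setminus\{b,b'\}$; since $\card(B)\geq 4$ we may pick $b_3\in (B\cap\Lambda)\setminus\{b,b'\}$. The crucial step is to show that $b$, $b'$ and $b_3$ lie on a common induced cycle of $\G$; once this is done, Lemma~\ref{lem:cycle-k4} (applied exactly as in the proof of Lemma~\ref{lem:mhf-cap-star}) yields a subdivided copy of $K_4$ in $\G$, as desired. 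For this construction, pick a second component $\Lambda'$ of $\G\setminus\{b,b'\}$. As $\G$ has no separating vertex, both $b$ and $b'$ have neighbours in $\Lambda'$, and a shortest $b$--$b'$ path in $\G[\Lambda'\cup\{b,b'\}]$ is an induced path $P$ of $\G$ with interior in $\Lambda'$. Moreover, inside $H=\G[\Lambda\cup\{b,b'\}]$ no single vertex separates $b_3$ from $\{b,b'\}$: any path of $\G$ from $b_3$ to $b$ stays in $\Lambda$ until it first reaches $b$ or $b'$, so such a separating vertex of $H$ would be a separating vertex of $\G$. Hence $H$ contains a $b$--$b'$ path $Q$ through $b_3$ with interior in $\Lambda$, and $P\cup Q$ is a cycle through $b,b',b_3$.

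The remaining work, and the main obstacle, is to refine $P\cup Q$ to an \emph{induced} cycle still passing through all three of $b,b',b_3$. Here chords of $Q$ that do not ``straddle'' $b_3$ can be used to shorten $Q$, while the triangle-free and square-free hypotheses on $\G$, together with condition (B1), are what rule out the remaining chords; this chord analysis runs parallel to the arguments in Lemmas~\ref{lem:A1A2xy} and~\ref{lem:cycle-k4}, but must be carried out with care. Everything else in the proof is routine once the reduction via Propositions~\ref{prop:stars} and~\ref{prop:all-stars} is in place.
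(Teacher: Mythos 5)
Your first direction (subdivided $K_4$ $\Rightarrow$ star) is correct and is essentially the paper's argument: the four branch vertices satisfy (B1) and (B3) because any two of them are joined by three internally disjoint paths inside the subdivided $K_4$, and one then enlarges to a maximal set to get (B2) and invokes Proposition~\ref{prop:stars}. The reduction in the converse direction is also the paper's (via Propositions~\ref{prop:stars} and~\ref{prop:all-stars}, Lemma~\ref{lem:star-sep} and Lemma~\ref{lem:cycle-k4}).

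However, in the converse direction there is a genuine gap exactly at the point you flag as ``the remaining work'': you never actually produce an induced cycle through three vertices of $B$, which is the hypothesis Lemma~\ref{lem:cycle-k4} needs. You fix the triple $\{b,b',b_3\}$, build an embedded cycle $P\cup Q$ through it, and then assert that chords can be handled by an unspecified analysis in which triangle-freeness, square-freeness and (B1) ``rule out the remaining chords.'' That claim is unsubstantiated, and insisting on the fixed triple is the wrong move: a chord of $Q$ straddling $b_3$ cannot be shortened away without losing $b_3$, and nothing you cite shows such chords cannot occur for this particular triple. The paper resolves this not by killing chords but by allowing the triple to change: it takes $\tau$ joining $b_1,b_2$ in the component of $\G\setminus\{b_1,b_2\}$ \emph{not} containing $b_3$, then a path $\mu$ from $b_2$ to $b_3$ missing $b_1$ and a path $\eta$ from $b_3$ to $b_1$ missing $b_2$; if $\eta$ and $\mu$ meet away from $b_3$, the first intersection point $x$ met along $\eta$ from $b_1$ admits three paths to $B$ meeting only at $x$, so Observation~\ref{obs:B} together with maximality (B2) forces $x\in B$, and $\eta_{[b_1,x]}\cup\mu_{[b_2,x]}\cup\tau$ is then a cycle through the three $B$-vertices $b_1,b_2,x$. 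This use of (B2) via Observation~\ref{obs:B} to replace $b_3$ by a junction vertex of $B$ is the missing idea in your outline; without it (or a genuinely completed chord analysis, which you do not supply) the ``star $\Rightarrow$ subdivided $K_4$'' direction is not established.
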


\begin{proof}
First we show that if $\G$ has a subdivided $K_4$ subgraph $\Lambda$ then there is a star in $\cT$.  
Let $b_1, b_2, b_3, b_4$ be the essential vertices of the $K_4$, considered as vertices in 
$\Lambda$.  If $c_1, c_2$ is an arbitrary pair of essential vertices of $\G$, then the subgraph $\Lambda \setminus \{c_1, c_2\}$ of 
$\G  \setminus \{c_1, c_2\}$ is a (subdivided) $K_4$ with at most two points missing, and is therefore connected.  
It follows that $\{b_1, b_2, b_3, b_4 \}\setminus \{c_1, c_2\}$ is contained in a single component of 
$\G  \setminus \{c_1, c_2\}$.
Thus $\{b_1, b_2, b_3, b_4 \}$
 satisfies properties (B1) and (B3) of Proposition~\ref{prop:stars}.  It is therefore contained in a set satisfying (B2), yielding an orbit of stars.  

Now suppose $\cT$ has a star, which comes from a set of essential vertices $B$ as in 
Proposition~\ref{prop:stars}.  By Lemma~\ref{lem:cycle-k4} it suffices to show that some three elements of $B$ lie on an induced cycle in $\G$.   By Lemma~\ref{lem:star-sep}, there is a cut pair $\{b_1, b_2 \}\subseteq B$.
Then we may choose $b_3 \neq b_1, b_2$ in $B$, and reduced paths $\tau$ and $\mu$
such that $\tau$ connects $b_1$ and $b_2$ in the component of $\G \setminus \{b_1, b_2\}$ which does not contain $b_3$, and $\mu$
connects $b_2$ and $b_3$ and misses $b_1$.  By construction,  $\tau \cap \mu =\{b_2\}$.
Now let $\eta$ be a reduced path from $b_3$ to $b_1$ which misses $b_2$.  Clearly $\eta \cap \tau= \{b_1\}$.   If $\eta\cap \mu= \{b_3\}$ then $\tau \cup \mu \cup \eta$ is an induced cycle containing $b_1, b_2$, and $b_3$. 
If $\eta\cap \mu$ contains points other than $b_3$, 
then let $x$ be the first point of $\eta \cap \mu$ that $\eta$ meets, starting from $b_1$, so that $\eta_{[b_1x]} \cap \mu= \{x\}$.  Then $x$ is connected to 
$b_1, b_2$ and $b_3$ by the disjoint paths 
$\eta_{[b_1,x]}$,
$\mu_{[b_2,x]}$ and $\mu_{[b_3,x]}$.  Now
by 
Observation~\ref{obs:B},
$x \in B$.  Then $\sigma = \eta_{[b_1,x]} \cup 
\mu_{[b_2,x]}\cup \tau$ is an induced  cycle containing $b_1, b_2, x \in B$, as desired. 
\end{proof}

%%%%%%%%%%%%%%%%%%%%%%%%%%%%%%%%%%%%%
\subsection{Quasi-isometric rigidity}\label{sec:qi}
%%%%%%%%%%%%%%%%%%%%%%%%%%%%%%%%%%%%%

In this section we state and prove Theorem~\ref{thm:fuchsian-racg}, which gives a precise version of Theorem~\ref{thm:qi-rigid} of the introduction. Theorem~\ref{thm:fuchsian-racg} follows by combining the work of Tukia~\cite{tukia}, 
Gabai~\cite{gabai}, and Casson--Jungreis~\cite{casson-jungreis} and 
Theorem~\ref{thm:fuchsian-all} of the Appendix, which is well-known to experts.  
Below we give a direct proof of the theorem.

Recall the definition of a Fuchsian group from the introduction.  Note that if $G$ is Fuchsian then so is $G \times F$ for any finite group $F$.  Denote by $\Lambda_n$ the $n$-cycle of length $n \geq 5$ and put $W_n = W_{\Lambda_n}$.  Then $W_n$ is the group generated by reflections in the sides of a right-angled hyperbolic $n$-gon.  

A \emph{generalised $\Theta$ graph} is defined as follows.    
Let $k \geq 3$ and $n_1,\dots,n_k \ge 0$ be integers.  The graph $
\Theta(0,0,\ldots,0)$ is the graph with two essential vertices $a$ and $b$ each of valence $k$, and $k$ edges $e_1$, $e_2$, \ldots, $e_k$ connecting $a$ and $b$.  
Then the graph $\Theta(n_1,n_2,\ldots,n_k)$ is obtained by, for each $1 \leq i \leq k$, subdividing the edge $e_i$ into $n_i + 1$ edges by inserting $n_i$ new vertices of valence $2$.  For example, the graphs in Figure~\ref{fig:approx-pairs} are $\Theta(1,2,2,3)$ and $\Theta(2,2,2,3)$.

\begin{thm}\label{thm:fuchsian-racg}
Let $W_\G$ be $2$-dimensional.
The following are equivalent:
\begin{enumerate}
\item $W_\G$ is cocompact Fuchsian.

\item $W_\G$ is quasi-isometric to $W_n$ for some $n\ge 5$.

\item $\G = \Lambda_n$ for some $n \ge 5$.   
\end{enumerate}
\end{thm}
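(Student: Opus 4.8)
The plan is to establish the cycle of implications $(3) \Rightarrow (1) \Rightarrow (2) \Rightarrow (3)$, with the middle and first implications being essentially immediate and the real content in $(2) \Rightarrow (3)$. For $(3) \Rightarrow (1)$: if $\G = \Lambda_n$ with $n \geq 5$, then $W_\G = W_n$ is by definition the reflection group in the sides of a right-angled hyperbolic $n$-gon, hence cocompact Fuchsian; this is recorded already in the discussion following Standing Assumptions~\ref{assumptions} and in Theorem~\ref{thm:fuchsian-all} of the Appendix. For $(1) \Rightarrow (2)$: any cocompact Fuchsian group acts properly cocompactly on $\H^2$, and $W_n$ also acts properly cocompactly on $\H^2$ (as the right-angled $n$-gon reflection group), so by the Milnor–\v Svarc lemma both are quasi-isometric to $\H^2$, hence to each other. (One should note here that a $2$-dimensional right-angled Coxeter group which happens to be cocompact Fuchsian is genuinely a surface-type group, so there is no issue with the ``direct product with a finite group'' clause of Theorem~\ref{thm:fuchsian-all} — a nontrivial finite direct factor would force the defining graph to have a cone vertex, contradicting $1$-endedness.)

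The heart of the matter is $(2) \Rightarrow (3)$: assuming $W_\G$ is $2$-dimensional and quasi-isometric to some $W_n$, we must show $\G = \Lambda_n$. First, $W_n$ is $1$-ended and hyperbolic, so $W_\G$ is too, which forces $\G$ to satisfy Standing Assumptions~\ref{assumptions}(1)--(3). Now I would split according to whether $W_\G$ is itself cocompact Fuchsian. The key observation is that $W_n$ \emph{is} cocompact Fuchsian, so Bowditch's JSJ machinery does not directly apply to it; but its boundary is $S^1$, and the boundary is a quasi-isometry invariant for hyperbolic groups. If $\G$ satisfies all five Standing Assumptions, then $W_\G$ is not cocompact Fuchsian and admits a splitting over a $2$-ended subgroup, so by Bowditch's theorem $\partial W_\G$ has a local cut point of valence $\geq 3$ (coming from an $\approx$-pair via Corollary~\ref{cor:approx}, or at worst a nontrivial $\sim$-class structure), whereas $\partial W_n = S^1$ has every point of valence exactly $2$. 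More cleanly: $\partial W_n \cong S^1$, and a hyperbolic group with boundary $S^1$ is virtually Fuchsian (Tukia–Gabai–Casson–Jungreis, or one can cite Bowditch's characterization), hence cannot have a JSJ splitting with any vertex of the types produced in Section~\ref{sec:JSJ}. So $W_\G$ must fail Standing Assumption (4) or (5). By Theorem~\ref{thm:mihalik-tschantz}, $W_\G$ (which is $1$-ended, hence has no cut pair issues with connectivity) fails (5) exactly when $\G$ has no cut pair; but a $1$-ended $2$-dimensional hyperbolic $W_\G$ whose $\G$ has no cut pair — combined with $\G$ being triangle-free, square-free, connected, and without separating vertices or edges — can be shown by an elementary graph argument to force $\G$ to be a single induced cycle of length $\geq 5$, i.e. $\G = \Lambda_m$ for some $m \geq 5$. (If $\G$ had an essential vertex, one could find a cut pair by the kind of argument used in Lemma~\ref{lem:star-sep}; so every vertex has valence $2$, and connectivity plus no separating vertex/edge gives a cycle, with girth $\geq 5$ from triangle- and square-freeness.) Similarly, failing (4) directly says $\G = \Lambda_m$ for some $m \geq 5$.

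It remains to pin down $m = n$. Since $W_\G = W_m$ and $W_n$ are quasi-isometric and both cocompact Fuchsian, they act properly cocompactly on $\H^2$, so their associated orbifolds $\H^2/W_m$ and $\H^2/W_n$ are commensurable; comparing orbifold Euler characteristics, $\chi^{orb}(\H^2/W_m) = -\tfrac{m-4}{4}$ is determined by $m$ (a right-angled hyperbolic $m$-gon reflection group has orbifold Euler characteristic $1 - \tfrac m2 + \tfrac m4$), and Euler characteristic is a commensurability invariant up to the index, so in fact one needs a bit more care: quasi-isometric cocompact lattices in $\mathrm{Isom}(\H^2)$ are \emph{virtually} isomorphic but not isomorphic. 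The clean way to finish is: I expect $W_m$ and $W_n$ to be quasi-isometric for \emph{all} $m, n \geq 5$ (they are all quasi-isometric to $\H^2$!), so statement $(3)$ as written — $\G = \Lambda_n$ for \emph{some} $n \geq 5$ — is all that can be, and indeed all that should be, claimed; we do not need $m = n$. Thus the proof concludes once we show $\G$ is \emph{some} cycle $\Lambda_m$ with $m \geq 5$. The main obstacle, then, is the graph-theoretic step showing that a triangle-free, square-free, $1$-ended defining graph with no cut pair (equivalently, no $\approx$-pair and no $\sim$-class structure, equivalently $W_\G$ cocompact Fuchsian) must be a single cycle — this requires ruling out essential vertices, and I would handle it exactly as in the opening paragraphs of Lemma~\ref{lem:star-sep} (an essential vertex ``nearest'' to the rest of the graph produces a separating pair), combined with the observation that girth $< 5$ is excluded by Assumptions (1) and (3).
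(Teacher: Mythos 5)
Your implications $(3)\Rightarrow(1)\Rightarrow(2)$ are fine, but the heart of $(2)\Rightarrow(3)$ contains a genuine gap, in the branch where Standing Assumption~(5) fails. You claim that a connected, triangle-free, square-free graph with no separating vertices or edges and \emph{no cut pair} must be a cycle, arguing that an essential vertex would produce a cut pair ``by the kind of argument used in Lemma~\ref{lem:star-sep}.'' This is false: the Petersen graph (or the Heawood graph) is triangle- and square-free, $3$-connected — hence has no separating vertex, edge, or cut pair — and is not a cycle; the associated $W_\G$ is $2$-dimensional, $1$-ended and hyperbolic, so it satisfies exactly the hypotheses you are working under in that branch. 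Lemma~\ref{lem:star-sep} cannot rescue this: its proof begins by invoking Standing Assumptions~\ref{assumptions}(4) and (5) to \emph{assume} a cut pair exists, so it produces a cut pair inside a set $B$ only when $\G$ already has one. The real issue is that in this branch you never use the hypothesis that $\partial W_\G\cong S^1$, and no purely graph-theoretic argument can close the case. (A smaller but related slip: the existence of a splitting over a $2$-ended subgroup does not give a local cut point of valence $\geq 3$ — a cut pair separating $\G$ into only two pieces yields valence-$2$ points, which $S^1$ has everywhere — so your first formulation of the contradiction in the other branch also doesn't stand as written.)

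The paper closes $(2)\Rightarrow(3)$ differently and uniformly: if $\G$ is not a cycle, then by Lemma~\ref{lem:theta subgraph} it contains an induced generalised theta graph $\Theta=\Theta(n_1,n_2,n_3)$ (square-freeness forces at most one $n_i=1$). The special subgroup $W_\Theta$ is $1$-ended, so $\partial W_\Theta$ is a closed connected subset of $\partial W_\G\cong S^1$, hence a circle or an arc; but the two essential vertices of $\Theta$ form a cut pair separating $\Theta$ into three components, so by Corollary~\ref{cor:approx} the endpoints of the corresponding bicoloured geodesic lie in $\partial W_\Theta(3+)$ — impossible in $S^1$ or an arc. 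This argument needs no case split on cut pairs of $\G$ and no JSJ machinery for $W_\G$ itself. Your parenthetical appeal to Tukia--Gabai--Casson--Jungreis could in principle be upgraded into a complete proof ($\partial W_\G\cong S^1$ gives that $W_\G$ is cocompact Fuchsian, and then Corollary~\ref{cor:fuchsian-2dim} gives that $\G$ is a cycle), but as written you only use it to rule out a JSJ splitting and then fall back on the false graph claim; if you want to pursue that route you must carry it through explicitly (and note it rests on a much deeper theorem than anything the paper's own proof uses).
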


\begin{proof} The implications $(1) \implies (2)$, $(3) \implies (1)$ and $(3) \implies (2)$ are obvious.  In order to establish $(2) \implies (3)$ and so complete the proof, we will use the following lemma, whose proof is an elementary exercise in graph theory. 

\begin{lemma}\label{lem:theta subgraph}  Suppose $\G$ is a triangle-free simplicial graph which has no separating vertices or edges.  If  $\Gamma$ is not a cycle graph or a single edge then $\Gamma$ contains an induced subgraph $\Theta(n_1,n_2,n_3)$ with $n_1, n_2, n_3 \geq 1$.
\qed
\end{lemma}

\hide{
\begin{proof}
Since $\G$ has no separating vertices and is not a single edge, it is connected and every vertex of $\G$ is part of an embedded cycle.  Since $\G$ is not itself a cycle, it contains at least two distinct (possibly intersecting) cycles, $\alpha$ and $\beta$.  
We claim that by possibly modifying $\beta$ by an inductive procedure, we may assume that $\alpha $ and $\beta$ intersect and their intersection is connected and contains at least two edges.  

To see this, first suppose $\alpha$ and  $\beta$ are disjoint and let $\gamma$ be a minimal  path connecting $\alpha$ to $\beta$. 
Suppose the vertices along $\gamma$ are $u_1, \dots, u_n$, with 
$u_1$ lying on $\alpha$ and $u_n$ lying on $\beta$.   Let $\eta$ be a minimal path in $\Gamma \setminus \{u_n\}$ connecting $u_{n-1}$ to some vertex $w$ on $\beta$ (with $w \neq u_n$), which exists since $u_n$ is not separating. Replace $\beta$ by the cycle formed by concatenating the edge $[u_{n-1}, u_n]$, one of the arcs in $\beta$ from $u_n$ to $w$ and the path $\eta$.  Rename this new cycle $\beta$ and note that its distance from $\alpha$
is at most $n-1$.  By inductively performing this procedure, we may assume that $\alpha$ and $\beta$ intersect in at least a vertex.

If the intersection is exactly a vertex, say $u$ and if $v$ denotes a vertex adjacent to $u$ on $\alpha$, then, as before, $v$ can be connected by a minimal path $\eta$ outside $\Gamma \setminus \{u \}$ to some vertex $w$ on $\beta$ (with $w \neq u$) and we may replace $\beta$ 
by the cycle formed by concatenating the edge $[v, u]$, one of the arcs from $u$ to $w$ in $\beta$ and the path $\eta$.  Thus we may assume that $\alpha$ and $\beta$ share at least an edge.

Now suppose the intersection of $\alpha$ and $\beta$ has more than one component.  Let $u_1$ 
and $u_2$ be vertices in the intersection of $\alpha$ and $\beta$, with the property that at least one of the two arcs in $\beta$ connecting $u_1$ and $u_2$ is disjoint from $\alpha$.  Replace $\beta$ with this arc, concatenated with an arc in $\alpha$ connecting $u_1$ and $u_2$, to get a pair of cycles $\alpha$ and $\beta$ with connected intersection which contains at least one edge.   

If the intersection of $\alpha$ and $\beta$ is exactly one edge, say $[u_1,u_2]$, let $v$ be a vertex adjacent to $u_1$ on $\alpha$.  Since $\G$ has no separating edges, there is a minimal path $\eta$ in $\Gamma \setminus [u_1,u_2]$ which connects $v$ to some vertex $w$ on $\beta$ (with $w \not \in \{u_1,u_2\}$).  We may replace $\beta$ by the cycle formed by concatenating the path $\eta$ with the edges $[v,u_1]$ and $[u_1,u_2]$ and the arc in $\beta$ from $u_2$ to $w$ which does not contain $u_1$.  Now $\alpha$ and $\beta$ have connected intersection containing
at least the two adjacent edges $[v,u_1]$ and $[u_1,u_2]$.  

Since $\G$ has no triangles, $\alpha \setminus \alpha \cap \beta$ and $\beta\setminus 
\alpha \cap \beta$ have at least one vertex each.  It follows that $\alpha \cup \beta$ is equal to 
$\Theta(n_1, n_2, n_3)$ for some $n_1, n_2, n_3 \ge 1$. 
\end{proof}
}

Now suppose that $W_\G$ is 2-dimensional and quasi-isometric to $W_n$ for some $n\geq 5$.   Then $W_\G$ is $1$-ended, so $\G$ has no separating vertices or edges.  Also $\G$ is not a single edge, as then $W_\G$ would be finite.  Assume that $\G$ is not a cycle graph.  Then by Lemma \ref{lem:theta subgraph}, $\G$ contains an induced subgraph $\Theta = \Theta(n_1,n_2,n_3)$.  If more than one $n_i$ is $1$, then $\Theta$ and hence $\G$ contains a square, thus $W_{\G}$ is not hyperbolic. This is a contradiction, so we may assume $n_2, n_3 \ge 2$.  

Since $W_\Theta$ is a $1$-ended special subgroup of $W_\G$, its visual boundary $\partial W_\Theta$ is a closed, connected subset of $\partial W_\G$.  Now $\partial W_\G$ is homeomorphic to $S^1$, so $\partial W_\Theta$ is homeomorphic to either $S^1$ or to a closed interval in $S^1$.   Let $a$ and $b$ be the two essential vertices of $\Theta$ and let $\gamma$ be an 
$(a, b)$-bicoloured geodesic in the Cayley graph of $W_\Theta$.  Since $\Theta \setminus \{a,b\}$ has three components, by Corollary~\ref{cor:approx} the endpoints of $\gamma$ are in $\partial W_\Theta(3+)$.  Thus $\partial W_\Theta$ cannot be homeomorphic to either~$S^1$ or an interval in $S^1$.  Hence $\G$ is a cycle graph $\Lambda_n$ with $n \geq 5$.
\end{proof}

%%%%%%%%%%%%%%%%%%%%%
%%%%%%%%%%%%%%%%%%%%%
\appendix
%%%%%%%%%%%%%%%%%%%%%
%%%%%%%%%%%%%%%%%%%%%

%%%%%%%%%%%%%%%%%%%%%%%%%%%%%%%%%%%%%%%%%%
\section{Characterisation of cocompact Fuchsian Coxeter groups}\label{sec:fuchsian-all}
%%%%%%%%%%%%%%%%%%%%%%%%%%%%%%%%%%%%%%%%%%

In this appendix we prove Theorem~\ref{thm:fuchsian-all}, which characterises cocompact Fuchsian groups among all Coxeter groups.   
The proof of Theorem~\ref{thm:fuchsian-all} uses results from~\cite{davis-book} which were suggested to us by an anonymous referee.

  In this appendix only, we consider general Coxeter groups.  Recall that a \emph{Coxeter group} is a group with  presentation
\begin{equation}\label{eq:Coxeter}  W = \langle S \mid s_i^2 = 1\, \forall \, s_i \in S, (s_is_j)^{m_{ij}} = 1 \, \forall \mbox{ distinct } s_i, s_j \in S \rangle\end{equation}
where $S = \{ s_i \}_{i=1}^n$ is a finite set and $m_{ij} = m_{ji} \in \{ 2,3,4,\dots\} \cup \{ \infty \}$, with $m_{ij} = \infty$ meaning that there is no relation between $s_i$ and $s_j$.  The pair $(W,S)$ is called a \emph{Coxeter system}.  The \emph{nerve} of a Coxeter system $(W,S)$ is the simplicial complex $N(W,S)$ with vertex set $S$ and a $k$-simplex $\sigma_T$ with vertex set $T \subset S$ for each subset $T$ of $S$ such that $\langle T \rangle$ is finite and $|T| = k + 1$.  The Coxeter system $(W,S)$ is \emph{right-angled} if each $m_{ij} \in  \{2,\infty\}$, and if $W = W_\G$ is right-angled then its defining graph $\G$ is precisely the $1$-skeleton of the nerve $N(W,S)$.  Moreover, $\G$ is triangle-free if and only if $\G$ is equal to the nerve of $W_\G$.

\begin{example}\label{ex:polygon} Let $P$ be a compact convex hyperbolic polygon with edges labelled cyclically $e_1, \dots, e_n$, so that for $1 \leq i \leq n$ the dihedral angle between $e_i$ and $e_{i+1}$ is $\pi/m_{i,i+1}$  with $m_{i,i+1}$ an integer $\geq 2$.  Put $m_{i+1,i} = m_{i,i+1}$ and if edges $e_i$ and $e_j$ are nonadjacent, put $m_{ij} = m_{ji} = \infty$.  Let $S = \{ s_i \}_{i=1}^n$ where $s_i$ is the reflection of $\H^2$ in the geodesic containing $e_i$.  Let $W = W(P)$ be  the group generated by this set of reflections $S$.  Then $W$ is cocompact Fuchsian, and is a Coxeter group with presentation as in \eqref{eq:Coxeter}.
\end{example}

A Coxeter group $W$ is a \emph{hyperbolic polygon reflection group} if $W = W(P)$ for some hyperbolic polygon $P$ as in Example~\ref{ex:polygon}.  Following Definition~10.6.2 of \cite{davis-book} we say that a Coxeter system $(W,S)$ is of \emph{type} $\mbox{HM}^2$ if its nerve is a generalised homology $1$-sphere (see Definition 10.4.5 of~\cite{davis-book} for the definition of this latter term.) 

\begin{thm}\label{thm:fuchsian-all}
A Coxeter group $W$ is cocompact Fuchsian if and only if $W$ is either a hyperbolic polygon reflection group, or the direct product of a hyperbolic polygon reflection group with a finite Coxeter group.
\end{thm}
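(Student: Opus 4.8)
The plan is to establish both directions using the structural theory of Coxeter groups from~\cite{davis-book}. For the ``if'' direction, suppose first that $W = W(P)$ is a hyperbolic polygon reflection group. Then by construction (Example~\ref{ex:polygon}) $W$ acts properly discontinuously and cocompactly on $\H^2$ by isometries, with fundamental domain $P$; moreover $W$ is non-elementary since $P$ has at least three sides. Hence $W$ is cocompact Fuchsian by definition. If instead $W = W(P) \times F$ for a finite Coxeter group $F$, then $W$ still acts properly discontinuously and cocompactly on $\H^2$ (let $F$ act trivially), and it remains non-elementary, so $W$ is again cocompact Fuchsian. This handles one direction with essentially no work.

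For the ``only if'' direction, assume $W$ is cocompact Fuchsian, so $W$ acts properly discontinuously and cocompactly on $\H^2$, and is non-elementary. The key point is that a cocompact lattice-like action on $\H^2$ forces strong constraints on the Davis complex $\Sigma$ of $W$: since $W$ is quasi-isometric to $\H^2$, the Davis complex is a $2$-dimensional contractible space quasi-isometric to the plane, and in particular its boundary must be a circle. I would invoke the results of~\cite{davis-book} on the relationship between the nerve $N(W,S)$ and the topology of $\Sigma$ and its boundary — specifically, I expect to use that $W$ acts geometrically on $\H^2$ (equivalently, on a space with circle boundary) if and only if the nerve $N(W,S)$ is a generalised homology $1$-sphere, i.e.\ the Coxeter system is of type $\mathrm{HM}^2$ in the sense of Definition~10.6.2 of~\cite{davis-book}. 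This is where the hint in the excerpt about ``results from~\cite{davis-book} suggested by an anonymous referee'' points. Once $(W,S)$ is known to be of type $\mathrm{HM}^2$, the classification of such Coxeter systems (again from~\cite{davis-book}, roughly Chapters~10 and~12, using Moussong's criterion for when the Davis complex is CAT(0) and the analysis of when it is in fact $\H^2$) pins down the nerve: it must be (the boundary of) a polygon, possibly with the ``join'' structure coming from a finite factor.

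Concretely, the argument should run: the nerve $N(W,S)$ is a generalised homology $1$-sphere, hence a simplicial circle after removing the ``join'' part corresponding to the direct factors that are spherical (finite) Coxeter groups. Decompose $(W,S)$ as a direct product $(W_1,S_1)\times(W_2,S_2)$ where $W_2$ is the maximal finite direct factor; then the nerve of $(W_1,S_1)$ is a genuine simplicial $1$-sphere with no such join decomposition, which forces each edge relation to come from a polygon (each pair of consecutive generators has $m_{ij}$ finite, non-consecutive ones have $m_{ij}=\infty$), i.e.\ $W_1$ is a polygon reflection group. Since $W$ is cocompact Fuchsian and hence $W_1$ acts geometrically on $\H^2$, the polygon must be hyperbolic (the angle-sum condition $\sum (1 - 1/m_{i,i+1}) > 1$ must hold, ruling out Euclidean and spherical cases). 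Thus $W = W_1 \times W_2$ with $W_1$ a hyperbolic polygon reflection group and $W_2$ finite, which is the desired conclusion.

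The main obstacle I anticipate is making the bridge ``cocompact Fuchsian $\Rightarrow$ nerve is a generalised homology $1$-sphere / type $\mathrm{HM}^2$'' fully rigorous: one needs to go from the \emph{a priori} rather soft hypothesis (some proper cocompact action on $\H^2$, not necessarily the Davis action) to a statement about $N(W,S)$. The cleanest route is probably: a cocompact Fuchsian group is a virtual surface group, hence $1$-ended with boundary $S^1$; then $\partial\Sigma \cong S^1$ since the visual boundary of a hyperbolic group is a quasi-isometry invariant; then apply Davis's characterisation (\cite[Ch.~10]{davis-book}) of when $\partial\Sigma$, which is the nerve-with-a-certain-construction, is a sphere — this is exactly the $\mathrm{HM}^2$ condition. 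I would cite the precise statements in~\cite{davis-book} (Theorem~10.6.1 and the surrounding discussion, together with Moussong's Lemma~\cite[Lemma~12.3.3]{davis-book}) rather than reproving them, since the excerpt explicitly says these are the results being used.
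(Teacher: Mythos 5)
Your overall skeleton — reduce to the type $\mathrm{HM}^2$ situation via \cite{davis-book}, use that a generalised homology $1$-sphere is a circle to get a polygonal nerve, and then let hyperbolicity force the polygon to be hyperbolic — is the same as the paper's, and your ``if'' direction (with the finite factor acting trivially, which is legitimate under the paper's/Bowditch's definition of Fuchsian, where the action need not be faithful) is fine. The gap is in the one step you yourself flag as the main obstacle: the bridge from ``cocompact Fuchsian'' to the nerve statement. The equivalence you propose to invoke, ``$W$ acts geometrically on $\H^2$ if and only if $N(W,S)$ is a generalised homology $1$-sphere,'' is false as stated, precisely because of finite factors: if $W = W(P)\times W_1$ with $W_1$ finite and nontrivial, then $W$ acts geometrically on $\H^2$ but $N(W,S)$ is the join of a circle with a simplex, not a $1$-sphere. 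The repaired route you sketch also does not close: from $\partial\Sigma \cong S^1$ you cannot conclude via \cite[Theorem~10.6.1]{davis-book} that the nerve is a generalised homology $1$-sphere, because that theorem needs $\Sigma$ to be a \emph{homology $2$-manifold}, and when there is a finite factor $\Sigma$ has dimension larger than $2$ (so it is certainly not one) even though its visual boundary is still $S^1$. Finally, your assertion that after splitting off the maximal finite direct factor the nerve of the remaining factor ``is a genuine simplicial $1$-sphere'' is exactly the statement that needs proof; it does not follow from Theorem~10.6.1 together with Moussong's theorem, and trying to derive it from the boundary being a circle amounts to reproving the relevant theorem of Davis.

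The fix is the citation the paper actually uses: \cite[Theorem~10.9.2]{davis-book}. Its hypothesis — that $W$ contains a finite-index torsion-free subgroup which is the fundamental group of a closed aspherical (here hyperbolic) surface — is precisely the intermediate fact you already establish (``cocompact Fuchsian implies virtual surface group''), and its conclusion is exactly the needed structural statement: $(W,S) = (W_0,S_0)\times(W_1,S_1)$ with $W_1$ finite and $(W_0,S_0)$ of type $\mathrm{HM}^2$. From there your argument and the paper's coincide: every generalised homology $1$-sphere is a circle (\cite[p.~194]{davis-book}), so the nerve of $(W_0,S_0)$ is an $n$-cycle and $W_0$ is a polygon reflection group; since $W$, hence $W_0$, is hyperbolic (non-elementary, quasi-isometric to $\H^2$), the spherical and Euclidean cases are excluded and the polygon is hyperbolic. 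So the defect is not in your strategy but in routing the key reduction through the visual boundary and Theorem~10.6.1 instead of Theorem~10.9.2; as written, that step would fail.
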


\begin{proof}[Proof of Theorem~\ref{thm:fuchsian-all}.]
Suppose that a Coxeter group $W$ is cocompact Fuchsian.  Then $W$ has a finite index torsion-free subgroup which is the fundamental group of a closed hyperbolic surface.  Hence by~\cite[Theorem 10.9.2]{davis-book}, either $(W,S)$ is of type $\mbox{HM}^2$, or $(W,S)$ has a direct product decomposition $(W,S) = (W_0,S_0) \times (W_1,S_1)$ where $W_1$ is a finite (nontrivial) Coxeter group and $(W_0,S_0)$ is of type $\mbox{HM}^2$.  It thus suffices to show that a Coxeter group of type $\mbox{HM}^2$ must be a hyperbolic polygon reflection group.  Now as remarked on p. 194 of~\cite{davis-book}, every generalised homology $1$-sphere is homeomorphic to the circle $S^1$.  Therefore the nerve of $(W,S)$ is an $n$-cycle for some $n \geq 3$.  Since $W$ is hyperbolic, it follows that $W$ is a hyperbolic polygon reflection group as required.
\end{proof}

\begin{cor}\label{cor:fuchsian-2dim}  Let $\G$ be a finite, simplicial, triangle-free graph.  Then the associated right-angled Coxeter group $W_\G$ is cocompact Fuchsian if and only if $\G$ is an $n$-cycle with $n \geq 5$.
\end{cor}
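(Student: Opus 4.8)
The plan is to deduce this from Theorem~\ref{thm:fuchsian-all} --- more precisely, from the ingredients of its proof --- while staying throughout inside the fixed right-angled Coxeter system $(W_\G,S)$. The key remark is that, since $\G$ is triangle-free, $\G$ equals the nerve $N(W_\G,S)$ itself (not merely its $1$-skeleton); hence $\G$ contains a triangle if and only if some three distinct generators of $W_\G$ pairwise commute, and this is the fact the argument will exploit.

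The ``if'' direction is immediate: if $\G=\Lambda_n$ with $n\ge 5$, then there is a compact convex hyperbolic $n$-gon $P$ with all interior angles $\pi/2$ (the angle-sum constraint $n\pi/2<(n-2)\pi$ holds precisely for $n>4$), so $W_\G=W_n=W(P)$ is cocompact Fuchsian by Example~\ref{ex:polygon}.

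For the converse, I would assume $W_\G$ is cocompact Fuchsian. Then $W_\G$ is word-hyperbolic (it acts geometrically on $\H^2$, which is Gromov hyperbolic) and contains a torsion-free finite-index subgroup that is a closed hyperbolic surface group. Applying \cite[Theorem 10.9.2]{davis-book} then gives two cases: either $(W_\G,S)$ is of type $\mathrm{HM}^2$, or $(W_\G,S)=(W_0,S_0)\times(W_1,S_1)$ with $W_1$ finite and nontrivial and $(W_0,S_0)$ of type $\mathrm{HM}^2$. In the first case the nerve $\G=N(W_\G,S)$ is a generalised homology $1$-sphere, hence homeomorphic to $S^1$, so $\G=\Lambda_n$ for some $n\ge 3$; triangle-freeness excludes $n=3$, and word-hyperbolicity excludes $n=4$ (since $W_{\Lambda_4}\cong D_\infty\times D_\infty$ contains $\Z^2$), leaving $n\ge 5$, as desired.

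It remains to dispose of the direct-product case. Being a direct factor of the right-angled system $(W_\G,S)$, the system $(W_0,S_0)$ is again right-angled (its Coxeter matrix is a submatrix of that of $W_\G$), and its nerve is homeomorphic to $S^1$; in particular $N(W_0,S_0)$ contains an edge, which by right-angledness joins two distinct generators $s,t\in S_0$ with $st=ts$. Choosing any $u\in S_1$, which is possible as $W_1\ne 1$, the generators $s,t,u$ are distinct and pairwise commute in $W_\G=W_0\times W_1$, so they span a triangle in $\G$, contradicting the standing triangle-freeness hypothesis. Hence this case cannot arise, and the proof is complete. I expect no serious obstacle here: the corollary is essentially bookkeeping once Theorem~\ref{thm:fuchsian-all} is in hand, the only mild points requiring care being that cocompact Fuchsian groups are word-hyperbolic (which is what kills $\Lambda_4$) and that one should never leave the given Coxeter system, so that ``defining graph'' and ``nerve'' coincide and no reflection-rigidity input is needed.
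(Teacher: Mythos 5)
Your proof is correct and is essentially the paper's intended argument: the paper states the corollary as an immediate consequence of Theorem~\ref{thm:fuchsian-all}, and your write-up just makes the implicit bookkeeping explicit (nerve $=\G$ by triangle-freeness, the type $\mbox{HM}^2$ case giving a cycle with $n=3,4$ excluded by triangle-freeness and hyperbolicity, and the product case killed because a nontrivial finite factor would create three pairwise commuting generators, i.e.\ a triangle). Your choice to argue from the ingredients of the proof of Theorem~\ref{thm:fuchsian-all} (Davis's Theorem 10.9.2 applied to the given system $(W_\G,S)$) rather than from its bare statement is exactly the right way to avoid any reflection-rigidity issues, and matches what the paper does.
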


Corollary~\ref{cor:fuchsian-2dim} 
provides an alternative proof of $(1) \implies (3)$ in 
Theorem~\ref{thm:fuchsian-racg}.

%%%%%%%%%%%%%%%%%%%%%%%%%%%%%%%%%%%%%%%%%%
\section{Quasi-isometry classification for complete graphs\\ {\small Christopher Cashen, Pallavi Dani, and Anne Thomas}}\label{app:Kn}
%%%%%%%%%%%%%%%%%%%%%%%%%%%%%%%%%%%%%%%%%%

In this appendix
we use techniques from Behrstock and Neumann~\cite{behrstock-neumann} and Cashen and Macura~\cite{cashen-macura} to give the quasi-isometric classification of all right-angled Coxeter groups whose defining graphs are obtained by sufficiently subdividing a complete graph on at least $4$ vertices.  In particular, this classification shows that Bowditch's JSJ tree is not a complete quasi-isometry invariant in general (i.e.~for all graphs satisfying Standing 
Assumptions~\ref{assumptions}). 

We define a graph to be \emph{$3$-convex} if every path between a pair of essential vertices has at least three edges.   Recall that $K_n$ denotes the complete graph on $n$ vertices.

\begin{thm}\label{thm:Kn}
For each $n\ge 3$, let $\hat K_{n+1}$ be a 3-convex graph which is a subdivided copy of $K_{n+1}$, and define $W_{n+1}$ to be the right-angled Coxeter group $W_{n+1}= W_{\hat K_{n+1}}$.  
\begin{enumerate} 
\item For all $n \geq 3$, all $W_{n+1}$ have isomorphic JSJ trees.
\item For any two such subdivisions $\hat K_{n+1}$ and $\hat K_{n+1}'$, the corresponding groups $W_{n+1} = W_{\hat K_{n+1}}$ and $W'_{n+1} = W_{\hat K_{n+1}'}$ are quasi-isometric. 
\item $W_{m+1}$ is not quasi-isometric to $W_{n+1}$ when $m \neq n$.
\end{enumerate}
\end{thm}

\begin{proof}
We first obtain a description of the JSJ tree $\cT := \cT_{n+1}$ of any $W_{n+1}$, using 
Theorem~\ref{thm:JSJ}.  
Let $x_0,\dots, x_n$ be the essential vertices of $\hat K_{n+1}$.
 For each $0 \le i < j \le n$, there is one 
$W_{n+1}$-orbit in $V_2(\cT)$ corresponding to the branch of $\hat K_{n+1}$ between $x_i$ and 
$x_j$. Also, there is a single $W_{n+1}$-orbit in $V_3(\cT)$, corresponding to the set $\{x_0, \dots, x_n\}$.  There 
are no $\approx$-pairs or $\sim$-classes of size 2.  Thus $V_1(\cT)$ contains only the valence two vertices 
described in (1)(d) of Theorem~\ref{thm:JSJ}.
 It follows that $\cT$ is a tripartite tree in which each vertex in $V_2(\cT)$ and in $V_3(\cT)$ is 
connected to countably infinitely many vertices in $V_1(\cT)$, and each vertex in $V_1(\cT)$ is 
 connected to one vertex in $V_2(\cT)$ and one vertex in $V_3(\cT)$.  It is evident from this description that there is a type-preserving isomorphism from $\cT_{n+1}$ to $\cT_{m+1}$, for all $m,n \ge 3$.
In other words, the JSJ trees of all $W_{n+1}$ are isomorphic for all $n\ge 3$, and so (1) holds.

For (2), the type-preserving isomorphism between the two JSJ trees $\cT_{W_{n+1}}$ and $\cT_{W'_{n+1}}$ induces, by techniques of Behrstock--Neumann~\cite{behrstock-neumann},  a quasi-isometry between the Davis complexes $\Sigma$ and $\Sigma'$ for $W_{n+1}$ and $W'_{n+1}$, respectively.  Thus $W_{n+1}$ and $W'_{n+1}$ are quasi-isometric.  The map $\Sigma \to \Sigma'$ sends subcomplexes corresponding to Type 2 vertex stabilisers (that is, branches) quasi-isometrically to subcomplexes corresponding to Type 2 vertex stabilisers.  The restriction of this map to any such subcomplex can be extended to a quasi-isometry $\Sigma \to \Sigma'$ because of the bijections between the branches and the essential vertices of the two subdivisions $\hat K_{n+1}$ and $\hat K_{n+1}'$.

For (3), by Theorem 3.37 the stabilisers of the vertices in $V_3(\cT_{n+1})$ are conjugates of the special subgroup \[R_{n+1}:=\langle x_0,\dots, x_n \mid x_i^2=1\rangle\] of $W_{n+1}$.
Also, the stabilisers of the vertices in $V_1(\cT_{n+1})$ are
  conjugates of the infinite dihedral groups $D_{i,j}:=\langle
  x_i,x_j\rangle$ for $0\leq i<j\leq n$;
these distinguished subgroups are the maximal 2-ended subgroups that
give a splitting of $W_{n+1}$ and that are \emph{universally
  elliptic}, in the terminology of~\cite{guirardel-levitt}.
By work of Papasoglu~\cite{papasoglu}, a quasi-isometry $\phi: W_{n+1}\to W_{m+1}$ must, up to
  bounded distance, take $R_{n+1}$ quasi-isometrically to a 
Type 3
vertex
stabiliser in the image. Since there is only one $W_{m+1}$-orbit of
such vertices, we may assume, by postcomposing with an element of
$W_{m+1}$, that this is the vertex stabilised by $R_{m+1}$.
Furthermore, $\phi$ gives a bijection between 
 the distinguished
$2$-ended subgroups
over which $W_{n+1}$ splits and 
 the distinguished
$2$-ended subgroups over which
$W_{m+1}$ splits. We know that such subgroups are exactly the
conjugates of the $D_{i,j}$, so, in particular, $\phi|_{R_{n+1}}$ gives a quasi-isometry from $R_{n+1}$
  to $R_{m+1}$ that takes the collection $\mathcal{L}_{n+1}$ of
  $R_{n+1}$-conjugates of the various $D_{i,j}$ in
  $R_{n+1}$ bijectively to the collection $\mathcal{L}_{m+1}$ of $R_{m+1}$-conjugates of
  the $D_{i,j}$ in $R_{m+1}$.

The group $R_{n+1}$ has an index two free subgroup $F_n:=\langle x_0x_1,\dots,
x_0x_n\rangle$, with $R_{n+1}=F_n\amalg F_nx_0$ (note that $x_i = (x_0 x_i)^{-1}x_0$ for $1 \leq i \leq n$).
The map $\bar{\iota}_n: R_{n+1}\to F_n$ that forgets the
  coset representative is a quasi-isometry inverse to the inclusion
  $\iota_n: F_n\into R_{n+1}$.
Let $y_i:=x_0x_i$ for each $1\leq i\leq n$. 
Notice that for each $0 \leq i < j \leq n$ the dihedral group $D_{i,j} = \langle x_i,x_j\rangle$ contains an index two infinite cyclic
subgroup $\langle x_ix_j\rangle$.
In the case $i>0$ we have $\langle x_ix_j\rangle=\langle y_i^{-1}y_j\rangle < F_n$.
Furthermore, each $R_{n+1}$-conjugate of $\langle y_i^{-1}y_j\rangle$
is of the form $f^{-1} \langle y_i^{-1}y_j\rangle f$ or $x_0f^{-1}
\langle y_i^{-1}y_j\rangle fx_0$, where $f\in F_n$.
Both of these are, in fact, $F_n$-conjugates of $\langle
y_i^{-1}y_j\rangle$. 
For the former this is clear. For the latter, note that for
$f':=x_0fx_0\in F_n$ we have $x_0f^{-1}y_i^{-1}y_j
fx_0=(f')^{-1}x_0x_ix_jx_0f'=(f')^{-1}y_iy_j^{-1}f'=(y_j^{-1}f')^{-1}(y_i^{-1}y_j)^{-1}(y_j^{-1}f')$.
Similarly, when $i=0$ we have $\langle x_ix_j\rangle=\langle
y_j\rangle<F_n$ and $x_0f^{-1}y_jfx_0=(f')^{-1}y_j^{-1}f'$.
Therefore, $\iota_n$ and $\bar{\iota}_n$  induce inverse bijections
  between $\mathcal{L}_{n+1}$ and the collection of $F_n$-conjugates of the cyclic subgroups
$\langle y_i\rangle$ for $1\leq i \leq n$ and $\langle
y_i^{-1}y_j\rangle$ for $1\leq i<j\leq n$.
We may then denote the latter collection by
$\bar{\iota}_n(\mathcal{L}_{n+1})$.
This means that the quasi-isometry $\phi$ induces a quasi-isometry
$\bar{\iota}_m\circ\phi|_{R_{n+1}}\circ\iota_n:F_n\to F_m$ that takes
$\bar{\iota}_n(\mathcal{L}_{n+1})$ bijectively to
$\bar{\iota}_m(\mathcal{L}_{m+1})$.
We use techniques of \cite{cashen-macura} to show that such a
quasi-isometry does not exist when $n\neq
m$, as follows.

Such a quasi-isometry extends to a
  homeomorphism
  $\partial(\bar{\iota}_m\circ\phi|_{R_{n+1}}\circ\iota_n):\partial
  F_n\to\partial F_m$.
Each element of  $\bar{\iota}_n(\mathcal{L}_{n+1})$, being a cyclic
subgroup of $F_n$, has two boundary points in $\partial F_n$, and the
boundary pairs for distinct elements of
$\bar{\iota}_n(\mathcal{L}_{n+1})$ are disjoint. 
Since $\bar{\iota}_m\circ\phi|_{R_{n+1}}\circ\iota_n$ takes
$\bar{\iota}_n(\mathcal{L}_{n+1})$ bijectively to
$\bar{\iota}_m(\mathcal{L}_{m+1})$, its extension to the boundary
takes the collection of boundary pairs of $\partial F_n$ coming from
$\bar{\iota}_n(\mathcal{L}_{n+1})$ bijectively to the collection of
boundary pairs of $\partial F_m$ coming from $\bar{\iota}_m(\mathcal{L}_{m+1})$.
Thus, $\partial(\bar{\iota}_m\circ\phi|_{R_{n+1}}\circ\iota_n)$ descends to a
homeomorphism $\partial F_n /\sim\to\partial F_m/\sim$ between
`decomposition spaces', where the decomposition space $\partial F_n
/\sim$ is the space obtained from $\partial F_n$ by, for each element
of $\bar{\iota}_n(\mathcal{L}_{n+1})$, collapsing its pair of boundary
points to a single point.

The topology of the decomposition space is related to Whitehead graphs. 
See~\cite[Section 3.1]{cashen-macura} for the definition of the Whitehead graph for a set of words.
For $W_{n+1}$, we are interested in 
the Whitehead graph for the set of words $\{y_i,y_j^{-1}y_k\mid 1\leq
i\leq n,\, 1\leq j<k\leq n\}$ in $F_n$.
This is a graph with $2n$ vertices labelled by the $y_i$ and
$y_i^{-1}$. 
For each $i$ there is an edge between (the vertices labelled) $y_i$ and $y_i^{-1}$, and for
each $j\neq k$ there is an edge between $y_j$ and $y_k$ and an edge
between $y_j^{-1}$ and $y_k^{-1}$.
In this graph the cut sets of size $n$ are of two types: the $n$ edges
incident to some vertex or the $n$ edges connecting the positive
generators to their inverses. One sees inductively that there are no smaller cut sets. 
Furthermore, by the same kind of argument as in
\cite[Section~6.2]{cashen-macura}, every cut set of size at most $n$ in the
decomposition space belongs to the $F_n$-orbit of one corresponding to
a cut set that is visible in the Whitehead graph. 
In particular, the smallest cut sets in the decomposition space have
size exactly $n$. The size of the smallest cut set is a homeomorphism
invariant, so the topology of the decomposition space gives an
obstruction to the existence of a quasi-isometry between $W_{n+1}$ and
$W_{m+1}$ when $n\neq m$.
\end{proof}

\bibliographystyle{siam}
\bibliography{refs}

\end{document}